\newcommand{\stkout}[1]{\ifmmode\text{\sout{\ensuremath{#1}}}\else\sout{#1}\fi}
\newtheorem{theorem}{Theorem}[section]
\newtheorem{proposition}[theorem]{Proposition}
\newtheorem{lemma}[theorem]{Lemma}
\newtheorem{corollary}[theorem]{Corollary}
\theoremstyle{definition}
\newenvironment{remark}
  {\pushQED{\qed}\remarkx}
  {\popQED\endremarkx}
\newenvironment{assumption}
  {\pushQED{\qed}\assumptionx}
  {\popQED\endassumptionx}
\numberwithin{equation}{section}
\def\writefig#1 #2 #3 {\rlap{\kern #1 truecm
\raise #2 truecm \hbox{\protect{\small #3}}}}
\newcommand{\Ku}{K^{u}}
\newcommand{\meta}[1]{\mathcal{M}_{#1}}
\definecolor{purple}{rgb}{.8,0,.8}
\tikzset{
    partial ellipse/.style args={#1:#2:#3}{
        insert path={+ (#1:#3) arc (#1:#2:#3)}
    }
}
\begin{document}

\newcommand{\setName}{\meta{k}}
\newcommand{\setA}{A} 
\newcommand{\mm}{m} 
\newcommand{\cUn}{c_{1}}
\newcommand{\rUn}{r_{1}}
\newcommand{\cDeux}{c_{2}}%
\newcommand{\lamb}{\lambda}%
\newcommand{\nr}{r_i} 

\newcommand{\setAUn}{A_1}
\newcommand{\setADeux}{A_2}
\newcommand{\normalind}[1]{\mathds{1}_{\{#1\}}}
\newcommand{\bigind}[1]{\mathds{1}_{\bigl\lbrace {#1} \bigr\rbrace
}}
\newcommand{\Bigind}[1]{\mathds{1}_{\Bigl\lbrace {#1} \Bigr\rbrace
}}

\newcommand{\ev}{\lambda} 
\newcommand{\kk}{{k}} 
\newcommand{\kkMUn}{{k-1}} 
\newcommand{\kkMDeux}{{k-2}}

\newcommand{\bigabs}[1]{\bigl\vert{#1}\bigr\vert}
\newcommand{\bigexpecim}[2]{\E_{\mskip1.5mu #1}\bigl\{#2\bigr\}}


%
\title{Spectral theory 
for random Poincar\'e maps}
\author{Manon Baudel, Nils Berglund}
\date{}

\maketitle

\begin{abstract}
\noindent
We consider stochastic differential equations, obtained by adding
weak Gaussian white noise to ordinary differential equations admitting $N$
asymptotically stable periodic orbits. We construct a discrete-time,
continuous-space Markov chain, called a random Poincar\'e map, which encodes
the metastable behaviour of the system. We show that this process admits
exactly $N$ eigenvalues which are exponentially close to $1$, and provide
expressions for these eigenvalues and their left and right eigenfunctions in
terms of committor functions of neighbourhoods of periodic orbits. The
eigenvalues and eigenfunctions are well-approximated by principal eigenvalues
and quasistationary distributions of processes killed upon hitting some of these
neighbourhoods. The proofs rely on Feynman--Kac-type representation formulas
for eigenfunctions, Doob's $h$-transform, spectral theory of compact operators,
and a recently \added{discovered} detailed-balance property satisfied by committor
functions. 
\end{abstract}

\added{\leftline{\small{\it Date.\/} November 15, 2016. 
Revised version, April 20, 2017.}}
\leftline{\small 2010 {\it Mathematical Subject Classification.\/} 
60J60,   
60J35    
(primary), 
34F05,   
45B05    
(secondary)
}
\noindent{\small{\it Keywords and phrases.\/}
Stochastic differential equation,
periodic orbit,
return map,
random Poincar\'e map,
metastability,
quasistationary distribution, 
Doob $h$-transform, 
spectral theory, 
Fredholm theory, 
stochastic exit problem.}  


%
\section{Introduction} 
\label{sec:intro} 

A very useful method to analyse the dynamics of ordinary differential equations
(ODEs) admitting one or several periodic orbits consists in introducing a 
submanifold of codimension $1$, which is transversal to the flow. Successive
intersections of orbits with this submanifold are described by an iterated map,
called a \emph{first-return map} or \emph{Poincar\'e map}. This map has proved
extremely useful for a number of reasons. First, it replaces a $d$-dimensional
ODE by a $(d-1)$-dimensional map, which is often easier to visualise. Perhaps
more importantly, it simplifies the stability analysis of periodic orbits,
because it allows to get rid of neutral transversal directions. Furthermore,
Poincar\'e maps simplify the classification of bifurcations of periodic orbits,
since the problem is reduced to the easier one of classifying bifurcations of
fixed points of maps. 

When noise is added to an ODE, it becomes a stochastic differential equation
(SDE). SDEs with multiple periodic orbits appear in many applications, such as
enzyme reaction models~\cite{moran1984onset,citri1988systematic}, 
neuron dynamics~\cite{Ditlevsen_Greenwood_13,Hopf_Loch_Thieu_16} and
related piecewise deterministic Markov processes~\cite{Duarte_Locherbach_Ost}.
A natural analogue of the Poincar\'e map in this situation was introduced
in~\cite{HitczenkoMedvedev}, and further analysed in~\cite{HitczenkoMedvedev1},
by Hitczenko and Medvedev who called it \emph{Poincar\'e map of randomly
perturbed periodic motion}, or \emph{random Poincar\'e map} for short. Random
Poincar\'e maps have already proved useful in several applications: they allowed
to study interspike interval statistics in the stochastic FitzHugh--Nagumo
equations~\cite{BerglundLandon}, the first-passage location through an unstable
periodic orbit in planar SDEs~\cite{berglund2014noise}, and mixed-mode
oscillation patterns in systems admitting a folded-node
singularity~\cite{Berglund_Gentz_Kuehn_2015}. 

Mathematically, a random Poincar\'e map is described by a discrete-time,
continuous-space Markov chain. If the ODE perturbed by noise admits $N\geqs2$
stable periodic orbits, and the noise intensity $\sigma$ is weak, the Markov
chain will tend to spend very long time intervals in small neighbourhoods of the
periodic orbits, with occasional transitions between these neighbourhoods. This
kind of behaviour is known as \emph{metastability}. 

The metastable dynamics of SDEs has been studied on the level of exponential
asymptotics by Freidlin and Wentzell~\cite{FreidlinWentzell_book}, using  the
theory of large deviations. In the particular case where the original ODE
derives from a potential and the noise is homogeneous and isotropic, the
perturbed system's invariant measure is known explicitly, and the dynamics is
reversible with respect to this measure. Reversibility greatly simplifies the
analysis of the system. In particular, the potential-theoretic approach
developed by Bovier, Eckhoff, Gayrard and Klein in~\cite{BEGK,BGK} for SDEs
yields very precise estimates on metastable transition times and small
eigenvalues of the generator, which are governed by the so-called
Eyring--Kramers formula. See for instance~\cite{Berglund_Kramers} for a recent
review, and the monograph~\cite{Bovier_denHollander_book} for a comprehensive
account of the potential-theoretic approach. 

A drawback of the potential-theoretic approach to metastability is that it has
so far only been developed in the reversible case. Systems admitting periodic
orbits are, however, strongly non-reversible. Recently, there have been a few
attempts to derive Eyring--Kramers-like formulas for non-reversible systems. For
instance, in~\cite{Lu_Nolen_2015} Lu and Nolen provided expressions for
transition times and reactive times in terms of committor functions (that is,
probabilities to hit a set $A$ before a set $B$), based on the transition-path
theory introduced by E and Vanden--Eijnden~\cite{E_VandeEijnden_2006}.
In~\cite{Bouchet_Reygner_2016}, Bouchet and Reygner formally derived an
Eyring--Kramers law for a class of non-reversible systems admitting an isolated
saddle, based on WKB asymptotics. Furthermore, in~\cite{Landim_Seo_2016}, Landim
and Seo obtained an Eyring--Kramers formula for certain non-reversible
\added{random walks} for which the invariant measure is
explicitly known, \added{using two variational formulae for the capacity.
In~\cite{Landim_Mariani_Seo_2017} Landim, Mariani, and Seo 
provide a sharp estimate for the transition times between two different wells 
for a class of non-reversible diffusion processes (again with known invariant
measure).}
 Despite these promising results, a full theory providing sharp asymptotics for metastable
transition times for general non-reversible systems has yet to be developed. 

Fortunately, it turns out that some central ideas in~\cite{BGK}, concerning the
spectral properties of the generator, do in fact not require any
potential-theoretic tools. The key assumption is that the metastable states can
be ordered in a particular way, from most stable to least stable, forming a
so-called \emph{metastable hierarchy}. Furthermore, it has become apparent that
the small eigenvalues of the diffusion and the corresponding eigenfunctions are
strongly connected to principal eigenvalues and quasistationary distributions
(QSDs) of certain related processes. See for
instance~\cite{Bianchi_Gaudilliere_2016} for the case of reversible Markovian
jump processes,~\cite{Champagnat_Villemonais_16} for birth-and-death processes
and related population models, and~\cite{GLPN_16} for the case of reversible
diffusions. Principal eigenvalues and QSDs are much easier to determine
numerically than arbitrary eigenvalues and eigenfunctions.

The aim of the present work is to derive spectral information on random
Poincar\'e maps, associated with non-reversible SDEs obtained by perturbing ODEs
admitting $N\geqs2$ asymptotically stable periodic orbits. Indeed, discrete-time
continuous-space Markov chains are amenable to Fredholm theory, showing that
transition probabilities can be represented as sums of projectors on invariant
subspaces multiplied by eigenvalues. Our main result,
Theorem~\ref{thm:eigenvalues}, shows that for sufficiently small noise, the
random Poincar\'e map admits exactly $N$ eigenvalues exponentially close to $1$,
\added{which are all real}, while all remaining eigenvalues are bounded away
from $1$. Theorems~\ref{thm:right_eigenfunctions}
and~\ref{thm:left_eigenfunctions} provide expressions for the associated right
and left eigenfunctions. All these quantities are expressed in terms of
committor functions associated with small neighbourhoods of the stable periodic
orbits. Furthermore, we show that the eigenvalues and left eigenfunctions are
well approximated by principal eigenvalues and QSDs of processes killed upon
hitting some of these neighbourhoods. Therefore our results provide links
between spectral properties of the random Poincar\'e map and quantities that are
accessible to numerical simulations. 

\added{The spectral decomposition that we obtain can be interpreted as showing
that on long timescales, the dynamics of the system can be described by an
$N$-state Markov chain. The $N$ states correspond to the $N$ stable periodic
orbits, and one-step transition probabilities between different states are
exponentially small. In particular, the metastable hierarchy assumption implies
that there are $N-1$ timescales of the form $\e^{H_i/\sigma^2}$, with
$H_1>H_2>\dots>H_{N-1}>0$. The time needed to reach 
\added{the union of the $k$ first periodic orbits} starting from the $k+1^{\textrm{st}}$ orbit is of order
$\e^{H_k/\sigma^2}$, while the $k^{\textrm{th}}$ eigenvalue of the random
Poincar\'e map behaves like $1-\e^{-H_k/\sigma^2}$. Note that this is
compatible with~\cite[Theorem~7.3, Chapter~6]{FreidlinWentzell_book}, which
states that the generator of the diffusion admits $N-1$ eigenvalues with
exponentially small real parts, of order $-\e^{-H_k/\sigma^2}$. A new feature
of our results is that they concern the eigenvalues of the discrete-time
Markov chain instead of the continuous-time generator, and that we are able to
prove that these eigenvalues are real. Apart from this relation interpretable
in terms of metastable transition times, the general link between
eigenvalues of the discrete-time and continous-time generators is not yet fully
understood (except in trivial cases where the dynamics transversal to periodic
orbits is completely decoupled from the phase dynamics).}

To obtain these results, we combine various techniques developed in prior works.
One of them is the representation of eigenfunctions in terms of Laplace
transforms of hitting times of well-chosen sets, already present in~\cite{BGK}.
Another key idea is the fact, discovered in~\cite{betz2016multi}, that committor
functions of not necessarily reversible Markov chains satisfy a kind of detailed
balance condition. We also rely on perturbation theory for compact linear
operators (see e.g.~\cite{Kato1980,gohberg2013classes}), Doob's $h$-transform,
\added{which is linked to the theory of quasi-stationary distributions as
reviewed in~\cite{Chetrite_Touchette_14}}, as well as sample-path estimates for
SDEs which were developed
in~\cite{berglund2002GeoPerSDE,Berglund_Gentz_book,berglund2014noise}. 

The remainder of this work is organised as follows. In Section~\ref{sec:setup},
we define precisely the kind of SDEs that we are going to consider, provide
a construction of their random Poincar\'e maps, and define the spectral
decomposition. Section~\ref{sec:results} contains the main results of the work.
In Section~\ref{sec:outline}, we provide an outline of the main steps of the
proofs. Subsequent sections are dedicated to technical details of the proofs. 
Section~\ref{sec:KZBi} contains estimates of the spectral gap and principal
eigenfunction of the process killed upon leaving a neighbourhood of a periodic
orbit. 
In Section~\ref{sec:diluted}, we show that the random Poincar\'e
map can be approximated by a finite-rank operator by providing estimates for the
operator norm of their difference. 
The spectral properties of the finite-rank operator are
described in Section~\ref{sec:operators}.  
Section~\ref{sec:sample_paths} provides sample-path estimates needed to apply
the bounds on operator norms, while Section~\ref{sec:lastSteps} contains the
proofs of the main results. Appendix~\ref{app:Doob} recalls some properties of
Doob's $h$-transform\added{, whereas Appendix~\ref{app:Floquet} recalls some 
results on Floquet theory.}

\medskip

\noindent
\textit{Notations:} Unless otherwise specified, $\norm{\cdot}$ denotes the
supremum norm of a function or a linear operator. The indicator function of an
event \added{or set} $A$  is denoted $\mathds{1}_A$ . The symbol 
$\id$ is used for the identity operator as well as the identity matrix. 

\medskip

\noindent
\textit{Acknowledgements:} The authors wish to thank Luc Hillairet for
useful advice on spectral-theoretical aspects, \added{and two anonymous
referees for their numerous constructive comments on the first version of the
manuscript, which allowed to substantially improve its readability.}

\section{Set-Up}
\label{sec:setup} 
\subsection{Deterministic system}
\label{ssec:setup-det} 

Let  $\cD_0 \subset \real^{d+1}$ be an open, connected set and let $f \in
\mathcal{C}^2\pth{\cD_0, \real^{d+1}}$. We consider the $(d+1)$-dimensional
deterministic ordinary differential equation (ODE) given by
\begin{equation}\label{EqODE}
\dot{z} = f\pth{z}\;.
\end{equation}

\begin{assumption}[Invariant domain]
\label{ass:domain} 
There exists a  bounded, open connected set $\cD\subset\cD_0$ which is
positively invariant under the flow of~\eqref{EqODE}.
\end{assumption}

This assumption ensures that for all $z\in\cD$ the flow $\varphi_t(z)$ is
defined for all $t\geqs0$. Recall that the image
$\setsuch{\varphi_t(z)}{t\geqs0}$ is called the (positive) \emph{orbit} of $z$.
The \emph{$\omega$-limit set} of $z$ is the set of accumulation points of
$\varphi_t(z)$ as $t\to\infty$. If $\varphi_t(z)$ is defined for all $t\leqs0$,
its set of accumulation points as $t\to-\infty$ is called the
\emph{$\alpha$-limit set} of $z$. A \emph{heteroclinic connection} between two
sets $A,B\subset\R^{d+1}$ invariant under the flow is an orbit admitting $A$ as
$\alpha$-limit set and $B$ as $\omega$-limit set. 

Recall that $\Gamma$ is a \emph{periodic orbit} of period $T>0$ of~\eqref{EqODE}
if there exists a periodic function $\gamma : \real \rightarrow \cD$ of minimal
period $T$ such that 
\begin{equation}
\dot{\gamma}\pth{t} = f\pth{\gamma\pth{t}} 
\qquad 
\forall t \in \real\;.
\end{equation}
Then $\Gamma$ is simply the image $\setsuch{\gamma(t)}{t\in[0,T)}$ of $\gamma$.
The periodic orbit is called \emph{linearly asymptotically stable} if all
Floquet multipliers of the linearised system $\dot\xi = \partial_z
f(\gamma(t))\xi$ are strictly smaller than $1$ in modulus. A periodic orbit is
\emph{linearly unstable} if it admits at least one Floquet multiplier of modulus
strictly larger than $1$. 

\begin{assumption}[Limit sets]
\label{ass:limit_sets} 
There are finitely many $\omega$-limit sets in $\cD$. They include $N\geqs2$
distinct linearly asymptotically stable periodic orbits
$\Gamma_1,\dots,\Gamma_N$. All other $\omega$-limit sets in $\cD$ are either
linearly unstable stationary points, or linearly unstable periodic orbits. 
Furthermore, there exists a smooth orientable $d$-dimensional manifold
$\Sigma\subset\cD$ with boundary $\partial\Sigma\subset\partial\cD$, such that
for all $x\in\Sigma$, $f(x)$ is not tangent to $\Sigma$ (transversality). Each
stable periodic orbit $\Gamma_i$ intersects $\Sigma$ at exactly one point
$x^\star_i$. In addition, there are no heteroclinic connections between
unstable orbits or between unstable orbits and unstable stationary points.
\end{assumption}

\added{Note that $\cD$ is not required to be simply connected: it can have the
shape of a solid torus containing all periodic orbits in its interior
(cf.~Fig.~\ref{figRandomPoincareMap}).} 
The deterministic \emph{Poincar\'e map} associated with
$\Sigma$ is then the map $\Pi:\Sigma\to\Sigma$ defined by 
\begin{equation}
\label{eq:first_return} 
 \Pi(x) = \varphi_\tau(x) 
 \qquad\text{where }
 \tau = \inf\setsuch{t>0}{\varphi_t(x)\in\Sigma}\;.
\end{equation} 
We will always implicitly assume that $\tau<\infty$ for almost all $x\in\Sigma$.
In other words, except perhaps for a set of initial conditions of zero
Lebesgue measure, orbits starting on $\Sigma$ always return to $\Sigma$ in a
finite time. 

We will denote by
\begin{equation}
\label{eq:Kj} 
\cA_j = \Bigsetsuch{x\in\Sigma}{\lim_{n\to\infty}
\Pi^n(x)=x^\star_j} 
\end{equation} 
the basin of attraction of the orbit $\Gamma_j$. The $\cA_j$ are open, disjoint
subsets of $\Sigma$, and the union of their closures is equal to $\Sigma$. 

\begin{remark}
Note that Assumption~\ref{ass:limit_sets} rules out the existence of any other
$\omega$-limit sets than periodic orbits and unstable stationary points.
We have formulated the assumption in this way for simplicity. In fact, what we
really need is that for each $\omega$-limit set other than the $\Gamma_i$,
noise added to the system is likely to move sample paths away from these sets
in a time which is negligible with respect to typical transition times between
the $\Gamma_i$. 

Furthermore, we believe that the absence of heteroclinic connections is not
required. We only need that a sample path starting near an unstable
$\omega$-limit set reaches the neighbourhood of a stable periodic orbit after a
negligible time. 
\end{remark}

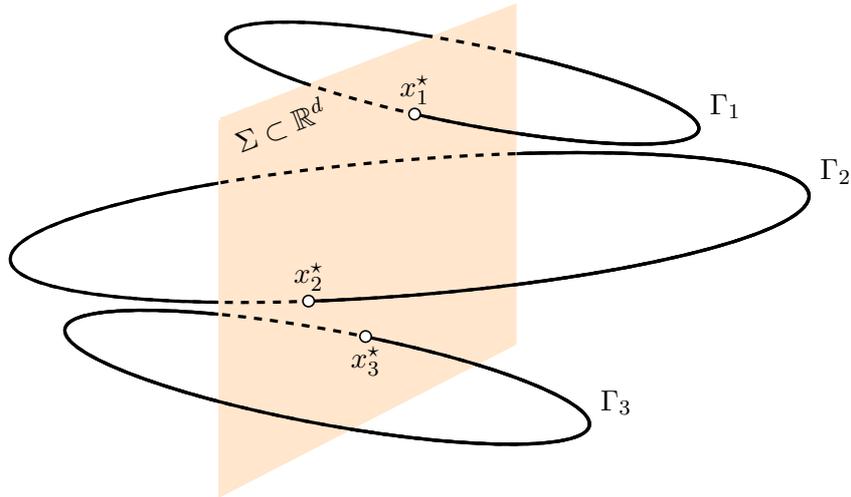
\begin{figure}
\begin{center}
\begin{tikzpicture}[scale=0.5]
\coordinate  (SigmaPGH) at (-3.3,4.4); 
\coordinate  (SigmaPGB) at (-3.3,-5.5); 
\coordinate  (SigmaPDB) at (4.4,-1.5); 
\coordinate  (SigmaPDH) at (4.4,7.4); 

\coordinate  (centreOrbiteA) at (3.05,5.39);
\coordinate  (centreOrbiteB) at (-0.51,-2.41);
\coordinate  (centreOrbiteC) at (1.66,1.56);

\coordinate (intersectionSectionPoincareA) at (1.79, 4.57);
\coordinate (intersectionSectionPoincareB) at  (-1,-0.39);
\coordinate (intersectionSectionPoincareC) at  (0.5, -1.33);

\def\petitAxeEllipseA{2.13*0.5};
\def\petitAxeEllipseB{2.5*0.5};
\def\petitAxeEllipseC{3.59*0.5};

\def\angleA{-11.26};
\def\angleB{-10.55};
\def\angleC{4.7};

\def\grandAxeEllipseA{12.67*0.5};
\def\grandAxeEllipseB{14.03*0.5};
\def\grandAxeEllipseC{21.08*0.5};

\draw [very thick,rotate around={\angleA:(centreOrbiteA)}] (centreOrbiteA)
[partial ellipse=0:360:\grandAxeEllipseA cm  and \petitAxeEllipseA  cm]
node[above right] {$\Gamma_1$};
\draw [very thick,rotate around={\angleB:(centreOrbiteB)}] (centreOrbiteB)
[partial ellipse=0:360:\grandAxeEllipseB cm  and \petitAxeEllipseB  cm]
node[above right] {$\Gamma_3$};
\draw [very thick,rotate around={\angleC:(centreOrbiteC)}] (centreOrbiteC)
[partial ellipse=0:360:\grandAxeEllipseC cm and \petitAxeEllipseC  cm]
node[above right] {$\Gamma_2$};

   \draw[fill=orange!20,draw=orange!20,line width = 2pt] (SigmaPGH) --  (SigmaPGB) -- (SigmaPDB)
-- (SigmaPDH) -- (SigmaPGH);
       \draw[fill=orange!20,draw=orange!20,line width = 2pt] (SigmaPGH) -- (SigmaPDH) node
[pos=0.17, below, sloped] {$\Sigma \subset \real^{d}$};

\draw [very thick,dashed,rotate around={\angleA:(centreOrbiteA)}]
(centreOrbiteA) [partial ellipse=0:360:\grandAxeEllipseA cm  and
\petitAxeEllipseA  cm];
\draw [very thick,rotate around={\angleA:(centreOrbiteA)}] (centreOrbiteA) [partial ellipse=-100:79:\grandAxeEllipseA cm  and \petitAxeEllipseA  cm];
\draw [very thick,dashed,rotate around={\angleB:(centreOrbiteB)}]
(centreOrbiteB) [partial ellipse=0:360:\grandAxeEllipseB cm  and
\petitAxeEllipseB  cm];
\draw [very thick,rotate around={\angleB:(centreOrbiteB)}] (centreOrbiteB) [partial ellipse=-242:83:\grandAxeEllipseB cm  and \petitAxeEllipseB  cm];
\draw [very thick,dashed,rotate around={\angleC:(centreOrbiteC)}]
(centreOrbiteC) [partial ellipse=0:360:\grandAxeEllipseC cm and
\petitAxeEllipseC  cm];
\draw [very thick,rotate around={\angleC:(centreOrbiteC)}] (centreOrbiteC) [partial ellipse=-105:73:\grandAxeEllipseC cm and \petitAxeEllipseC  cm];

\draw [semithick,color=black,fill=white,fill opacity=1]
(intersectionSectionPoincareA) circle (0.15cm) node[above] {$x^\star_1$};
\draw [semithick,color=black,fill=white,fill opacity=1]
(intersectionSectionPoincareB) circle (0.15cm) node[above] {$x^\star_2$};
\draw [semithick,color=black,fill=white,fill opacity=1]
(intersectionSectionPoincareC) circle (0.15cm) node[below] {$x^\star_3$};


\end{tikzpicture}
\caption{Sketch of Poincaré map for a deterministic system admitting several
stables periodic orbits. }
\end{center}
\end{figure}

\subsection{Stochastic system}
\label{ssec:setup-stoch} 

We turn now to random perturbations of the ODE~\eqref{EqODE}, given by It\^o
stochastic differential equations (SDEs) of the form 
\begin{equation}
 \label{eq:SDE}
 \6z_t = f(z_t)\6t + \sigma g(z_t) \6W_t\;.
\end{equation} 
Here $W_t$ denotes a $k$-dimensional standard Wiener process on a probability
space $(\Omega,\cF,\fP)$ with $k\geqs d+1$, while
$g\in\cC^1(\cD_0,\R^{(d+1)\times k})$, and $\sigma>0$ is a small parameter. We
will denote by $Z_t^{z}$, or simply $Z_t$, the solution of~\eqref{eq:SDE}
starting in $z$ at time $0$. The corresponding probability is written
$\probin{z}{\cdot}$, and expectations with respect to $\probin{z}{\cdot}$ are
denoted $\expecin{z}{\cdot}$. The \emph{infinitesimal generator} of the
diffusion process is the second-order differential operator 
\begin{equation}
 \label{eq:generator}
 \cL = \sum_{i=1}^{d+1} f_i(z) \frac{\partial}{\partial z_i}
 + \frac{\sigma^2}{2} \sum_{i,j=1}^{d+1} D_{ij}(z) \frac{\partial^2}{\partial
z_i\partial z_j}
\end{equation} 
where $D(z) = g(z)\transpose{g(z)}$ denotes the \emph{diffusion matrix}. 

\begin{assumption}[Ellipticity]
\label{ass:ellipticity} 
There exist constants $c_+ > c_- > 0$ such that 
\begin{equation}
 c_- \norm{\xi}^2 \leqs 
 \pscal{\xi}{D(z)\xi}
 \leqs c_+ \norm{\xi}^2
\end{equation}
for all $z\in\cD$ and all $\xi\in\R^{d+1}$. 
\end{assumption}

We recall a few elements from the large-deviation theory for SDEs developed by
Freidlin and Wentzell~\cite{FreidlinWentzell_book}. Given a finite time
interval $[0,T]$ and a continuous function $\gamma:[0,T]\to\cD$, one defines
a \emph{rate function} by 
\begin{equation}
\label{eq:rate_function}
I_{[0,T]}(\gamma) = 
\begin{cases}
\dfrac12 \displaystyle\int_0^T \transpose{(\dot\gamma_s - f(\gamma_s))}
D(\gamma_s)^{-1} (\dot\gamma_s - f(\gamma_s)) \,\6s
& \text{if $\gamma\in H^1$,}\\
+\infty
& \text{otherwise\;.}
\end{cases}
\end{equation}
Roughly speaking, the probability of a sample path of~\eqref{eq:SDE} tracking
$\gamma$ behaves like $\e^{-I_{[0,T]}(\gamma)/\sigma^2}$ in the limit
$\sigma\to0$ (see~\cite{FreidlinWentzell_book} for details). 
If $x^\star$ belongs to one of the $\Gamma_i$ and $y\in\cD$, we define the
\emph{quasipotential} 
\begin{equation}
 \label{eq:quasipotential}
 V(x^\star,y) = \inf_{T>0}\inf_{\gamma:x^\star\to y} I_{[0,T]}(\gamma)\;,
\end{equation} 
where the second infimum runs over all continuous paths $\gamma$ such that
$\gamma_0=x^\star$ and $\gamma_T=y$. It is easy to see that if $y_1$ and $y_2$
belong to the same periodic orbit, then $V(x^\star,y_1) = V(x^\star,y_2)$.
Indeed one can connect $y_1$ to $y_2$ at zero cost by tracking the
deterministic flow, so that $V(y_1,y_2)=0$, and similarly one has
$V(y_2,y_1)=0$. Thus for $1 \leqs i\neq j\leqs N$, the quantity 
\begin{equation}
 \label{eq:Hij}
 H(i,j) = V(x^\star_i,x^\star_j) 
\end{equation} 
measures the cost of reaching the $j$th periodic orbit from the $i$th periodic
orbit in arbitrary time. If $i\not\in A\subset\set{1,\dots,N}$ it
will be convenient to use the notation 
\begin{equation}
 \label{eq:HiA}
 H(i,A) = \min_{j\in A} H(i,j)
\end{equation} 
for the cost of reaching any of the orbits in $\bigcup_{j\in A}\Gamma_j$. 
The following non-degeneracy assumption will greatly simplify the spectral
analysis. 

\begin{assumption}[Metastable hierarchy]
\label{ass:hierarchy} 
There exists a constant $\theta>0$ such that the stable periodic orbits
$\Gamma_1, \dots, \Gamma_N$ can be ordered in such a way that if one writes
$M_j = \set{1,\dots, j}$, then 
\begin{equation}
 \label{eq:metastable_hierarchy}
 H(j,M_{j-1}) \leqs \min_{i<j} H(i,M_j\setminus\set{i}) - \theta\;.
\end{equation} 
We say that the orbits are in \emph{metastable order}, and
write $\Gamma_1 \prec \Gamma_2 \prec \dots \prec \Gamma_N$. 
\end{assumption}

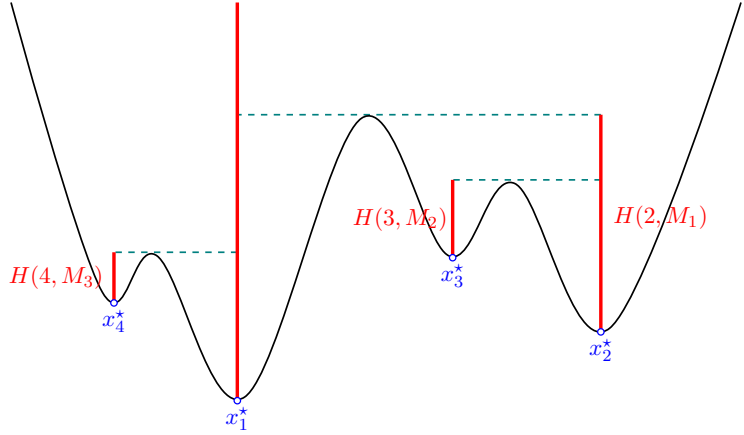
\begin{figure}[tb]
\begin{center}
\scalebox{0.8}{
\begin{tikzpicture}[>=stealth',main node/.style={draw,circle,fill=white,minimum
size=3pt,inner sep=0pt},x=1.2cm,y=1.2cm
]



\draw[teal,thick,dashed] (0.58,2.95) -- (-4.4,2.95);
\draw[teal,thick,dashed] (0.58,2.05) -- (-1.45,2.05);
\draw[teal,thick,dashed] (-4.4,1.05) -- (-6.09,1.05);


\draw[black,thick] plot[smooth,tension=.6]
  coordinates{(2.5,4.5) (0.7,0) (-0.6,2) (-1.5,1) (-2.7,2.9) (-4.3,-0.95)
(-5.5,1) (-6.25,0.5) (-7.5,4.5)};
  

\draw[red,ultra thick] (0.58,-0.05) -- 
node[red,right,xshift=0.05cm,yshift=0.1cm] {$H(2,M_1)$} (0.58,2.95); 

\draw[red,ultra thick] (-1.45,0.975) -- 
node[red,left,xshift=0.07cm] {$H(3,M_2)$} (-1.45,2.05); 

\draw[red,ultra thick] (-4.4,-1) -- (-4.4,4.5);

\draw[red,ultra thick] (-6.09,0.35) -- 
node[red,left,xshift=-0.02cm] {$H(4,M_3)$} (-6.09,1.05); 


\node[main node,blue,fill=white,semithick] (2) at (0.58,-0.05) {}; 
\node[blue] [below of=2,yshift=0.7cm] {$x^\star_2$};

\node[main node,blue,fill=white,semithick] (3) at (-1.45,0.975) {}; 
\node[blue] [below of=3,yshift=0.7cm] {$x^\star_3$};

\node[main node,blue,fill=white,semithick] (1) at (-4.4,-1) {}; 
\node[blue] [below of=1,yshift=0.7cm] {$x^\star_1$};

\node[main node,blue,fill=white,semithick] (4) at (-6.09,0.35) {}; 
\node[blue] [below of=4,yshift=0.7cm] {$x^\star_4$};



\end{tikzpicture}
}
\end{center}
\vspace{-5mm}
\caption[]{In cases where there exists a global potential $U$, such that
$V(x^\star_i,x^\star_j) - V(x^\star_j,x^\star_i) = U(x^\star_j) - U(x^\star_i)$
for all $i, j$, the metastable hierarchy can be determined by a graphical
construction. In this example, the $x^\star_i$ have already been labelled in
such a way that~\eqref{eq:metastable_hierarchy} is satisfied, so that $\Gamma_1
\prec \Gamma_2 \prec \Gamma_3 \prec \Gamma_4$.}
\label{fig:disconnectivity} 
\end{figure}

The metastable order can be determined in the following way. First
one computes,
for each $i$, the minimal cost $H(i,M_N\setminus\set{i})$ for reaching another
orbit from $\Gamma_i$. If the minimum $\min_{1\leqs i\leqs N}
H(i,M_N\setminus\set{i})$ is reached in a unique $i$, then this $i$ will be
relabelled $N$. The procedure is then reiterated with the other $N-1$ orbits,
discarding the $N$th orbit, until all orbits have been ordered.
Figure \ref{fig:disconnectivity} illustrates the procedure in case the quasipotential
derives from a global potential $U$ (i.e., such that $V(x^\star_i,x^\star_j) - 
V(x^\star_j,x^\star_i) = U(x^\star_j) - U(x^\star_i)$ for all $i, j$), which is
not the case for a generic irreversible system. 

\added{The metastable hierarchy assumption is related to the concept of
$W$-graphs used by Wentzell in~\cite{Wentzell_72_matrices} in the case of
finite matrices, and shown in~\cite[Theorem~7.3,
Chapter~6]{FreidlinWentzell_book} to determine the real parts of exponentially
small eigenvalues of the generator of a diffusion. The $W$-graphs can be used
without the metastable hierarchy assumption to determine the relevant
exponential timescales, but if this assumption holds then the $W$-graph
algorithm becomes particularly simple, since only the edges from vertex $j$ to
one vertex in $M_{j-1}$ contribute. See 
also}~\cite{Cameron_Vanden-Eijnden_2014,CameronGan2016} for recent results
\added{based on $W$-graphs} on how to determine the metastable hierarchy
efficiently in case $N$ is large.

Finally, we will need a type of recurrence assumption, because so far we have
not assumed much on the behaviour of the diffusion outside the set $\cD$. In
particular, the solutions of the SDE~\eqref{eq:SDE} may not even exist globally
in time. In fact, we will consider two slightly different situations, which
however can be treated in a uniform way. 

\begin{assumption}[Confinement property]
\label{ass:confinement} 
One of the two following situations holds. 
\begin{enumerate}
\renewcommand{\theenumi}{\Alph{enumi}}
\item 	Either there exist a \emph{Lyapunov function} $V\in\cC^2(\cD_0,\R_+)$
such that $\norm{V(z)}\to\infty$ as $z\to\partial\cD_0$ (or as
$\norm{z}\to\infty$ in case $\cD_0$ is unbounded) satisfying  
\begin{equation}
 \label{eq:recurrence1}
 (\cL V)(z) \leqs -c + d\ind{z\in\cD}
 \qquad
 \forall z\in\cD_0
\end{equation} 
for some constants $c>0$ and $d\geqs0$. 

\item 	Or  
\begin{equation}
 \label{eq:recurrence2}
 \bar V(\partial\cD) := 
 \min_{1\leqs i\leqs N} \inf_{y\in\partial\cD} V(x^\star_i,y)
 \geqs \max_{i\neq j} H(i,j) + \theta'
\end{equation} 
for a constant $\theta'>0$.
\qed
\end{enumerate}
\renewcommand{\qed}{}
\end{assumption}

\added{By~\cite[Theorem~4.2]{Meyn_Tweedie_1993b}, variant A implies that 
the process $\set{Z_t}_{t\geqs0}$ is positive Harris recurrent. Recall that a
process is Harris recurrent if there exists a $\sigma$-finite measure $\mu$ such
that the first-hitting time of a set $A$ is almost surely finite whenever
$\mu(A)>0$. 
Such a process admits an essentially unique invariant measure $\pi$, and is
called positive Harris recurrent if $\pi$ can be normalised to be a probability
measure. The ellipticity assumption~\ref{ass:ellipticity} implies that the
restriction of $\pi$ to $\cD$ is absolutely continuous with respect to Lebesgue
measure. Furthermore,}
\cite[Theorem~4.3]{Meyn_Tweedie_1993b}, applied with $f=1$, shows that the
first-hitting time $\tau_\cD$ of $\cD$ satisfies 
\begin{equation}
\label{eq:tau_D} 
 \expecin{z}{\tau_\cD} \leqs \frac{1}{c} V(z)
\end{equation} 
for all $z\in\cD_0$. 

\begin{remark}
If $V$ is a quadratic form, then we have $\cL V = \pscal{f}{\nabla V} +
\Order{\sigma^2}$. Thus a quadratic deterministic Lyapunov function satisfying
$\pscal{f}{\nabla V} \leqs -cV$ outside $\cD$ may already fulfil
Condition~\eqref{eq:recurrence1} if $\sigma$ is small enough.
\end{remark}

Variant B of Assumption~\ref{ass:confinement} says that it should be
harder to reach the boundary $\partial\cD$ of $\cD$ than to make any transition
between periodic orbits. In that situation, we are going to consider the process
conditioned on staying in $\cD$. Doob's $h$-transform
(cf.~Appendix~\ref{app:Doob}) will allow us to relate the spectral properties of
the conditioned process with those of the process killed upon leaving
$\cD$. Both processes are not influenced by what happens outside $\cD$, so that
global existence of solutions is not required.



\subsection{Random Poincar\'e map}
\label{ssec:setup-poincare} 

We now define a discrete-time process recording successive intersections of
sample paths with the surface of section $\Sigma$. The following basic estimate
shows that solutions starting in $\cD$ will hit $\Sigma$ almost surely after a
finite time \added{(and thus return to $\Sigma$ infinitely often)}. 

\begin{proposition}
\label{prop:tau_sigma} 
Let $\tau_\Sigma = \inf\setsuch{t>0}{Z_t\in\Sigma}$. There exist constants
$\sigma_0, M>0$ such that for all $\sigma < \sigma_0$, 
\begin{equation}
\label{eq:tau_Sigma} 
 \sup_{z\in\cD} \bigexpecin{z}{\tau_\Sigma} \leqs
M\log(\sigma^{-1})\;.
\end{equation} 
\end{proposition}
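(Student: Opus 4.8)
The plan is to reduce the claim to a purely deterministic return-time fact, transfer that fact to the SDE via sample-path estimates, and close with a renewal argument; the $\log(\sigma^{-1})$ will come entirely from the time the process spends near the linearly unstable $\omega$-limit sets. For the deterministic skeleton, let $\cW$ be a fixed (arbitrarily small) neighbourhood of the union of the finitely many linearly unstable stationary points and linearly unstable periodic orbits in $\cD$. I would first show that there is $T_0<\infty$ such that for every $z\in\overline\cD$ the deterministic orbit $\{\varphi_t(z):0<t\leqs T_0\}$ either meets $\Sigma$ or enters $\cW$: if this fails, a compactness argument (using positive invariance of $\overline\cD$) produces a point whose entire forward orbit avoids $\Sigma\cup\cW$; by Assumption~\ref{ass:limit_sets} its $\omega$-limit set is then some stable orbit $\Gamma_i$, so the orbit accumulates on $x^\star_i\in\Sigma$, and transversality of $f$ to $\Sigma$ forces it to actually cross $\Sigma$ — a contradiction. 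A companion statement, proved similarly but now invoking the absence of heteroclinic connections between unstable $\omega$-limit sets, is that, after possibly shrinking $\cW$, the local unstable manifold of each unstable $\omega$-limit set is a compact set each of whose forward orbits reaches $\Sigma$ transversally within a uniform time $T^u<\infty$ without re-entering $\cW$. Both crossing properties are robust under $C^0$-small perturbations of the initial point, $\Sigma$ being crossed transversally.

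Next I would produce a one-step stochastic estimate. Over a fixed horizon $T$, the sample-path estimates for~\eqref{eq:SDE} (as in~\cite{berglund2002GeoPerSDE,Berglund_Gentz_book}) give $\probin{z}{\sup_{0\leqs t\leqs T}\norm{Z_t-\varphi_t(z)}>\delta}\leqs C\e^{-c/\sigma^2}$ with $C,c$ depending only on $T,\delta$. This bound degenerates on the $\log(\sigma^{-1})$ time scale, so near the unstable $\omega$-limit sets I would instead invoke the sharp escape estimates of~\cite{berglund2002GeoPerSDE,berglund2014noise,Berglund_Gentz_book}: started anywhere in $\cW$, the process leaves $\cW$ within a time $M_1\log(\sigma^{-1})$ with probability at least $\tfrac12$, emerging at a distance from the relevant local unstable manifold that vanishes as $\sigma\to0$. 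Combining the two regimes, I claim there are $\sigma_0,p_0>0$ and $T_1=\Order{\log(\sigma^{-1})}$ with $\inf_{z\in\cD}\probin{z}{\tau_\Sigma\leqs T_1}\geqs p_0$ for $\sigma<\sigma_0$: if the deterministic orbit from $z$ meets $\Sigma$ within $T_0$, the stochastic path does too, up to an event of probability $\e^{-c/\sigma^2}$, by transversality of the crossing; if it enters $\cW$ first, then with probability $\geqs\tfrac12$ the process escapes $\cW$ within $M_1\log(\sigma^{-1})$ near a local unstable manifold, whence the companion statement together with another application of the fixed-horizon estimate carries it to $\Sigma$ within a further time $\lesssim T^u$, again up to $\e^{-c/\sigma^2}$.

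Finally I would iterate this bound via the Markov property at $T_1,2T_1,\dots$: on the event $\{\tau_\Sigma>nT_1\}$ the process lies, outside an event of probability $n\e^{-c/\sigma^2}$, in a fixed bounded enlargement of $\cD$ on which the one-step estimate still applies (rare far excursions being reabsorbed by Assumption~\ref{ass:confinement} and the bound~\eqref{eq:tau_D} on the expected return time to $\cD$). Hence $\probin{z}{\tau_\Sigma>nT_1}\leqs(1-p_0)^n+n\e^{-c/\sigma^2}$, and summation over $n$ yields $\sup_{z\in\cD}\expecin{z}{\tau_\Sigma}=\Order{\log(\sigma^{-1})}$, which is the assertion. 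I expect the main obstacle to be precisely the control near the unstable stationary points and unstable periodic orbits: on the $\log(\sigma^{-1})$ time scale the naive Gronwall-type estimate is useless, so one must both show that the process escapes these neighbourhoods within that time and that it emerges close enough to the unstable manifold for the deterministic return mechanism to take over — which is exactly where the no-heteroclinic-connection hypothesis in Assumption~\ref{ass:limit_sets} and the sharp exit estimates from the cited works are needed.
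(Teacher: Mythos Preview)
Your proposal is correct and follows essentially the same route as the paper's own proof (given in Section~\ref{ssec:apriori}): partition $\cD$ according to whether the deterministic orbit reaches $\Sigma$ directly or first enters a neighbourhood of an unstable $\omega$-limit set, use sample-path estimates on fixed horizons for the former and $\log(\sigma^{-1})$-escape estimates for the latter, and close with the strong Markov property and~\eqref{eq:tau_D}. The paper's version is terser---it simply cites its own Section~\ref{ssec:mean_return_time} for escape from unstable periodic orbits and external references (Kifer, Bakhtin, Almada--Bakhtin) for unstable equilibria---whereas you spell out the deterministic compactness argument, the role of the no-heteroclinic-connection hypothesis, and the renewal iteration explicitly.
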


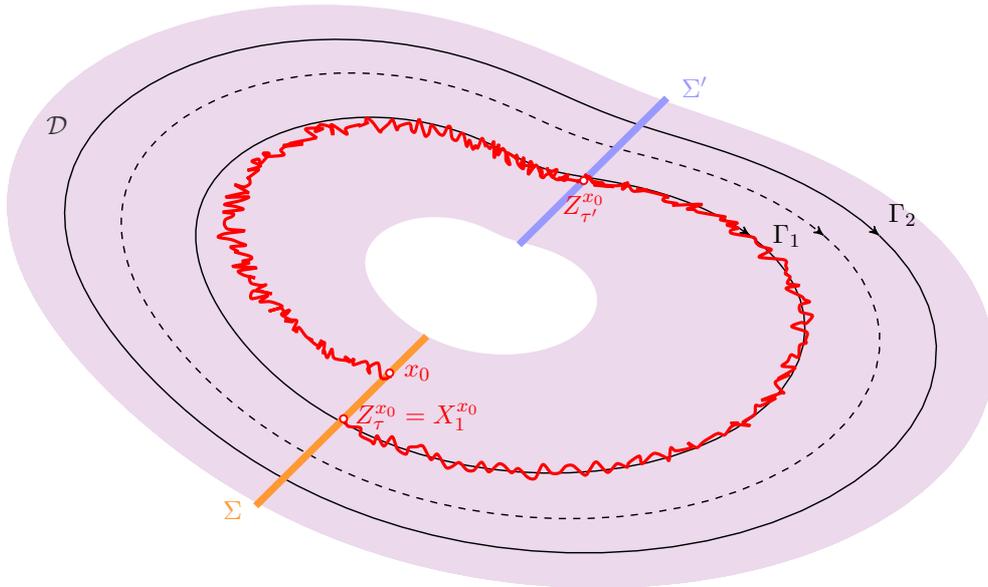
\begin{figure}
\begin{center}
\scalebox{0.9}{
\begin{tikzpicture}[>=stealth',main
node/.style={draw,circle,fill=white,minimum
size=3pt,inner sep=0pt},scale=0.9,
x=1.5cm,y=0.9cm,
declare function={
rho(\x) = 3*(1 + 0.4*sin(\x r) + 0.2*sin(2*\x r));
rhoext(\x)=  3*(1 + 0.4*sin(\x r) + 0.3*sin(2*\x r))+2;
rhoint(\x)=  1.15*(1 + 0.3*sin(\x r) + 0.2*sin(2*\x r));
}]
\draw[fill,violet!15,line width=2,thick,domain=0:10,samples=150] plot 
(xy polar cs:angle=-\x r,radius= {rhoext(\x)});
\draw[fill,white,line width=0,thick,domain=0:2*pi,samples=100] plot 
({rhoint(\x)*cos(\x r)},{-0.6-rhoint(\x)*sin(\x r)});
\draw[black,semithick,domain=0:2*pi,samples=100,->] plot (xy polar
cs:angle=-\x r,radius= {rho(\x)}) node[right,xshift=2mm] {$\Gamma_1$};
\draw[black,semithick,domain=0:2*pi,samples=100,->] plot (xy polar
cs:angle=-\x r,radius= {0.3*rho(\x)+0.7*rhoext(\x)}) node[above right]
{$\Gamma_2$};
\draw[black,dashed,semithick,domain=0:2*pi,samples=100,->] plot (xy
polar cs:angle=-\x r,radius= {0.6*rho(\x)+0.4*rhoext(\x)});
\draw[line width =3,orange!80]
(-2.35,{-2.35*5/3-1}) -- (-0.5,{-5/6-1});
\node[orange!80] at (-2.6,-5) {$\Sigma$};
\draw[line width =3,blue!40]
(2.1,{2.1*5/3-1}) -- (0.5,{5/6-1});
\node[blue!40] at (2.4,2.7) {$\Sigma'$};
\pgfmathsetseed{37}
\draw[red,line join=bevel,line width =1.3,domain=1.9:8.24,samples=330,smooth]
plot (xy polar cs:angle=-\x r+0.2*rand,radius= {(1-exp(-0.65*
\x))*rho(\x)+0.15*rand});
\node[main node,red,fill=white,thick] at (-1.4,-3.33333) {};
\node[red] at (-0.6,-3.3) {$Z_{\tau}^{x_0}=X_1^{x_0}$};
\node[main node,red,fill=white,thick] at (1.2,1) {};
\node[red] at (1.2,0.5) {$Z_{\tau'}^{x_0}$};
\node[main node,red,fill=white,thick] at (-0.9,-2.5) {};
\node[red] at (-0.6,-2.5) {$x_0$};
\node[black!80] at (-4.5,2) {$\cD$};
\end{tikzpicture}
}
\caption{Sketch of a random Poincaré map when the process starts in the basin of
attraction of a stable periodic orbit $\Gamma_1$.}
\label{figRandomPoincareMap}
\end{center}
\end{figure}
Consider first the case where variant A of Assumption~\ref{ass:confinement}
holds (for variant B, see Section~\ref{ssec:doob}). 
\deleted{Sentences removed}
%
Assume we have chosen a parametrisation of $\Sigma$ by a variable $x\in\R^d$. By
a slight abuse of notation, we will denote the domain of $x$ by $\Sigma$ as
well. For an initial condition $X_0\in\Sigma$, we would like to study the
sequence $(X_1, X_2, \dots)$ of successive intersections of the sample path
$(Z^{X_0}_t)_{t\geqs0}$ with $\Sigma$. We cannot proceed exactly as in the
deterministic case, because $\tau$ defined as in~\eqref{eq:first_return} is
equal to $0$ almost surely, due to the irregularity of Brownian paths. This
problem is cured quite easily, however. One can for instance introduce a second
manifold $\Sigma'\subset\cD$ which does not intersect $\Sigma$, such that
$\partial\Sigma'\subset\partial\cD$ and the vector field $f$ is transversal to 
$\Sigma'$ as well. Then \added{setting $\tau_{0}=0$,}
 it suffices to set for each $n\in \added{\N_0}$ 
\begin{align}
\nonumber
 \tau'_{n+1} &= \inf\setsuch{t> \added{ \tau_n} }{Z^{X_0}_t\in\Sigma'}\;, \\
 \tau_{n+1} &= \inf\setsuch{t>\tau'_{n+1}}{Z^{X_0}_t\in\Sigma}\;,
\end{align} 
and to define $X_{n+1}$ to be the $x$-coordinate of $Z^{\added{X_0}}_{\tau_{n+1}} \in
\Sigma$, see Figure \ref{figRandomPoincareMap}.

The strong Markov property implies that the law of $X_{n+1}$ given $X_n$ is
independent of $n$ and of all $X_m$ with $m<n$, that is, $(X_n)_{n\geqs0}$ forms
a time-homogeneous Markov chain. Since each
$X_n$ can be seen as the first-exit location from a bounded set, results
from~\cite{BenArous_Kusuoka_Stroock_1984} show that the law of $X_n$ has a
continuous density. We thus obtain a continuous-space Markov kernel $K$ with
continuous density $k$, defined by 
\begin{equation}
\label{eq:def_K} 
 K(x,A) = \bigprobin{x}{X_1 \in A} = \int_A k(x,y)\6y 
\end{equation} 
for any $x\in\Sigma$ and any Borel set $A\subset\Sigma$. We will denote $n$-fold
iterates of $K$ by 
\begin{equation}
 K^n(x,A) = \bigprobin{x}{X_n \in A} = \int_A k^n(x,y)\6y 
\end{equation} 
where the time-$n$ transition densities $k^n$ can be determined recursively by
the Chapman--Kolmogorov equation 
\begin{equation}
 k^{n+1}(x,y) = \int_\Sigma k^n(x,z) k(z,y) \6z\;.
\end{equation} 
The Markov kernel $K$ induces two Markov semigroups in the standard way.
Namely, with any bounded measurable test function $f:\Sigma\to\R$, we associate
the function 
\begin{equation}
 (Kf)(x) 
 = \int_\Sigma k(x,y) f(y) \6y
  = \bigexpecin{x}{f(X_1)}\;.
\end{equation} 
Furthermore, with any (signed) Borel measure $\mu$ on $\Sigma$ with density
$m$, we associate the measure 
\begin{equation}
\label{eq:semigroup_mu} 
 (\mu K)(\6y)  
 = \biggpar{\int_\Sigma m(x) k(x,y)\6x} \6y
 = \probin{\mu}{X_1\in\6y}\;. 
\end{equation} 
Since in what follows, all measures will have densities, we will often use the
same symbol for a measure and its density, and write $(mK)(y)$ for the integral
appearing on the right-hand side of~\eqref{eq:semigroup_mu}. 

\subsection{Spectral decomposition}
\label{ssec:setup-spectral} 

Since $\Sigma$ is bounded and $k$ is continuous, $K$ is a compact operator
(cf.~\cite[Section~VI.5]{Reed_Simon_I}), which implies that \added{the behaviour
of its large iterates} can be described by Fredholm theory. In particular, the
Riesz--Schauder theorem~\cite[Theorem~V1.15]{Reed_Simon_I} states that $K$ has
discrete spectrum, with all eigenvalues except possibly $0$ having finite
multiplicity. The eigenvalues are roots of the Fredholm determinant, first
introduced in~\cite{fredholm1903classe}, which is well-defined since $k$ is
bounded.

Let us denote by $(\lambda_j)_{j\in\N_0}$ the eigenvalues of $K$, ordered by
decreasing modulus, and by $\pi_j$ and $\phi_j$ the left and right
eigenfunctions respectively. That is, 
\begin{equation}
 (\pi_j K)(x) = \lambda_j \pi_j(x) 
 \qquad\text{and}\qquad
 (K\phi_j)(x) = \lambda_j \phi_j(x)
\end{equation} 
for all $j\in\N_0$. We can normalise the eigenfunctions in such a way that 
\begin{equation}
\label{eq:orthogonality} 
 \pi_i \phi_j := 
 \int_\Sigma \pi_i(x)\phi_j(x)\6x = \delta_{ij}\;.
\end{equation} 
In this way, the kernels $\phi_i(x)\pi_i(y)$ are projectors on invariant
subspaces of $K$. If the set of eigenfunctions is complete, 
and all nonzero eigenvalues have multiplicity $1$, then we have
the spectral decomposition 
\begin{equation}
\label{eq:spectral_dec} 
 k(x,y) = \sum_{i\geqs0} \lambda_i \phi_i(x)\pi_i(y)\;,
\end{equation} 
which entails the very useful property  
\begin{equation}
\label{eq:spectral_dec_n} 
 k^n(x,y) = \sum_{i\geqs0} \lambda_i^n \phi_i(x)\pi_i(y)\;.
\end{equation} 
A similar spectral decomposition holds if there are eigenvalues of multiplicity
higher than~$1$, except that there may be nontrivial Jordan blocks. In what
follows, we will show that $K$ is close to a finite-rank operator, defined by a
sum with $N$ terms. Therefore, the completeness of the set of all
eigenfunctions will not be an issue. 

Jentzsch's extension of the Perron--Frobenius theorem~\cite{Jentzsch1912} shows
that $\lambda_0$ 
is real, positive and
simple, and that the eigenfunctions
$\pi_0(x)$ and $\phi_0(x)$ can be taken real and positive. Since in our case,
$K$ is a stochastic Markov kernel (i.e.~$K(x,\Sigma)=1$ for all $x\in\Sigma$),
we have in fact $\lambda_0=1$
, while $\phi_0(x)$ can be taken identically equal
to $1$, and $\pi_0(x)$ is an invariant density, which
by~\eqref{eq:orthogonality} is normalised to be a probability density. Under the
spectral-gap condition $\abs{\lambda_1}<1$, the iterates $k^n(x,y)$ will
converge to
$\pi_0(y)$ for all $x$. 


\subsection{Process killed upon leaving a subset $A$}
\label{ssec:killed} 

Given a Borel set $A\subset\Sigma$, several processes related to
$(X_n)_{n\geqs0}$ will play an important r\^ole in what follows. The simplest
one is the process $(X^A_n)_{n\geqs0}$ killed
upon leaving $A$. Its kernel $K_A$ has density
\begin{equation}
 k_A(x,y) = k(x,y) \ind{x\in A, y\in A}\;.
\end{equation} 
This is in general a substochastic Markov process ($K_A(x,A) < 1$), which can be
turned into a stochastic Markov process by adding to $A$ an absorbing cemetery
state $\partial$. We denote its eigenvalues by $\lambda^A_i$ and its left and
right eigenfunctions by $\pi^A_i(x)$ and $\phi^A_i(x)$. The largest eigenvalue
$\lambda^A_0$ is still real, positive and simple, but in general smaller than
$1$. It is called the \emph{principal eigenvalue} of the process.
Following~\cite{BGK}, we call $\phi^A_0$ the (right) \emph{principal
eigenfunction} of $(X^A_n)_{n\geqs0}$. The normalised left
eigenfunction $\pi^A_0$ is called the \emph{quasistationary distribution}
(QSD) of the killed process. Under the spectral gap condition $\abs{\lambda^A_1}
< \lambda^A_0$, it satisfies 
\begin{equation}
 \pi^A_0(B) 
 = \lim_{n\to\infty} \bigpcondin{x}{X^A_n\in B}{X^A_n\in A}
 = \lim_{n\to\infty} \bigpcondin{x}{X_n\in B}{\tau^+_{A^c} > n}
\end{equation} 
for any Borel set $B\subset A$, independently of $x\in A$. 
Here $\tau^+_{A^c} = \inf\setsuch{n\geqs1}{X_n\not\in A}$ denotes the first-exit
time of the original process from $A$. 
A useful property of the QSD is that for the process $(X_n)_{n\geqs0}$ one has 
\begin{equation}
 \bigprobin{\pi^A_0}{\tau^+_{A^c} = n} = (\lambda^A_0)^{n-1} (1-\lambda^A_0)
 \qquad\forall n\geqs1\;,
\end{equation} 
that is, the first-exit time from $A$ has a geometric distribution with success
probability $(1-\lambda^A_0)$. In particular, we have 
\begin{equation}
 \bigexpecin{\pi^A_0}{\tau^+_{A^c}} = \frac{1}{1-\lambda^A_0}\;.
\end{equation} 

\begin{remark}
\added{
Note that if an eigenvalue $\lambda$ of the original process satisfies the
lower bound}
\begin{equation}
\added{\abs{\lambda}\geqs\supSur{x \in A }{\Prcx{x}{X_1 \in A}}\;,}
\end{equation}
\added{then the principal eigenvalue of the process killed upon leaving $A$ satisfies}
\begin{equation}
\added{\lambda^A_0 \leqslant \abs{\lambda}\; ,}
\end{equation}
\added{because $\lambda^A_0  = \Prcx{\pi^A_0 }{X_1 \in A} $.}
\end{remark}

\subsection{Trace of the process on $A$}
\label{ssec:trace} 

A second important process is the trace $(X_n)\vert_A$, which describes the
process monitored only while it visits $A$. This is still a Markov process, with
kernel 
\begin{equation}
 K\vert_A(x,\6y) = \bigprobin{x}{X_{\tau^+_A}\in\6y}\;,
\end{equation} 
where $\tau^+_A = \inf\setsuch{n\geqs1}{X_n\in A}$ denotes the first-return time
to $A$. The density of $\added{K\vert_A}$ is thus given by 
\begin{equation}
 k\vert_A(x,y) = \sum_{n\geqs1} \bigprobin{x}{\tau^+_A=n} k^n(x,y)\ind{x\in A,
y\in
A}\;.
\end{equation} 
If $R_{A^c}(1;z_1,\6z_2) = [\id - K_{A^c}]^{-1}(z_1,\6z_2)$ denotes the
resolvent at $1$ of the kernel killed upon leaving $A^c$, then
$\added{k\vert_A}$ can also
be written 
\begin{equation}
 k\vert_A(x,y) = \Bigbrak{k(x,y) + \int_{A^c}\int_{A^c} k(x,z_1)
R_{A^c}(1;z_1,\6z_2)k(z_2,y)\6z_1}\ind{x\in A, y\in A}\;.
\end{equation} 
One of the key points of our analysis will be to characterise the process
monitored only while visiting a neighbourhood of a well-chosen subset of
the stable periodic orbits.

\subsection{Process conditioned on staying in $A$}
\label{ssec:doob} 

The last important kernel describes the process $(\bar X^A_n)_{n\geqs0}$
conditioned on remaining in $A$ forever, and is defined by 
\begin{equation}
 \bar K_A(x,B) 
 = \lim_{n\to\infty}\bigpcondin{x}{X^A_1\in B}{X^A_n\in A}
 = \lim_{n\to\infty}\bigpcondin{x}{X_1\in B}{\tau^+_{A^c} > n}
\end{equation} 
for all Borel sets $B\subset A$. It can be constructed using Doob's
$h$-transform. 

\begin{proposition}[Doob $h$-transform]
\label{prop:Doob} 
Assume the spectral gap condition $\abs{\lambda^A_1} < \lambda^A_0$. Then the
density
of $\bar K_A$ is given by 
\begin{equation}
\label{eq:kAbar} 
 \bar k_A(x,y) = \frac{1}{\lambda^A_0} \frac{\phi^A_0(y)}{\phi^A_0(x)}
k_A(x,y)\;.
\end{equation} 
Furthermore, the eigenvalues and eigenfunctions of $\bar K_A$ are given by 
\begin{equation}
\label{eq:Doob_eigenfunctions} 
 \bar\lambda^A_n = \frac{\lambda^A_n}{\lambda^A_0}\;, 
 \qquad
 \bar\pi^A_n(x) = \pi^A_n(x)\phi^A_0(x) 
 \qquad\text{and}\qquad 
 \bar\phi^A_n(x) = \frac{\phi^A_n(x)}{\phi^A_0(x)}\;.
\end{equation} 
\end{proposition}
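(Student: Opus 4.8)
The plan is to derive the $h$-transform formula directly from the definition of $\bar K_A$ as a conditioned limit, using the spectral decomposition of the substochastic kernel $K_A$. First I would observe that the spectral-gap condition $\abs{\lambda^A_1}<\lambda^A_0$ lets me write, for $x,y\in A$,
\begin{equation}
 k_A^n(x,y) = (\lambda^A_0)^n \phi^A_0(x)\pi^A_0(y)
 + \BigOrder{\abs{\lambda^A_1}^n}\;,
\end{equation}
so that $\bigprobin{x}{X^A_n\in A} = \int_A k_A^n(x,z)\6z = (\lambda^A_0)^n\phi^A_0(x)\bigpar{1+o(1)}$, using that $\pi^A_0$ integrates to $1$ over $A$ and $\phi^A_0>0$ on $A$ (Jentzsch/Perron--Frobenius, as invoked for the killed process in Section~\ref{ssec:killed}). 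The Markov property gives
\begin{equation}
 \bigpcondin{x}{X^A_1\in B}{X^A_n\in A}
 = \frac{\int_B k_A(x,y)\, \bigprobin{y}{X^A_{n-1}\in A}\6y}
        {\bigprobin{x}{X^A_n\in A}}\;,
\end{equation}
and inserting the two asymptotics above, the $(\lambda^A_0)^{n-1}$ factors and the $(1+o(1))$ corrections cancel in the limit $n\to\infty$, leaving
\begin{equation}
 \bar K_A(x,B) = \int_B \frac{1}{\lambda^A_0}\,
 \frac{\phi^A_0(y)}{\phi^A_0(x)}\, k_A(x,y)\6y\;,
\end{equation}
which is exactly~\eqref{eq:kAbar}. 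One should check the dominated-convergence step that justifies passing the limit inside the $y$-integral; here compactness of $\bar A$ and continuity/boundedness of the density $k_A$, together with the uniform spectral estimate, suffice.

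Next I would verify that $\bar k_A$ is a genuine stochastic density on $A$: integrating~\eqref{eq:kAbar} over $y\in A$ gives $\frac{1}{\lambda^A_0\phi^A_0(x)}(K_A\phi^A_0)(x) = \frac{1}{\lambda^A_0\phi^A_0(x)}\lambda^A_0\phi^A_0(x)=1$, using the right-eigenfunction equation $K_A\phi^A_0=\lambda^A_0\phi^A_0$. For the eigenvalues and eigenfunctions, this is a routine conjugation computation: writing $\bar k_A(x,y) = \frac{1}{\lambda^A_0}\phi^A_0(x)^{-1}k_A(x,y)\phi^A_0(y)$, one checks that if $K_A\phi^A_n=\lambda^A_n\phi^A_n$ then $\bar\phi^A_n := \phi^A_n/\phi^A_0$ satisfies $(\bar K_A\bar\phi^A_n)(x) = \frac{1}{\lambda^A_0\phi^A_0(x)}(K_A\phi^A_n)(x) = \frac{\lambda^A_n}{\lambda^A_0}\bar\phi^A_n(x)$, and dually, if $\pi^A_n K_A = \lambda^A_n\pi^A_n$ then $\bar\pi^A_n := \pi^A_n\phi^A_0$ satisfies $\bar\pi^A_n\bar K_A = \frac{\lambda^A_n}{\lambda^A_0}\bar\pi^A_n$ by the same cancellation. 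The normalisation $\int_A\bar\pi^A_i\bar\phi^A_j = \int_A\pi^A_i\phi^A_j = \delta_{ij}$ is automatically inherited. These algebraic identities are also recorded in Appendix~\ref{app:Doob}, so one may simply cite them.

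The only genuine subtlety — the ``hard part'' — is the interchange of limit and integral in the derivation of~\eqref{eq:kAbar}, i.e.\ making precise the $o(1)$ error terms and their uniformity in $y$. This is where the spectral-gap hypothesis is essential: without it the subdominant part of $k_A^n$ need not be negligible relative to $(\lambda^A_0)^n$, and the conditioned limit may fail to exist. Given the gap, the standard estimate $\normn{k_A^n - (\lambda^A_0)^n\phi^A_0\otimes\pi^A_0} \leqs C\,\abs{\lambda^A_1}^n$ in supremum norm on the compact set $\bar A\times\bar A$ (which follows from compactness of $K_A$ and the Riesz--Schauder decomposition, exactly as set up in Section~\ref{ssec:setup-spectral}) controls everything uniformly, and the argument goes through. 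Everything else is bookkeeping.
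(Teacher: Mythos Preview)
Your proof is correct and follows essentially the same approach as the paper's Appendix~\ref{app:Doob}: the paper likewise computes $h_n(x)=\probin{x}{\tau_{A^c}>n}=(\lambda^A_0)^n\phi^A_0(x)+\Order{\abs{\lambda^A_1}^n}$ via the spectral projector decomposition $k_A=\lambda^A_0\,\Pi_0+k_\perp$, takes the limit of $h_{n-1}(y)/h_n(x)$, and then verifies the eigenvalue and eigenfunction relations by direct conjugation. Your extra remarks on stochasticity and on the dominated-convergence step are sound additions that the paper leaves implicit.
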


This is a standard result, which is closely related to what is called
\emph{ground state transformation} in quantum physics. For the reader's
convenience, we give a short proof in Appendix~\ref{app:Doob}.
Integrating~\eqref{eq:kAbar} over $y\in A$, we see immediately that $\bar K_A$
is a stochastic Markov kernel. Its principal eigenvalue $\bar\lambda^A_0$ is
indeed
equal to $1$, and the corresponding right eigenfunction is identically equal to
$1$. Proposition~\ref{prop:Doob} shows that the spectral properties of $K_A$ and
$\bar K_A$ determine one another, provided one knows the principal eigenvalue
$\lambda^A_0$ and the corresponding right eigenfunction $\phi^A_0$. 

We finally discuss the situation where variant B of
Assumption~\ref{ass:confinement} holds. In that case, we may consider the
process $Z_t^\cD$ killed upon leaving $\cD$. Proceeding exactly as above, we can
define a continuous-space Markov kernel $K_\cD$ describing the distribution of
first-hitting points of $\Sigma$ after visiting $\Sigma'$. Because of the
killing,  $K_\cD$ is a substochastic kernel. However, Doob's $h$-transform
allows us to define a stochastic kernel $\bar K_\cD$ of the process conditioned
on staying in $\cD$ forever. Proposition~\ref{prop:Doob} then allows us to
deduce spectral properties of $K_\cD$ from those of $\bar K_\cD$, provided we
manage to control the principal eigenvalue and right eigenfunction of $K_\cD$.

\section{Results}
\label{sec:results} 

We can now state the main results of this work. Throughout, we \added{require} 
Assumptions~\ref{ass:domain}, \ref{ass:limit_sets}, \ref{ass:ellipticity},
\ref{ass:hierarchy} and~\ref{ass:confinement} to hold. If variant A of
the confinement assumption~\ref{ass:confinement} holds, all results concern the
kernel $K$ defined in~\eqref{eq:def_K}. In case of variant B, they concern the
kernel $\bar K_\cD$ of the Doob-transformed process introduced just above. 

For $i=1,\dots,N$, we let $B_i \subset \Sigma$ be \added{the closure of a
neighbourhood of $x^\star_i$, contained in a ball centred in  
$x^\star_i$ and of radius $\delta>0$. Here $\delta$ is assumed} to be small
enough for each $B_i$ to be contained in the basin of attraction $\cA_i$ of
$x^\star_i$ (cf.~\eqref{eq:Kj}) \added{and such that the 
deterministic Poincar\'e map maps $B_i$ strictly into itself (such a ball
exists since the orbit $\Gamma_i$ is asymptotically stable)}.
 For $1\leqs k\leqs N$ we define the metastable
neighbourhood  
\begin{equation}
 \cM_k = \bigcup_{i=1}^k B_i\;.
\end{equation} 
For a Borel set $A\subset\Sigma$ we denote by $\tau_A =
\inf\setsuch{n\geqs0}{X_n\in A}$ its first-hitting time of $A$ and by $\tau^+_A
= \inf\setsuch{n\geqs1}{X_n\in A}$ the first-return time of the process to $A$.
If $A$ and $B$ are disjoint, \replaced{an}{and} important r\^ole will be played by the
\emph{committor functions}  
\begin{equation}
 \bigprobin{x}{\tau_A < \tau_B}
 \qquad\text{and}\qquad  
 \bigprobin{x}{\tau^+_A < \tau^+_B}
\end{equation} 
of hitting $A$ before $B$. Note that these functions are identical whenever 
$x\not\in A\cup B$, while $\probin{x}{\tau_A < \tau_B}$ has value $1$ in 
$A$ and $0$ in $B$. A rough bound on committor functions can be obtained by
large-deviation theory.

\begin{proposition}
\label{Prop:LargeDeviation}
For any $\eta>0$, there exists $\delta_0>0$ such that if $\delta < \delta_0$,
then for any $1\leqs i, j\leqs N$ with $i\neq j$, any \added{non-empty open set}
$A \subset \cA_j$ and all $x\in B_i$, one has 
\begin{equation}
 -H(i,j) - \eta \leqs 
 \lim_{\sigma\to0} \sigma^2 \log \bigprobin{x}{\tau^+_A<\tau^+_{B_i}}  
 \leqs - H(i, j) + \eta\;.
\end{equation}
\end{proposition}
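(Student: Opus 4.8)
The plan is to sandwich the committor $\bigprobin{x}{\tau^+_A<\tau^+_{B_i}}$ between matching exponential bounds, by transferring the question from the Poincar\'e chain $(X_n)_{n\geqs0}$ to the underlying diffusion $(Z_t)_{t\geqs0}$ and then invoking Freidlin--Wentzell large-deviation theory~\cite[Chapters~3--4]{FreidlinWentzell_book}. The transfer relies on the sample-path estimates of Section~\ref{sec:sample_paths}, together with Proposition~\ref{prop:tau_sigma} and the techniques of~\cite{berglund2014noise,Berglund_Gentz_book}: a crossing of $\Sigma$ by $(Z_t)$ inside $B_i$ corresponds to the path sitting in a small tube around $\Gamma_i$, where for small $\sigma$ it is well approximated by the deterministic flow, and similarly near $\Gamma_j$. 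For $i\neq j$ and $\delta$ small the sets $A\subset\cA_j$ and $B_i\subset\cA_i$ are disjoint --- indeed $\overline{\cA_j}\cap\cA_i=\emptyset$, since the $\cA_k$ are open and pairwise disjoint --- so the event $\{\tau^+_A<\tau^+_{B_i}\}$ forces the sample path to travel from a neighbourhood of $\Gamma_i$ to the part of $\Sigma$ above $A$ before the chain returns to $B_i$, and this is the mechanism producing the cost $H(i,j)=V(x^\star_i,x^\star_j)$.

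For the upper bound, since $A\subset\overline{\cA_j}$ one has $\tau^+_{\overline{\cA_j}}\leqs\tau^+_A$, so $\{\tau^+_A<\tau^+_{B_i}\}\subset\{\tau^+_{\overline{\cA_j}}<\tau^+_{B_i}\}$, and it is enough to bound the probability that the chain reaches $\overline{\cA_j}$ before returning to $B_i$. This is a communication-cost estimate: invoking the Freidlin--Wentzell large-deviation upper bound (as in~\cite[Chapter~6]{FreidlinWentzell_book}), together with the recurrence of $(Z_t)$ to a fixed neighbourhood of $\Gamma_i$ and the sample-path estimates of Section~\ref{sec:sample_paths} to transfer it to the chain, one gets for $\sigma$ small a bound of the form $\e^{-(c-\eta/2)/\sigma^2}$ with $c=\inf\setsuch{V(x^\star_i,y)}{y\in\overline{\cA_j}}$. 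One then identifies $c=H(i,j)$: for $y\in\cA_j$ the triangle inequality together with $V(y,x^\star_j)=0$ --- which holds because the deterministic flow carries $y$ into a neighbourhood of $\Gamma_j$, after which the residual gap is closed at arbitrarily small cost by asymptotic stability of $\Gamma_j$ --- gives $V(x^\star_i,y)\geqs V(x^\star_i,x^\star_j)=H(i,j)$, with equality at $y=x^\star_j$, and continuity of $V(x^\star_i,\cdot)$ extends the infimum to $\overline{\cA_j}$. Taking $\delta_0=\delta_0(\eta)$ small enough that the (negligible) effect of the starting point $x\in B_i$ is absorbed yields $\limsup_{\sigma\to0}\sigma^2\log\bigprobin{x}{\tau^+_A<\tau^+_{B_i}}\leqs-H(i,j)+\eta$.

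For the lower bound, fix $x\in B_i$ and a small neighbourhood $\cU$ of $\Gamma_j$. Since $V(x,x^\star_j)\leqs V(x,x^\star_i)+V(x^\star_i,x^\star_j)=H(i,j)$ for $x\in\cA_i$ (using $V(x,x^\star_i)=0$, as for $V(y,x^\star_j)$ above), there exist $T>0$ and a continuous path $\gamma\colon[0,T]\to\cD$ from $x$ into $\cU$ with $I_{[0,T]}(\gamma)<H(i,j)+\eta/2$, which, after modifying it past an initial segment (excising any loop returning near $\Gamma_i$), may be taken so as to leave and never re-enter a fixed neighbourhood of $\Gamma_i$. The Freidlin--Wentzell lower bound, combined with the sample-path control of the first passages through $\Sigma'$ and $\Sigma$, then shows that $(Z_t)$ enters $\cU$ without the chain having returned to $B_i$ in the meantime, with probability at least $\e^{-(H(i,j)+\eta)/\sigma^2}$ for $\sigma$ small. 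It remains to show that, started anywhere in $\cU$, the chain subsequently hits $A$ before returning to $B_i$ with probability $1-o(1)$ uniformly in the starting point; combining this with the previous estimate via the strong Markov property gives $\liminf_{\sigma\to0}\sigma^2\log\bigprobin{x}{\tau^+_A<\tau^+_{B_i}}\geqs-H(i,j)-\eta$, which together with the upper bound proves the claim.

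The main obstacle is this last step. As any path from $\cU$ to $B_i\subset\cA_i$ must first leave $\cA_j$, it suffices to prove that the cost $V(x^\star_j,A)=\inf_{y\in A}V(x^\star_j,y)$ of reaching the non-empty open set $A$ from $\Gamma_j$ is strictly smaller than the cost of leaving $\cA_j$; a comparison of sojourn and escape times --- equivalently, the quasistationary-distribution estimates for the process killed upon leaving $\cA_j$, in the spirit of Section~\ref{ssec:killed} --- then yields that $A$ is hit before $B_i$ with probability $1-o(1)$. Proving this strict inequality uniformly over all non-empty open $A\subset\cA_j$ is where the structure of the basins imposed by Assumptions~\ref{ass:limit_sets} and~\ref{ass:confinement} must be used: that each $\Gamma_j$ is asymptotically stable with $V(x^\star_j,\cdot)$ vanishing on it, and that the remaining $\omega$-limit sets and the boundary $\partial\cD$ lie at a definite cost away from $\Gamma_j$. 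The remaining work --- the discrete-to-continuous transfer, and the choice of $\delta_0$ guaranteeing that a crossing of $\Sigma$ inside $B_i$ genuinely reflects the path being trapped near $\Gamma_i$ rather than merely passing by --- is more routine but still requires the sample-path machinery of Section~\ref{sec:sample_paths}.
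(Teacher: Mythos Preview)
Your overall strategy is sound but differs from the paper's. Rather than working in continuous time and transferring to the chain through sample-path estimates, the paper passes at once to a discrete-time large-deviation principle via the contraction principle: the one-step rate function is $J(x,y)=\inf_{T>0}\inf_{\gamma:(x,0)\to(y,1)}I_{[0,T]}(\gamma)$, the inner infimum running over continuous paths that visit $\Sigma'$ between their crossings of $\Sigma$, and the $n$-step rate is additive. Both bounds then come down to identifying the cost of the cheapest discrete path $(x_0,\dots,x_n)$ from $B_i$ into $B_j$ avoiding $B_i$ in between, which is $H(i,j)$ up to $\eta$ once $\delta$ is small. This bypasses your final ``from $\cU$, hit $A$ before $B_i$ with probability $1-o(1)$'' step entirely, and requires less of the Section~\ref{sec:sample_paths} machinery.

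In your upper bound, the justification $c=H(i,j)$ via ``continuity of $V(x^\star_i,\cdot)$'' is not valid: the quasipotential is in general only lower semicontinuous, which yields the inequality the wrong way on $\partial\cA_j$. The conclusion can be salvaged, since in fact $V(y,x^\star_j)=0$ also for $y\in\partial\cA_j$ (perturb an arbitrarily small distance into $\cA_j$ at cost $O(\epsilon)$, then follow the flow), whence the triangle inequality still gives $V(x^\star_i,y)\geqs H(i,j)$ on the closure; but it is cleaner simply to drop the passage to $\overline{\cA_j}$ and apply the Freidlin--Wentzell upper bound to the event of hitting $A$ itself.

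On the lower bound you rightly isolate the obstacle, and you should note that the paper's own proof does not treat general open $A\subset\cA_j$ either --- it only constructs a low-cost discrete path ending in $B_j$. Your proposed fix via the strict inequality $V(x^\star_j,A)<\inf_{y\in\partial\cA_j}V(x^\star_j,y)$ is not guaranteed for arbitrary $A$: nothing in the assumptions excludes pockets of $\cA_j$ that are harder to reach from $\Gamma_j$ than the basin boundary is, in which case the process started near $\Gamma_j$ escapes $\cA_j$ before visiting $A$. For the paper's applications only $A=B_l$ ever occurs, where the difficulty evaporates; for the full stated generality the lower bound would need more argument than either proof supplies.
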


This bound indicates that for $x\in B_i$ and $A\subset\cA_j$, the
committor $\smash{\probin{x}{\tau^+_A<\tau^+_{B_i}}}$ is exponentially small, of
order $\smash{\e^{-H(i,j)/\sigma^2}}$. We will see below that some committor
functions can be more precisely estimated in terms of certain principal
eigenfunctions. 

\subsection{Eigenvalue estimate}
\label{res:eigenvalues} 

Fix a small constant $\eta\in(0,\theta)$ and set $\theta^-=\theta-\eta$\added{, where $\theta$  is given by the metastable hierarchy assumption~\ref{ass:hierarchy}}. In all
results given below, it is always implicitly understood that there exists a
$\sigma_0>0$, depending on $\eta$, such that the claims hold for all $\sigma <
\sigma_0$. We will not repeat this condition in what follows. 

\begin{theorem}[Eigenvalue estimates]
\label{thm:eigenvalues} 
The $N$ largest eigenvalues of $K$ are real and positive and satisfy 
\begin{align}
\nonumber
 \lambda_0 &= 1\;, \\
 \lambda_k &= 1 - \bigprobin{\mathring{\pi}^{B_{k+1}}_0}{\tau^+_{\cM_k} <
\tau^+_{B_{k+1}}}
 \bigbrak{1 + \Order{\e^{-\theta_k/\sigma^2}}}
 && \text{for $1\leqs k\leqs N-1$\;,}
 \label{eq:lambdak} 
\end{align} 
where $\mathring{\pi}^{B_{k+1}}_0$ is a probability measure concentrated
on $B_{k+1}$ and $\theta_k = H(k+1,M_k)/2 - \eta$. Furthermore, there exists
$c>0$ such that 
\begin{equation}
 \abs{\lambda_k} \leqs \rho := 1 - \frac{c}{\log(\sigma^{-1})}
 \qquad \text{for all $k\geqs N$\;.} 
\label{eq:gap_lambdaN} 
\end{equation}  
Finally, the principal eigenvalue of the process killed upon hitting $\cM_k$
satisfies 
\begin{equation}
1 - \lambda_0^{\cM_k^c} = (1-\lambda_k)\bigbrak{1 +
\Order{\e^{-\theta_k/\sigma^2}}}
\label{eq:principal_eigenvalue} 
\end{equation} 
for $1\leqs k\leqs N-1$.
\end{theorem}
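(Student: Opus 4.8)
The plan is to collapse the infinite-dimensional spectral problem for $K$ onto an explicit $N\times N$ matrix, by a two-stage reduction. First I would replace $K$ by the trace $K\vert_{\cM_N}$ of the process on the metastable set $\cM_N=\bigcup_{i=1}^N B_i$; then I would replace this trace kernel by a rank-$N$ operator built from the quasistationary data $\lambda^{B_i}_0,\pi^{B_i}_0,\phi^{B_i}_0$ of the processes killed upon leaving the individual neighbourhoods $B_i$, which are controlled in Section~\ref{sec:KZBi}. Under confinement variant~B, the same programme is carried out for $\bar K_\cD$ in place of $K$, and translated back through Proposition~\ref{prop:Doob}.

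For the first reduction I would use the sample-path estimates of Section~\ref{sec:sample_paths}, together with Proposition~\ref{prop:tau_sigma} and the fact that $\cM_N^c$ contains no asymptotically stable $\omega$-limit set, to prove a uniform bound $\sup_{x\in\Sigma}\bigexpecin{x}{\tau_{\cM_N}}\leqs C\log(\sigma^{-1})$; a standard sub-geometric tail estimate then gives $\lambda_0^{\cM_N^c}\leqs\rho=1-c/\log(\sigma^{-1})$ for the principal eigenvalue of the process killed on hitting $\cM_N$. A Schur-complement identity writing the resolvent $(z-K)^{-1}$ in terms of $(z-K\vert_{\cM_N})^{-1}$ and of $(z-K_{\cM_N^c})^{-1}$ --- the latter holomorphic on $\abs{z}>\rho$ --- then shows that, with multiplicities, the eigenvalues of $K$ of modulus $>\rho$ coincide with those of the honest stochastic kernel $K\vert_{\cM_N}$. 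Once the next step produces exactly $N$ such eigenvalues, this both proves~\eqref{eq:gap_lambdaN} and identifies $\lambda_0,\dots,\lambda_{N-1}$.

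For the second reduction I would, using the sample-path estimates, construct a rank-$N$ operator $\hat K$ (a sum of $N$ rank-one terms assembled from the $\phi^{B_i}_0$ and suitable killed measures), show that $\norm{K\vert_{\cM_N}-\hat K}$ is exponentially small --- and, crucially, small relative to every scale below --- and identify its spectrum as $\set{0}$ together with that of an explicit $N\times N$ stochastic matrix $\hat E$ whose entries are committor-type probabilities, roughly $\hat E_{ij}=\bigprobin{\mathring{\pi}^{B_i}_0}{\text{the first return to }\cM_N\text{ lies in }B_j}$. Applying the metastable hierarchy~\eqref{eq:metastable_hierarchy} to the pair $(j,j+1)$ yields the scale separation $H(j+1,M_j)\leqs H(j,M_{j-1})-\theta$, so that the $1-\lambda_k$ live on geometrically separated scales $\e^{-H(k+1,M_k)/\sigma^2}$; a recursive Schur-complement computation on $\id-\hat E$, eliminating states in order of decreasing escape rate, then decouples the eigenvalues and gives $\hat\lambda_0=1$ and $\hat\lambda_k=1-\bigprobin{\mathring{\pi}^{B_{k+1}}_0}{\tau^+_{\cM_k}<\tau^+_{B_{k+1}}}\bigbrak{1+\Order{\e^{-\theta_k/\sigma^2}}}$. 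Because each $\hat\lambda_k$ is simple and separated from the remaining spectrum of $\hat K$ by much more than $\norm{K\vert_{\cM_N}-\hat K}$, perturbation theory transfers~\eqref{eq:lambdak} to the corresponding $\lambda_k$; positivity is then immediate, since $\lambda_k=1-\Order{\e^{-c/\sigma^2}}$. Reality of the $\hat\lambda_k$ follows from the detailed-balance property of committor functions of~\cite{betz2016multi}, which forces $\hat E$ to be self-adjoint for an explicit measure on $\set{1,\dots,N}$; as each $\hat\lambda_k$ is then real and simple, a complex-conjugate pair of eigenvalues of $K$ cannot converge to it, so $\lambda_k\in\R$. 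Finally~\eqref{eq:principal_eigenvalue} is obtained by rerunning the argument for the process killed upon hitting $\cM_k$: its principal part is governed by the neighbourhood $B_{k+1}$ of the most stable remaining orbit, and a rank-one version of the finite-rank reduction gives $1-\lambda_0^{\cM_k^c}=\bigprobin{\mathring{\pi}^{B_{k+1}}_0}{\tau^+_{\cM_k}<\tau^+_{B_{k+1}}}\bigbrak{1+\Order{\e^{-\theta_k/\sigma^2}}}$, which with~\eqref{eq:lambdak} gives the claim.

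The main obstacle is the second reduction: constructing $\hat K$ and controlling $\norm{K\vert_{\cM_N}-\hat K}$ with enough exponential precision, which is precisely where the machinery of Sections~\ref{sec:diluted} and~\ref{sec:sample_paths} enters --- one must show that excursions of the process away from the $B_i$ either return to the same $B_i$ or are exponentially unlikely, and that on each $B_i$ the process relaxes to its quasistationary distribution faster than it escapes. A secondary difficulty is the bookkeeping of scales: one must verify that the approximation error lies below the smallest metastable scale $\e^{-H(2,M_1)/\sigma^2}$ and below every spectral gap, so that the estimates --- and in particular reality --- genuinely descend from $\hat E$ to $K$.
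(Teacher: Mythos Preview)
Your overall architecture --- reduce to the metastable set, approximate by a finite-rank operator governed by an $N\times N$ stochastic matrix, and read off eigenvalues via successive Schur complements exploiting the hierarchy --- is close in spirit to the paper's, and your ``Schur complement of the resolvent'' is essentially another name for the Feynman--Kac relation of Corollary~\ref{corEquivalenceEigenvaluePb}: the Schur complement of $z\id-K$ at $z=\e^{-u}$ is exactly the $u$-dependent kernel $K^u$, not the trace kernel $K\vert_{\cM_N}=K^0$. So your statement that the eigenvalues of $K$ of modulus $>\rho$ ``coincide'' with those of $K\vert_{\cM_N}$ is not quite right; one must solve the coupled problem $K^u\phi=\varv\phi$, $\varv=\e^{-u}$, and this is what forces the paper to control $\norm{K^u-K^0}$ (Proposition~\ref{Prop:NormKuK0}) rather than simply working with the trace.

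The genuine gap is your argument for reality. The detailed-balance identity of Proposition~\ref{prop:pi0} (equivalently~\cite{betz2016multi}) is a \emph{two-set} relation: it equates $\pi_0$-weighted committors between a pair of sets $A_1,A_2$. It does \emph{not} make the $N\times N$ matrix $\hat E$, whose off-diagonal entries are probabilities of hitting $B_j$ first among all of $\cM_N$, self-adjoint with respect to any measure when $N>2$. A non-reversible chain on three states already furnishes a counterexample: its coarse-grained matrix can have complex eigenvalues even though the pairwise first-passage identity holds. Since the whole point of the paper is that the underlying dynamics is non-reversible, you cannot expect $\hat E$ to be symmetrisable. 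The paper's reality argument is different and purely topological: for each $k$ one works on $\cM_{k+1}$, block-triangularises the corresponding $(k{+}1)\times(k{+}1)$ matrix to exhibit its smallest nonzero eigenvalue as an explicit real scalar $\alpha=\hat a+\hat P_{21}S^\star_{12}$ (Corollary~\ref{CorEigenvalueProp}), draws a contour around it containing no other eigenvalue of $K^\star$, and then observes that the \emph{real} operator $(K^u)^m$ has exactly one eigenvalue inside that contour --- hence that eigenvalue is real, since complex eigenvalues of real operators occur in conjugate pairs. Doing this separately for each $k=1,\dots,N$ is what yields reality of all $N$ eigenvalues. Your ``recursive Schur complement on $\id-\hat E$'' is essentially the same block-triangularisation, but you should use it directly to see that each peeled-off eigenvalue is a real scalar, and then invoke the simple-eigenvalue-in-a-contour argument for the transfer to $K$, rather than appealing to a self-adjointness that does not hold.
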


The probability measure $\mathring{\pi}^{B_{k+1}}_0$ has an explicit
definition: it is the QSD of the trace process
$(\smash{X^{B_{k+1}}_n})\vert_{\cM_{k+1}}$, monitored only while visiting
$\cM_{k+1}$ and killed upon hitting $\cM_k$ (which is equivalent to the
trace process leaving $B_{k+1}$). \added{Note that this process is not the same
as (the trace of) the process killed when leaving $B_{k+1}$, meaning that
taking the trace and killing do not commute.}
Regardless of the precise definition
of \added{the probability measure $\smash{\mathring{\pi}^{B_{k+1}}_0}$},
Proposition~\ref{Prop:LargeDeviation} shows that 
\begin{equation}
 \lambda_k = 1 - \Order{\e^{-(H(k+1,M_k)-\eta)/\sigma^2}}
 \qquad \added{\text{for $k=1,\dots,N-1$\;,}}
\end{equation} 
where $\eta$ can be chosen as small as one likes. 
The main interest of this estimate is that the spectral
decomposition~\eqref{eq:spectral_dec_n} becomes 
\begin{equation}
\label{eq:spectral_dec_N} 
 k^n(x,y) = \sum_{i=0}^{N-1} \lambda_i^n \phi_i(x)\pi_i(y)
 + \Order{\rho^n}\;,
\end{equation} 
which is dominated by the $N$ first terms as soon as $n \gg
\log(\added{{\rho}^{-1}})$. 
Since the $N$ first eigenvalues are exponentially close to $1$, the first $N$
terms of the sum decrease very slowly, highlighting the metastable behaviour of
the system. 

The proof of Theorem~\ref{thm:eigenvalues} relies on two main ingredients. In a
first step, we show that for each $k\leqs N-1$, the kernel of the process
monitored only while visiting $\cM_{k+1}$ can be described by a finite-rank
operator, given by a stochastic matrix $P$ with elements 
\begin{equation}
P_{ij}
= \probin{\mathring{\pi}^{B_i}_0}{X_{\tau_{\cM_{k+1}}} \in B_j}
= \probin{\mathring{\pi}^{B_i}_0}{\tau_{B_j} < \tau_{\cM_{k+1} \setminus B_j}} 
\;. 
\end{equation} 
In a second step we use the metastable hierarchy assumption to show that the
largest eigenvalue of $\id - P$ is close to the indicated committor functions.

If variant B of the confinement assumption~\eqref{ass:confinement} holds, then
the following result shows that Theorem~\ref{thm:eigenvalues} essentially holds
also for the process killed upon leaving $\cD$. \added{It can be seen as a
generalisation to the case $N>1$ of the result in~\cite{Wentzell_72_eigenvalue}
by Wentzell, which estimates the principal eigenvalue of the generator of a
diffusion killed upon leaving a domain containing a stable equilibrium point as
sole attractor.}

\begin{proposition}
The principal eigenvalue of the chain killed upon leaving $\cD$ satisfies 
\begin{equation}
 \lambda_0^\cD 
 = 1 - \probin{\mathring{\pi}^{B_1}_0}{\tau^+_\partial < \tau^+_{B_1}}
\bigbrak{1 +
\Order{\e^{-\theta_0/\sigma^2}}}
 = 1 - \Order{\e^{-\bar V(\partial\cD)/\sigma^2}}\;,
\end{equation} 
where $\partial$ denotes the cemetery state,
$\mathring{\pi}^{B_1}_0$ is a probability measure concentrated on $B_1$ and 
$\theta_0=\inf_{y\in\partial\cD}V(x^\star_1,y)/2-\eta$. 
\end{proposition}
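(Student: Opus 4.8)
The plan is to treat the cemetery state $\partial$ exactly as one of the metastable neighbourhoods and to reduce the statement to a direct analogue of the argument behind Theorem~\ref{thm:eigenvalues}, specialised to the case where the least stable ``orbit'' is replaced by the boundary $\partial\cD$. Concretely, under variant~B of Assumption~\ref{ass:confinement} we have $\bar V(\partial\cD) \geqs \max_{i\neq j}H(i,j)+\theta'$, so reaching $\partial\cD$ is strictly costlier than any transition between periodic orbits; thus, if we append $\partial$ to the list of metastable sets as the very first (most stable) one, the augmented family $\partial,\Gamma_1,\dots,\Gamma_N$ still satisfies a metastable-hierarchy-type ordering, with $\partial$ playing the role of ``$M_0$''. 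Equivalently, we study the chain killed upon leaving $\cD$, whose kernel is $K_\cD$, and we are after its principal eigenvalue $\lambda_0^\cD$.

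First I would set up, by the same construction as in Section~\ref{ssec:trace}, the trace of the killed chain on $\cM_1 = B_1$ together with the cemetery: concretely, consider the process monitored only while visiting $B_1\cup\set{\partial}$, killed upon hitting $\partial$. By the finite-rank reduction underlying the proof of Theorem~\ref{thm:eigenvalues} (applied here with the single neighbourhood $B_1$), this trace process is described by a sub-stochastic scalar ``matrix'' whose only relevant entry is the return probability $\probin{\mathring{\pi}^{B_1}_0}{\tau^+_{B_1} < \tau^+_\partial}$, where $\mathring{\pi}^{B_1}_0$ is the QSD of the trace process on $B_1$ killed upon leaving $\cD$ (i.e.\ upon hitting $\partial$). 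The key identity is then that the principal eigenvalue $\lambda_0^\cD$ equals this return probability, up to the usual multiplicative error coming from the fact that $\mathring{\pi}^{B_1}_0$ is only an approximate left eigenfunction of the full killed kernel; this error is controlled by the spectral gap of the chain killed upon leaving $B_1$, established in Section~\ref{sec:KZBi}, and by the sample-path estimates of Section~\ref{sec:sample_paths} showing that, started from anywhere in $\cD$, the process is overwhelmingly likely to reach $B_1$ in a time of order $\log(\sigma^{-1})$ before hitting $\partial$. This yields
\begin{equation*}
 \lambda_0^\cD = 1 - \probin{\mathring{\pi}^{B_1}_0}{\tau^+_\partial < \tau^+_{B_1}}\bigbrak{1+\Order{\e^{-\theta_0/\sigma^2}}}\;,
\end{equation*}
with $\theta_0 = \inf_{y\in\partial\cD}V(x^\star_1,y)/2-\eta$, the factor $1/2$ and the $-\eta$ arising exactly as the $\theta_k$ do in Theorem~\ref{thm:eigenvalues}: from splitting the excursion from $B_1$ into a ``reach a neighbourhood of the saddle region of $\partial\cD$'' part and a ``fall to $\partial\cD$'' part, each contributing roughly half the quasipotential, and absorbing sub-exponential prefactors into $\eta$.

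The second displayed equality, $\lambda_0^\cD = 1 - \Order{\e^{-\bar V(\partial\cD)/\sigma^2}}$, then follows from a large-deviation lower bound on the exit probability $\probin{\mathring{\pi}^{B_1}_0}{\tau^+_\partial < \tau^+_{B_1}}$, entirely parallel to Proposition~\ref{Prop:LargeDeviation}: the cost of a path from (the QSD concentrated near) $x^\star_1$ to $\partial\cD$ without returning to $B_1$ is, to leading exponential order, $\inf_{y\in\partial\cD}V(x^\star_1,y) \geqs \bar V(\partial\cD)$, and the definition of $\bar V(\partial\cD)$ gives the matching upper bound. I expect the main obstacle to be the careful bookkeeping that justifies using $\mathring{\pi}^{B_1}_0$ — the QSD of the trace on $B_1$ of the process killed upon leaving $\cD$ — rather than the QSD of the process killed upon leaving $B_1$: as the remark following Theorem~\ref{thm:eigenvalues} emphasises, taking the trace and killing do not commute, so one must verify that the relevant sample-path estimates (that excursions from $B_1$ either return quickly or exit $\cD$, with negligible probability of lingering near other periodic orbits or unstable limit sets, using Assumption~\ref{ass:limit_sets}) still hold when the killing is at $\partial\cD$ rather than at $\partial B_1$. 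Once this is in place, the estimate is a one-dimensional specialisation of the machinery already developed for Theorem~\ref{thm:eigenvalues}, so no genuinely new analytic input is required.
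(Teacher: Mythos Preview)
Your proposal is correct and follows essentially the same route as the paper: the paper's proof consists of the single observation that one adds a fictitious ball $B_0$ with zero boundary conditions representing the cemetery state $\partial$ and then repeats the proof of Theorem~\ref{thm:eigenvalues}, which is precisely what you do when you append $\partial$ as the most stable element of the hierarchy and rerun the finite-rank reduction on $B_1\cup\set{\partial}$. One small inaccuracy: your heuristic for the factor $1/2$ in $\theta_0$ (splitting the excursion into two halves at a saddle region) is not the actual mechanism in the paper --- the $1/2$ arises in Section~\ref{sec:lastSteps} from optimising the iteration parameter $m$ in the bound $\norm{(K^u)^m-(K^\star)^m}$, not from any geometric splitting of the path.
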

\begin{proof}
The proof is the same as the proof of Theorem~\ref{thm:eigenvalues}, except
that one adds a fictitious ball $B_0$ with zero boundary conditions,
representing
the cemetery state $\partial$. 
\end{proof}
 
Indeed, when using Proposition~\ref{prop:Doob} to compute the eigenvalues of
the killed process, Condition~\eqref{eq:recurrence2} ensures that the
corrections to the eigenvalues $\bar \lambda^\cD_k$ are negligible for
$k=1,\dots,N-1$.

\subsection{Right eigenfunctions}
\label{res:right_eigenfunctions} 

For the spectral decomposition~\eqref{eq:spectral_dec_N} to be useful, it is
desirable to also have a control on the $N$ first right and left
eigenfunctions. We start by giving a result on the right
eigenfunctions~$\phi_k$, which is close in spirit to~\cite[Theorem~1.3]{BGK}. 

\begin{theorem}[Right eigenfunctions]
\label{thm:right_eigenfunctions} 
The $N$ first right eigenfunctions of $K$ can be taken real. They satisfy 
$\phi_0(x)=1$ for all $x\in\Sigma$, while for $k=1, \dots, N-1$,
\begin{equation}
\label{eq:right_eigenfunction} 
 \phi_k(x) = \bigprobin{x}{\tau_{B_{k+1}} < \tau_{\cM_k}}
 \bigbrak{1+\Order{\e^{-\theta^-/\sigma^2}}}
 + \Order{\e^{-\theta^-_k/\sigma^2}}
  \qquad \forall x\in\Sigma\;,
\end{equation} 
where $\theta^-_k = \min\set{\theta^-,\theta_k}$. 
Furthermore, the right principal eigenfunction of the process killed upon first
hitting $\cM_ k$ satisfies  
\begin{equation}
\label{eq:phi_0} 
 \phi_0^{\cM_k^c}(x) = \bigprobin{x}{\tau_{B_{k+1}} < \tau_{\cM_k}}
\bigbrak{1+\Order{\e^{-\theta^-/\sigma^2}}}+ \Order{\e^{-\theta^-_k/\sigma^2}}
 \qquad \forall x\in\cM_k^c
\end{equation} 
for $k=1, \dots, N-1$.
\end{theorem}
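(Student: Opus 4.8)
The plan is to derive the right eigenfunction estimates from the finite-rank approximation of the monitored kernel together with a Feynman--Kac / hitting-time representation of eigenfunctions. First I would recall the two-step decomposition already announced for Theorem~\ref{thm:eigenvalues}: for each $k\leqs N-1$, the kernel of the process monitored only while visiting $\cM_{k+1}$ is, up to an operator-norm error of order $\e^{-\theta^-/\sigma^2}$, a finite-rank operator supported on the balls $B_1,\dots,B_{k+1}$, and is conjugate (via restriction to the QSDs $\mathring\pi^{B_i}_0$) to the $(k+1)\times(k+1)$ stochastic matrix $P$. The $N$ largest eigenvalues of $K$ are inherited from eigenvalues of such matrices, and the right eigenfunction of $K$ for $\lambda_k$ restricted to $\cM_{k+1}$ is, up to the same error, the lift of the corresponding right eigenvector of $P$. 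Since the metastable hierarchy makes $P$ close to a matrix in which state $k+1$ communicates only with $M_k=\set{1,\dots,k}$, the eigenvector associated with the $k$th eigenvalue of $\id-P$ is close to the vector that is $1$ on the block $\set{k+1}$ and $0$ on $M_k$; lifting this through the QSD-identification gives $\phi_k\approx\mathds{1}_{B_{k+1}}$ on $\cM_{k+1}$, up to relative error $\Order{\e^{-\theta^-/\sigma^2}}$ and additive error $\Order{\e^{-\theta^-_k/\sigma^2}}$.

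Next I would extend $\phi_k$ off $\cM_{k+1}$ using the eigenvalue equation $K\phi_k=\lambda_k\phi_k$. Writing this as $\phi_k(x)=\lambda_k^{-1}\expecin{x}{\phi_k(X_1)}$ and iterating until the first hit of $\cM_{k+1}$, one gets the Feynman--Kac representation $\phi_k(x)=\expecin{x}{\lambda_k^{-\tau_{\cM_{k+1}}}\phi_k(X_{\tau_{\cM_{k+1}}})}$. Because $\lambda_k=1-\Order{\e^{-H(k+1,M_k)/\sigma^2}}$ and, by Proposition~\ref{prop:tau_sigma} together with the sample-path estimates, $\tau_{\cM_{k+1}}$ is at most of order $\log(\sigma^{-1})$ on the relevant timescale (more precisely, the probability of $\tau_{\cM_{k+1}}$ being large enough to make $\lambda_k^{-\tau_{\cM_{k+1}}}$ differ appreciably from $1$ is superexponentially small in the relevant scale), the factor $\lambda_k^{-\tau_{\cM_{k+1}}}$ contributes only a multiplicative $1+\Order{\e^{-\theta^-/\sigma^2}}$. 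Hence $\phi_k(x)\approx\expecin{x}{\phi_k(X_{\tau_{\cM_{k+1}}})}\approx\probin{x}{X_{\tau_{\cM_{k+1}}}\in B_{k+1}}=\probin{x}{\tau_{B_{k+1}}<\tau_{\cM_k}}$ for all $x\in\Sigma$, with the stated errors; the additive $\Order{\e^{-\theta^-_k/\sigma^2}}$ term absorbs both the error already present on $\cM_{k+1}$ and the contribution of paths that reach $\cM_k$ but whose value of $\phi_k$ there is not exactly $0$ (it is $\Order{\e^{-\theta_k/\sigma^2}}$ by the same QSD-comparison).

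For the second assertion, the principal eigenfunction $\phi_0^{\cM_k^c}$ of the process killed on hitting $\cM_k$, I would use exactly the same argument applied to the killed kernel $K_{\cM_k^c}$: its principal eigenvalue is $\lambda_0^{\cM_k^c}=1-\Order{\e^{-H(k+1,M_k)/\sigma^2}}$ by \eqref{eq:principal_eigenvalue}, and the killed process, when monitored on $\cM_{k+1}\setminus\cM_k=B_{k+1}$ before absorption, is one-dimensional, so its principal right eigenfunction restricted to $B_{k+1}$ is (proportional to) the constant $1$ up to $\Order{\e^{-\theta_k/\sigma^2}}$. Propagating off $B_{k+1}$ via $K_{\cM_k^c}\phi_0^{\cM_k^c}=\lambda_0^{\cM_k^c}\phi_0^{\cM_k^c}$ and the Feynman--Kac formula stopped at $\tau_{\cM_k}\wedge\tau_{B_{k+1}}$, and noting that on $\cM_k^c$ hitting $\cM_k$ sends the eigenfunction to $0$ (the cemetery value), yields $\phi_0^{\cM_k^c}(x)\approx\probin{x}{\tau_{B_{k+1}}<\tau_{\cM_k}}$ on $\cM_k^c$ with the same error bounds.

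The main obstacle I anticipate is the control of the factor $\lambda_k^{-\tau_{\cM_{k+1}}}$ in the Feynman--Kac representation: one must show that although $\lambda_k^{-1}=1+\Order{\e^{-H(k+1,M_k)/\sigma^2}}$ is exponentially close to $1$, the (rare) event that $\tau_{\cM_{k+1}}$ is comparable to the metastable timescale $\e^{H(k+1,M_k)/\sigma^2}$ does not spoil the estimate. This requires combining the logarithmic bound on the mean return time from Proposition~\ref{prop:tau_sigma} with the exponential tail of $\tau_{\cM_{k+1}}$ coming from the geometric law of excursions (via the QSD of the killed process) and the large-deviation lower bound $H(k+1,M_{k})$ on the relevant committor, so that $\expecin{x}{\lambda_k^{-\tau_{\cM_{k+1}}}}=1+\Order{\e^{-\theta^-/\sigma^2}}$ uniformly in $x$; the bookkeeping of how the error exponents $\theta^-$, $\theta_k$ and their minimum $\theta^-_k$ combine is the delicate part.
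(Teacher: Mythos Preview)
Your approach is essentially the paper's: approximate $\phi_k$ on $\cM_{k+1}$ by the right eigenvector of the finite-rank kernel $K^\star$ (the paper makes this precise via Riesz projectors applied to the iterates $(K^u)^m$ and $(K^\star)^m$, with the same optimal choice of $m$ as in the proof of Theorem~\ref{thm:eigenvalues}, which is what produces the exponent $\theta_k$), and then extend off $\cM_{k+1}$ by the Feynman--Kac representation $\phi_k(x)=\bigexpecin{x}{\e^{u\tau_{\cM_{k+1}}}\phi_k(X_{\tau_{\cM_{k+1}}})}$.

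There is, however, a genuine gap in the step you flag as ``delicate''. Your claim that $\tau_{\cM_{k+1}}$ is of order $\log(\sigma^{-1})$ is false when $k+1<N$: for $x\in B_j$ with $j>k+1$, the hitting time $\tau_{\cM_{k+1}}$ is \emph{typically} (not rarely) of order $\e^{H(j,M_{k+1})/\sigma^2}$, which is exponentially large. Proposition~\ref{prop:tau_sigma} does not help here; it bounds the continuous-time return to $\Sigma$, not the discrete-time hitting of $\cM_{k+1}$. What actually saves the estimate is the metastable hierarchy (Assumption~\ref{ass:hierarchy}): for every $j>k+1$ one has $H(j,M_{k+1})\leqs H(j,M_{j-1})\leqs H(k+1,M_k)-\theta$, so that $(1-\lambda_k)\,\bigexpecin{\cM_{k+1}^c}{\tau_{\cM_{k+1}}}=\Order{\e^{-\theta^-/\sigma^2}}$. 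The paper packages this via Proposition~\ref{Prop:NormKuK0} (the $\norm{K^u-K^0}$ bound in terms of $(1-\e^{-u})$ times expected return times), combined with the mean-return-time estimates of Section~\ref{ssec:mean_return_time}, which are derived inductively in $k$ using exactly this hierarchy. Once you replace your $\log(\sigma^{-1})$ argument by this ingredient, your proof goes through and matches the paper's.
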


If $x$ is in the basin of attraction $\cA_i$ of $B_i$, then the committor
$\probin{x}{\tau_{B_i}<\tau_A}$ is exponentially close to $1$ whenever $A$ is
not in $\cA_i$. This shows that to leading order, 
\begin{itemize}
\setlength\itemsep{-1mm}
\item 	if $x\in\cA_i$ for $1\leqs i\leqs k$, then $\phi_k(x)$ is exponentially
small; 
\item 	if $x\in\cA_{k+1}$, then $\phi_k(x)$ is exponentially close to $1$; 
\item 	if $x\in\cA_j$ for $j>k+1$, then $\phi_k(x)$ is exponentially close to
$1$ if it is easier to reach $B_{k+1}$ than $\cM_k$ from $B_j$, and
exponentially small otherwise. 
\end{itemize}

In the case where variant B of Assumption~\ref{ass:confinement} holds, the
following result together with Proposition~\ref{prop:Doob} show that the same
expressions for eigenfunctions hold, except perhaps close to the boundary of
$\Sigma$. 

\begin{remark}
The proof actually yields a more precise estimate of the eigenfunctions, of the
form  
\begin{equation}
\label{eq:right_eigenfunction_iterated} 
 \phi_k(x) = \bigprobin{x}{\tau_{B_{k+1}} < \tau_{\cM_k}}
 \bigbrak{1+\Order{\e^{-\theta^-/\sigma^2}}}
 + \sum_{i=1}^k \bigprobin{x}{\tau_{B_i} < \tau_{\cM_{k+1}\setminus
 B_i}} \rho_{ki}
\end{equation} 
for $1 \leqs k \leqs N-1$, where 
\begin{equation}
 \rho_{ki} = 
 - \frac{\bigprobin{\mathring{\pi}^{B_i}_0}{\tau^+_{B_{k+1}} <
 \tau^+_{\cM_k}}}{\bigprobin{\mathring{\pi}^{B_{k+1}}_0}{\tau^+_{\cM_k} <
 \tau^+_{B_{k+1}}}}
 + \Order{\e^{-2\theta^-/\sigma^2}}
 = \Order{\e^{-\theta^-/\sigma^2}}\;.
\end{equation}
Higher-order expansions are also available. 
This expression may contain more information than~\eqref{eq:right_eigenfunction}
if the leading term in~\eqref{eq:right_eigenfunction} is exponentially small.
Note that at least some of the coefficients $\rho_{ki}$ are negative, which is
consistent with the orthogonality relation~\eqref{eq:orthogonality}. 
\end{remark}

\begin{proposition}
\label{prop:phi_0_D} 
The principal eigenfunction of the chain killed upon leaving $\cD$ satisfies 
\begin{equation}
 \phi_0^\cD(x) 
 = \probin{x}{\tau_{B_1} < \tau_\partial}
\bigbrak{1+\Order{\e^{-\theta^-/\sigma^2}}}
 \qquad \forall x\in\Sigma\;.
\end{equation} 
Thus $\phi_0^\cD(x) = 1-\Order{\e^{-\theta^-/\sigma^2}}$ whenever $x$ is
bounded away from $\partial\Sigma$. 
\end{proposition}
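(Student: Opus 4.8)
The plan is to repeat the proof of Theorem~\ref{thm:right_eigenfunctions}, adding the cemetery point $\partial$ as a fictitious ``most stable'' metastable neighbourhood $B_0$, exactly as in the proof of the analogous eigenvalue estimate. Concretely, adjoin $\partial$ to $\Sigma$ and let $\hat K$ denote the stochastic kernel on $\Sigma\cup\set{\partial}$ obtained from $K_\cD$ by routing the defect mass $1-K_\cD(x,\Sigma)$ to the absorbing state $\partial$. Then $K_\cD$ is precisely $\hat K$ killed upon hitting $B_0:=\set{\partial}$, so $\phi_0^\cD$ is the principal right eigenfunction of that killed chain, and the task is to apply Theorem~\ref{thm:right_eigenfunctions} to $\hat K$ with the family of metastable neighbourhoods $B_0\prec B_1\prec\dots\prec B_N$. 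I would first verify that this is a legitimate metastable hierarchy. Since $\partial$ is absorbing, the cost of any transition out of $B_0$ is infinite, so every inequality in~\eqref{eq:metastable_hierarchy} that would require leaving $B_0$ holds trivially; and variant~B of Assumption~\ref{ass:confinement} ensures that the cost of reaching $B_0$ from $\Gamma_i$ (i.e.\ of exiting $\cD$), namely $\inf_{y\in\partial\cD}V(x^\star_i,y)\geqs\bar V(\partial\cD)\geqs\max_{i\neq j}H(i,j)+\theta'$, strictly dominates every barrier among the $\Gamma_i$. Hence placing $B_0$ at the top does not alter any of the quantities $H(j,M_{j-1})$ or $\min_{i<j}H(i,M_j\setminus\set{i})$ entering~\eqref{eq:metastable_hierarchy} for the original orbits, and the augmented family is in metastable order (shrinking $\theta$ if necessary so that $\theta\leqs\theta'$, which is harmless).

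With this set-up, the estimate~\eqref{eq:phi_0} of Theorem~\ref{thm:right_eigenfunctions} applied to $\hat K$ in the ``$k=1$'' instance --- the killing set being $\cM_1=B_0=\set{\partial}$, so that $\cM_1^c=\Sigma$ and $\phi_0^{\cM_1^c}=\phi_0^\cD$, and the second neighbourhood in the hierarchy (the theorem's ``$B_{k+1}$'' for $k=1$) being $B_1$ --- reads $\phi_0^\cD(x)=\probin{x}{\tau_{B_1}<\tau_{\partial}}\bigbrak{1+\Order{\e^{-\theta^-/\sigma^2}}}+\Order{\e^{-\theta^-_0/\sigma^2}}$ for $x\in\Sigma$, with $\theta_0=\inf_{y\in\partial\cD}V(x^\star_1,y)/2-\eta$. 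Because $\theta_0$ is of the order of $\bar V(\partial\cD)$, which exceeds $2\theta$ by variant~B (using $\theta\leqs\max_{i\neq j}H(i,j)$, a consequence of~\eqref{eq:metastable_hierarchy}, together with $\theta\leqs\theta'$), one has $\theta^-_0=\min\set{\theta^-,\theta_0}=\theta^-$. Moreover, the refined expansion~\eqref{eq:right_eigenfunction_iterated} shows that the coefficient of the additive correction is built from the probability of \emph{leaving} $B_0$, which vanishes identically since $\partial$ is absorbing; tracking this through, the additive term is proportional to the committor $\probin{x}{\tau_{B_1}<\tau_{\partial}}$ with a factor $\Order{\e^{-\theta^-/\sigma^2}}$, hence is absorbed into the multiplicative error, giving $\phi_0^\cD(x)=\probin{x}{\tau_{B_1}<\tau_{\partial}}\bigbrak{1+\Order{\e^{-\theta^-/\sigma^2}}}$ on all of $\Sigma$. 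The closing sentence then follows from Proposition~\ref{Prop:LargeDeviation} and the sample-path estimates of Section~\ref{sec:sample_paths}: from any point bounded away from $\partial\Sigma$, reaching the stable core $B_1$ is far cheaper than exiting $\cD$ (the deterministic flow is inward on $\partial\cD$ since $\cD$ is positively invariant, and the cost to get from the basin of any $\Gamma_j$ to $B_1$ is at most $\max_{i\neq l}H(i,l)<\bar V(\partial\cD)$), so $\probin{x}{\tau_{B_1}<\tau_{\partial}}=1-\Order{\e^{-\theta^-/\sigma^2}}$ and hence $\phi_0^\cD(x)=1-\Order{\e^{-\theta^-/\sigma^2}}$ there.

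I expect the main obstacle to be making the absorption of the additive term rigorous \emph{uniformly over all of $\Sigma$}, in particular for $x$ approaching $\partial\Sigma=\partial\Sigma\cap\partial\cD$, where $\probin{x}{\tau_{B_1}<\tau_{\partial}}$ is no longer close to $1$ and need not be bounded below by a constant: there the purely multiplicative form of the error is not immediate from plugging into~\eqref{eq:phi_0}, and one has to check that every correction produced by the finite-rank (trace-process) reduction underlying Theorem~\ref{thm:right_eigenfunctions} is genuinely proportional to $\probin{x}{\tau_{B_1}<\tau_{\partial}}$ --- which is exactly where the absorbing character of $B_0$ and the separation constant $\theta'$ must be exploited. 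A secondary (routine but worth spelling out) point is that the trace-process construction and its quasistationary distribution $\mathring\pi^{B_0}_0$ still make sense when one of the metastable ``balls'' degenerates to a single absorbing point.
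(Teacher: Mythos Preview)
Your approach is exactly the paper's (implicit) one: the proposition is stated without separate proof because it follows from the argument for Theorem~\ref{thm:right_eigenfunctions} after adding the fictitious absorbing ball $B_0=\{\partial\}$, just as in the eigenvalue estimate preceding it. Your verification of the augmented metastable hierarchy and your identification $\hat\phi_1\vert_\Sigma=\phi_0^\cD$ are correct.

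One slip worth correcting: in the refined expansion~\eqref{eq:right_eigenfunction_iterated} applied with $\hat k=1$, the additive term is $\probin{x}{\tau_{\hat B_1}<\tau_{\hat\cM_2\setminus\hat B_1}}\,\hat\rho_{11}=\probin{x}{\tau_\partial<\tau_{B_1}}\,\hat\rho_{11}$, i.e.\ it carries the \emph{complementary} committor, not $\probin{x}{\tau_{B_1}<\tau_\partial}$ as you write. This does not spoil the conclusion, because the absorbing nature of $\partial$ gives $\hat P_{12}=0$ exactly, whence the fixed-point equation~\eqref{Eq:FixedPointEquationMatrixIdMinusP} forces $S_{12}^\star=0$ exactly (cf.~\eqref{Eq:BoundOnStar12}), so $\hat\rho_{11}=0$ to all orders and the additive term vanishes identically. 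This also dissolves your ``main obstacle'': in the Feynman--Kac representation used at the end of Section~\ref{sec:lastSteps} for the killed eigenfunction, both terms already carry the indicator $\normalind{\tau_{B_1}<\tau_\partial}$, so the multiplicative structure is built in and no uniform lower bound on the committor near $\partial\Sigma$ is needed. Your secondary point about $\mathring\pi_0^{B_0}$ is harmless: for a singleton absorbing state the trace kernel is trivial and all the required quantities are well defined (and zero where they should be).
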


\subsection{Left eigenfunctions}
\label{res:left_eigenfunctions} 

If the kernel $K$ were reversible, that is, if $\pi_0(x)k(x,y) =
\pi_0(y)k(y,x)$ were true for any $x,y\in\Sigma$, then it would be immediate to
obtain the left eigenfunctions. Indeed, it is straightforward to check that they
would be given by $\pi_k(x) = \pi_0(x)\phi_k(x)$. Since we do not assume
reversibility, we have to find another way to determine the left
eigenfunctions. 

In~\cite{betz2016multi}, the authors obtained that first-return times of
finite-state space Markov chains satisfy the remarkable identity
$\pi_0(x) \probin{x}{\tau^+_y < \tau^+_x} = \pi_0(y) \probin{y}{\tau^+_x <
\tau^+_y}$, even if the chain is not reversible.
The following result shows that a similar property holds in our case.
The proof which, arguably, is even more elementary than the one given
in~\cite{betz2016multi}, is given in
Section~\ref{subSection:Computation_Eigenfunctions}.

\begin{proposition}
\label{prop:pi0} 
For any disjoint Borel sets $A_1, A_2 \subset \Sigma$ one has 
\begin{equation}
 \int_{A_1} \pi_0(x) \bigprobin{x}{\tau^+_{A_2} < \tau^+_{A_1}} \6x 
 = \int_{A_2} \pi_0(x) \bigprobin{x}{\tau^+_{A_1} < \tau^+_{A_2}} \6x\;.
\end{equation} 
The same relation holds when each $\tau^+_{A_i}$ is replaced by the
$n^{\text{th}}$ return time $\tau^{+,n}_{A_i}$ to $A_i$. 
\end{proposition}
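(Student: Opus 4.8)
The plan is to read the identity off from the stationarity relation $\pi_0 K = \pi_0$ after a one-step (Markov property) analysis of a single committor function, in the spirit of the elementary computation in~\cite{betz2016multi}. Set $A = A_1 \cup A_2$. We may assume $\pi_0(A_1) > 0$ and $\pi_0(A_2) > 0$: if, say, $\pi_0(A_1) = 0$, then $A_1$ is Lebesgue-null (since $\pi_0$ is a strictly positive continuous density), so that $\bigprobin{x}{\tau^+_{A_1} < \infty} = 0$ for every $x$ and both sides of the claimed identity vanish. Moreover, the Markov chain $(X_n)_{n\geqs0}$ is positive Harris recurrent: under variant~A of Assumption~\ref{ass:confinement} this follows from~\cite{Meyn_Tweedie_1993b} combined with the finite expected return time to $\Sigma$ of Proposition~\ref{prop:tau_sigma}, and under variant~B the same holds for the Doob-transformed kernel $\bar K_\cD$, to which all statements then refer. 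In particular, $\bigprobin{x}{\tau^+_A < \infty} = 1$ for $\pi_0$-almost every $x$.

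Next I introduce the committor $u \colon \Sigma \to [0,1]$, $u(x) = \bigprobin{x}{\tau_{A_2} < \tau_{A_1}}$, where $\tau_{A_i} = \inf\setsuch{n\geqs0}{X_n \in A_i}$ is the first-hitting time. Then $u = \mathds{1}_{A_2}$ on $A$, while $u(x) = \bigprobin{x}{\tau^+_{A_2} < \tau^+_{A_1}}$ for $x \notin A$. Conditioning on $X_1$ and using the Markov property — the only nonzero contributions come from $\{X_1 \in A_2\}$, contributing $1$, and from $\{X_1 \notin A\}$, contributing $u(X_1)$ — one obtains, for every $x \in \Sigma$,
\begin{equation*}
 (Ku)(x) = u(x) \text{ if } x \notin A,
 \qquad
 (Ku)(x) = \bigprobin{x}{\tau^+_{A_2} < \tau^+_{A_1}} \text{ if } x \in A .
\end{equation*}
Hence $Ku - u$ vanishes on $A^c$ and equals $\bigprobin{x}{\tau^+_{A_2} < \tau^+_{A_1}} - \mathds{1}_{A_2}(x)$ on $A$.

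Integrating against $\pi_0$ and using $\int_\Sigma \pi_0\,(Ku) = \int_\Sigma (\pi_0 K)\,u = \int_\Sigma \pi_0\,u$ (Fubini and stationarity), the contribution of $A^c$ cancels and we are left with
\begin{equation*}
 \int_{A_1} \pi_0(x)\bigprobin{x}{\tau^+_{A_2} < \tau^+_{A_1}}\6x
 + \int_{A_2} \pi_0(x)\bigprobin{x}{\tau^+_{A_2} < \tau^+_{A_1}}\6x
 = \pi_0(A_2) .
\end{equation*}
On $A_2$, recurrence gives $\bigprobin{x}{\tau^+_{A_2} < \tau^+_{A_1}} = 1 - \bigprobin{x}{\tau^+_{A_1} < \tau^+_{A_2}}$ for $\pi_0$-a.e.\ $x$, since $\{\tau^+_{A_1} < \tau^+_{A_2}\}$ and $\{\tau^+_{A_2} < \tau^+_{A_1}\}$ partition $\{\tau^+_A < \infty\}$. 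Replacing the second integral by $\pi_0(A_2) - \int_{A_2}\pi_0(x)\bigprobin{x}{\tau^+_{A_1} < \tau^+_{A_2}}\6x$ and cancelling $\pi_0(A_2)$ yields exactly the asserted identity. The statement with the $n$-th return times $\tau^{+,n}_{A_i}$ then follows by running the same argument on the trace process $(X_n)\vert_A$ of Section~\ref{ssec:trace} — which is again positive recurrent, with invariant density proportional to $\pi_0\vert_A$ — together with a short induction on $n$; this step is pure bookkeeping, the substance being already contained in the case $n=1$.

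I do not expect a genuine obstacle, which is precisely why the proof is short; the two points that require attention are (i) the one-step identity for $Ku$, where one must distinguish whether the starting point lies in $A$ or not — on $A$ the output is the return-committor, not $u$ itself — and (ii) the recurrence input $\bigprobin{x}{\tau^+_A < \infty} = 1$, which upgrades the intermediate equality $\int_A \pi_0(x)\bigprobin{x}{\tau^+_{A_2} < \tau^+_{A_1}}\6x = \pi_0(A_2)$ to the symmetric statement; without it one would obtain the identity only up to a residual term $\int_{A_2}\pi_0(x)\bigprobin{x}{\tau^+_A = \infty}\6x$.
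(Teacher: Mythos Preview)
Your proof is correct and follows essentially the same route as the paper's. The paper first states a slightly more general identity (Lemma~\ref{lem:left_eigenfunction}, valid for any left eigenfunction $\pi_k$ with a Laplace weight $\e^{u_k\tau^+_A}$), then specialises to $\pi_0$ and $u_0=0$ to obtain $\pi_0(A_1)=\int_{A_1\cup A_2}\pi_0(x)\,\bigprobin{x}{\tau^+_{A_1}<\tau^+_{A_2}}\6x$ and decomposes exactly as you do; your computation of $Ku$ for the committor $u$ is precisely that lemma in the special case $u=0$, carried out directly rather than via the detour through the Laplace-weighted kernel.
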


Applying this result with $A_1=\cM_N$, $A_2=\Sigma\setminus\cM_N$, and using the
fact that the $\Gamma_i$ are the only attractive limit sets, we obtain that
$\pi_0$ is concentrated in $\cM_N$, in the sense that there exists $\kappa>0$,
depending on the size $\delta$ of the $B_i$, such that 
\begin{equation}
\label{eq:pi0_MN} 
 \frac{\pi_0(\Sigma\setminus\cM_N)}{\pi_0(\cM_N)}
 = \Order{\e^{-\kappa/\sigma^2}}\;.
\end{equation} 
Furthermore, for any compact $D_j$ such that $B_j\subset D_j \subset \cA_j$,
one has 
\begin{equation}
 \frac{\pi_0(D_j\setminus B_j)}{\pi_0(D_j)}
 = \Order{\e^{-\kappa/\sigma^2}}
\end{equation} 
where $\kappa>0$ may depend on $D_j$. Similar bounds hold for the QSDs
$\pi_0^{\cM_k^c}$ and the other left eigenfunctions. The essential information
is thus contained in the integrals of these measures over the sets $B_j$, which
are described by the following result. 

\begin{theorem}[Left eigenfunctions]
\label{thm:left_eigenfunctions} 
The invariant distribution satisfies 
\begin{equation}
\label{eq:left_ef_1} 
 \pi_0(B_1) = 1 - \Order{\e^{-\kappa/\sigma^2}}\;, 
 \qquad 
 \pi_0(B_j) = \Order{\e^{-\theta^-/\sigma^2}} 
 \quad \text{for $j = 2,\dots,N$\;.}
\end{equation} 
Similarly, the QSDs $\pi_0^{\cM_k^c}$ of the process killed upon first hitting
$\cM_k$ satisfy 
\begin{equation}
\label{eq:left_ef_2} 
 \pi_0^{\cM_k^c}(B_{k+1}) = 1 - \Order{\e^{-\kappa/\sigma^2}}\;, 
 \qquad 
 \pi_0^{\cM_k^c}(B_j) = \Order{\e^{-\kappa/\sigma^2}} 
 \quad \text{for $j = k+2,\dots,N$\;.}
\end{equation} 
Furthermore, the left eigenfunction $\pi_k$ satisfies 
\begin{equation}
\label{eq:left_ef_3} 
 \pi_k(B_j) = 
 \begin{cases}
  - \displaystyle 
  \frac{\bigprobin{\mathring{\pi}^{B_{k+1}}_0}{\tau^+_{B_j} <
 \tau^+_{\cM_{k+1}\setminus B_j}}}
 {\bigprobin{\mathring{\pi}^{B_{k+1}}_0}{\tau^+_ {\cM_k} <
 \tau^+_{B_{k+1}}}}
  \bigbrak{1 + \Order{\e^{-\theta^-/\sigma^2}}}
  + \Order{\e^{-\theta_k/\sigma^2}}
  & \text{for $1\leqs j \leqs k$\;,} \\
  \pi_0^{\cM_k^c}(B_j) \bigbrak{1 + \Order{\e^{-\theta^-/\sigma^2}}} 
  + \Order{\e^{-\theta_j/\sigma^2}}
  & \text{for $j \geqs k+1$\;.} 
 \end{cases}
\end{equation} 
\end{theorem}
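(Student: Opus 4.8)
The plan is to exploit the detailed-balance identity of Proposition~\ref{prop:pi0} together with the already-established structure of the right eigenfunctions (Theorem~\ref{thm:right_eigenfunctions}) and the eigenvalue estimates (Theorem~\ref{thm:eigenvalues}), transferring everything through the finite-rank approximation of the trace process that underlies the proofs of those two theorems. The key observation is that by Proposition~\ref{prop:pi0}, applied with $A_1 = B_j$ and $A_2 = \cM_{k+1}\setminus B_j$ (or $\Sigma\setminus\cM_N$ in the invariant-measure case), the mass $\pi_0(B_j)$ of any left eigenfunction on a metastable ball can be rewritten in terms of committor functions weighted against the invariant (or quasistationary) measure. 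Since the left eigenfunctions $\pi_k$ of $K$ restrict, up to exponentially small errors, to left eigenvectors of the stochastic matrix $P$ of the trace process on $\cM_{k+1}$ (the same matrix used in the proof of Theorem~\ref{thm:eigenvalues}), the problem reduces to an explicit linear-algebra computation on an $(k+1)\times(k+1)$ stochastic matrix whose off-diagonal entries are exponentially small, governed by the metastable hierarchy.

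First I would establish the concentration estimates \eqref{eq:left_ef_1} and the analogous bounds for $\pi_0^{\cM_k^c}$: apply Proposition~\ref{prop:pi0} with $A_1 = \cM_N$, $A_2 = \Sigma\setminus\cM_N$, bound the committor $\probin{x}{\tau^+_{\Sigma\setminus\cM_N} < \tau^+_{\cM_N}}$ from $x\in\cM_N$ using Proposition~\ref{Prop:LargeDeviation} (the deterministic flow pushes $B_i$ strictly into itself, so leaving $\cM_N$ costs a positive quasipotential), and likewise for the ratio $\pi_0(D_j\setminus B_j)/\pi_0(D_j)$. This shows all the relevant measures live, to exponential precision, on $\cM_N$, hence are determined by their values $\pi_0(B_1),\dots,\pi_0(B_N)$. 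Next, using the finite-rank reduction already proved for Theorem~\ref{thm:eigenvalues}, identify the vector $(\pi_k(B_1),\dots,\pi_k(B_{k+1}))$ as (close to) the left eigenvector of $\id - P$ associated with the eigenvalue $1 - \lambda_k \approx \probin{\mathring{\pi}^{B_{k+1}}_0}{\tau^+_{\cM_k} < \tau^+_{B_{k+1}}}$. Because of the metastable hierarchy, $P$ is block-triangular up to errors of relative size $\Order{\e^{-\theta^-/\sigma^2}}$: transitions from $B_{k+1}$ into $\cM_k$ dominate, and transitions among $\cM_k$ happen on longer timescales. A direct perturbative computation of the left eigenvector of such a matrix then yields the ratio formula in the first line of \eqref{eq:left_ef_3}, with the numerator $\probin{\mathring{\pi}^{B_{k+1}}_0}{\tau^+_{B_j}<\tau^+_{\cM_{k+1}\setminus B_j}}$ being the $(k+1,j)$ entry of $P$ and the denominator being $1 - (P)_{k+1,k+1} \approx 1-\lambda_k$. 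For $j \geqs k+1$ the eigenvector restricted to those coordinates is, to leading order, that of the principal left eigenvector of the killed process, i.e.~$\pi_0^{\cM_k^c}$, giving the second line.

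The main obstacle will be controlling the error terms uniformly, specifically the passage between the trace process on $\cM_{k+1}$ and the original chain, and between $\mathring{\pi}^{B_{k+1}}_0$ (the QSD of the doubly-modified trace-then-kill process) and the quantities that appear naturally. One must check that the finite-rank operator norm bounds from Section~\ref{sec:diluted}, combined with perturbation theory for the (non-self-adjoint!) operator $K$, propagate from eigenvalues to left eigenfunctions without loss in the exponents — the delicate point being that $\pi_k$ is a left eigenfunction, so one controls it via the right eigenfunctions $\phi_j$ of the adjoint and the near-biorthogonality \eqref{eq:orthogonality}, which requires knowing that the spectral projector $\phi_k\otimes\pi_k$ has norm $\Order{1}$ despite the $N$ top eigenvalues being exponentially clustered near $1$. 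This is exactly where the metastable hierarchy assumption is essential: it guarantees the corresponding eigenvalue \emph{gaps} $\lambda_{k}-\lambda_{k+1}$ are themselves only polynomially (not exponentially) degenerate relative to each other after rescaling by $\e^{-H(k+1,M_k)/\sigma^2}$, keeping the projectors bounded. Once that is in hand, the identification of $\pi_k(B_j)$ with the claimed committor ratios is a finite computation with the matrix $P$, and plugging in Proposition~\ref{Prop:LargeDeviation} gives the stated orders of magnitude.
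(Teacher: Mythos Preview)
Your overall strategy matches the paper's: reduce to left eigenvectors of the finite stochastic matrix $P$ on $\cM_{k+1}$, then compute those explicitly via the block-triangularisation already set up for Theorem~\ref{thm:eigenvalues}. Two points need sharpening, though.

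First, Proposition~\ref{prop:pi0} only concerns the \emph{invariant} measure $\pi_0$; it says nothing about $\pi_k$ for $k\geqs 1$. The bridge you actually need is Lemma~\ref{lem:left_eigenfunction}, which shows that for any left eigenfunction $\pi_k$ of $K$ and any $B\subset A$,
\[
\int_A \pi_k(x)\, K^{u_k}(x,B)\,\6x = \e^{-u_k}\pi_k(B)\;,
\]
i.e.\ the restriction of $\pi_k$ to $A=\cM_{k+1}$ is \emph{exactly} a left eigenfunction of the trace kernel $K^{u_k}$, not just ``close to'' one. This is what legitimises comparing $\pi_k$ with the left eigenvector $\pi^\star_{k-1}$ of $K^\star$, and hence of $P$. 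Without it, your sentence ``the left eigenfunctions $\pi_k$ of $K$ restrict\dots to left eigenvectors of the stochastic matrix $P$'' is an assertion rather than a step.

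Second, your worry about controlling spectral projectors via eigenvalue gaps and biorthogonality is an unnecessary detour. The paper simply observes that the $L^1$-operator norm of a kernel acting on signed measures is bounded by $\sup_x K(x,\cM_k)$, so the \emph{same} estimates on $\norm{(K^u)^m-(K^\star)^m}$ already proved for the right-eigenfunction argument apply verbatim to the action on measures. One then repeats the Riesz-projector argument with a contour of radius $(1-\lambda_{k-1}^m)/2$ around $(\lambda^\star_{k-1})^m$; the metastable hierarchy guarantees this contour separates the target eigenvalue from the rest, and the resolvent bound gives $\bigabs{\pi_{k-1}(B_j)-\pi^\star_{k-1}(B_j)}=\Order{\e^{-\theta_{k-1}/\sigma^2}}$ directly. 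The explicit form of $\pi^\star_{k-1}$ comes from the block decomposition: $\pi^\star_{k-1}=(\hat\pi,\,1-\hat\pi S^\star_{12})$ with $\hat\pi=(\alpha\id-T_{11})^{-1}\hat P_{21}$, which unwinds to the committor ratio in~\eqref{eq:left_ef_3}. For $j\geqs k+1$ the paper compares the original and killed processes monitored on $\cM_j$ via a further block decomposition of sizes $(k-1)$ and $(j-k+1)$, rather than invoking duality.
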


This result shows in particular that
\begin{itemize}
\setlength\itemsep{-1mm}
\item 	$\pi_k(B_{k+1})$ is exponentially close to $1$;
\item 	if $k+1 < j \leqs N$, then $\pi_k(B_j)$ is exponentially small;
\item 	if $1\leqs j\leqs k$, then $\pi_k(B_j)$ is
negative, which is consistent with the orthogonality
relation~\eqref{eq:orthogonality}; it can be close to $-1$ or exponentially
small, depending on whether $B_j$ is the easiest ball in $\cM_k$ to reach from
$B_{k+1}$ or not. 
\end{itemize}

\added{In the case where variant B of Assumption~\ref{ass:confinement} holds,
combining Propositions~\ref{prop:phi_0_D} and~\ref{prop:Doob} it is immediate
to see that the conclusions of Theorem~\ref{thm:left_eigenfunctions} still hold
true. }
\begin{remark}
Using Proposition~\ref{prop:pi0}, either for the sets $B_1$ and $B_{k+1}$ or
for the sets $\cM_k$ and $B_{k+1}$, one can obtain more precise estimates for
the invariant distribution, namely the relations
\begin{align}
\nonumber
\pi_0(B_{k+1}) &=
 \frac{\bigprobin{\mathring{\pi}^{B_1}_0}{\tau^+_{B_{k+1}} <
 \tau^+_{B_1}}}{\bigprobin{\mathring{\pi}^{B_{k+1}}_0}{\tau^+_{B_1} <
 \tau^+_{B_{k+1}}}}
\bigbrak{1 + \Order{\e^{-\theta^-/\sigma^2}}}\;,  \\
\pi_0(B_{k+1}) &= \sum_{j=1}^k \pi_0(B_j) 
 \frac{\bigprobin{\mathring{\pi}^{B_j}_0}{\tau^+_{B_{k+1}} <
 \tau^+_{\cM_k}}}{\bigprobin{\mathring{\pi}^{B_{k+1}}_0}{\tau^+_{\cM_k} <
 \tau^+_{B_{k+1}}}}
\bigbrak{1 + \Order{\e^{-\theta^-/\sigma^2}}}
\end{align}
which hold for $1\leqs k\leqs N-1$. 
The second expression, while more complicated, has the merit of making it
obvious that $\pi_0(B_{k+1}) = \Order{\e^{-\theta^-/\sigma^2}}$, as a
consequence of Assumption~\ref{ass:hierarchy}. 

Similar expressions hold for the Doob-conditioned distributions 
$\bar\pi_0^{\cM_k^c}$, which immediately imply expressions for the QSDs via
Proposition~\ref{prop:Doob} and the expression~\eqref{eq:phi_0} of the right
principal eigenfunctions. 
\end{remark}

\subsection{Link between eigenvalues and expected return times}
\label{res:return_times} 

If the initial condition is distributed according to $\pi_0^{\cM_k^c}$, then it
follows directly from the properties of QSDs that $\tau_{\cM_k}$ has a
geometric distribution, with expectation 
\begin{equation}
 \bigexpecin{\pi_0^{\cM_k^c}}{\tau_{\cM_k}} 
 = \frac{1}{1 - \lambda_0^{\cM_k^c}}
 = \frac{1+\Order{\e^{-\theta_k/\sigma^2}}}{1 - \lambda_k}
 = \frac{1+\Order{\e^{-\theta_k/\sigma^2}}}
{\bigprobin{\mathring{\pi}^{B_{k+1}}_0}{ \tau^+_{\cM_k} <
 \tau^+_{B_{k+1}}}}\;.
\end{equation} 
Combining this fact with the bounds we obtained on the QSDs $\pi_0^{\cM_k^c}$,
it is not hard to obtain the following link between expected hitting times and
eigenvalues. 

\begin{theorem}[Expected hitting times]
\label{thm:expectations} 
There exists a constant $\kappa>0$, depending on the size $\delta$ of the $B_j$,
such that for every $k\in\set{1, \dots, N-1}$ one has, for any $x\in B_{k+1}$, 
\begin{equation}
  \bigexpecin{x}{\tau_{\cM_k}} 
 = \frac{1+\Order{\e^{-\kappa/\sigma^2}}}{1 - \lambda_k}
 = \frac{1+\Order{\e^{-\kappa/\sigma^2}}}
{\bigprobin{\mathring{\pi}^{B_{k+1}}_0}{\tau^+_{\cM_k} <
 \tau^+_{B_{k+1}}}}\;.
\end{equation} 
\end{theorem}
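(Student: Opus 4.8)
Throughout, set $h(x)=\E^x\{\tau_{\cM_k}\}$. Recall from the discussion preceding the statement that $\E^{\pi_0^{\cM_k^c}}\{\tau_{\cM_k}\}=1/(1-\lambda_0^{\cM_k^c})$, and that by \eqref{eq:principal_eigenvalue} one has $1-\lambda_0^{\cM_k^c}=(1-\lambda_k)[1+\Order{\e^{-\theta_k/\sigma^2}}]$, whose committor form is \eqref{eq:lambdak}. The plan is therefore to reduce everything to the comparison $h(x)=\E^{\pi_0^{\cM_k^c}}\{\tau_{\cM_k}\}\,[1+\Order{\e^{-\kappa/\sigma^2}}]$ for $x\in B_{k+1}$, where $\kappa>0$ may depend on $\delta$ and on the free parameter $\eta$ (which I will take small enough relative to $\delta$), and may decrease from line to line.

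I would establish this comparison by a coupling argument. Let $(X_n)$ start from $x\in B_{k+1}$ and $(\tilde X_n)$ from $\pi_0^{\cM_k^c}$, realised on a common probability space. Two inputs are needed. First, by ellipticity (Assumption~\ref{ass:ellipticity}) and the continuity and positivity of $k$, the kernel satisfies a uniform minorisation on $B_{k+1}$: there is a ball $C\subset B_{k+1}$ around $x^\star_{k+1}$ and a constant $\beta>0$ with $k(y,z)\geqs\beta$ for all $y\in B_{k+1}$ and $z\in C$, so as long as both chains are in $B_{k+1}$ they can be coupled with probability at least $\beta\abs{C}$ at each step. Second, since the deterministic Poincaré map sends $B_{k+1}$ strictly into itself, the sample-path estimates of Section~\ref{sec:sample_paths} give that the probability of either chain leaving $B_{k+1}$ within $n_1:=\lceil\sigma^{-2}\rceil$ steps is $\Order{\e^{-c_\delta/\sigma^2}}$ for a constant $c_\delta>0$ depending only on $\delta$. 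Together with the concentration estimate \eqref{eq:left_ef_2} (which gives $\tilde X_0\in B_{k+1}$ with probability $1-\Order{\e^{-\kappa/\sigma^2}}$), these imply that the two chains meet within $n_1$ steps, while still in $B_{k+1}$ and hence before hitting $\cM_k$, with probability $1-\Order{\e^{-\kappa/\sigma^2}}$; from the meeting time on they coincide, so $\tau_{\cM_k}$ is the same for both. On the exceptional event, the strong Markov property at the first time one of the chains leaves $B_{k+1}$ (or at time $n_1$) bounds the remaining expected hitting time by $S_k:=\sup_{z\in\Sigma}\E^z\{\tau_{\cM_k}\}$. Hence $\bigl|h(x)-\E^{\pi_0^{\cM_k^c}}\{\tau_{\cM_k}\}\bigr|\leqs 2(n_1+S_k)\,\Order{\e^{-\kappa/\sigma^2}}$.

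It then remains to bound $S_k$ crudely. I would use sub-multiplicativity: $u_n:=\sup_z\mathbb{P}^z\{\tau_{\cM_k}>n\}$ satisfies $u_{n+m}\leqs u_nu_m$, so $S_k\leqs\sum_{n\geqs0}u_n\leqs n_0/(1-u_{n_0})$ for every $n_0$. Taking $n_0$ polynomial in $\log(\sigma^{-1})$ and invoking the lower-bound part of Proposition~\ref{Prop:LargeDeviation} (from any $B_i$ there is a path of cost $H(k+1,M_k)+o(1)$ into $\cM_k$, and conditioned on the corresponding committor event the transition is achieved in a bounded number of Poincaré steps) yields $1-u_{n_0}\geqs c\,\e^{-(H(k+1,M_k)+\eta)/\sigma^2}$, hence $S_k\leqs\Order{\e^{(H(k+1,M_k)+2\eta)/\sigma^2}}$. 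Since also $\E^{\pi_0^{\cM_k^c}}\{\tau_{\cM_k}\}=1/(1-\lambda_0^{\cM_k^c})\geqs c\,\e^{(H(k+1,M_k)-\eta)/\sigma^2}$ by the upper-bound part of Proposition~\ref{Prop:LargeDeviation}, we get $S_k\leqs\Order{\e^{3\eta/\sigma^2}}\,\E^{\pi_0^{\cM_k^c}}\{\tau_{\cM_k}\}$, while $n_1$ is negligible compared with $\E^{\pi_0^{\cM_k^c}}\{\tau_{\cM_k}\}\sim\e^{H(k+1,M_k)/\sigma^2}$; the error in the previous paragraph becomes $\Order{\e^{(3\eta-c_\delta)/\sigma^2}}\,\E^{\pi_0^{\cM_k^c}}\{\tau_{\cM_k}\}$, and choosing $\eta<c_\delta/3$ finishes the proof (the committor form of the answer following from \eqref{eq:lambdak}).

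The step I expect to be the main obstacle is the a priori bound $S_k\leqs\Order{\e^{3\eta/\sigma^2}}/(1-\lambda_0^{\cM_k^c})$ on the worst-case expected hitting time: it hinges on a large-deviation lower bound for the probability of reaching $\cM_k$ within a controlled number of Poincaré steps starting from an arbitrary initial condition, which is the one genuinely new sample-path estimate here; the minorisation and the ``unlikely to leave $B_{k+1}$ quickly'' bound are routine. A purely spectral alternative is to write $h(x)=\sum_{n\geqs0}(K_{\cM_k^c}^n\mathds{1})(x)$, split off the principal spectral projector of $K_{\cM_k^c}$ (its range spanned by $\phi_0^{\cM_k^c}$, evaluated on $B_{k+1}$ through \eqref{eq:phi_0}, where the committor $\mathbb{P}^x\{\tau_{B_{k+1}}<\tau_{\cM_k}\}$ equals $1$), and bound the remainder by re-running the finite-rank approximation of Section~\ref{sec:diluted} for the killed chain $K_{\cM_k^c}$ — itself a random Poincaré map for $\Gamma_{k+1},\dots,\Gamma_N$ with a cemetery at $\cM_k$ — using the spectral gap $\abs{\lambda_1^{\cM_k^c}}\leqs 1-c\,\e^{-(H(k+1,M_k)-\theta)/\sigma^2}$, which follows from Assumption~\ref{ass:hierarchy} since taking $i=k+1$, $j=k+2$ in \eqref{eq:metastable_hierarchy} gives $H(k+2,M_{k+1})\leqs H(k+1,M_k)-\theta$; the delicate point there is that in the non-reversible setting the prefactors in $\|K_{\cM_k^c}^n\|$ must be handled through the finite-rank approximation rather than by a bare operator-norm bound.
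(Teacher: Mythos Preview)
Your coupling argument has a genuine gap. The minorisation constant $\beta$ in ``$k(y,z)\geqs\beta$ for all $y\in B_{k+1}$, $z\in C$'' cannot be taken independent of $\sigma$: as $\sigma\to0$ the one-step density $k(y,\cdot)$ concentrates in a ball of radius $\Order{\sigma}$ around the deterministic Poincar\'e image $\Pi(y)$, and for $y_1,y_2\in B_{k+1}$ at distance of order $\delta$ the overlap of $k(y_1,\cdot)$ and $k(y_2,\cdot)$ is of order $\e^{-c/\sigma^2}$. With $\beta\abs{C}$ exponentially small, the probability of \emph{failing} to couple in $n_1=\lceil\sigma^{-2}\rceil$ steps is $(1-\beta\abs{C})^{n_1}\approx1$, not $\Order{\e^{-\kappa/\sigma^2}}$. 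The way two trajectories in $B_{k+1}$ actually come together is by first letting the contraction of $\Pi$ bring them within distance $\Order{\sigma^2}$ (in $\Order{\log\sigma^{-1}}$ steps), via common driving noise rather than Doeblin minorisation; that is the content of Proposition~\ref{prop:coupling} and Section~\ref{ssec:distance_trajectories}, and it is what feeds the spectral-gap and oscillation estimates of Section~\ref{sec:KZBi}.

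The paper uses those estimates directly, but for the \emph{trace} process on a single ball rather than the full killed chain. One sandwiches $\bigexpecin{x}{\tau_{\cM_k}}$ between $\bigexpecin{x}{\hat\tau}$ and $\bigexpecin{x}{\hat\tau}\,\bigexpecin{B_{k+1}}{\tau^+_{\cM_{k+1}}}$, where $\hat\tau$ is the killing time of the trace process with kernel $K^0_{B_{k+1}}$ and the second factor is $1+\Order{\e^{-\kappa/\sigma^2}}$ by Section~\ref{ssec:mean_return_time}. Then $\bigexpecin{x}{\hat\tau}=\sum_{n\geqs0}(K^0_{B_{k+1}})^n(x,B_{k+1})$ is read off the spectral decomposition~\eqref{Eq:SpectralDecompositionKZBIIterated}: the principal term $\mathring\phi_0^{B_{k+1}}(x)/(1-\mathring\lambda_0^{B_{k+1}})$ has oscillation in $x$ controlled by Proposition~\ref{prop:oscillation_phi0}, and the remainder is $\Order{\log\sigma^{-1}}$ thanks to the gap $\bigabs{\mathring\lambda_1^{B_{k+1}}}\leqs\e^{-c_0/\log\sigma^{-1}}$ of Proposition~\ref{prop:spectral_gap}. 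Your spectral alternative is closer in spirit, but by working with $K_{\cM_k^c}$ you face $N-k-1$ further eigenvalues exponentially close to $1$ and all the non-normal prefactors you flag; passing to the trace on a single ball buys a spectral gap of order $1/\log\sigma^{-1}$ and makes the remainder merely polynomial in $\log\sigma^{-1}$, so no a~priori bound on $S_k$ is needed at all.
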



\subsection{Discussion of computational aspects}
\label{res:numerics} 

Our results provide sharp relations between eigenvalues and eigenfunctions of
the random Poincar\'e map, committor functions between and expected
first-hitting times of neighbourhoods of periodic orbits, and principal
eigenvalues, eigenfunctions and QSDs of processes killed when hitting these
sets. One limitation, compared to results in the reversible case, is that we do
not have sharp asymptotics for the prefactors of these quantities as in the case
of the Eyring--Kramers formula. However, some of them are accessible to
numerical methods. 

Computing eigenvalues and eigenfunctions of a continuous-space linear operator
by discretisation is possible, but costly, especially in high space dimension.
By contrast, principal eigenvalues, eigenfunctions and QSDs are much cheaper to
compute, since it is sufficient to simulate the process conditioned on
survival, starting with an arbitrary initial distribution.

There also exist powerful methods allowing to compute committor functions in
certain situations, such as adaptive multilevel splitting, see for instance
\cite{CerouGuyader07,AristoffLelievre_etal13,BrehierLelievreRousset15}. The fact
that the expressions~\eqref{eq:lambdak} for eigenvalues depend on committors
with respect to a QSD is not a problem, since we find that the spectral gap of
the associated process is \added{at least} logarithmically large in $\sigma$, so that
whatever the initial distribution, this QSD can be sampled in a relatively short
time.

 \section{Outline of the proof}
\label{sec:outline} 
As described in \Cref{ssec:setup-spectral}, in order to quantify transitions
between periodic orbits, our main objective is to solve the eigenvalue problem 
\begin{equation}\label{EqGenEigenvaluePb}
\pth{K \phi}\pth{x} = \e^{-u}  \phi\pth{x}
\end{equation}
for the discrete-time, continuous-state space kernel $K$.
We will start by exhibiting some general properties of this problem.

\subsection{Continuous-space, discrete-time Markov chains}

Let $\Sigma \subset \real^d$ be a bounded set equipped with the Borel
$\sigma$-algebra $\mathcal{B}\pth{\Sigma}$. Consider a \added{positive Harris
recurrent} discrete-time Markov chain $\pth{X_n}_{n\geqs 0}$ on the continuous
state space $\Sigma$ and let $K$ be the associated Markovian kernel having
density $k>0$ with respect to Lebesgue measure, i.e.,
\begin{equation}
K(x, \6y) = k(x, y) \6y \;.
\end{equation}
Given a Borel set $\setA \subset \Sigma$,  we introduce the first hitting time
and first return time 
\begin{align}
\nonumber
\tau_{\setA}(x) &= \inf\braces{n \geqslant 0, X_n \in {\setA}} \;,\\
\tau^+_{\setA}(x) &= \inf\braces{n \geqslant 1, X_n \in {\setA}} \;,
\end{align}
where $x$ denotes the initial condition. When the initial condition is clear
from the context, then we simply write $\tau_{\setA}$, $\tau^+_{\setA}$.
Note that $\tau^+_{\setA}\pth{x} = \tau_{\setA}\pth{x}$ for $x \in A^c= \Sigma
\bs {\setA}$, whereas $0 = \tau_{\setA}\pth{x} < 1 \leqs\tau^+_{\setA}\pth{x}$
if $x \in {\setA}$.
\added{If $A$ has positive Lebesgue measure}, due to the positive
\added{Harris} recurrence assumption on the Markov chain \added{and the fact
that $K$ has positive density}, the stopping times $\tau_{\setA}$ and
$\tau^+_{\setA}$ are almost surely finite.
To ease notation, we introduce
\begin{equation}
\Espc{A}{\cdot} = \supSur{x \in A}{\Espc{x}{ \cdot}}\;,\qquad \added{ \Prcx{A}{\cdot} = \supSur{x \in A}{\Prcx{x}{ \cdot}}}  
\end{equation}
 We also introduce the $n^{\text{th}}$ return
time defined inductively by
\begin{equation}
\tau^{+,n}_{\setA} = \inf\bigsetsuch{n >
\tau^{+,n-1}_{\setA}}{X_n \in {\setA}}\;,
\end{equation}
with $\tau^{+,1}_{\setA} = \tau^{+}_{\setA}$.

We recall the following result on existence of Laplace transforms, see
e.g.~\cite[Lemma~5.1]{berglund2014noise}.
\begin{lemma}
Consider a positive recurrent Markov chain with state space $\Sigma$. The
Laplace transform of the first hitting time $\bigexpecin{x}{\e^{u
\tau_{\setA}}}$ and the Laplace transform of the first return time
$\smash{\bigexpecin{x}{\e^{u \tau^+_{\setA}}}}$ are analytic in $u$ for $u$ such
that
\begin{equation}\label{EqLaplaceTransform}
\supSur{x \in A^c}{\Prcx{x}{X_1 \in A^c}}<
\abs{\e^{-u}} \;.
\end{equation}
\end{lemma}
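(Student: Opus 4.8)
The plan is to reduce the statement to a uniform geometric tail bound on $\tau_A$ and $\tau_A^+$, and then to recognise the Laplace transforms as series of entire functions converging locally uniformly on the region in question. Throughout, write $q := \supSur{x\in A^c}{\Prcx{x}{X_1\in A^c}}$, so that the admissible set of $u$ is $\cU := \set{u\in\C : \abs{\e^{-u}}>q}$; since $\abs{\e^{-u}}=\e^{-\re u}$ takes every positive value, $\cU$ is the nonempty open half-plane $\set{\re u<-\log q}$ (all of $\C$ if $q=0$), so that ``analytic in $u$'' is meaningful there. For $x\in A$ one has $\tau_A=0$, hence $\bigexpecin{x}{\e^{u\tau_A}}=1$ is trivially entire; the content is the case $x\in A^c$ for $\tau_A$, and arbitrary $x$ for $\tau_A^+$.

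\emph{Step 1: geometric tails.} I would first prove that $\Prcx{x}{\tau_A>n}\leqs q^n$ for all $x\in A^c$ and $n\geqs0$, and $\Prcx{x}{\tau_A^+>n}\leqs q^{n-1}$ for all $x\in\Sigma$ and $n\geqs1$. Both come from the Markov property. For the first, note that for $x\in A^c$ the event $\set{\tau_A>n}$ equals $\set{X_1\notin A,\dots,X_n\notin A}$; conditioning on $X_{n-1}$, which lies in $A^c$ on this event, gives $\Prcx{x}{\tau_A>n}=\bigexpecin{x}{\ind{X_1\notin A,\dots,X_{n-1}\notin A}\,\Prcx{X_{n-1}}{X_1\notin A}}\leqs q\,\Prcx{x}{\tau_A>n-1}$, and induction from $\Prcx{x}{\tau_A>0}=1$ closes it. For the second, condition on $X_1$ and apply the first estimate on the event $\set{X_1\notin A}$. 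It follows that $\Prcx{x}{\tau_A=n}\leqs q^{n-1}$ for $n\geqs1$ and $\Prcx{x}{\tau_A^+=n}\leqs q^{n-2}$ for $n\geqs2$; letting $n\to\infty$ this already shows $\tau_A,\tau_A^+<\infty$ almost surely when $q<1$ (when $q=1$ one invokes positive recurrence, as recalled above, and $\cU\subset\set{\re u<0}$, so the argument below goes through unchanged).

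\emph{Steps 2 and 3: convergence and analyticity.} Expanding $\bigexpecin{x}{\e^{u\tau_A}}=\sum_{n\geqs0}\e^{un}\Prcx{x}{\tau_A=n}$ (and likewise for $\tau_A^+$), Step 1 bounds the $n$-th term in modulus by $q^{-1}(\abs{\e^u}q)^n$ (resp.\ by $q^{-2}(\abs{\e^u}q)^n$ for $n\geqs2$, plus a bounded $n=1$ term), so the series converges absolutely as soon as $\abs{\e^u}q<1$, i.e.\ on $\cU$; in particular the Laplace transforms are finite there. Moreover each $u\mapsto\e^{un}\Prcx{x}{\tau_A=n}$ is entire, and on any compact $L\subset\cU$ there is $r<1$ with $\abs{\e^u}q\leqs r$ throughout $L$, whence $\sum_{n}\sup_{u\in L}\abs{\e^{un}\Prcx{x}{\tau_A=n}}\leqs q^{-1}\sum_n r^n<\infty$. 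The series thus converges uniformly on compact subsets of $\cU$, so by the Weierstrass convergence theorem its sum is holomorphic on $\cU$; the same applies to $u\mapsto\bigexpecin{x}{\e^{u\tau_A^+}}$.

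\emph{Main obstacle.} There is no genuine difficulty here; the only point needing a little care is Step 1 --- obtaining the geometric decay \emph{uniformly in the starting state}, which is precisely what the supremum over $x\in A^c$ in the definition of $q$ is designed to provide --- together with the minor bookkeeping distinguishing an initial point in $A$ from one in $A^c$ when treating the return time $\tau_A^+$.
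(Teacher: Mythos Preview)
Your proof is correct. The paper does not actually give a proof of this lemma: it is stated with the remark ``see e.g.~\cite[Lemma~5.1]{berglund2014noise}'' and no argument is provided in the present paper. Your self-contained argument --- establishing uniform geometric tails via the Markov property and then invoking Weierstrass' theorem on the resulting power series --- is the standard route and is essentially what the cited reference does; the only cosmetic point is that your constants $q^{-1}$, $q^{-2}$ are meaningless in the degenerate case $q=0$, but you already note that then the series are finite and the transforms are entire, so this is harmless.
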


Following ideas from the potential-theoretic approach to
metastability~\cite{BEGK,BGK}, we are going to study a Dirichlet
boundary value problem to solve the eigenvalue problem. Given a set
$\setA\subset\Sigma$, $u \in \complex$
and a \added{bounded} measurable function $\overline{\phi} : A \rightarrow \real$, we want
to find a (bounded) function ${\phi}^u$ which satisfies
\begin{align}
\nonumber
 \pth{ K {\phi}^{u} } \pth{x}  &= \e^{-u} {\phi}^{u} (x), & x& \in \setA^c\;,\\
{\phi}^{u} \pth{x} &= \overline{\phi} \pth{ x }, &   x &\in {\setA}\;.
\label{EqDirichletBoundaryValueProblem}
\end{align}
Solutions of such a Dirichlet problem admit a probabilistic representation 
in terms of Laplace transforms.

\begin{proposition}[Feynman--Kac type relation]
\label{prop:Feynman-Kac} 
For $u$ such that \Cref{EqLaplaceTransform} is satisfied, the unique solution of
the Dirichlet boundary value problem \Cref{EqDirichletBoundaryValueProblem}
is given by 
\begin{equation}
{\phi}^{u}\pth{x} = \bigexpecin{x}{\e^{u \tau_{\setA}} 
\overline{\phi}\pth{X_{\tau_{\setA}}}}\;.
\end{equation}
\end{proposition}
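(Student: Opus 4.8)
The plan is to establish the Feynman--Kac formula by a combination of the strong Markov property (to verify that the proposed expression solves the boundary value problem) and a uniqueness argument (to show it is the only bounded solution). This is a standard martingale/optional-stopping type argument adapted to discrete time.

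First I would check that $\phi^u(x) = \bigexpecin{x}{\e^{u\tau_A}\overline\phi(X_{\tau_A})}$ is well defined and bounded for $u$ satisfying \Cref{EqLaplaceTransform}: since $\overline\phi$ is bounded by hypothesis and the Laplace transform $\bigexpecin{x}{\e^{u\tau_A}}$ is finite and in fact bounded uniformly in $x$ (by the preceding lemma, together with the estimate $\supSur{x\in A^c}{\Prcx{x}{X_1\in A^c}} < \abs{\e^{-u}}$ controlling the geometric-type tail of $\tau_A$ on $A^c$), the expectation converges absolutely and $\norm{\phi^u}<\infty$. On $A$ we have $\tau_A=0$, so $\phi^u(x)=\overline\phi(x)$, giving the boundary condition immediately. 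For $x\in A^c$, I would condition on $X_1$: by the Markov property,
\begin{equation}
(K\phi^u)(x) = \bigexpecin{x}{\phi^u(X_1)}
= \bigexpecin{x}{\bigexpecin{X_1}{\e^{u\tau_A}\overline\phi(X_{\tau_A})}}\;.
\end{equation}
Since $x\in A^c$ means $\tau_A\geqs1$, the strong Markov property at time $1$ gives $\bigexpecin{x}{\e^{u\tau_A}\overline\phi(X_{\tau_A})} = \e^{u}\bigexpecin{x}{\bigexpecin{X_1}{\e^{u\tau_A}\overline\phi(X_{\tau_A})}}$, i.e.\ the inner expression shifted by one step picks up a factor $\e^{-u}$ and reproduces $\phi^u(x)$. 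Rearranging yields $(K\phi^u)(x)=\e^{-u}\phi^u(x)$, which is exactly \Cref{EqDirichletBoundaryValueProblem} on $A^c$.

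For uniqueness, suppose $\psi$ is another bounded solution and set $w=\phi^u-\psi$; then $w$ vanishes on $A$ and satisfies $(Kw)(x)=\e^{-u}w(x)$ on $A^c$. I would show $M_n := \e^{un\wedge\tau_A}\,w(X_{n\wedge\tau_A})$ is a martingale under $\probin{x}{\cdot}$: on $\{\tau_A>n\}$ we are in $A^c$ so $\bigexpecin{x}{w(X_{n+1})\mid\cF_n}=(Kw)(X_n)=\e^{-u}w(X_n)$, which gives the martingale identity, while on $\{\tau_A\leqs n\}$ the process is frozen. Then $w(x)=M_0=\bigexpecin{x}{M_n}$ for every $n$, and letting $n\to\infty$ — using $\tau_A<\infty$ a.s., the boundedness of $w$, and the integrable dominating bound $\abs{M_n}\leqs\norm{w}\,\e^{\re(u)\tau_A}$ furnished by \Cref{EqLaplaceTransform} via dominated convergence — we get $w(x)=\bigexpecin{x}{\e^{u\tau_A}w(X_{\tau_A})}=0$ since $w$ vanishes on $A$ and $X_{\tau_A}\in A$.

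The main obstacle, and the step deserving the most care, is the uniform integrability / domination needed to pass to the limit $n\to\infty$ in the optional-stopping step; this is precisely where the hypothesis \Cref{EqLaplaceTransform} is essential, since it guarantees not merely $\tau_A<\infty$ a.s.\ but an exponential moment $\bigexpecin{x}{\e^{\re(u)\tau_A}}<\infty$ (uniformly for $x$ in the relevant region), which provides the integrable dominating function $\norm{w}\e^{\re(u)\tau_A}$ for $\{M_n\}$. Everything else — the Markov-property computation showing $\phi^u$ solves the problem, and the martingale identity — is routine once the integrability is in place. I would also note that the same argument simultaneously re-derives existence and uniqueness, so no separate existence proof (e.g.\ via Fredholm theory for $\id-\e^u K_{A^c}$) is needed, although one could alternatively invoke invertibility of $\id-\e^{u}K_{A^c}$ under \Cref{EqLaplaceTransform} to get uniqueness more algebraically.
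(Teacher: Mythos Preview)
Your proof is correct. The existence half is essentially identical to the paper's: both verify the boundary condition trivially on $A$ and use the Markov property at time $1$ on $A^c$ to obtain $(K\phi^u)(x)=\e^{-u}\phi^u(x)$.

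For uniqueness the two diverge. The paper argues algebraically: the difference $w=\phi^u-\psi$ satisfies $(\id-\e^u K_{A^c})w=0$, and under \Cref{EqLaplaceTransform} one has $\norm{\e^u K_{A^c}}<1$, so $\id-\e^u K_{A^c}$ is invertible by a Neumann-series/Fredholm argument (citing \cite[Theorem~8.1]{gohberg2013classes}), forcing $w\equiv0$. Your route is probabilistic: you build the martingale $M_n=\e^{u(n\wedge\tau_A)}w(X_{n\wedge\tau_A})$, apply optional stopping, and pass to the limit via the domination $\abs{M_n}\leqs\norm{w}\e^{\re(u)\tau_A}$, which is integrable precisely because \Cref{EqLaplaceTransform} guarantees the exponential moment. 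You correctly identify this domination step as the crux and even note the operator-theoretic alternative at the end. Your approach is self-contained and avoids the external reference to Fredholm theory; the paper's approach is shorter and fits the operator-theoretic machinery used elsewhere in the article. Both are standard and equally valid here.
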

\begin{proof}
First, let us check that the proposed function solves the boundary problem.
This is obvious for $x \in {\setA}$, since in that case $\tau_A =0$, so that
$\bigexpecin{x}{\e^{u \tau_{\setA}} 
\overline{\phi}\pth{X_{\tau_{\setA}}}} = \overline{\phi}\pth{x}$.
For $x \in A^c$, splitting the expectation defining $ \pth{ K
{\phi}^{u} } \pth{x}$ according to the location of $X_1$, we get
\begin{align}
\nonumber
 \pth{ K {\phi}^{u} } \pth{x}  &= \bigexpecin{x}{ \bigexpecin{X_1}{\e^{u
\tau_{\setA}} \overline{\phi}\pth{X_{\tau_{\setA}}}} \ind{X_1 \in
{\setA} }} +  \bigexpecin{x}{ \bigexpecin{X_1}{\e^{u \tau_{\setA}}
\overline{\phi}\pth{X_{\tau_{\setA}}}} \ind{X_1 \in  A^c
}}\\
\nonumber
& = \bigexpecin{x}{ \overline{\phi}\pth{X_{1}} \ind{X_1 \in {\setA} }} + 
\bigexpecin{x}{ \e^{u \pth{\tau_{\setA} - 1}}
\overline{\phi}\pth{X_{\tau_{\setA}}} \ind{X_1 \in  A^c
}}\\
 &=\e^{-u} \bigexpecin{x}{\e^{u \tau_{\setA}} 
\overline{\phi}\pth{X_{\tau_{\setA}}}}\;.
 \end{align}
 This shows that $\bigexpecin{x}{\e^{u \tau_{{\setA}}} 
\overline{\phi}\pth{X_{\tau_{{\setA}}}}}$ is an admissible solution for all
$x \in \Sigma$.
\added{Uniqueness follows from the Fredholm alternative. Let us assume
by contradiction that two functions $f$ and $g$ solve the Dirichlet boundary
value problem with $f \neq g$. Then} 
\begin{align}
\nonumber
\added{ \pth{ (\id - \e^u K ) \pth{f-g} } \pth{x}  }&\added{{}= 0\;,} 
&\added{x}& \added{{}\in \setA^c\;,}\\
\added{(f-g)(x)} &\added{{}= 0\;,} 
&   \added{x}&\added{{}\in {\setA}}\;.
\label{EqKindDirichletBoundaryValueProblem} 
\end{align}
\added{The contradiction comes from the fact that under Condition
\eqref{EqLaplaceTransform}, $ \norm{\e^{ u}{ K_{A^c} } } < 1 $, so that we can 
apply \cite[Theorem 8.1]{gohberg2013classes}. In particular $\pth{\id - \e^u
K_{A^c}}$  is invertible and $f  \equiv g$.}
\end{proof}

\deleted{A remark has been removed.}

The solution of the boundary value
problem~\eqref{EqDirichletBoundaryValueProblem} allows us to define a (non
Markov) kernel on $\setA$.

\begin{corollary}\label{corEquivalenceEigenvaluePb}
Let $\Ku$ be the kernel defined on $\added{A \times\mathcal{B}(A)}$ by
\begin{equation}
\Ku\pth{x, \6y} = \biggexpecin{x}{\e^{u\pth{ \tau^+_{\setA}-1}}
\bigind{X_{\tau^+_{\setA}}  \in \6y}}\;.
\end{equation}
For $u$ verifying \Cref{EqLaplaceTransform}, the eigenvalue problem on $\Sigma$
\begin{equation}
\pth{K {\phi}^u}\pth{x} = \e^{-u}  {\phi}^u\pth{x}
\end{equation}
 is equivalent to the eigenvalue problem on $\setA$ given by
\begin{equation}
\label{eq:Kuphibar}  
 \bigpar{\Ku \overline{\phi}^{u}}\pth{x} = \e^{-u}  \overline{\phi}^{u}\pth{x}
\end{equation}
where $\overline{\phi}^{u}\pth{x} = {\phi}^{u}\pth{x}$ for all $x \in \setA$.
\end{corollary}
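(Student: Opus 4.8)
The plan is to establish the equivalence between the two eigenvalue problems by using the Feynman--Kac representation from Proposition~\ref{prop:Feynman-Kac} together with the strong Markov property. The key observation is that $\Ku$ describes the process ``skipping'' the excursions outside $\setA$ while accumulating the appropriate Laplace weight. Concretely, I would first unfold the definition of $\Ku$ via a first-step decomposition: for $x\in\setA$, split the expectation $\bigexpecin{x}{\e^{u(\tau^+_\setA-1)}\bigind{X_{\tau^+_\setA}\in\6y}}$ according to whether $X_1\in\setA$ (contributing $k(x,y)\6y$ directly) or $X_1\in\setA^c$ (in which case $\tau^+_\setA = 1+\tau_\setA\circ\theta_1$ and one applies the strong Markov property at time $1$). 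This shows that $\Ku$ acting on $\overline\phi$ can be rewritten as $(\Ku\overline\phi)(x) = \bigexpecin{x}{\e^{u(\tau^+_\setA-1)}\overline\phi(X_{\tau^+_\setA})}$ for $x\in\setA$.

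Next I would show the forward implication: suppose $\phi^u$ solves $(K\phi^u)(x) = \e^{-u}\phi^u(x)$ on all of $\Sigma$. Restricting to $\setA^c$, this says precisely that $\phi^u$ solves the homogeneous Dirichlet problem~\eqref{EqDirichletBoundaryValueProblem} with boundary data $\overline\phi = \phi^u\vert_\setA$, so by Proposition~\ref{prop:Feynman-Kac} we have $\phi^u(x) = \bigexpecin{x}{\e^{u\tau_\setA}\overline\phi(X_{\tau_\setA})}$ for $x\in\setA^c$ (and trivially for $x\in\setA$). Now evaluate $\e^{-u}\phi^u(x) = (K\phi^u)(x)$ at a point $x\in\setA$: condition on $X_1$, use that on $\{X_1\in\setA\}$ we get $\phi^u(X_1)=\overline\phi(X_1)$ and on $\{X_1\in\setA^c\}$ the Feynman--Kac formula gives $\phi^u(X_1) = \bigexpecin{X_1}{\e^{u\tau_\setA}\overline\phi(X_{\tau_\setA})}$; the strong Markov property then collapses this to $\e^{-u}\phi^u(x) = \e^{-u}\bigexpecin{x}{\e^{u(\tau^+_\setA-1)}\overline\phi(X_{\tau^+_\setA})} = \e^{-u}(\Ku\overline\phi)(x)$, i.e.\ $(\Ku\overline\phi^u)(x) = \e^{-u}\overline\phi^u(x)$.

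For the converse, suppose $\overline\phi^u$ solves $(\Ku\overline\phi^u)(x) = \e^{-u}\overline\phi^u(x)$ on $\setA$. Define $\phi^u$ on all of $\Sigma$ by $\phi^u(x) = \bigexpecin{x}{\e^{u\tau_\setA}\overline\phi^u(X_{\tau_\setA})}$ (this agrees with $\overline\phi^u$ on $\setA$). By Proposition~\ref{prop:Feynman-Kac}, this $\phi^u$ automatically satisfies $(K\phi^u)(x) = \e^{-u}\phi^u(x)$ for $x\in\setA^c$. It remains to verify the eigenvalue identity at points $x\in\setA$, which is exactly the computation reversed from the previous paragraph: $(K\phi^u)(x)$ decomposes via the first step and the strong Markov property into $\e^{-u}(\Ku\overline\phi^u)(x) = \e^{-u}\e^{-u}\overline\phi^u(x) = \e^{-u}\phi^u(x)$. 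Condition~\eqref{EqLaplaceTransform} is invoked throughout to guarantee that all the Laplace transforms converge and that $\Ku$ is a well-defined bounded kernel, so that the manipulations are legitimate.

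The main obstacle I anticipate is bookkeeping the strong Markov property correctly when relating $\tau_\setA$ (hitting time, which is $0$ on $\setA$) and $\tau^+_\setA$ (return time, which is $\geqs 1$ on $\setA$): one must be careful that after the first step from $x\in\setA$ to $X_1\in\setA^c$, the remaining time to return to $\setA$ is $\tau_\setA\circ\theta_1$, so that $\tau^+_\setA = 1+\tau_\setA\circ\theta_1$, and the exponential weight $\e^{u(\tau^+_\setA-1)} = \e^{u\,\tau_\setA\circ\theta_1}$ factors cleanly. Once this identity is in hand, everything else is a routine conditioning argument, and the equivalence of the two eigenvalue problems follows in both directions.
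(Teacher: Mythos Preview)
Your strategy is exactly the paper's: split $(K\phi^u)(x)$ according to whether $X_1\in A$ or $X_1\in A^c$, apply the Feynman--Kac representation on the latter, and reassemble via the strong Markov property. There is one bookkeeping slip to fix: the first-step decomposition yields $(K\phi^u)(x) = \bigexpecin{x}{\e^{u(\tau^+_A-1)}\overline\phi(X_{\tau^+_A})} = (\Ku\overline\phi)(x)$ directly, with no extra factor of $\e^{-u}$ in front; as written, your chain $\e^{-u}(\Ku\overline\phi^u)(x) = \e^{-u}\e^{-u}\overline\phi^u(x) = \e^{-u}\phi^u(x)$ is inconsistent (the last equality fails). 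Once you drop the spurious $\e^{-u}$, both directions go through exactly as you describe and match the paper's argument.
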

\begin{proof}
Let $\pth{\e^{-u}, {\phi}^{u}}$ be a couple of eigenvalue, eigenfunction for the
Markov kernel $K$. Then splitting the integral equation according to $X_1$ and
inserting the previous solution in the second term of the right-hand side, we
have
\begin{align}
\nonumber
\e^{-u} {\phi}^{u}\pth{x}=\pth{K {\phi}^u}\pth{x} 
&= \bigexpecin{x}{{\phi}^u\pth{X_1}
\ind{X_1 \in \setA}} +  \bigexpecin{x}{{\phi}^u\pth{X_1} \ind{X_1 \in A^c}}\\
\nonumber
&= \bigexpecin{x}{ {\phi}^u(X_{\tau^+_{\setA}}) \normalind{ \tau^+_{\setA}= 1 }}
+ \bigexpecin{x}{\Espc{X_1}{\e^{u \tau_{{\setA}}} 
{\phi}^u\pth{X_{\tau_{{\setA}}}}} \normalind{\tau^+_{\setA} > 1}}\\
\nonumber
&=  \bigexpecin{x}{\e^{u(\tau^+_{\setA}-1)} 
{\phi}^u(X_{\tau^+_{\setA}}) \added{\normalind{ \tau^+_{\setA}= 1 } }} +
\bigexpecin{x}{\e^{u\pth{
\tau^+_{\setA}-1}}  {\phi}^u(X_{\tau^+_{\setA}}) \normalind{\tau^+_{\setA} >
1}}\\
&= \bigexpecin{x}{\e^{u(\tau^+_{\setA}-1)}  {\phi}^u(X_{\tau^+_{\setA}}) }
= \pth{\Ku {\phi}^u}\pth{x}\;.
\end{align}
Since for $x \in A$, we have ${\phi}^u \pth{x} = 
\overline{\phi}^{u}\pth{x}$, this proves that~\eqref{eq:Kuphibar} holds.

On the other hand, if we know a couple $({\e^{-u},\overline{\phi}^{u}})$ of
eigenvalue and eigenfunction for the kernel $\Ku$, we introduce the
function
\begin{equation}
{\phi}^u\pth{x} = \Espc{x}{\e^{u \tau_{\setA} }
\overline{\phi}^u\pth{X_{\tau_{\setA}}}}\;.
\end{equation}
Note that ${\phi}^u\pth{x} = \overline{\phi}^u\pth{x}$ for $x \in \setA$.
By the previous proposition, ${\phi}^u$ satisfies the eigenvalue equation
with eigenvalue $\e^{-u}$.
\end{proof}
In the sequel, we will forget the notation $\overline{\phi}$ since $
\overline{\phi}^{u} = \restriction{ {\phi}^u}{x \in \setA} $.

\subsection{Choice of the set to reduce the eigenvalue problem}
\label{subSection:ChoiceSetA}

Thanks to \Cref{corEquivalenceEigenvaluePb}, we have reduced the eigenvalue
problem on $\Sigma$ to an eigenvalue problem on a subset $\setA$ of $\Sigma$,
which has yet to be defined. We now discuss the choice of $\setA$. Under the
metastable hierarchy assumption~\ref{ass:hierarchy}, we expect that there will
be $N$ eigenvalues exponentially close to one, and a gap between the
$N^{\text{th}}$ eigenvalue and the remaining part of the spectrum (we recall
that eigenvalues are ordered by decreasing modulus).

The general idea of the proof is to first choose a set $\setA$ which is well
suited to estimating $\lambda_{N-1}$, the ${N}^{\text{th}}$ eigenvalue of the
kernel, and also to obtain a rough estimate of the $N-1$ largest eigenvalues.
Then we take another set $\setA$ in order to estimate $\lambda_{N-2}$, and
obtain a rough estimate of the $N-2$ largest eigenvalues, and so on up to
$\lambda_{1}$. The way to estimate one of the ${N}$ largest eigenvalues is
based  on approximations of the kernel $\Ku$ and is explained is the next
subsection.

To estimate the $N^{\text{th}}$ eigenvalue,  we are going to choose $\setA :=
\meta{N}$. Note that intuitively for such a choice of set,
\Cref{EqLaplaceTransform} is not restrictive. Indeed, due to the  attraction of
$\meta{N}$, starting outside the union of the neighbourhoods of the stable
periodic orbits, the probability that the first return point is still outside
this neighbourhood should be very small (possibly replacing $K$ by a suitable
iterate $K^m$). So this should allow us to estimate the
$N^{\text{th}}$ eigenvalue of the kernel $\Ku$. 

Next, to estimate the $\pth{N-1}^{\text{st}}$ eigenvalue $\lambda_{N-2}$, we
will study the eigenvalue problem on $\setA := \meta{N-1}$. It follows that for
all $u$ such that
\begin{equation}
\underset{x \in \meta{N-1}^c}{\sup} \Prcx{x}{X_1 \in 
\meta{N-1}^c} <\abs{\e^{-u}}\; ,
\end{equation} 
the original eigenvalue problem \Cref{EqGenEigenvaluePb} is equivalent to an
eigenvalue problem on $\meta{N-1}$.
Note that the Laplace transform conditions given by \Cref{EqLaplaceTransform}
satisfies
\begin{equation}
\underset{x \in \meta{N}^c}{\sup} \Prcx{x}{X_1 \in  
\meta{N}^c} < \underset{x \in \meta{N-1}^c}{\sup} \Prcx{x}{X_1 \in
\meta{N-1}^c}\;.
\end{equation}
Therefore when solving the eigenvalue equation defined on $\cM_{N-1}$ with
kernel $K^{u,\pth{N-1}}$ we find eigenvalues which are greater in modulus  than
the eigenvalues of the kernel $K^{u,\pth{N}}$ defined on $\cM_{N}$.
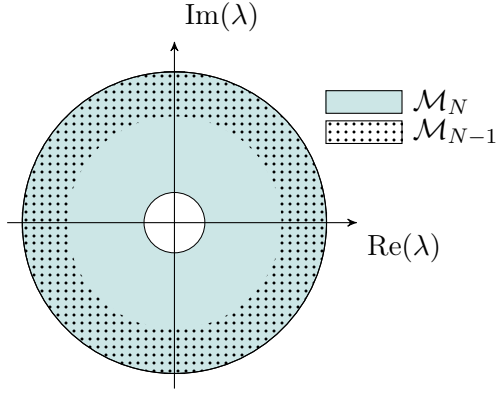
\begin{figure}
\centering
\begin{tikzpicture}[scale=2,>=stealth']
\draw [fill=teal!20] (0,0.) circle (1cm);
\draw [pattern=dots] (0,0.) circle (1cm);
\draw [fill=teal!20, color=teal!20] (0,0.) circle (0.7cm);
\draw [line width=0.1pt,color=black,fill=white] (0,0) circle (0.2cm);
 \node[draw=black,fill=teal!20,shape=rectangle,
       minimum width=1cm,minimum height=0.3cm,anchor=east](mn) at (1.5,0.8){};
 \node[anchor=west](tmn) at (1.5,0.8){$\cM_{N}$};
  \node[draw=black,pattern=dots,shape=rectangle,
       minimum width=1cm,minimum height=0.3cm,anchor=east](mn) at (1.5,0.6){};
 \node[anchor=west](tmn) at (1.5,0.6){$\cM_{N-1}$};
\draw[->,color=black] (-1.1,0.) -- (1.2,0.) node[below
right]{$\text{Re}\pth{\lambda}$};
\draw[->,color=black] (0.,-1.1) -- (0.,1.2) node[above
right]{$\text{Im}\pth{\lambda}$};
\draw[shift={(-1,0)},color=black] (0pt,2pt) -- (0pt,-2pt);
\draw[shift={(1,0)},color=black] (0pt,2pt) -- (0pt,-2pt);
\draw[shift={(0,1)},color=black] (2pt,0pt) -- (-2pt,0pt);
\draw[shift={(0,-1)},color=black] (2pt,0pt) -- (-2pt,0pt);
\end{tikzpicture}
\caption{Set of possible eigenvalues $\lambda$ for the eigenvalue problems
defined on $\cM_{N}$ and $\cM_{N-1}$.}
\end{figure}

In general, to estimate the $k^{\text{th}}$ eigenvalue, we will study
the eigenvalue problem \Cref{EqDirichletBoundaryValueProblem} with $\setA =
\cM_{k+1}$. Therefore we have to study the spectral properties of the kernel
$K^{u,\pth{k+1}}$ defined on $\meta{k+1}$ by
\begin{equation}
K^{u,\pth{k+1}}\pth{x,\6y}  = \biggexpecin{x}{\e^{u(\tau^+_{\cM_{k+1}}-1)}
\bigind{X_{\tau^+_{\cM_{k+1} }} \in \6y} }\;.
\end{equation}
To ease notation, we will simply write $K^{u}$ and keep in mind that the kernel
depends on $k$ through its domain of definition.

\subsection{Eigenvalue problem on a union of metastable sets}
\label{subSection:EigenvaluePbUnionMetastable}

Reducing our eigenvalue problem on $\cM_{k+1}$ is convenient because the
kernel is defined where we expect to have information from the deterministic
part of the system due to the attraction of the stable periodic orbits. However,
the introduced kernel does not have a nice probabilistic interpretation since it
depends on the spectral parameter $u$. 
To circumvent this problem, we are going to introduce a new parameter $\varv$
and solve the system of two coupled equations
\begin{align}
\nonumber
\pth{\Ku \phi^u}\pth{x} & = \varv \phi^u\pth{x} \\
\varv & = \e^{-u}\;.
\label{eq:coupled} 
\end{align}
%
In addition, instead of studying the kernel $\Ku$ (or its iterate
$\pth{\Ku}^m$), we are going to approximate it by a kernel having a nicer
probabilistic interpretation and for which we can easily  obtain the spectrum.
The justification for using such an approximation is given by the continuity of
eigenvalues of bounded linear operators \cite{gohberg2013classes}. Indeed, let
$K^{\star}: Y \rightarrow Y$ be a bounded linear operator acting on the Banach
space $Y$. The following classical theorem describes what happens to parts of
the spectrum $\uGs\pth{K^{\star}}$ if the operator $K^{\star}$ is
subjected to a small perturbation.

\begin{proposition}[{\cite[Proposition 4.2]{gohberg2013classes}}]
\label{propContinuityEigenvalue}
Let 
$\uGs$ be a finite set of eigenvalues of finite type of $K^{\star}$, and let
$\cC$ be a contour around $\uGs$ which separates $\uGs$ from
$\uGs\pth{K^{\star}} \bs \uGs$. Then there exists $\epsilon>0$ such
that for any operator $\Ku$ on $X$ with $\norm{K^{\star}-\Ku} < \epsilon$ the
following holds true: $\uGs\pth{\Ku} \cap \cC =\emptyset$,
the part of $\uGs\pth{\Ku}$ inside $\cC$ is a finite set of eigenvalues of
finite type and, if we denote $m\pth{\lambda ; K}$ the algebraic multiplicity of
the eigenvalue $\lambda$ for the operator $K$, then
\begin{equation}
\underset{\lambda \textrm{ inside } \cC}{\sum} m\pth{\lambda ; K^u} = 
\underset{\lambda \textrm{ inside } \cC}{\sum} m\pth{\lambda ; K^{\star}}\;.
\end{equation}
\end{proposition}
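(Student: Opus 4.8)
The plan is to prove this through the Riesz spectral projection attached to the contour $\cC$ and to exploit its norm-continuity under perturbations of the operator. Since $\uGs\pth{K^{\star}}\cap\cC=\emptyset$, the resolvent $\lambda\mapsto\pth{\lambda\id-K^{\star}}^{-1}$ is continuous on the compact curve $\cC$, so $M:=\sup_{\lambda\in\cC}\norm{\pth{\lambda\id-K^{\star}}^{-1}}$ is finite. I would then set $\epsilon_0:=1/M$: for any operator $\Ku$ with $\norm{K^{\star}-\Ku}<\epsilon_0$ and any $\lambda\in\cC$, the factorisation
\[
 \lambda\id-\Ku = \pth{\lambda\id-K^{\star}}\Bigpar{\id-\pth{\lambda\id-K^{\star}}^{-1}\pth{K^{\star}-\Ku}}
\]
combined with the Neumann series for the second factor shows that $\lambda\id-\Ku$ is invertible, with $\norm{\pth{\lambda\id-\Ku}^{-1}}\leqs M/\bigl(1-M\norm{K^{\star}-\Ku}\bigr)$ uniformly on $\cC$. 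Hence $\uGs\pth{\Ku}\cap\cC=\emptyset$, so $\cC$ encloses a well-defined (a priori possibly empty) portion of $\uGs\pth{\Ku}$.

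Next I would introduce the spectral projections $P^{\star}=\frac{1}{2\pi\icx}\oint_{\cC}\pth{\lambda\id-K^{\star}}^{-1}\6\lambda$ and $P^{u}=\frac{1}{2\pi\icx}\oint_{\cC}\pth{\lambda\id-\Ku}^{-1}\6\lambda$. By the Riesz--Dunford functional calculus each is a bounded idempotent that commutes with the corresponding operator and splits $Y$ into two invariant subspaces, one carrying the spectrum inside $\cC$ and one carrying the spectrum outside. Using the resolvent identity
\[
 \pth{\lambda\id-\Ku}^{-1}-\pth{\lambda\id-K^{\star}}^{-1}
 = \pth{\lambda\id-\Ku}^{-1}\pth{\Ku-K^{\star}}\pth{\lambda\id-K^{\star}}^{-1}
\]
together with the uniform resolvent bound above and integrating over $\cC$, I obtain $\norm{P^{u}-P^{\star}}\leqs C\norm{\Ku-K^{\star}}$ with $C$ depending only on $M$ and the length of $\cC$. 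Shrinking $\epsilon$ below $\epsilon_0$ if necessary, I may therefore assume $\norm{P^{u}-P^{\star}}<1$.

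At this point I would invoke the classical fact that two idempotents at norm distance $<1$ have isomorphic ranges: the operator $U:=\id-(P^{u}-P^{\star})$ is invertible, and a short computation gives $UP^{\star}=P^{u}U$, so $U$ maps $\operatorname{Ran}P^{\star}$ bijectively onto $\operatorname{Ran}P^{u}$. By hypothesis $\uGs$ lies inside $\cC$ and consists of finitely many eigenvalues of finite type, so $\operatorname{Ran}P^{\star}$ is finite-dimensional with $\dim\operatorname{Ran}P^{\star}=\sum_{\lambda\text{ inside }\cC}m\pth{\lambda;K^{\star}}$. Consequently $\operatorname{Ran}P^{u}$ is finite-dimensional of the same dimension; since $\Ku$ restricts to a finite matrix on $\operatorname{Ran}P^{u}$ whose spectrum is exactly the part of $\uGs\pth{\Ku}$ inside $\cC$, that part is a finite set of eigenvalues of finite type, and the sum of their algebraic multiplicities equals $\dim\operatorname{Ran}P^{u}=\dim\operatorname{Ran}P^{\star}$. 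Chaining the equalities yields the asserted multiplicity identity.

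I expect the delicate point to be the bookkeeping in the last step rather than the perturbation estimates: one must be sure that the finite-dimensional subspace $\operatorname{Ran}P^{u}$ genuinely accounts for \emph{all} of the algebraic multiplicity of $\Ku$ inside $\cC$, i.e.\ that in the $\Ku$-invariant splitting $Y=\operatorname{Ran}P^{u}\oplus\ker P^{u}$ one has $\uGs\pth{\Ku|_{\operatorname{Ran}P^{u}}}$ strictly inside $\cC$ and $\uGs\pth{\Ku|_{\ker P^{u}}}$ strictly outside, and that the algebraic multiplicity of an eigenvalue computed on the whole space coincides with the one computed on the invariant subspace containing it. These are standard properties of the Riesz calculus, but they are precisely what makes the finite counting legitimate; once they and the ``close projections have isomorphic ranges'' lemma are in place, the remaining estimates are routine given the fixed constant $M$.
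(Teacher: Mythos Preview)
Your approach is correct and is the standard Riesz-projection argument; indeed the paper does not supply its own proof of this proposition but simply cites it from Gohberg, and the quantitative elaboration the paper gives later in Section~\ref{ssec:contour_general} follows exactly the same line (uniform resolvent bound on $\cC$, Riesz projectors, smallness of their difference).

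One detail, however, is wrong as written: with $U=\id-(P^{u}-P^{\star})$ you do \emph{not} get the intertwining $UP^{\star}=P^{u}U$. A direct computation gives $UP^{\star}=2P^{\star}-P^{u}P^{\star}$ while $P^{u}U=P^{u}P^{\star}$, and these agree only when $(I-P^{u})P^{\star}=0$. The conclusion you want---that $\dim\operatorname{Ran}P^{u}=\dim\operatorname{Ran}P^{\star}$---is of course true and classical, but it requires either a different intertwiner (e.g.\ $W=P^{u}P^{\star}+(\id-P^{u})(\id-P^{\star})$, for which $WP^{\star}=P^{u}W$ is immediate, and one then checks $\norm{\id-W}<1$ via $(P^{u}-P^{\star})^{2}$) or the simpler injectivity argument: if $\norm{P^{u}-P^{\star}}<1$ then $P^{u}$ restricted to $\operatorname{Ran}P^{\star}$ is injective and vice versa, which together with finite-dimensionality of $\operatorname{Ran}P^{\star}$ forces equality of dimensions. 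With this correction the proof is complete and matches the standard one.
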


The theory also provides bounds on $\epsilon$, cf.\
Section~\ref{sec:operators} for details. 

Two approximations are going to be made. Firstly, because we are looking for
eigenvalues of $\Ku$ that are close to $1$, i.e.\ $u$ close to zero, we can
compare the kernels $\Ku$ and $K^{0}$. Note that $K^0$ is a Markov kernel
defined  on $\cM_{k+1} \times \mathcal{B}\pth{\cM_{k+1}}$ and given by
\begin{equation}
K^{0}\pth{x, \6y} = \Bigprobin{x}{X_{\tau^+_{\cM_{k+1}} }  \in \6y} \; .
\end{equation}
This is exactly the kernel of the trace process $X\vert_{\cM_{k+1}}$ introduced
in Section~\ref{ssec:trace}.  
For the second approximation, we introduce the kernel $K^{\star}$ given by
\begin{equation}
K^{\star}\pth{x, \6y} =  \displaystyle \sum_{i=1}^{k+1} \ind{x \in B_i} 
\int_{B_i} \mathring{\pi}^{B_i}_0\pth{z} \ K^0\pth{z, \6y} \6z
\end{equation}
where $\mathring{\pi}^{B_i}_0$ is the quasistationary distribution of the
process described by the kernel $K^0_{B_i}$  (see \Cref{ssec:killed}). Note that
the kernel $K^{\star}$ is of finite rank, since it is a finite sum of products
of two functions, one of which depends on its first argument only. 

In order to obtain sharper estimates, instead of considering the Markov chain at
each return time to the set $\cM_{k+1}$ on the Poincaré map, we will study the
diluted chain given by the $m^{\text{th}}$ iterate of the kernel, where $m$ may
depend on $\sigma$. It is clear that $\e^{-u}$ is an eigenvalue of $\Ku$ if and
only if $\e^{-u \mm}$ is an eigenvalue of the kernel $\npar{\Ku}^{m}$. We also
introduce the $m$-fold iterates $\npar{K^{0}}^m$ and $\npar{K^{\star}}^m$.

We will prove in \Cref{Ssection:KUKZ} the following bound on the norm of the
difference between $\npar{K^{u}}^m$ and $\npar{K^{0}}^m$.

\begin{proposition}[Proposition~\ref{Prop:NormKuKZIteratedk}]
\label{Prop:NormKuKZIterated}
For all real  $u$ verifying the Laplace condition given by
\eqref{EqLaplaceTransform} \added{with $A=
\cM_{k+1}$}, and such that $\pth{1-\e^{-u}}
\bigexpecin{\cM_{k+1}^c}{\tau^+_{\cM_{k+1}} }<1$, we have
\begin{equation}
\norm{\pth{\Ku}^m  - \pth{K^{0}}^m} \leqslant \Biggpar{ 1 +
\frac{\pth{1-\e^{-u}} \bigexpecin{\cM_{k+1}}{\tau^+_{\cM_{k+1}}-1} }{ 1-
\pth{1-\e^{-u}} \bigexpecin{\cM_{k+1}^c}{ \tau^+_{\cM_{k+1}} }}}^m  -1 \; .
\label{Eq:NormKUKZIterated}
\end{equation}
%
\end{proposition}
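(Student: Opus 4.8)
The plan is to prove the one-step operator bound
\[
 \norm{K^u-K^0}\;\leqslant\;X\;:=\;
 \frac{\pth{1-\e^{-u}}\,\bigexpecin{\cM_{k+1}}{\tau^+_{\cM_{k+1}}-1}}
      {1-\pth{1-\e^{-u}}\,\bigexpecin{\cM_{k+1}^c}{\tau^+_{\cM_{k+1}}}}
\]
and then to transfer it to the $m$-fold iterate. Since $K^0$ is the stochastic trace kernel on $\cM_{k+1}$ one has $\norm{K^0}=1$, and the telescoping identity
\[
 \pth{K^u}^m-\pth{K^0}^m=\sum_{j=0}^{m-1}\pth{K^u}^j\,\pth{K^u-K^0}\,\pth{K^0}^{m-1-j}
\]
gives $\norm{\pth{K^u}^m-\pth{K^0}^m}\leqslant\sum_{j=0}^{m-1}\norm{K^u}^j\norm{K^u-K^0}$. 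Using $\norm{K^u}\leqslant\norm{K^0}+\norm{K^u-K^0}\leqslant1+X$, the geometric sum $\sum_{j=0}^{m-1}(1+X)^jX=(1+X)^m-1$ reproduces \eqref{Eq:NormKUKZIterated}. We may take $u>0$ real (for $u=0$ both sides vanish), so everything comes down to $\norm{K^u-K^0}\leqslant X$.

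For that, observe that $\pth{K^u-K^0}(x,\6y)=\bigexpecin{x}{\bigpar{\e^{u\pth{\tau^+_{\cM_{k+1}}-1}}-1}\ind{X_{\tau^+_{\cM_{k+1}}}\in\6y}}$, which is a non-negative measure carried by $\cM_{k+1}$ because $\tau^+_{\cM_{k+1}}\geqslant1$ and $u>0$; hence its supremum operator norm equals its total mass,
\[
 \norm{K^u-K^0}=\supSur{x\in\cM_{k+1}}{\bigexpecin{x}{\e^{u\pth{\tau^+_{\cM_{k+1}}-1}}-1}}\;,
\]
and the task reduces to bounding this Laplace transform of $\tau^+_{\cM_{k+1}}-1$ uniformly in $x\in\cM_{k+1}$.

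I would estimate it by two applications of summation by parts. Writing $\e^{u\ell}-1=\pth{1-\e^{-u}}\sum_{i=1}^\ell\e^{ui}$ with $\ell=\tau^+_{\cM_{k+1}}-1$ gives
\[
 \bigexpecin{x}{\e^{u(\tau^+_{\cM_{k+1}}-1)}-1}=\pth{1-\e^{-u}}\,S_x\;,\qquad
 S_x:=\sum_{n\geqslant1}\e^{un}\,\Prcx{x}{\tau^+_{\cM_{k+1}}>n}\;,
\]
the series being finite because the Laplace condition \eqref{EqLaplaceTransform} (with $A=\cM_{k+1}$) forces $\Prcx{x}{\tau^+_{\cM_{k+1}}>n}$ to decay geometrically faster than $\e^{-un}$. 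Applying the same identity to $\e^{un}$ and exchanging the two summations, the diagonal contribution is $\bigexpecin{x}{\tau^+_{\cM_{k+1}}-1}$, while the off-diagonal tails are $\sum_{n\geqslant i}\Prcx{x}{\tau^+_{\cM_{k+1}}>n}=\bigexpecin{x}{\pth{\tau^+_{\cM_{k+1}}-i}^+}$. On the event $\set{\tau^+_{\cM_{k+1}}>i}$ one has $X_i\in\cM_{k+1}^c$ and $\tau^+_{\cM_{k+1}}-i$ has the law of the first return time to $\cM_{k+1}$ of the chain restarted at $X_i$, so the strong Markov property gives $\bigexpecin{x}{\pth{\tau^+_{\cM_{k+1}}-i}^+}\leqslant\bigexpecin{\cM_{k+1}^c}{\tau^+_{\cM_{k+1}}}\,\Prcx{x}{\tau^+_{\cM_{k+1}}>i}$. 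Substituting back, $S_x$ reappears on the right and one obtains
\[
 S_x\;\leqslant\;\bigexpecin{x}{\tau^+_{\cM_{k+1}}-1}+\pth{1-\e^{-u}}\,\bigexpecin{\cM_{k+1}^c}{\tau^+_{\cM_{k+1}}}\,S_x\;.
\]
Since $S_x<\infty$ and, by hypothesis, $\pth{1-\e^{-u}}\bigexpecin{\cM_{k+1}^c}{\tau^+_{\cM_{k+1}}}<1$, one may solve for $S_x$, multiply by $1-\e^{-u}$, and take the supremum over $x\in\cM_{k+1}$ to arrive at $\norm{K^u-K^0}\leqslant X$.

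All of this is elementary; the one point needing care is the bookkeeping of index shifts in the two summations by parts, so that the denominator comes out as $1-\pth{1-\e^{-u}}\bigexpecin{\cM_{k+1}^c}{\tau^+_{\cM_{k+1}}}$ and not as something weaker (such as a one-step escape probability). This rests on the sharp tail estimate $\bigexpecin{x}{\pth{\tau^+_{\cM_{k+1}}-i}^+}\leqslant\bigexpecin{\cM_{k+1}^c}{\tau^+_{\cM_{k+1}}}\,\Prcx{x}{\tau^+_{\cM_{k+1}}>i}$ — using that $X_i$ already lies in $\cM_{k+1}^c$ on $\set{\tau^+_{\cM_{k+1}}>i}$ — and on keeping the numerator equal to the excess return time $\tau^+_{\cM_{k+1}}-1$ from $\cM_{k+1}$ itself. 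One should also track the complementary roles of the two hypotheses on $u$: the Laplace condition guarantees $S_x<\infty$, while the inequality $\pth{1-\e^{-u}}\bigexpecin{\cM_{k+1}^c}{\tau^+_{\cM_{k+1}}}<1$ makes $X$ a finite positive number and is exactly what permits solving the displayed inequality for $S_x$.
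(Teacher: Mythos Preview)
Your proof is correct. It differs from the paper's in two places, and in both cases your route is at least as clean.

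For the one-step bound $\norm{K^u-K^0}\leqslant X$, the paper reaches the same self-referential inequality but through a slightly more probabilistic detour: it writes $\bigexpecin{x}{\e^{u(\tau^+-1)}-1}=(1-\e^{-u})\bigexpecin{x}{\sum_{n=1}^{\tau^+-1}\e^{un}}$, then time-reverses the sum to $\sum_{n=1}^{\tau^+-1}\bigexpecin{X_n}{\e^{u\tau_{\cM_{k+1}}}}$, bounds it by $(\tau^+-1)\cdot M$ with $M=\bigexpecin{\cM_{k+1}^c}{\e^{u\tau_{\cM_{k+1}}}}$, and finally controls $M$ via a boundary-value representation (their Lemma~5.3) that yields $M-1\leqslant(1-\e^{-u})\bigexpecin{\cM_{k+1}^c}{\tau^+_{\cM_{k+1}}}M$. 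Your double summation-by-parts on the tail series $S_x$ arrives at the identical closure $S_x\leqslant\bigexpecin{x}{\tau^+-1}+(1-\e^{-u})\bigexpecin{\cM_{k+1}^c}{\tau^+}S_x$ without invoking that lemma; the two computations are essentially Abel-dual to one another.

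For the iterate, the paper does \emph{not} telescope. It uses instead the explicit probabilistic identity $(K^u)^m(x,\6y)=\bigexpecin{x}{\e^{u(\tau^+_m-m)}\normalind{X_{\tau^+_m}\in\6y}}$, where $\tau^+_m$ is the $m$-th return time to $\cM_{k+1}$, bounds $\norm{(K^u)^m-(K^0)^m}$ by $\sup_x\bigexpecin{x}{\e^{u(\tau^+_m-m)}-1}$, and derives a recursion $t_m\leqslant t_1+t_{m-1}(1+(1-\e^{-u})t_1)$ on $t_m=\bigexpecin{\cM_{k+1}}{\sum_{n=1}^{\tau^+_m-m}\e^{un}}$, whose solution is exactly $(1+X)^m-1$ after substituting the bound \eqref{Eq:BoundSumUpToTau} on $t_1$. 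Your operator-algebraic telescoping, using only $\norm{K^0}=1$ and $\norm{K^u}\leqslant 1+X$, is shorter and makes no further use of the return-time structure; the paper's argument has the minor advantage of producing, along the way, a direct Laplace-transform bound on $\tau^+_m-m$ that could be reused elsewhere.
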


The expectations appearing in this bound will be estimated in
Section~\ref{sec:sample_paths}. We will also obtain in
Section~\ref{ssec:K0Kstar} the following bound on the norm of the difference
between $\npar{K^{0}}^m$ and $\npar{K^{\star}}^m$.

\begin{proposition}[Proposition~\ref{Prop:NormK0KStark}]
\label{Prop:NormK0KStar}
For all $m\in\N$, the norm of the difference between the 
 iterates of $K^0$ and $K^{\star}$ satisfies the bound 
\begin{equation}
\norm{\pth{K^{0}}^m  - \pth{K^{\star}}^m} \leqslant  \supSur{1 \leqslant i
\leqslant {k+1}} {R_i}\;,
\end{equation}
where 
\begin{align}
\nonumber
R_i ={} &\norm{\mathring{\phi}^{B_i}_0-1} +2 (\mathring{\ev}^{B_i}_1)^m + 2
\frac{1- (\mathring{\ev}^{B_i}_1)^m }{1-\mathring{\ev}^{B_i}_1}
\Prcx{B_{i}}{\tau^+_{\cM_{k+1} \bs  B_i}  < \tau^+_{B_i}} \\
&{}+ m \pth{m-1} \Prcx{B_{i}}{\tau^+_{\cM_{k+1} \bs B_i}  < \tau^+_{B_i}}
\Prcx{\cM_{k+1} \bs B_i}{\tau^+_{B_i} <\tau^+_{\cM_{k+1} \bs B_i  }  }\; .
\end{align}
\end{proposition}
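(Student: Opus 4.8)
The plan is to view $(K^\star)^m$ as the \emph{quasistationary-reset} version of the trace chain and to compare it with $(K^0)^m$ path by path. Write $\cM_{k+1}=\bigcup_{i=1}^{k+1}B_i$ as a disjoint union, and recall that, restricted to a single $B_i$, the trace kernel $K^0$ coincides with the killed kernel $K^0_{B_i}$, whose principal eigendata are $\mathring{\ev}^{B_i}_0,\mathring{\pi}^{B_i}_0,\mathring{\phi}^{B_i}_0$ (normalised by $\mathring{\pi}^{B_i}_0\mathring{\phi}^{B_i}_0=1$). One step of $K^\star$ from a point of $B_i$ produces the law $\nu_i:=\mathring{\pi}^{B_i}_0K^0$, and since $\nu_i$ restricted to $B_i$ equals $\mathring{\ev}^{B_i}_0\mathring{\pi}^{B_i}_0$, the iterate $(K^\star)^m$ describes a process which, whenever it sits in $B_i$, has within-ball law exactly $\mathring{\pi}^{B_i}_0$ and leaves $B_i$ at each step with probability $q_i:=\Prcx{\mathring{\pi}^{B_i}_0}{\tau^+_{\cM_{k+1}\bs B_i}<\tau^+_{B_i}}\leqs\Prcx{B_i}{\tau^+_{\cM_{k+1}\bs B_i}<\tau^+_{B_i}}$. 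Since both $K^0$ and $K^\star$ are stochastic on $\cM_{k+1}$, all their iterates have operator norm~$1$, so the telescoping identity $(K^0)^m-(K^\star)^m=\sum_{j=0}^{m-1}(K^0)^{m-1-j}(K^0-K^\star)(K^\star)^j$ is available; more transparently, one couples the two processes and classifies the sample paths of the trace chain started in $B_i$ according to the number of \emph{ball transitions}, i.e.\ steps at which $X_{\tau^+_{\cM_{k+1}}}$ lands in a ball different from the current one.

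\textbf{Zero or one transition.} On the event of no transition in $m$ steps, $(K^0)^m$ acts as $(K^0_{B_i})^m$, for which the spectral-gap estimates of \Cref{sec:KZBi} give $(K^0_{B_i})^m(x,\6y)=(\mathring{\ev}^{B_i}_0)^m\mathring{\phi}^{B_i}_0(x)\mathring{\pi}^{B_i}_0(\6y)+\Order{(\mathring{\ev}^{B_i}_1)^m}$, whereas on the same event $(K^\star)^m$ yields $(\mathring{\ev}^{B_i}_0)^m\mathring{\pi}^{B_i}_0$; replacing $\mathring{\phi}^{B_i}_0$ by $1$ costs $\norm{\mathring{\phi}^{B_i}_0-1}$ (using $(\mathring{\ev}^{B_i}_0)^m\leqs1$). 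This produces the summands $\norm{\mathring{\phi}^{B_i}_0-1}+2(\mathring{\ev}^{B_i}_1)^m$, the $2$ absorbing triangle-inequality slack. On the event of exactly one transition, at step $1\leqs\ell\leqs m$, the pre-transition law differs from the reset law $\mathring{\pi}^{B_i}_0$ by $\Order{(\mathring{\ev}^{B_i}_1)^{\ell-1}}$, while after the transition both processes relax within the new ball towards its quasistationary distribution and create no further discrepancy; summing the geometric factor over $\ell$ and bounding the one-step transition mass by $\Prcx{B_i}{\tau^+_{\cM_{k+1}\bs B_i}<\tau^+_{B_i}}$ (uniformly in the starting point via the supremum convention) gives $2\frac{1-(\mathring{\ev}^{B_i}_1)^m}{1-\mathring{\ev}^{B_i}_1}\Prcx{B_i}{\tau^+_{\cM_{k+1}\bs B_i}<\tau^+_{B_i}}$.

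\textbf{Two or more transitions.} On the remaining event each iterate is bounded separately by the probability of that event (times $\norm{f}\leqs1$). A union bound over the ordered times $1\leqs\ell_1<\ell_2\leqs m$ of the first two transitions — at most $\tfrac12 m(m-1)$ of them — combined with the strong Markov property bounds this probability: the first exit from $B_i$ costs at most $\Prcx{B_i}{\tau^+_{\cM_{k+1}\bs B_i}<\tau^+_{B_i}}$, and from any other ball a further transition (in particular a return towards $B_i$) costs at most $\Prcx{\cM_{k+1}\bs B_i}{\tau^+_{B_i}<\tau^+_{\cM_{k+1}\bs B_i}}$. This yields the last summand $m(m-1)\Prcx{B_i}{\tau^+_{\cM_{k+1}\bs B_i}<\tau^+_{B_i}}\Prcx{\cM_{k+1}\bs B_i}{\tau^+_{B_i}<\tau^+_{\cM_{k+1}\bs B_i}}$. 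Collecting the four contributions and taking the supremum over $1\leqs i\leqs k+1$ gives the claimed bound.

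\textbf{Main obstacle.} The delicate point is the bookkeeping that welds the geometric relaxation estimate to the counting of ball transitions: one must ensure that the discrepancy created just before a transition (because $K^0_{B_i}$ has not yet reached $\mathring{\pi}^{B_i}_0$) telescopes against the post-transition relaxation rather than accumulating, and that the crude pair-of-times union bound genuinely dominates every path with at least two transitions, including those oscillating between balls many times. Obtaining the $\norm{\mathring{\phi}^{B_i}_0-1}$ correction with exactly the stated constant also requires passing carefully between $K^0_{B_i}$ and its Doob transform $\bar K^0_{B_i}$ via \Cref{prop:Doob}, since the natural relaxation statement involves the projector $\mathring{\phi}^{B_i}_0\otimes\mathring{\pi}^{B_i}_0$ rather than $\mathds{1}\otimes\mathring{\pi}^{B_i}_0$.
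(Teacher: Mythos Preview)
Your path-counting strategy matches the paper's: decompose according to the number of times the trace chain exits $B_i$, use the spectral decomposition of $K^0_{B_i}$ inside the ball, and union-bound the probability of two or more exits to get the $m(m-1)$ term. The gap is in the one-transition case.

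You assert that after the exit ``both processes relax within the new ball towards its quasistationary distribution and create no further discrepancy''. For the genuine iterate $(K^\star)^m$ this is false. Once the $K^0$-chain lands at some $z\in B_j$ at step $\ell$, its law at step $m$ (on the no-return event) is $(K^0_{\cM_{k+1}\setminus B_i})^{m-\ell}(z,\cdot)$, whereas the $K^\star$-chain forgets $z$ entirely and applies $\mathring\pi_0^{B_j}K^0$ from step $\ell{+}1$ on. The resulting mismatch involves the spectral gap of $K^0_{B_j}$ and possible further hops among the balls of $\cM_{k+1}\setminus B_i$ --- quantities indexed by $j\neq i$ that nowhere appear in $R_i$. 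Your geometric-sum term captures only the pre-transition error and leaves this post-transition contribution unaccounted for.

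The paper closes this gap by inserting an explicit intermediate kernel $\check k^m$ (the $K^0$-dynamics restricted to at most one exit from $B_i$) and writing
\[
\bigl|(k^0)^m - (k^\star)^m\bigr| \leqs \bigl|(k^0)^m - \check k^m\bigr| + \Bigl|\check k^m(x,\cdot) - \int_{B_i}\mathring\pi_0^{B_i}(z)\,\check k^m(z,\cdot)\,\6z\Bigr| + \Bigl|\int_{B_i}\mathring\pi_0^{B_i}(z)\bigl(\check k^m - (k^0)^m\bigr)(z,\cdot)\,\6z\Bigr|\,.
\]
The outer two terms are each bounded by the $\geqs2$-exits probability and together give the $m(m-1)$ summand. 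In the middle term, both sides carry the \emph{same} post-exit kernel $(K^0_{\cM_{k+1}\setminus B_i})^{m-\ell}$, which therefore factors out: only the pre-exit discrepancy $(K^0_{B_i})^\ell(x,\cdot) - (\mathring\lambda_0^{B_i})^\ell\mathring\pi_0^{B_i}$ survives, and this is controlled entirely by $\norm{\mathring\phi_0^{B_i}-1}$ and the geometric sum in $\mathring\lambda_1^{B_i}$, with no $B_j$-data entering. This cancellation of the post-exit dynamics is precisely the ``telescoping'' you flag as the main obstacle but do not carry out.
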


The quantities $\mathring{\ev}^{B_i}_k$ and $\mathring{\phi}^{B_i}_0$ appearing
in this estimate are related to the trace process $K^0_{B_i}$ killed upon
leaving $B_i$. In Section~\ref{sec:KZBi}, we will derive bounds on the
oscillation of the principal eigenfunction $\smash{\mathring{\phi}^{B_i}_0}$ and
the spectral gap $\smash{|\mathring{\ev}^{B_i}_1|/\mathring{\ev}^{B_i}_0}$.
Together with the metastable hierarchy assumption, these bounds show that
$\norm{\pth{K^{0}}^m - \pth{K^{\star}}^m}$ is small for all $m$ such that
\begin{equation}
m \Prcx{B_{k+1}}{\tau^+_{\cM_{k}} < \tau^+_{B_{k+1}}} \ll 1\; .
\end{equation}
The difference $\pth{\Ku}^m  - \pth{K^{0}}^m$ is also small under this
condition. 

Thanks to these approximations, we have reduced our eigenvalue problem to a much
simpler one. Since the kernel $\npar{K^\star}^m$ is of finite rank, it admits
exactly $N$ eigenvalues.  Furthermore, solving the eigenvalue problem for
$\npar{K^\star}^m$ is now equivalent to solving a system of linear algebraic
equations. 

\begin{proposition}[\Cref{CorEigenvalueProp}]
For $ 0 \leqslant i \leqslant k$, we denote by ${\ev}^{\star}_i$ the
eigenvalues of $K^{\star}$ labelled by decreasing order.  The smallest
eigenvalue ${\ev}^{\star}_k$ of $K^{\star}$ is real and simple. It
satisfies 
\begin{equation}
\label{eq:lambdastark} 
\abs{{{\ev}^{\star}_k} - \pth{1 - \Bigprobin{
\mathring{\pi}_0^{B_{k+1}} }{X_{\tau^{+}_{\cM_{k+1}}}  \in \cM_{k}}}} 
\leqslant 2 \maxSur{1\leqslant l \leqslant k}{ \Bigprobin{
\mathring{\pi}_0^{B_l} }{X_{\tau^{+}_{\cM_{k+1}}}  \in \cM_{k+1} \bs
B_l} } \;.
\end{equation}
The $k$ remaining eigenvalues satisfy for all $0 \leqslant i < k$
\begin{equation}
\abs{1 - {\ev}^{\star}_{i}} \leqslant  4 \maxSur{1\leqslant l
\leqslant k}{ \Bigprobin{ \mathring{\pi}_0^{B_l}
}{X_{\tau^{+}_{\cM_{k+1}}} \in \cM_{k+1} \bs B_l}} \;.
\end{equation}
\end{proposition}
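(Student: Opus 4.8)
The plan is to reduce the spectral problem for the finite-rank operator $K^{\star}$ to an eigenvalue problem for an explicit $(k{+}1)\times(k{+}1)$ stochastic matrix, and then to analyse that matrix by a Schur-complement computation in which the metastable hierarchy separates two exponentially different scales. First I would reduce to the matrix: since $B_1,\dots,B_{k+1}$ are disjoint and cover $\cM_{k+1}$, for any bounded measurable $f$ on $\cM_{k+1}$ one has
\[
 (K^{\star}f)(x)=\sum_{i=1}^{k+1}\mathds{1}_{B_i}(x)\,\Espc{\mathring{\pi}_0^{B_i}}{f\bigl(X_{\tau^+_{\cM_{k+1}}}\bigr)}\;,
\]
so $K^{\star}$ maps every function into the span of $\mathds{1}_{B_1},\dots,\mathds{1}_{B_{k+1}}$, on which — reading off $K^{\star}\mathds{1}_{B_l}=\sum_iP_{il}\mathds{1}_{B_i}$ — it acts by the matrix $P=(P_{ij})_{1\le i,j\le k+1}$ with $P_{ij}=\Prcx{\mathring{\pi}_0^{B_i}}{X_{\tau^+_{\cM_{k+1}}}\in B_j}$. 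Hence the nonzero eigenvalues of $K^{\star}$, with algebraic multiplicities, are exactly those of $P$. Since $\sum_jP_{ij}=\Prcx{\mathring{\pi}_0^{B_i}}{X_{\tau^+_{\cM_{k+1}}}\in\cM_{k+1}}=1$, the matrix $P$ is row-stochastic, so $\ev^{\star}_0:=1$ is an eigenvalue (with $\phi^{\star}_0\equiv1$) and every eigenvalue has modulus at most $1$; it remains to place the other $k$ eigenvalues.

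Write $S=\{1,\dots,k\}$ and set $\varepsilon_i:=1-P_{ii}=\Prcx{\mathring{\pi}_0^{B_i}}{X_{\tau^+_{\cM_{k+1}}}\in\cM_{k+1}\setminus B_i}$, $\varepsilon:=\maxSur{i\in S}{\varepsilon_i}$, and $\varepsilon_{k+1}:=1-P_{k+1,k+1}=\Prcx{\mathring{\pi}_0^{B_{k+1}}}{X_{\tau^+_{\cM_{k+1}}}\in\cM_k}$. Because $P\ge0$ is stochastic, the $i$th row of $P-\id$ has $\ell^1$-norm $2\varepsilon_i$, so in block form along $S\sqcup\{k{+}1\}$,
\[
 P=\begin{pmatrix}P_{SS}&c\\ r&1-\varepsilon_{k+1}\end{pmatrix},\qquad
 \norm{P_{SS}-\id_k}\le2\varepsilon,\quad\norm{c}_\infty\le\varepsilon,\quad\norm{r}_1=\varepsilon_{k+1}\;.
\]
By Proposition~\ref{Prop:LargeDeviation} and the metastable hierarchy (Assumption~\ref{ass:hierarchy} at level $k{+}1$ gives $H(i,M_{k+1}\setminus\{i\})\ge H(k{+}1,M_k)+\theta$ for $i\le k$), the ratio $\varepsilon/\varepsilon_{k+1}$ is of order $\Order{\e^{-c/\sigma^2}}$ for some $c>0$; in particular, for $\sigma$ small enough we may and will assume $\varepsilon_{k+1}\ge12\varepsilon$.

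The main step uses the Schur-complement identity: for $\lambda\notin\uGs(P_{SS})$,
\[
 \det(P-\lambda\id)=\det(P_{SS}-\lambda\id_k)\,h(\lambda),\qquad
 h(\lambda):=(1-\varepsilon_{k+1}-\lambda)-r(P_{SS}-\lambda\id_k)^{-1}c\;.
\]
From $\norm{P_{SS}-\id_k}\le2\varepsilon$ we get $\uGs(P_{SS})\subset\{|z-1|\le2\varepsilon\}$ and, via a Neumann series, $\norm{(P_{SS}-\lambda\id_k)^{-1}}\le(|1-\lambda|-2\varepsilon)^{-1}$ for $|1-\lambda|>2\varepsilon$, whence $|r(P_{SS}-\lambda\id_k)^{-1}c|\le\varepsilon\varepsilon_{k+1}(|1-\lambda|-2\varepsilon)^{-1}$. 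On the complement of $\{|z-1|\le4\varepsilon\}$ the identity is valid and $\det(P_{SS}-\lambda\id_k)\ne0$, so there the eigenvalues of $P$ are exactly the zeros of $h$, and any such zero satisfies $|1-\varepsilon_{k+1}-\lambda|=|r(P_{SS}-\lambda\id_k)^{-1}c|$; iterating this bound twice (it gives successively $\le\varepsilon_{k+1}/2$, $\le3\varepsilon$, $<2\varepsilon$, using $\varepsilon_{k+1}\ge12\varepsilon$) confines the zero to $\Delta:=\{|z-(1-\varepsilon_{k+1})|\le2\varepsilon\}$, which is disjoint from $\{|z-1|\le4\varepsilon\}$. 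On $\partial\Delta$ one has $|1-\varepsilon_{k+1}-\lambda|=2\varepsilon>|r(P_{SS}-\lambda\id_k)^{-1}c|$, so Rouché's theorem yields exactly one zero of $h$ in $\Delta$, counted with multiplicity. Consequently $P$ has exactly one eigenvalue outside $\{|z-1|\le4\varepsilon\}$ and it lies in $\Delta$; being the sole eigenvalue in a disk stable under complex conjugation while $P$ is real, it is real, and by the multiplicity count it is simple — this is $\ev^{\star}_k$. The remaining $k$ eigenvalues lie in $\{|z-1|\le4\varepsilon\}$, and $|\ev^{\star}_k|\le1-\varepsilon_{k+1}+2\varepsilon<1-4\varepsilon$ confirms that $\ev^{\star}_k$ has the smallest modulus, so the labelling is consistent. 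Unwinding the definitions of $\varepsilon$ and $\varepsilon_{k+1}$ gives the two stated bounds $|\ev^{\star}_k-(1-\Prcx{\mathring{\pi}_0^{B_{k+1}}}{X_{\tau^+_{\cM_{k+1}}}\in\cM_k})|\le2\varepsilon$ and $|1-\ev^{\star}_i|\le4\varepsilon$ for $0\le i<k$.

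The delicate point is the separation in the last paragraph, with the sharp constants $2\varepsilon$ and $4\varepsilon$. A plain Gershgorin estimate, or conjugation of $P$ by a diagonal matrix, separates the isolated eigenvalue from the cluster near $1$ only on the scale $\sqrt{\varepsilon\varepsilon_{k+1}}$, which is too weak; it is the Schur complement, the resolvent bound on $P_{SS}$ (valid because $P_{SS}$ is within $2\varepsilon$ of $\id_k$), and the bootstrap that together pin $\ev^{\star}_k$ to within $2\varepsilon$ of $1-\varepsilon_{k+1}$. The indispensable input is the metastable hierarchy, which forces $\varepsilon_{k+1}$ to dominate $\varepsilon$ exponentially — without it $\ev^{\star}_k$ need not be isolated, real, or simple.
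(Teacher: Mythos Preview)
Your proof is correct, but it follows a genuinely different route from the paper's. The paper works with $\hat P=\id-P$ and block-triangularises it: one solves the quadratic (Riccati-type) equation $\hat P_{11}S_{12}-S_{12}\hat a-S_{12}\hat P_{21}S_{12}+\hat P_{12}=0$ for $S_{12}\in\R^{k}$ via the Banach fixed-point theorem on a ball of radius $2\norm{\hat P_{12}}/\hat a$, under the separation condition $b/\hat a<1/8$. This yields $\hat P$ similar to a block-triangular matrix with scalar block $\alpha=\hat a+\hat P_{21}S_{12}$ and $k\times k$ block $T_{11}=\hat P_{11}-S_{12}\hat P_{21}$; the bounds $|\alpha-\hat a|\le 2b$ and $\norm{T_{11}}\le 4b$ then give exactly the constants you obtain. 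Your Schur-complement/Rouch\'e/bootstrap argument is a clean alternative that reaches the same sharp constants under a comparable hypothesis ($\varepsilon_{k+1}\ge 12\varepsilon$ versus the paper's $\hat a>8b$), and your deduction of reality and simplicity from the conjugation symmetry of $\Delta$ and the multiplicity count is sound. The trade-off is that the paper's block-triangularisation also produces the eigenvector $(S_{12}^\star,1)^{\top}$ explicitly (used in Lemma~\ref{lem:phikstar} to describe $\phi^\star_{k-1}$) and the similarity matrix $S$, which is then reused for the resolvent estimate of $K^\star$; your approach would need a separate argument for these by-products.
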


Theorem~\ref{thm:eigenvalues} then follows essentially by combining the
estimate~\eqref{eq:lambdastark} with the bound on $\norm{\pth{\Ku}^m  -
\pth{K^\star}^m}$ implied by the two previous propositions, for an appropriate
choice of $m$. Details are given in Section~\ref{sec:lastSteps}.  

\subsection{Computation of eigenfunctions}
\label{subSection:Computation_Eigenfunctions}

Once eigenvalues have been estimated, determining the associated left and right
eigenfunctions is relatively easy. The expressions for right eigenfunctions
$\phi_k$ are essentially consequences of the Feynman--Kac representation given
in Proposition~\ref{prop:Feynman-Kac}. As for the left eigenfunctions, a
crucial tool is the following result.  

\begin{lemma}
\label{lem:left_eigenfunction} 
For any left eigenfunction $\pi_k$ \added{of the kernel $K$} associated to the eigenvalue $e^{-u_k}$, and
for any $B\subset A \subset \Sigma$, we have 
\begin{equation}
 \int_A \pi_k(x) K^{u_k}(x,B) \6x  
:= \int_A \pi_k(x)  \bigexpecin{x}{ \e^{u_k (\tau^+_{A}-1)} \normalind{
\tau^+_{B} < \tau^+_{A \bs B} }} \6x = \e^{-u_k}\pi_k(B)\;.
\label{Eq:LeftEigenvectorCommitork}
\end{equation}
\end{lemma}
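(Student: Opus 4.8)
The plan is to prove the slightly stronger statement that $\sigma:=\mathds{1}_A\pi_k$, regarded as a finite signed measure on $A$, is itself a left eigenfunction of the reduced kernel $K^{u_k}$ with eigenvalue $\e^{-u_k}$, i.e.\ $\sigma K^{u_k}=\e^{-u_k}\sigma$. Evaluating this identity on a Borel set $B\subset A$ then yields \eqref{Eq:LeftEigenvectorCommitork} at once, since the integrand on the left of \eqref{Eq:LeftEigenvectorCommitork} is supported in $A$ and $\sigma(B)=\pi_k(B)$ because $B\subset A$. Throughout I use that $u_k$ satisfies the Laplace condition \eqref{EqLaplaceTransform} relative to $A$ (which is implicitly required for $K^{u_k}$ to be well defined), equivalently $\norm{\e^{u_k}K_{A^c}}<1$, so that $\bigpar{\id-\e^{u_k}K_{A^c}}^{-1}=\sum_{m\geqs0}\e^{mu_k}(K_{A^c})^m$ converges in operator norm; here $K_{A^c}$ is the kernel killed upon leaving $A^c$, with density $k(x,y)\mathds{1}_{x\in A^c}\mathds{1}_{y\in A^c}$.

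The first step is a one-step excursion decomposition of $K^{u_k}$. For $x\in A$ I would split the expectation defining $K^{u_k}(x,\6y)$ according to whether $X_1\in A$ — in which case $\tau^+_A=1$ and the contribution is $k(x,y)\6y$ — or $X_1\in A^c$, in which case the strong Markov property at time $1$ (note $\tau^+_A=1+\tau_A$ for the shifted chain, which starts outside $A$) together with the Feynman--Kac formula of Proposition~\ref{prop:Feynman-Kac} identifies the contribution of the excursion in $A^c$; summing the resulting geometric series in $\e^{u_k}K_{A^c}$ gives, as kernels on $A\times\cB(A)$,
\[
 K^{u_k} = \mathds{1}_A K\mathds{1}_A + \e^{u_k}\,\mathds{1}_A K\mathds{1}_{A^c}\bigpar{\id-\e^{u_k}K_{A^c}}^{-1}\mathds{1}_{A^c}K\mathds{1}_A\;,
\]
where $\mathds{1}_C$ denotes multiplication by the indicator of $C$. (At $u_k=0$ this is exactly the resolvent representation of the trace kernel $K\vert_A$ recalled in Section~\ref{ssec:trace}.)

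The second step feeds the eigenvalue equation into this formula. Write $\rho:=\mathds{1}_{A^c}\pi_k$, so $\pi_k=\sigma+\rho$. Restricting $\pi_k K=\e^{-u_k}\pi_k$ to $A^c$ and using $\rho\mathds{1}_{A^c}K\mathds{1}_{A^c}=\rho K_{A^c}$ (as $\rho$ is carried by $A^c$) gives $(\sigma K)\vert_{A^c}+\rho K_{A^c}=\e^{-u_k}\rho$, hence $(\sigma K)\vert_{A^c}=\e^{-u_k}\rho\bigpar{\id-\e^{u_k}K_{A^c}}$, and therefore $\e^{u_k}(\sigma K)\vert_{A^c}\bigpar{\id-\e^{u_k}K_{A^c}}^{-1}=\rho$. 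Substituting this into $\sigma K^{u_k}$ computed from the decomposition above — and using that $\sigma$ is carried by $A$ (so $\sigma\mathds{1}_A=\sigma$ and $\sigma\mathds{1}_A K\mathds{1}_{A^c}=(\sigma K)\vert_{A^c}$) and $\rho$ by $A^c$ (so $\rho\mathds{1}_{A^c}K\mathds{1}_A=(\rho K)\vert_A$) — the expression collapses to
\[
 \sigma K^{u_k} = (\sigma K)\vert_A + (\rho K)\vert_A = \bigpar{(\sigma+\rho)K}\vert_A = (\pi_k K)\vert_A = \e^{-u_k}\pi_k\vert_A = \e^{-u_k}\sigma\;,
\]
which, read on $B\subset A$, is \eqref{Eq:LeftEigenvectorCommitork}.

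I do not expect a genuine obstacle: the argument is bookkeeping of excursions followed by one linear substitution, and is essentially the transpose of Corollary~\ref{corEquivalenceEigenvaluePb}. The only point needing care is the bound $\norm{\e^{u_k}K_{A^c}}<1$, which legitimises both the inversion of $\id-\e^{u_k}K_{A^c}$ and the interchange of summation and integration in the excursion series; this is precisely the Laplace condition \eqref{EqLaplaceTransform}, the same bound already invoked (via \cite[Theorem~8.1]{gohberg2013classes}) in the uniqueness part of Proposition~\ref{prop:Feynman-Kac}. A minor secondary point is keeping track of which side each indicator multiplication acts on, which is automatic once one notes that $\sigma$ lives on $A$ and $\rho$ on $A^c$.
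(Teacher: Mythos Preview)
Your argument is correct, but the paper takes a different and somewhat shorter route. Rather than decomposing the reduced kernel $K^{u_k}$ via the resolvent of $K_{A^c}$ and then solving for $\rho=\mathds{1}_{A^c}\pi_k$ in terms of $\sigma=\mathds{1}_A\pi_k$, the paper introduces a single auxiliary function
\[
 h^{u}(x)=\bigexpecin{x}{\e^{u\tau_A}\normalind{\tau_B<\tau_{A\setminus B}}}\;,
\]
observes that $h^{u}=\mathds{1}_B$ on $A$ while $(Kh^{u})(x)=K^{u}(x,B)$ for every $x\in\Sigma$ (by the same one-step splitting as in Proposition~\ref{prop:Feynman-Kac}), and then uses the duality $\int_\Sigma\pi_k\,Kh^{u_k}=\e^{-u_k}\int_\Sigma\pi_k\,h^{u_k}$ together with the identity $K^{u_k}(x,B)=\e^{-u_k}h^{u_k}(x)$ for $x\in A^c$ to cancel the contribution from $\Sigma\setminus A$. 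What your approach buys is an explicit, measure-level statement $\sigma K^{u_k}=\e^{-u_k}\sigma$ (the transpose of Corollary~\ref{corEquivalenceEigenvaluePb}), obtained by pure linear algebra on the block decomposition of $K$ with respect to $A$ and $A^c$; the paper's approach avoids the resolvent expansion altogether at the cost of tailoring the test function $h^{u}$ to a fixed target set $B$. Both rely on the Laplace condition~\eqref{EqLaplaceTransform} in the same way.
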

\begin{proof}
Consider the function 
$ h^u(x) = \bigexpecin{x}{\e^{u\tau_A} \normalind{\tau_B < \tau_{A\bs B}}}$. 
Note that $h^u(x) = \ind{x\in B}$ whenever $x\in A$, while a similar
argument as in Proposition~\ref{prop:Feynman-Kac} yields 
\begin{equation}
 (K h^u)(x) 
 = \bigexpecin{x}{h^u(X_1)}
 = K^u(x,B)\;.
\end{equation} 
It follows that 
\begin{align}
\nonumber
 \int_A \pi_k(x) K^{u_k}(x,B) \6x 
 &= \int_\Sigma \pi_k(x) (Kh^{u_k})(x)\6x 
 - \int_{\Sigma\bs A} \pi_k(x) K^{u_k}(x,B) \6x \\
\nonumber
 &= \e^{-u_k} \int_\Sigma \pi_k(x) h^{u_k}(x) \6x
 - \e^{-u_k} \int_{\Sigma\bs A} \pi_k(x) h^{u_k}(x) \6x \\
 &= \e^{-u_k} \int_A \pi_k(x) \ind{x\in B} \6x
 = \e^{-u_k} \pi_k(B)\;.
\end{align}
In the second line, we have used the eigenvalue equation $\pi_k K =
\e^{-u_k}\pi_k$ and the fact that in $\Sigma \bs A$, $\tau_A=\tau^+_A$ and
$\tau_B=\tau^+_B$, and thus $\Ku(x,B)=\e^{-u}h^u(x)$. 
\end{proof}

\begin{proof}[{\sc Proof of Proposition~\ref{prop:pi0}}]
Applying \eqref{Eq:LeftEigenvectorCommitork} for the left eigenfunction $\pi_0$
associated to the eigenvalue $1$ and with any disjoint $A_1, A_2$ such that $A_1
\cup A_2 = A$, we have
\begin{equation}
\pi_0\pth{A_1} = \int_{A_1 \cup A_2} \pi_0\pth{x}
\bigprobin{x}{\tau^+_{A_1} < \tau^+_{A_2}} \6x.
\end{equation}
Decomposing the domain of the integral into $A_1$ and $A_2$, and using the fact
that for all $x$, $\bigprobin{x}{\tau^+_{A_1} < \tau^+_{A_2}} =
1-\bigprobin{x}{\tau^+_{A_2} < \tau^+_{A_1}}$, we immediately get the result.
\end{proof}

\section{Spectral properties of $K^0_{B_i}$}
\label{sec:KZBi}
 Recall that we have denoted $K^{\star}$ the kernel on $\cM_{k}
\times \mathcal{B}\pth{\cM_{k}}$ defined by
\begin{equation}
K^{\star}\pth{x, \6y} =  \displaystyle \sum_{i=1}^{k}  \ind{x \in B_i} 
\int_{B_i} \mathring{\pi}^{B_i}_0\pth{z}\bigprobin{z}{X_{\tau^{+}_{\cM_{k}}}
\in \6y}\6z\;, 
\end{equation}
where $\smash[b]{\mathring{\pi}^{B_i}_0}$ is the quasistationary distribution of
the process described by the kernel $K^0_{B_i}$.  Also recall that
$\smash[b]{K^0_{B_i}}$ is the kernel associated to the trace process
$(X_n)\vert_{\meta{k}}$ killed upon
leaving $B_i$. \added{
To remind us that we are not looking at the process described by the
kernel defined on $\Sigma$ but at the process}
\begin{equation}
\added{
\bigpar{X_n\vert_{\cM_k}}_{n\geqs0}
 = \Bigpar{X_{\tau^{+,n}_{\cM_k}}}_{\! n\geqs0}\;,}
\end{equation} 
\added{i.e., the trace of the original process at the return times to}\added{  $\cM_{k} =
\bigcup_{i=1}^{k} B_i$}\added{, we use the symbol}{ $\mathring{\phantom{a}}$.} 
\added{Since we study the killed process, we also follow the notations introduced in Section \ref{ssec:killed}, by denoting its eigenvalues by $\smash[b]{\mathring{\ev}^{B_i}_j}$ and its left
and right eigenfunctions by $\smash[b]{\mathring{\pi}^{B_i}_j\pth{x}}$ and
$\smash[b]{\mathring{\phi}^{B_i}_j\pth{x}}$ respectively.}
The principal eigenvalue $\smash{\mathring{\ev}_0^{B_{i}}}$ of this kernel is
given by
\begin{equation}
\label{eq:lambda0_qed} 
\mathring{\ev}_0^{B_{i}}= \Prcx{\mathring{\pi}_0^{B_i} }{ \tau^{+}_{B_i} < \tau^{+}_{\cM_{k} \bs B_i} }\; .
\end{equation} 
Using the spectral decomposition, we can introduce the function $g\pth{x,y}$  such that the density of $K^0_{B_i}$ satisfies
\begin{equation}
k^{0}_{B_i}\pth{x,y} = \mathring{\ev}^{B_i}_0 \braces{ \mathring{\pi}^{B_i}_0\pth{y}  \mathring{\phi}^{B_i}_0\pth{x}  + \frac{\mathring{\ev}^{B_i}_1}{\mathring{\ev}^{B_i}_0} g\pth{x,y}}\; .
\label{Eq:SpectralDecompositionKZBI}
\end{equation}
Note that due to orthogonality of eigenfunctions (see \eqref{eq:orthogonality}),
\begin{equation}
\int_{B_i} g\pth{x,y} \mathring{\phi}^{B_i}_0\pth{y} \6y  =0 \; , \qquad
\int_{B_i}\mathring{\pi}^{B_i}_0\pth{x}  g\pth{x,y} \6x  =0\; .
\end{equation}
It follows that
\begin{equation}
\pth{k^{0}_{B_i}}^m\pth{x,y} = \bigpar{\mathring{\ev}^{B_i}_0}^m \braces{
\mathring{\pi}^{B_i}_0\pth{y}  \mathring{\phi}^{B_i}_0\pth{x}  +
{\pth{\frac{\mathring{\ev}^{B_i}_1}{\mathring{\ev}^{B_i}_0}}}^m g^m\pth{x,y}}\;.
\label{Eq:SpectralDecompositionKZBIIterated}
\end{equation}
In addition $g$ has spectral radius $1$. 

\subsection{Spectral gap estimate}
%

\begin{proposition}[Adapted from {\cite[Proposition 5.5]{berglund2014noise}}]
\label{prop:spectral_gap} 
Assume that for some $n \in \N$, the density of the $n$-fold iterated kernel
${\pth{k^{0}_{B_i}}}^n$ satisfies a uniform positivity condition, i.e., there
exists $L(n)>1$ such that
\begin{equation}
\label{eq:uniform_positivity} 
\infSur{x_0 \in B_i}{(k^0_{B_i})^n\pth{x_0,y} } \leqslant 
(k^0_{B_i})^n\pth{x,y}\leqslant L\pth{n} \added{\infSur{x_0 \in
B_i}{(k^0_{B_i})^n\pth{x_0,y} }}\ \qquad \forall x, y \in B_i \;.
\end{equation}
Then $\theta = \absN{\mathring{\ev}_1}/\mathring{\ev}_0$ satisfies
\begin{equation}
\label{eq:spectralgap} 
{\theta}^n \leqslant L\pth{n} - \frac{\infSur{x \in B_i}{  \Bigprobin{x}{
\tau^{+,n}_{B_i} < \tau^{+}_{\cM_{k} \bs B_i}  }  }}
{ \bigpar{\mathring{\ev}^{B_i}_0}^{n}}\; .
\end{equation}
\end{proposition}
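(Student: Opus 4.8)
The plan is to run a Doeblin-type contraction estimate on the $n$-fold iterate $(K^0_{B_i})^n$, using that the uniform positivity condition~\eqref{eq:uniform_positivity} is exactly a two-sided minorization of the iterated density. Since $K^0_{B_i}$ has a continuous kernel on the bounded set $B_i$, it is compact, so its nonzero spectrum consists of eigenvalues of finite multiplicity. If $\mathring{\ev}_1=0$ the bound is trivial (its right-hand side is $\geq L(n)-1\geq0$); otherwise there is a genuine right eigenfunction $\psi$ with $K^0_{B_i}\psi=\mathring{\ev}_1\psi$, hence $(K^0_{B_i})^n\psi=(\mathring{\ev}_1)^n\psi$, and we normalise $\norm{\psi}_{\infty}=1$. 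Because the principal eigenvalue $\mathring{\ev}_0$ is simple, the biorthogonality~\eqref{eq:orthogonality} between $\psi$ and the left principal eigenfunction $\mathring{\pi}^{B_i}_0$ (the QSD, a probability density) gives $\int_{B_i}\mathring{\pi}^{B_i}_0(x)\psi(x)\6x=0$. Since $\mathring{\pi}^{B_i}_0\6x$ is a probability measure, $0$ is the mean of $\psi$ against it, hence lies in the closed convex hull of the range of $\psi$; an elementary argument then shows that this range has diameter at least $\norm{\psi}_{\infty}=1$.

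Set $m(y):=\inf_{x_0\in B_i}(k^0_{B_i})^n(x_0,y)$ and $q:=\int_{B_i}m(y)\6y$. Condition~\eqref{eq:uniform_positivity} gives $m(y)\leq(k^0_{B_i})^n(x,y)\leq L(n)\,m(y)$ for all $x,y\in B_i$, so $\abs{(k^0_{B_i})^n(x,y)-(k^0_{B_i})^n(x',y)}\leq(L(n)-1)\,m(y)$ for all $x,x',y$. Subtracting the eigenfunction relation $(\mathring{\ev}_1)^n\psi(\cdot)=\int_{B_i}(k^0_{B_i})^n(\cdot,y)\psi(y)\6y$ at $x$ and $x'$ and inserting this bound,
\begin{equation*}
\abs{\mathring{\ev}_1}^{n}\,\abs{\psi(x)-\psi(x')}
=\abs{\int_{B_i}\bigl[(k^0_{B_i})^n(x,y)-(k^0_{B_i})^n(x',y)\bigr]\psi(y)\6y}
\leq(L(n)-1)\,q\,\norm{\psi}_{\infty}\;.
\end{equation*}
Taking the supremum over $x,x'\in B_i$ and using that the range of $\psi$ has diameter at least $\norm{\psi}_{\infty}=1$ yields $\abs{\mathring{\ev}_1}^{n}\leq(L(n)-1)\,q$.

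It remains to compare $q$ with survival probabilities. On the one hand, $q=\int_{B_i}\inf_{x_0}(k^0_{B_i})^n(x_0,y)\6y\leq\int_{B_i}(k^0_{B_i})^n(x,y)\6y=\bigprobin{x}{\tau^{+,n}_{B_i}<\tau^+_{\cM_k\bs B_i}}$ for every $x\in B_i$, hence $q\leq\inf_{x\in B_i}\bigprobin{x}{\tau^{+,n}_{B_i}<\tau^+_{\cM_k\bs B_i}}$. On the other hand, started from the QSD $\mathring{\pi}^{B_i}_0$ the killed trace process survives $n$ steps with probability exactly $(\mathring{\ev}^{B_i}_0)^n$ (the geometric-survival property recalled in Section~\ref{ssec:killed}, of which~\eqref{eq:lambda0_qed} is the case $n=1$), i.e.\ $(\mathring{\ev}^{B_i}_0)^n=\bigprobin{\mathring{\pi}^{B_i}_0}{\tau^{+,n}_{B_i}<\tau^+_{\cM_k\bs B_i}}\geq\inf_{x\in B_i}\bigprobin{x}{\tau^{+,n}_{B_i}<\tau^+_{\cM_k\bs B_i}}\geq q$. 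Dividing $\abs{\mathring{\ev}_1}^{n}\leq(L(n)-1)q$ by $(\mathring{\ev}^{B_i}_0)^n$ and using both $q\leq(\mathring{\ev}^{B_i}_0)^n$ and $\inf_{x\in B_i}\bigprobin{x}{\tau^{+,n}_{B_i}<\tau^+_{\cM_k\bs B_i}}\leq(\mathring{\ev}^{B_i}_0)^n$,
\begin{equation*}
\theta^{n}=\frac{\abs{\mathring{\ev}_1}^{n}}{(\mathring{\ev}^{B_i}_0)^n}
\leq L(n)-1
\leq L(n)-\frac{\inf_{x\in B_i}\bigprobin{x}{\tau^{+,n}_{B_i}<\tau^+_{\cM_k\bs B_i}}}{(\mathring{\ev}^{B_i}_0)^n}\;,
\end{equation*}
which is the claim; in fact the intermediate bound $\abs{\mathring{\ev}_1}^{n}\leq(L(n)-1)q$ is somewhat sharper.

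The one delicate point is that $\mathring{\ev}_1$ need not be real, so the naive ``oscillation of a real eigenfunction'' argument must be replaced by the observation above that, since $0$ is the $\mathring{\pi}^{B_i}_0$-mean of $\psi$, the diameter of the range of $\psi$ is at least $\norm{\psi}_{\infty}$; everything else is a routine minorization computation. If one prefers to bypass this, an equivalent route is to pass through Proposition~\ref{prop:Doob} to the genuinely stochastic Doob-transformed kernel $\bar K^0_{B_i}$, apply the classical Doeblin coupling contraction to its $n$-th iterate, and control the oscillation of $\mathring{\phi}^{B_i}_0$ separately, which again follows from~\eqref{eq:uniform_positivity}.
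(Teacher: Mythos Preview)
Your proof is correct and is a close cousin of the paper's argument, though the mechanics differ. The paper also exploits the orthogonality $\int\mathring{\pi}^{B_i}_0\psi=0$, but instead of taking a two-point difference it subtracts a multiple $\kappa\,\mathring{\pi}^{B_i}_0(y)$ from the kernel (which does not affect the action on $\psi$), evaluates the eigenvalue equation at a single point where $\lvert\psi\rvert$ attains its sup, and then chooses $\kappa=(\mathring{\ev}^{B_i}_0)^n L(n)$ so that the integrand $\kappa\,\mathring{\pi}^{B_i}_0(y)-(k^0_{B_i})^n(x_0,y)$ becomes nonnegative by the upper bound in~\eqref{eq:uniform_positivity}. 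Integrating gives directly $\lvert\mathring{\ev}_1\rvert^n\leq L(n)(\mathring{\ev}^{B_i}_0)^n-\inf_{x}\bigprobin{x}{\tau^{+,n}_{B_i}<\tau^{+}_{\cM_k\setminus B_i}}$, i.e.\ the stated bound without passing through an intermediate $L(n)-1$. Your oscillation route is slightly more work in the complex case (you need the observation that mean zero forces diameter at least $\norm{\psi}_\infty$), but it yields the sharper intermediate estimate $\lvert\mathring{\ev}_1\rvert^n\leq(L(n)-1)\int_{B_i}\inf_{x}(k^0_{B_i})^n(x,y)\,\6y$, which indeed dominates the paper's bound since $\int\inf\leq\inf\int\leq(\mathring{\ev}^{B_i}_0)^n$; you then have to weaken it to recover the stated form. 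The final paragraph sketching a Doob-transform alternative is unnecessary.
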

\begin{proof}
To ease notation, we prove the result for $n=1$, but one can show that it is still true for all $n\geqslant 2$. 
For any $l \geqslant 1$, the eigenvalue equation for $\mathring{{\ev}}^{B_i}_l$
and the orthogonality relation \eqref{eq:orthogonality} of the eigenfunctions
$\mathring{\phi}_l^{B_i}$ and $\mathring{{\pi}}_l^{B_i}$ give 
\begin{align}
\nonumber
\mathring{{\ev}}^{B_i}_l \mathring{\phi}_l^{B_i}\pth{x} &= \int_{B_i}
k^0_{B_i}\pth{x,y} \mathring{\phi}^{B_i}_l\pth{y}  \6y\;, \\
0 &= \int_{B_i} \mathring{{\pi}}^{B_i}_0\pth{y}
\mathring{\phi}^{B_i}_l\pth{y}\6y \; .
\end{align}
For any $\kappa>0$, we thus obtain
\begin{equation}
\mathring{{\ev}}^{B_i}_l \mathring{\phi}_l^{B_i}\pth{x}  = \int_{B_i} \crochets{
k^0_{B_i}\pth{x,y}  - \kappa  \mathring{{\pi}}^{B_i}_0\pth{y} }  
\mathring{\phi}^{B_i}_l\pth{y} \6y \; .
\end{equation}
Let us denote by $x_0$ the point in $B_i$ where $ \mathring{\phi}^{B_i}_l\pth{y}$ reaches its supremum. Evaluating the last equation in $x_0$ we obtain
\begin{equation}
\bigabs{\mathring{{\ev}}^{B_i}_l   } \leqslant \int_{B_i} \abs{
k^0_{B_i}\pth{x_0,y}  - \kappa  \mathring{{\pi}}^{B_i}_0\pth{y} }  
  \6y \; .
\end{equation}
Remark that for all $y\in B_i$,
\begin{equation}
\mathring{{\ev}}^{B_i}_0\mathring{{\pi}}^{B_i}_0 \pth{y}
= \int_{B_i}\mathring{{\pi}}^{B_i}_0 \pth{x} k^0_{B_i}\pth{x,y} \added{\6x} 
\added{\geqslant \infSur{x \in B_i}{k^0_{B_i}\pth{x,y} }}\;.
\end{equation}
Taking $\kappa =\mathring{{\ev}}^{B_i}_0L\pth{1}$, we can remove the absolute
value and write
\begin{align}
\nonumber
\bigabs{\mathring{{\ev}}^{B_i}_l   }& \leqslant \int_{B_i}
\Bigbrak{\mathring{{\ev}}^{B_i}_0 L\pth{1} 
\mathring{{\pi}}^{B_i}_0\pth{y}  -  \added{\infSur{x \in B_i}{ k^0_{B_i}\pth{x,y}}}}   \6y \\
&= \mathring{{\ev}}^{B_i}_0L\pth{1} - \added{\infSur{x \in B_i}{\Prcx{x}{{\tau^{+}_{B_i}} <
\tau^{+}_{\cM_{k} \bs B_i} }}}\;,
\end{align}
which proves~\eqref{eq:spectralgap} for $n=1$. 
\end{proof}
 
The two following results based on Harnack inequalities~\cite{Gilbarg_Trudinger}
will enable us to prove that $n$ and $L(n)$ satisfying the uniform positivity
condition~\eqref{eq:uniform_positivity} exist.

\begin{lemma}[{\cite[Lemma 5.7]{berglund2014noise}}]
For any set $\cD_{\added{1}}$ such that its closure satisfies $\bar{\cD_{\added{1}}} \subset \cD$, there exists a constant $C$, independent of $\sigma$, such that
\begin{equation}
\frac{\supSur{x \in \cD_{\added{1}}} k^0_{B_i}\pth{x,y}}{\infSur{x \in \cD_{\added{1}}}  k^0_{B_i}\pth{x,y}} \leqslant \e^{C / \sigma^2} 
\end{equation}
for all $y \in \partial D$.
\label{Lemma:Harnack1}
\end{lemma}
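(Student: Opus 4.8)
The plan is to use that, for a fixed target point $y$, the map $x\mapsto k^0_{B_i}(x,y)$ (suitably extended) is a nonnegative solution of the uniformly elliptic equation attached to the generator~$\cL$ on a fixed open neighbourhood of $\overline{\cD_1}$, and then to iterate the classical Harnack inequality along a finite chain of balls covering $\overline{\cD_1}$, keeping careful track of how the Harnack constant depends on~$\sigma$.

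\textbf{Step 1: an elliptic equation in the first variable.} Fix $y\in\partial D$. By its probabilistic representation, $k^0_{B_i}(\cdot,y)$ is the density, in the exit variable, of a first-hitting distribution of the diffusion (the restriction of the trace kernel on $\cM_k$ that returns inside $B_i$); hence, for any bounded measurable $\varphi$ supported near $y$, the function $x\mapsto\int_{B_i}k^0_{B_i}(x,z)\varphi(z)\,\dd z$ is an expectation of $\varphi$ evaluated at that hitting point, so it satisfies $\cL u=0$ on the open region swept before the corresponding stopping time. Using the smoothness of the coefficients and the density estimates of~\cite{BenArous_Kusuoka_Stroock_1984} together with interior elliptic regularity, and letting $\varphi$ approximate $\delta_y$, one obtains that $k^0_{B_i}(\cdot,y)$ extends to a nonnegative classical $\cL$-harmonic function $u$ on a fixed open set $\mathcal{O}$ with $\overline{\cD_1}\subset\mathcal{O}$, $\overline{\mathcal{O}}\subset\cD$ and $y\notin\overline{\mathcal{O}}$, which moreover can be chosen uniformly in $y\in\partial D$. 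Since dividing $\cL$ by $\sigma^{2}$ does not change its harmonic functions, $u$ also solves $\widetilde{\cL}u=0$, where $\widetilde{\cL}=\frac12\sum_{i,j=1}^{d+1}D_{ij}\,\partial_{z_i}\partial_{z_j}+\sigma^{-2}\sum_{i=1}^{d+1}f_i\,\partial_{z_i}$ is uniformly elliptic with the $\sigma$-independent constants $c_\pm$ of Assumption~\ref{ass:ellipticity} and has drift coefficients of size $\Order{\sigma^{-2}}$.

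\textbf{Step 2: Harnack chain.} By the classical Harnack inequality for nonnegative solutions of uniformly elliptic second-order equations (e.g.~\cite[Corollary~8.21 or Theorem~9.22]{Gilbarg_Trudinger}; the $\mathcal{C}^1$ regularity of $D$ allows passing between non-divergence and divergence form), for every ball $B(x_0,4r)\subset\mathcal{O}$ one has $\sup_{B(x_0,r)}u\leqslant C_H\inf_{B(x_0,r)}u$, and the quantitative form of the estimate gives $C_H\leqslant\e^{c_0(1+\sigma^{-2})}$ with $c_0$ depending only on $d+1$, the ellipticity ratio $c_+/c_-$, $\|f\|_\infty$ and $r$ — the factor $\sigma^{-2}$ stemming exactly from the size of the normalised drift of $\widetilde{\cL}$ — and $C_H$ is independent of $y$ and of $u$. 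As $\overline{\cD_1}$ is compact and contained in $\cD$, fix once and for all $r>0$ and a finite number $M$ of balls $B(x_1,r),\dots,B(x_M,r)$, all satisfying $B(x_j,4r)\subset\cD$ and lying at a fixed positive distance from $\partial D$ (so they are contained in $\mathcal{O}$ for every admissible $y$), which overlap consecutively and together cover $\overline{\cD_1}$; here $r$ and $M$ depend only on the mutual geometry of $\cD_1$, $D$ and $\cD$. Iterating the single-ball estimate along the chain gives
\begin{equation}
 \frac{\sup_{x\in\cD_1}k^0_{B_i}(x,y)}{\inf_{x\in\cD_1}k^0_{B_i}(x,y)}
 \leqslant C_H^{\,M}
 \leqslant \e^{Mc_0(1+\sigma^{-2})}
 \leqslant \e^{C/\sigma^2}
\end{equation}
for all $\sigma\leqslant\sigma_0$ (taking $\sigma_0\leqslant1$), with $C:=2Mc_0$ depending only on $d+1$, $c_\pm$, $\|f\|_\infty$ and the geometry — in particular neither on $\sigma$ nor on $y$ — which is the assertion.

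\textbf{Main difficulty.} Two points require care. The first is to make Step~1 rigorous: one must check that $k^0_{B_i}(\cdot,y)$ extends to a single $\cL$-harmonic function on one open set $\mathcal{O}$ valid simultaneously for all $y\in\partial D$, which combines the density bounds of~\cite{BenArous_Kusuoka_Stroock_1984}, interior Schauder or $W^{2,p}$ estimates for~$\cL$, and the geometric fact that $\overline{\cD_1}$ stays away both from $\partial D$ and from $\partial\cD$. The second, and genuinely quantitative, point is to read off from the classical Harnack inequality the precise scaling $C_H=\e^{\Order{\sigma^{-2}}}$, i.e.\ to confirm that it is only the drift-to-diffusion ratio $\Order{\sigma^{-2}}$ — and nothing worse, such as $\e^{\Order{\sigma^{-4}}}$ — that enters the exponent; this is exactly what the passage to the normalised operator $\widetilde{\cL}$ makes transparent. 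The covering argument in Step~2 is then routine.
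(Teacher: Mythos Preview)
The paper does not supply its own proof of this lemma; it is quoted directly from \cite[Lemma~5.7]{berglund2014noise}. Your approach --- extend $x\mapsto k^0_{B_i}(x,y)$ to a nonnegative $\cL$-harmonic function on a full-dimensional neighbourhood, pass to the rescaled operator $\widetilde{\cL}=\sigma^{-2}\cL$ so that only the drift carries the $\sigma^{-2}$, and then chain the classical Harnack inequality of \cite{Gilbarg_Trudinger} over a fixed finite cover of $\overline{\cD_1}$ --- is precisely the argument of the cited reference, and is correct. Your ``main difficulty'' paragraph correctly isolates the two genuine points: that the harmonic extension is well-defined on a single open set working for all $y$, and that the Harnack constant scales like $\e^{\Order{\sigma^{-2}}}$ rather than anything worse.
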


\begin{lemma}[{\cite[Lemma 5.8]{berglund2014noise}}]
\label{Lemma:Harnack2}
Let $\cB_{r}\pth{x}$ denote the ball of radius $r$ centred in $x$, and let
$\cD_{\added{1}}$ be such that its closure satisfies $\bar{\cD}_{\added{1}} \subset \cD$. Then for
any $x_0 \in \cD_{\added{1}}, y \in \partial \cD$, and $\eta >0$, one can find a constant
$r=r\pth{y,\eta}$, independent of  $\sigma$, such that
\begin{equation}
\supSur{x \in \cB_{r \sigma^2}\pth{x_0}}{ k^0_{B_i}\pth{x,y}} \leqslant \pth{1+\eta} \infSur{x \in \cB_{r \sigma^2}\pth{x_0}}{ k^0_{B_i}\pth{x,y}}\; .
\end{equation}
\end{lemma}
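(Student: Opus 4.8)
The plan is to reduce the statement to a classical interior estimate for uniformly elliptic equations, after an appropriate rescaling. The starting observation is that, for a fixed boundary point $y$, the map $x\mapsto k^0_{B_i}(x,y)$ is (the restriction to $\cD_1$ of) a strictly positive classical solution of the elliptic equation $\mathcal{L}u=0$ in a fixed neighbourhood of $\overline{\cD_1}$, where $\mathcal{L}$ is the generator~\eqref{eq:generator}. This is the technical input inherited from~\cite{berglund2014noise}: the kernel $K^0_{B_i}$ is built (via the traces and killings of Sections~\ref{ssec:setup-poincare}--\ref{ssec:trace}) from first-hitting distributions of the diffusion~\eqref{eq:SDE}, and such densities, seen as functions of the starting point, are $\mathcal{L}$-harmonic by the standard theory of exit problems for elliptic operators, with the required smoothness supplied by~\cite{BenArous_Kusuoka_Stroock_1984}; strict positivity on $\cD$ is a consequence of the strict positivity of the transition density recalled in Section~\ref{ssec:setup-spectral}. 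Since $x_0\in\cD_1$ and $\overline{\cD_1}\subset\cD$, the distance $\rho_0=\operatorname{dist}(\overline{\cD_1},\partial\cD)$ is strictly positive and independent of $\sigma$, and $u:=k^0_{B_i}(\cdot,y)$ solves $\mathcal{L}u=0$ in $\cB_{\rho_0}(x_0)$.

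The second step is the change of variables $\xi=(x-x_0)/\sigma^2$, which is exactly the scale at which the drift and diffusion terms of $\mathcal{L}$ become comparable. Setting $\tilde u(\xi)=u(x_0+\sigma^2\xi)$, a direct computation gives, on $\cB_{\rho_0/\sigma^2}(0)$,
\begin{equation}
 \sum_{i,j=1}^{d+1} D_{ij}(x_0+\sigma^2\xi)\,\partial_{\xi_i}\partial_{\xi_j}\tilde u
 + 2\sum_{i=1}^{d+1} f_i(x_0+\sigma^2\xi)\,\partial_{\xi_i}\tilde u = 0\;.
\end{equation}
By Assumption~\ref{ass:ellipticity} and the boundedness of $f$ and $D$ on the compact set $\overline{\cD}$, this is a uniformly elliptic equation without zeroth-order term, whose ellipticity constants and $L^\infty$-bounds on the coefficients depend only on $c_\pm$, $d$ and $\sup_{\overline{\cD}}\norm{f}$ --- in particular not on $\sigma$, provided $\sigma$ is small enough that the fixed-radius balls used in the next step fit inside $\cB_{\rho_0/\sigma^2}(0)$.

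The third step combines two standard interior estimates for $\tilde u$, both with $\sigma$-independent constants. The interior Harnack inequality (\cite[Theorem~8.20]{Gilbarg_Trudinger}) applied to the positive solution $\tilde u$ gives a constant $C_H$ with $\sup_{\cB_2(0)}\tilde u\leqs C_H\inf_{\cB_2(0)}\tilde u$, while the De Giorgi--Nash--Moser interior Hölder estimate (\cite[Theorem~8.24]{Gilbarg_Trudinger}) gives constants $C_0,\alpha>0$ with $[\tilde u]_{C^\alpha(\cB_1(0))}\leqs C_0\norm{\tilde u}_{L^\infty(\cB_2(0))}$. Hence, for any $0<r\leqs1$ and any $\xi,\xi'\in\cB_r(0)$,
\begin{align*}
 \absN{\tilde u(\xi)-\tilde u(\xi')}
 &\leqs C_0(2r)^\alpha\norm{\tilde u}_{L^\infty(\cB_2(0))} \\
 &\leqs C_0 C_H (2r)^\alpha\inf_{\cB_2(0)}\tilde u
  \leqs C_0 C_H (2r)^\alpha\inf_{\cB_r(0)}\tilde u\;,
\end{align*}
so that $\sup_{\cB_r(0)}\tilde u\leqs\bigl(1+C_0C_H(2r)^\alpha\bigr)\inf_{\cB_r(0)}\tilde u$. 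Choosing $r=r(\eta)$ so small that $C_0C_H(2r)^\alpha\leqs\eta$ and undoing the rescaling --- $\cB_r(0)$ in the $\xi$ variable corresponds to $\cB_{r\sigma^2}(x_0)$ in the $x$ variable --- yields the claim, with $r$ depending only on $\eta$ (and, as permitted by the statement, on $y$; in fact the bound is uniform in $y$).

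The step I expect to be the real obstacle is the first one: making precise the sense in which $k^0_{B_i}(\cdot,y)$ solves an elliptic equation and is strictly positive near $x_0$, uniformly in $\sigma$. This is exactly the point at which~\cite{berglund2014noise} is being followed, and it rests on the representation of the trace-and-kill kernel $K^0_{B_i}$ through hitting densities of~\eqref{eq:SDE} together with the regularity results of~\cite{BenArous_Kusuoka_Stroock_1984}. Once that is in hand, everything else is the routine rescaled elliptic-regularity computation sketched above; the only point to watch is that the Harnack and Hölder estimates are applied on balls of fixed radius in the rescaled variable, which forces the harmless smallness condition on $\sigma$ mentioned in Step~2 and introduces no $\sigma$-dependence in $r$.
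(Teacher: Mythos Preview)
The paper does not supply its own proof of this lemma: it is quoted verbatim from~\cite[Lemma~5.8]{berglund2014noise} and used as a black box, so there is no in-paper argument to compare against. Your approach --- extend the kernel to an $\cL$-harmonic function of the starting point, rescale by $\xi=(x-x_0)/\sigma^2$ so that the drift and diffusion terms become of comparable, $\sigma$-independent size, and then combine an interior Harnack inequality with a H\"older oscillation estimate on a fixed-radius ball in the rescaled variable --- is exactly the mechanism one expects, and is indeed the argument carried out in~\cite{berglund2014noise}. The choice of $r$ so that $C_0C_H(2r)^\alpha\leqs\eta$ is the right endgame.

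Two small remarks. First, your own caveat is well placed: the only genuinely nontrivial input is that $x\mapsto k^0_{B_i}(x,y)$ extends to a positive classical solution of $\cL u=0$ in a full $(d+1)$-dimensional neighbourhood of $x_0$ (not merely on the section $\Sigma$), which is what allows you to apply the interior estimates from~\cite{Gilbarg_Trudinger}; this is where the representation of the kernel as a hitting density for the continuous-time diffusion and the regularity results of~\cite{BenArous_Kusuoka_Stroock_1984} enter. Second, you correctly observe that your constants do not in fact depend on $y$; the $y$-dependence permitted in the statement is harmless slack inherited from~\cite{berglund2014noise}.
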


\begin{proposition}
\label{prop:coupling} 
For $x_1, x_2 \in B_i$, define the integer stopping time 
\begin{equation}
\label{eq:def_N} 
N = N\pth{x_1,x_2} = \inf \braces{n \geqslant 1 : \abs{\hat{{X}}_n^{x_2} -
\hat{{X}}_n^{x_1}}\leqslant r_{\eta} \sigma^2  }\;,
\end{equation}
where $\hat{{X}}_n^{x_0}$ denotes the Markov chain with transition kernel
$K^{0}_{B_i}(x_0,\6y)/K^{0}_{B_i}(x_0,B_i)$ (i.e.\ the Markov chain conditioned
to stay in $B_i$) and initial condition $x_0$, and $r_{\eta}$ is the constant of
Lemma \ref{Lemma:Harnack2}. 
Let
\begin{equation}
\rho_{n} = \supSur{x_1,x_2 \in B_i}{\prob{N\pth{x_1,x_2}>n}} \; .
\end{equation}
Then for any  $n \geqslant 2$, and any $\eta >0$, 
the transition kernel ${(K^{0}_{B_i})}^{n}(x,\6y)$ fulfils a uniform positivity
condition with constant $L(n)$ satisfying 
\begin{equation}
\label{eq:rho_n} 
L(n) \leqslant \frac{ 1+ \eta + \rho_{n-1} \e^{C/\sigma^2} }{\infSur{x
\in B_i}{  \Prcx{x}{ \tau^{+,n}_{B_i} < \tau^{+}_{\cM_{k} \bs B_i}  }  }  } \; ,
\end{equation}
where $C$ does not depend on $\sigma$.
\end{proposition}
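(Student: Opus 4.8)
The plan is to establish the upper bound in the uniform positivity condition~\eqref{eq:uniform_positivity} — the lower one being immediate — in the form $\pth{k^0_{B_i}}^n\pth{x_1,y}\leqs L(n)\,\infSur{x_0\in B_i}{\pth{k^0_{B_i}}^n\pth{x_0,y}}$ with $L(n)$ as in~\eqref{eq:rho_n}, by comparing $\pth{k^0_{B_i}}^n\pth{x_1,y}$ with $\pth{k^0_{B_i}}^n\pth{x_2,y}$ for an arbitrary $x_2\in B_i$ through a coupling of the two conditioned chains $\hat X^{x_1}$, $\hat X^{x_2}$, and taking the infimum over $x_2$ at the end. First I would record the representation of the substochastic kernel through the conditioned chain: writing $q(z)=K^0_{B_i}(z,B_i)=\Prcx{z}{\tau^+_{B_i}<\tau^+_{\cM_k\setminus B_i}}$ for the one-step survival probability, telescoping the $n$-fold density gives
\begin{equation}
\pth{k^0_{B_i}}^n\pth{x,y}=\E^x\!\left[\left(\prod_{j=0}^{n-2}q(\hat X_j)\right)k^0_{B_i}\pth{\hat X_{n-1},y}\right]\;,
\end{equation}
and integrating in $y$ over $B_i$ yields $\int_{B_i}\pth{k^0_{B_i}}^n\pth{x,y}\6y=\Prcx{x}{\tau^{+,n}_{B_i}<\tau^+_{\cM_k\setminus B_i}}$, which is exactly the denominator appearing in~\eqref{eq:rho_n}.

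I would then split the expectation according to whether the coupling time $N=N(x_1,x_2)$ of~\eqref{eq:def_N} is at most $n-1$ or larger. On $\{N=m\}$ with $m\leqs n-1$, the strong Markov property at $N$ writes the contribution as a survival-weighted average of $\pth{k^0_{B_i}}^{n-m}\pth{\hat X_m^{x_1},y}$; since $\bigabs{\hat X_m^{x_1}-\hat X_m^{x_2}}\leqs r_\eta\sigma^2$ and only the first of the remaining $n-m$ steps depends on the starting point, Lemma~\ref{Lemma:Harnack2} applied to that first step gives $\pth{k^0_{B_i}}^{n-m}\pth{\hat X_m^{x_1},y}\leqs(1+\eta)\pth{k^0_{B_i}}^{n-m}\pth{\hat X_m^{x_2},y}$. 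Coupling the two chains so that they run together after $N$ (Lemma~\ref{Lemma:Harnack2} forces the overlap of the two conditioned one-step kernels to be at least $(1+\eta)^{-1}$, so a single merging attempt succeeds with positive probability and, combined with the control of $\rho$, this suffices), bounding the survival weights along the $x_1$-path by $1$, and summing over $m$, this part is at most $(1+\eta)$ times $\pth{k^0_{B_i}}^n\pth{x_2,y}$ once the latter has been normalised by its total mass $\infSur{x}{\Prcx{x}{\tau^{+,n}_{B_i}<\tau^+_{\cM_k\setminus B_i}}}$. On $\{N>n-1\}$, an event of probability at most $\rho_{n-1}$ by definition of $\rho$, I would use the cruder Harnack bound of Lemma~\ref{Lemma:Harnack1}: the survival weights are again bounded by $1$ and $k^0_{B_i}\pth{\hat X_{n-1}^{x_1},y}\leqs\supSur{z\in B_i}{k^0_{B_i}(z,y)}\leqs\e^{C/\sigma^2}\infSur{z\in B_i}{k^0_{B_i}(z,y)}$, contributing at most $\rho_{n-1}\e^{C/\sigma^2}$ times a lower bound for $\pth{k^0_{B_i}}^n\pth{x_2,y}$. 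Dividing by $\infSur{x}{\Prcx{x}{\tau^{+,n}_{B_i}<\tau^+_{\cM_k\setminus B_i}}}$ and taking the infimum over $x_2$ produces~\eqref{eq:rho_n}; the hypothesis $n\geqs2$ only serves to make $\rho_{n-1}$ and the decomposition meaningful.

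The main obstacle is the accounting of the substochastic survival weights $\prod_j q(\hat X_j)$ along the two coupled trajectories before they merge: these are carried by different paths and cannot be matched pointwise, so instead one bounds the $x_1$-weights by $1$ and compensates with the normalising factor $\infSur{x}{\Prcx{x}{\tau^{+,n}_{B_i}<\tau^+_{\cM_k\setminus B_i}}}$ in the denominator, leaving only the clean Harnack factors $1+\eta$ and $\rho_{n-1}\e^{C/\sigma^2}$ in the numerator. A secondary technical issue is that Lemmas~\ref{Lemma:Harnack1}--\ref{Lemma:Harnack2} are stated for endpoints on a boundary manifold, so before applying them one must reduce the comparison of $k^0_{B_i}(\cdot,y)$ over varying starting points to a comparison of the underlying diffusion's transition density, where the Harnack estimates actually live; and one should verify that the merging coupling at time $N$ can be iterated if a single attempt fails, using that each attempt succeeds with probability bounded below by $(1+\eta)^{-1}$.
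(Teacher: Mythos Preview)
Your approach is essentially the one underlying the paper's proof: the paper simply invokes \cite[Proposition~5.9]{berglund2014noise}, which carries out precisely the coupling argument you sketch (split according to whether $N\leqs n-1$, use Lemma~\ref{Lemma:Harnack2} on the coupled part and Lemma~\ref{Lemma:Harnack1} on the uncoupled part), and then reads off~\eqref{eq:rho_n} in one line. So strategically you are on the same track.

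There is one place where your sketch is looser than it should be. The cited result is stated not for $(k^0_{B_i})^n(x,y)$ itself but for the \emph{normalised} density $(k^0_{B_i})^n(x,y)/(K^0_{B_i})^n(x,B_i)$: one shows
\[
\sup_{x\in B_i}\frac{(k^0_{B_i})^n(x,y)}{(K^0_{B_i})^n(x,B_i)}
\;\leqs\;\bigl(1+\eta+\rho_{n-1}\e^{C/\sigma^2}\bigr)\,
\inf_{x\in B_i}\frac{(k^0_{B_i})^n(x,y)}{(K^0_{B_i})^n(x,B_i)}\;,
\]
and \emph{then} multiplies up by $(K^0_{B_i})^n(x,B_i)\leqs1$ and divides by $\inf_x(K^0_{B_i})^n(x,B_i)=\inf_x\Prcx{x}{\tau^{+,n}_{B_i}<\tau^+_{\cM_k\setminus B_i}}$ to obtain~\eqref{eq:rho_n}. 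Your attempt to handle the survival weights directly inside the coupling estimate --- bounding $\prod_j q(\hat X_j^{x_1})\leqs1$ and then claiming the resulting sum over $m$ is controlled by $(k^0_{B_i})^n(x_2,y)$ divided by its total mass --- does not go through cleanly as written: after dropping the $x_1$-weights you are left with $\sum_m\E\bigl[\mathds{1}_{\{N=m\}}(k^0_{B_i})^{n-m}(\hat X_m^{x_2},y)\bigr]$, and relating each summand to $(k^0_{B_i})^n(x_2,y)$ requires reinserting the missing weights $\prod_{j=0}^{m-1}q(\hat X_j^{x_2})$, which are $m$-dependent. The cleaner organisation is to run the coupling argument for the \emph{stochastic} conditioned kernel $\hat k=k^0_{B_i}/q$ (so no survival weights appear at all), obtain the displayed inequality for the normalised density, and only at the very end pass to the unnormalised one via the denominator in~\eqref{eq:rho_n}. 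That is exactly how the paper (via the reference) structures it, and it sidesteps the obstacle you flagged.
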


\begin{proof}
Thanks to \cite[Proposition 5.9]{berglund2014noise},
we obtain that
\begin{equation}
\supSur{x \in B_i}{ \frac{(k^0_{B_i})^n\pth{x,y}}{(K^0_{B_i})^n\pth{x,B_i}} }
\leqslant  \infSur{x \in B_i}{
\frac{(k^0_{B_i})^n\pth{x,y}}{(K^0_{B_i})^n\pth{x,B_i}} } \pth{ 1 + \eta +
\rho_{n-1} \e^{C/\sigma^2} } \qquad \forall y \in B_i \; .
\end{equation}
The result is then immediate.
\end{proof}

\subsection{Oscillations of the principal right eigenfunction}

\begin{proposition}
\label{prop:oscillation_phi0} 
Assume that $(k^0_{B_i})^n$ satisfies the uniform positivity
condition~\eqref{eq:uniform_positivity} for some $n \in\N$.
Then there exists $M>0$, such that the normalised principal
right eigenfunction of $K^0_{B_i}$ satisfies
\begin{equation}
\norm{ \mathring{\phi}^{B_i}_0 - 1 } \leqslant M L(n)^2 \supSur{x \in
B_i} {\abs{1- \frac{ \Prcx{x}{{\tau^{+,n}_{B_i}} < \tau^{+}_{\cM_{k} \bs B_i}}}
{\bigpar{\mathring{\ev}_0^{B_i}}^n}}}  \; .
\end{equation}
\end{proposition}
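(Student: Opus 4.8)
The plan is to compare $\mathring{\phi}^{B_i}_0$ with the constant function $1$ by means of the $n$-fold iterated eigenvalue equation, the uniform positivity bound~\eqref{eq:uniform_positivity}, and — crucially — the normalisation $\int_{B_i}\mathring{\pi}^{B_i}_0\mathring{\phi}^{B_i}_0=1$ from~\eqref{eq:orthogonality}, which is exactly what lets the resulting self-referential estimate close. Throughout I abbreviate $\ev_0:=\mathring{\ev}^{B_i}_0$, $\phi:=\mathring{\phi}^{B_i}_0$, $\pi:=\mathring{\pi}^{B_i}_0$, and set
\[
 q_n(x):=(K^0_{B_i})^n(x,B_i)=\Prcx{x}{\tau^{+,n}_{B_i}<\tau^{+}_{\cM_{k}\bs B_i}}\;,\qquad \varepsilon_n:=\supSur{x\in B_i}{\abs{1-\tfrac{q_n(x)}{\ev_0^{\,n}}}}\;,
\]
so that the claim is $\norm{\phi-1}\le M L(n)^2\varepsilon_n$.

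First I would record the ingredients. Iterating the eigenfunction equations gives, for $x,y\in B_i$,
\[
 \ev_0^{\,n}\,\phi(x)=\int_{B_i}(k^0_{B_i})^n(x,y)\,\phi(y)\6y\;,\qquad \ev_0^{\,n}\,\pi(y)=\int_{B_i}\pi(x)\,(k^0_{B_i})^n(x,y)\6x\;,
\]
while $\int_{B_i}\pi(y)\6y=1$ and $\int_{B_i}\pi(y)\phi(y)\6y=1$; moreover $\ev_0>0$ and $\pi>0$ on $B_i$ (Jentzsch), so the quotients below are well defined. With $p_n(y):=\infSur{x_0\in B_i}{(k^0_{B_i})^n(x_0,y)}$, condition~\eqref{eq:uniform_positivity} reads $p_n(y)\le(k^0_{B_i})^n(x,y)\le L(n)p_n(y)$. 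Integrating these inequalities against $\pi$ in the left-eigenfunction relation yields $\ev_0^{\,n}\pi(y)/L(n)\le p_n(y)\le\ev_0^{\,n}\pi(y)$, hence
\[
 \tfrac{1}{L(n)}\,\ev_0^{\,n}\pi(y)\;\le\;(k^0_{B_i})^n(x,y)\;\le\;L(n)\,\ev_0^{\,n}\pi(y)\qquad\forall\,x,y\in B_i\;.
\]

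Now write $(k^0_{B_i})^n(x,y)=\ev_0^{\,n}\pi(y)\bigpar{1+e(x,y)}$, so that the previous display gives $\abs{e(x,y)}\le L(n)-1$, while by definition $\int_{B_i}\pi(y)e(x,y)\6y=q_n(x)/\ev_0^{\,n}-1$. Substituting into the iterated eigenfunction equation, using $\int_{B_i}\pi\phi=1$ and then splitting $\phi=1+(\phi-1)$,
\[
 \phi(x)-1=\int_{B_i}\pi(y)\,e(x,y)\,\phi(y)\6y=\Bigpar{\tfrac{q_n(x)}{\ev_0^{\,n}}-1}+\int_{B_i}\pi(y)\,e(x,y)\,\bigpar{\phi(y)-1}\6y\;.
\]
Bounding $\abs{e}\le L(n)-1$ and $\int_{B_i}\pi=1$ gives $\abs{\phi(x)-1}\le\varepsilon_n+(L(n)-1)\norm{\phi-1}$ for every $x\in B_i$; taking the supremum and rearranging,
\[
 \norm{\phi-1}\;\le\;\frac{\varepsilon_n}{2-L(n)}\;.
\]
Since $L(n)$ stays bounded away from $2$ in the regime in which~\eqref{eq:uniform_positivity} is applied (indeed $L(n)\to1$ for the $n$ eventually chosen in Section~\ref{sec:KZBi}, as Proposition~\ref{prop:coupling} shows), one has $(2-L(n))^{-1}\le M L(n)^2$ for a suitable constant $M$, which gives the statement.

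The main obstacle — and the reason the argument must be arranged this way — is the estimate in the last step: bounding $\int_{B_i}(k^0_{B_i})^n(x,y)\abs{\phi(y)-1}\6y$ directly through~\eqref{eq:uniform_positivity} only produces $L(n)\norm{\phi-1}$, which is useless since $L(n)>1$. One has to first peel off the rank-one part $\ev_0^{\,n}\pi(y)$ of the iterated kernel and exploit that $\phi-1$ has zero $\pi$-mean; then only the fluctuation $e(x,y)$, whose $\pi$-average $q_n(x)/\ev_0^{\,n}-1$ has size $\varepsilon_n$, multiplies $\phi-1$, and the fixed-point inequality can be closed. A secondary, routine point is the uniform comparability of $p_n$ and $\ev_0^{\,n}\pi$, which is precisely what the left-eigenfunction relation supplies.
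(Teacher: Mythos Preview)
Your argument is correct and takes a genuinely different route from the paper. The paper invokes Birkhoff's positive-operator contraction theorem~\cite{Birkhoff1957}: it shows that $(K^0_{B_i})^{nm}1/(\mathring\ev_0^{B_i})^{nm}\to\mathring\phi_0^{B_i}$ geometrically with rate $\varrho\le 1-L(n)^{-2}$, then writes $1-\mathring\phi_0^{B_i}$ as a telescopic series $\sum_m(h_m-h_{m+1})$ and bounds each term using the spectral remainder~\eqref{Eq:SpectralDecompositionKZBIIterated}; summing the geometric series produces the factor $\sum_m\varrho^m\le L(n)^2$. Your approach bypasses Birkhoff entirely: you peel off the rank-one piece $\ev_0^n\pi$ from the $n$-fold kernel, use the zero $\pi$-mean of $\phi-1$, and close a one-step contraction inequality $\|\phi-1\|\le\varepsilon_n+(L(n)-1)\|\phi-1\|$. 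This is more elementary and self-contained, and for $L(n)$ close to $1$ gives the slightly sharper constant $(2-L(n))^{-1}$.

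The one limitation you correctly flag is that your fixed-point closure needs $L(n)<2$, so the final inequality $(2-L(n))^{-1}\le M\,L(n)^2$ holds only with $L(n)$ bounded away from $2$; the paper's Birkhoff route does not have this restriction in principle (the series $\sum\varrho^m$ converges for every $L(n)>1$). Since Proposition~\ref{prop:coupling} is applied precisely to manufacture an $n$ with $L(n)$ near~$1$, this is harmless for the paper's purposes, but it means your proof establishes the proposition only in that regime rather than for arbitrary $L(n)$ satisfying~\eqref{eq:uniform_positivity}.
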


\begin{proof}
The uniform positivity condition implies that we can apply
\cite[Theorem 3, Lemma 3]{Birkhoff1957}, which tells us that for any bounded
measurable function $f : B_i \rightarrow \R$, there exists a constant $M\pth{f}$
such that for all $m\in\N$, 
\begin{equation}
\norm{{(K^0_{B_i})}^{nm} f - (\mathring{\ev}_0^{B_i})^{nm}
(\mathring{\pi}^{B_i}_0 f) 
\mathring{\phi}^{B_i}_0  }   \leqslant M\pth{f} \varrho^m
(\mathring{\ev}_0^{B_i})^{nm}
\norm{\mathring{\phi}^{B_i}_0} \; ,
\end{equation}
where $\varrho <1$. Inspecting the proofs in~\cite{Birkhoff1957} shows that 
$\varrho$ satisfies $\varrho \leqslant 1- 1/L(n)^2$.
Taking $f(x) = {1}$, it follows that
\begin{equation}
\bigabs{ 
(K^0_{B_i})^{nm}(x,B_i) - 
(\mathring{\ev}_0^{B_i})^{nm} \mathring{\phi}^{B_i}_0\pth{x}      } \leqslant
M(1) \varrho^m (\mathring{\ev}_0^{B_i})^{nm} \norm{\mathring{\phi}^{B_i}_0}
\; .
\end{equation}
Dividing by $(\mathring{\ev}_0^{B_i})^{nm}$ 
and using the spectral
decomposition~\eqref{Eq:SpectralDecompositionKZBIIterated}, 
we get 
\begin{equation}
\abs{\int_{B_i}  
{\pth{\frac{\mathring{\ev}^{B_i}_1}{\mathring{\ev}^{B_i}_0}}}^{n m} g^{n
m}\pth{x,y} \6y} = 
\abs{\frac{ (K^0_{B_i})^{nm}(x,B_i)}
{\bigpar{\mathring{\ev}_0^{B_i}}^{nm}} - \mathring{\phi}^{B_i}_0\pth{x} }
\leqslant
M(1) \varrho^m \norm{\mathring{\phi}^{B_i}_0}\;. 
\label{Eq:BirkApplication}
\end{equation}
Since $\varrho<1$, taking the limit $ m \rightarrow \infty$, we obtain 
\begin{equation}
\mathring{\phi}^{B_i}_0\pth{x} 
= \lim_{m \rightarrow \infty}
\frac{(K^0_{B_i})^{nm}(x,B_i)}
{\bigpar{\mathring{\ev}_0^{B_i}}^{nm}} 
= \lim_{m \rightarrow \infty}
\frac{ \Prcx{x}{{\tau^{+,nm}_{B_i}} < \tau^{+}_{\cM_{k} \bs B_i}}}
{\bigpar{\mathring{\ev}_0^{B_i}}^{nm}} \; .
\end{equation}
Let $(h_m)_{m\geqs0}$ be the sequence of bounded measurable functions in $B_i$
defined by $h_0 =1$, and 
\begin{equation}
h_{m+1} \pth{x} = \frac{1}{\bigpar{\mathring{\ev}_0^{B_i}}^n} \int_{B_i}
(k^0_{B_i})^n\pth{x,y} h_{m}\pth{y} \6y\; ,
\end{equation}
so that for all $m$
\begin{equation}
h_m \pth{x} = \frac{(K^0_{B_i})^{nm}(x,B_i)}
{\bigpar{\mathring{\ev}_0^{B_i}}^{nm} } \; .
\end{equation}
We can now use a telescopic series to estimate 
\begin{align}
\nonumber
{1- \mathring{\phi}^{B_i}_0 \pth{x}}&= {h_0\pth{x} - \lim_{m \rightarrow \infty }{h_m}\pth{x} }\\
\nonumber
&=  \sum_{m=0}^\infty \bigbrak{h_m\pth{x}- h_{m+1}\pth{x}}  \\
&= \sum_{m=0}^\infty \int_{B_i}  {  \frac{{(k^0_{B_i})}^{n m}\pth{x,y}}{
\bigpar{\mathring{\ev}_0^{B_i}}^{nm} } \bigbrak{h_0 - h_1(y)} \6y } \; .
\end{align}
Since $\displaystyle\int_{B_i}\mathring{\pi}^{B_i}_0\pth{x} \bigbrak{h_0-
h_1\pth{x}} \6x= 0$, the spectral decomposition
\eqref{Eq:SpectralDecompositionKZBIIterated} and 
\eqref{Eq:BirkApplication} yield
\begin{align}
\nonumber
\norm{1 - \mathring{\phi}^{B_i}_0 } & \leqslant  
\sup_{x\in B_i} \sum_{m=0}^\infty \abs{\int_{B_i}  
{\pth{\frac{\mathring{\ev}^{B_i}_1}{\mathring{\ev}^{B_i}_0}}}^{n m} g^{n
m}\pth{x,y} \6y} \, \norm{h_0- h_1}      \\
&\leqslant \sum_{m=0}^\infty   M(1) \varrho^{m}
\norm{\mathring{\phi}^{B_i}_0} \norm{h_0- h_1}\; .
\end{align}
Since $\sum_m \varrho^{m} \leqs L(n)^2$ and $h_1(x) =
(\mathring{\ev}^{B_i}_0)^{-n} (K^0_{B_i})^n(x,B_i)$, the result follows.
\end{proof}

\section{Estimates on operators norms}
\label{sec:diluted} 
The aim of this section is to show that the kernel $K^{u}$ (or its $m$-fold
iterates) defined on $\cM_{{k}}$ by
\begin{equation}
\label{eq:defKu} 
K^{u}\pth{x,\6y}  = \Biggexpecin{x}{\e^{u( \tau^+_{\meta{k}}-1)}
\bigind{X_{\tau^+_{\meta{k}}}  \in \6y}}\; ,
\end{equation}
can be approximated by a finite-rank operator $K^{\star}$ (or its $m$-fold
iterates) given by 
\begin{equation}
K^{\star}\pth{x,\6y}  = \sum_{i=1}^{k} \ind{x \in B_i}  \int_{B_i}
\mathring{\pi}^{B_i}_0\pth{ x_0 } \ K^0\pth{x_0, \6y}  \6x_0 \; .
\end{equation}
We will first compare $K^u$ to $K^0$,
and then compare $K^0$ to $K^{\star}$ (and similarly for their iterates).

\subsection{Comparison between $\Ku$, $K^0$ and their $m$-fold iterates}
\label{Ssection:KUKZ}
Note that the difference between $\Ku$ and $K^0$ is given by
\begin{equation}
\pth{\Ku  - K^{0}}\pth{x , \6y} =  \Biggexpecin{x}{ \bigpar{\e^{u
(\tau^+_{\cM_{k}}-1)}  - 1} \bigind{X_{\tau^+_{\cM_{k}}} \in \6y} } \; .
\end{equation}
The following proposition enables us to bound the norm of this difference.

\begin{proposition}
\label{Prop:NormKuK0}
For all real  $u$ verifying the Laplace condition given by
\eqref{EqLaplaceTransform} with $A=\cM_k$, and such that $\pth{1-\e^{-u}}
\added{\bigexpecin{\cM_{k}^c}{ \tau^+_{\cM_{k}} }}<1$, we have
\begin{equation}
\norm{\Ku  - K^{0}} \leqslant \frac{\pth{1-\e^{-u}}
\bigexpecin{\cM_{k}}{\tau^+_{\cM_{k}}-1} }{ 1- \pth{1-\e^{-u}}
\bigexpecin{\cM_{k}^c}{ \tau^+_{\cM_{k}} }} \label{Eq:NormKuK0} \; .
\end{equation}
\end{proposition}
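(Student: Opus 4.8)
The plan is to turn the operator-norm bound into the control of a single scalar expectation and then to expand that expectation through a resolvent series adapted to the first return to $\cM_k$. We may assume $u>0$ (for $u=0$ both sides vanish). First I would note that for $x\in\cM_k$ one has $\tau^+_{\cM_k}\geqs1$ $\fP^x$-almost surely, so the weight $\e^{u(\tau^+_{\cM_k}-1)}-1$ is non-negative; hence the signed kernel $\pth{\Ku-K^0}(x,\cdot)=\bigexpecin{x}{\bigpar{\e^{u(\tau^+_{\cM_k}-1)}-1}\ind{X_{\tau^+_{\cM_k}}\in\cdot}}$ is actually a \emph{non-negative} measure, whose total variation equals its total mass, so that $\norm{\Ku-K^0}=\sup_{x\in\cM_k}\bigexpecin{x}{\e^{u(\tau^+_{\cM_k}-1)}-1}$ (finite by the Laplace condition). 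Splitting this expectation according to $X_1$ — on $\{X_1\in\cM_k\}$ the return time is $1$ and the weight vanishes, while on $\{X_1\in\cM_k^c\}$ one has $\tau^+_{\cM_k}=1+\tau_{\cM_k}$ along the shifted path — the Markov property gives, for $x\in\cM_k$,
\[ \bigexpecin{x}{\e^{u(\tau^+_{\cM_k}-1)}-1}=\int_{\cM_k^c}k(x,z)\,\bigpar{\bigexpecin{z}{\e^{u\tau_{\cM_k}}}-1}\,\dd z . \]

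The key step is to rewrite the inner expectation in operator form. For $z\in\cM_k^c$, using $\e^{un}-1=(\e^u-1)\sum_{m=0}^{n-1}\e^{um}$ together with $\bigprobin{z}{\tau_{\cM_k}>m}=\bigbrak{(K_{\cM_k^c})^m\mathds{1}}(z)$, one gets $\bigexpecin{z}{\e^{u\tau_{\cM_k}}}-1=(\e^u-1)\bigbrak{(\id-\e^u K_{\cM_k^c})^{-1}\mathds{1}}(z)$, the inverse being well defined on $L^\infty(\cM_k^c)$ by the Laplace condition. I would then introduce the positivity-preserving operator $R:=(\id-K_{\cM_k^c})^{-1}=\sum_{m\geqs0}(K_{\cM_k^c})^m$, noting that $R\mathds{1}$ is the function $z\mapsto\bigexpecin{z}{\tau^+_{\cM_k}}$ on $\cM_k^c$. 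The algebraic identity $\e^{-u}\id-K_{\cM_k^c}=(\id-K_{\cM_k^c})\bigpar{\id-(1-\e^{-u})R}$ then yields
\[ (\id-\e^u K_{\cM_k^c})^{-1}\mathds{1}=\e^{-u}\sum_{j\geqs0}(1-\e^{-u})^j\,R^{\,j+1}\mathds{1}, \]
the series converging in $L^\infty(\cM_k^c)$ precisely because of the hypothesis $(1-\e^{-u})\bigexpecin{\cM_k^c}{\tau^+_{\cM_k}}<1$ (as $R^{\,j}\mathds{1}\leqs\bigl(\bigexpecin{\cM_k^c}{\tau^+_{\cM_k}}\bigr)^{j}\mathds{1}$ pointwise, by positivity).

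Assembling, and using $(\e^u-1)\e^{-u}=1-\e^{-u}$, one obtains $\bigexpecin{x}{\e^{u(\tau^+_{\cM_k}-1)}-1}=(1-\e^{-u})\sum_{j\geqs0}(1-\e^{-u})^j\int_{\cM_k^c}k(x,z)\bigbrak{R^{\,j+1}\mathds{1}}(z)\,\dd z$. Bounding $R^{\,j}\mathds{1}$ pointwise as above and then using once more the one-step identity $\int_{\cM_k^c}k(x,z)\bigexpecin{z}{\tau^+_{\cM_k}}\,\dd z=\bigexpecin{x}{\tau^+_{\cM_k}}-1\leqs\bigexpecin{\cM_k}{\tau^+_{\cM_k}-1}$ (valid for $x\in\cM_k$), each summand is at most $\bigl(\bigexpecin{\cM_k^c}{\tau^+_{\cM_k}}\bigr)^{j}\bigexpecin{\cM_k}{\tau^+_{\cM_k}-1}$. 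Summing the geometric series and taking $\sup_{x\in\cM_k}$ produces exactly~\eqref{Eq:NormKuK0}.

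The step I expect to be delicate is extracting the sharp constant in the last display: one must resist bounding $\bignorm{(\id-\e^u K_{\cM_k^c})^{-1}\mathds{1}}_{L^\infty(\cM_k^c)}$ first and only afterwards multiplying by $\bigprobin{x}{X_1\in\cM_k^c}$, since that cruder route introduces a spurious factor $\bigexpecin{\cM_k^c}{\tau^+_{\cM_k}}$; the point is to keep the initial jump from $\cM_k$ into $\cM_k^c$ paired with exactly one copy of $R$, which is what turns it into the interior expectation $\bigexpecin{x}{\tau^+_{\cM_k}-1}$ rather than a bare one-step probability. Everything else is routine bookkeeping: the existence of $(\id-K_{\cM_k^c})^{-1}$ and of $(\id-(1-\e^{-u})R)^{-1}$ (from the Laplace condition and from the stated hypothesis, respectively), the non-negativity of all kernels involved so that pointwise domination propagates through them, and the identification in the first paragraph of total variation with total mass.
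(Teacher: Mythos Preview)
Your proof is correct. The paper takes a different, more probabilistic route: after the common reduction $\norm{\Ku-K^0}\leqslant\sup_{x\in\cM_k}\bigexpecin{x}{\e^{u(\tau^+_{\cM_k}-1)}-1}$, it writes $\e^{u(\tau^+-1)}-1=(1-\e^{-u})\sum_{n=1}^{\tau^+-1}\e^{un}$, reverses the summation index to $\sum_{n=1}^{\tau^+-1}\e^{u(\tau^+-n)}$, and applies the Markov property at each time $n$ to recognise the $n$th summand as $\bigexpecin{X_n}{\e^{u\tau_{\cM_k}}}$; this directly factors the bound as $(1-\e^{-u})\,\bigexpecin{\cM_k}{\tau^+_{\cM_k}-1}\,M$ with $M:=\bigexpecin{\cM_k^c}{\e^{u\tau_{\cM_k}}}$. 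The quantity $M$ is then controlled by a self-referential inequality $M-1\leqslant(1-\e^{-u})\,\bigexpecin{\cM_k^c}{\tau^+_{\cM_k}}\,M$, obtained from the probabilistic representation of $(\id-K_{\cM_k^c})^{-1}$. Your argument replaces both the index-reversal trick and this self-referential step by the single operator factorisation $\e^{-u}\id-K_{\cM_k^c}=(\id-K_{\cM_k^c})\bigpar{\id-(1-\e^{-u})R}$ followed by a Neumann expansion; this is more systematic and makes the role of each hypothesis explicit, at the price of being less pictorial. The delicate point you flag --- keeping the first jump from $\cM_k$ paired with exactly one copy of $R$ so as to produce $\bigexpecin{x}{\tau^+_{\cM_k}-1}$ rather than a bare one-step probability --- corresponds in the paper's proof to pulling out the full count $\tau^+_{\cM_k}-1$ of summands \emph{before} bounding each of them by $M$.
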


\begin{remark}
Note that for real $u$, the two conditions on $u$ can be summarised  as follows:
\begin{equation}
\max\pth{  \bigprobin{\cM_{k}^c}{X_1 \in \cM_{k}^c} ,
\frac{\bigexpecin{\cM_{k}^c}{ \tau^+_{\cM_{k}} } -1}{\bigexpecin{\cM_{k}^c}{
\tau^+_{\cM_{k}} }} } < \e^{-u} \;.
\end{equation}
\end{remark} 

To prove this proposition, we will use the following expression for the
inverse of $\pth{\id- K_{A^c}}$ (which is its resolvent at $z=1$).

\begin{lemma}\label{Prop:EquationBoundaryValueProblemZ}
Assume that there is a set $A \subset \Sigma$ such that
\begin{equation}
\supSur{x \in A^c}{\Prcx{x}{X_1 \in A^c}} <1\; .
\end{equation} 
Then the unique solution of the boundary value problem
\begin{align}
\nonumber
\pth{ \pth{ \id - K } r }\pth{x}  &= g(x), & x& \in A^c
\;,\\
r \pth{x} &= 0, &   x &\in {\setA} \; ,
\label{EqNonDirichletBoundaryValueProblem}
\end{align}
is given by
\begin{equation}
r(x)=\Biggexpecin{x}{ \sum_{n=0}^{\tau_{\setA}-1}  g\pth{X_n} }
\end{equation}
where by convention, the empty sum equals zero.
\end{lemma}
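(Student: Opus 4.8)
The plan is to guess-and-check: verify that the proposed function $r(x)=\expecin{x}{\sum_{n=0}^{\tau_{A}-1} g(X_n)}$ solves~\eqref{EqNonDirichletBoundaryValueProblem}, and then derive uniqueness from the same Fredholm-type argument already used in the proof of Proposition~\ref{prop:Feynman-Kac}. The first thing I would do is check that $r$ is a bounded (in particular finite) function. Setting $q:=\supSur{x\in A^c}{\Prcx{x}{X_1\in A^c}}<1$, an easy induction on $n$ using the Markov property gives $\Prcx{x}{\tau_A>n}\leqs q^n$ for all $x\in A^c$ and $n\geqs0$, whence $\expecin{x}{\tau_A}\leqs(1-q)^{-1}$ and $\absN{r(x)}\leqs\norm{g}\,\expecin{x}{\tau_A}\leqs\norm{g}/(1-q)$. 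This bound also shows that the series defining $r$ converges absolutely, which is what legitimises the manipulations below.

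Next I would verify the two parts of the boundary value problem. For $x\in A$ one has $\tau_A=0$, so the sum is empty and $r(x)=0$, which is the boundary condition. For $x\in A^c$ one has $\tau_A\geqs1$; splitting off the $n=0$ term and then conditioning on $X_1$ via the Markov property (using that on $\set{X_1\in A}$ the shifted hitting time vanishes, so the corresponding inner sum is empty),
\begin{equation*}
 r(x)=g(x)+\expecin{x}{\sum_{n=1}^{\tau_A-1}g(X_n)}
 =g(x)+\expecin{x}{\expecin{X_1}{\sum_{n=0}^{\tau_A-1}g(X_n)}}
 =g(x)+(Kr)(x)\;,
\end{equation*}
which is exactly $((\id-K)r)(x)=g(x)$ on $A^c$. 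An equivalent, slightly slicker route is to note that $(K_{A^c}^n g)(x)=\expecin{x}{g(X_n)\ind{\tau_A>n}}$ for $x\in A^c$, so that $r\vert_{A^c}=\sum_{n\geqs0}K_{A^c}^n g=[\id-K_{A^c}]^{-1}g$, the resolvent being well-defined precisely because $\norm{K_{A^c}}\leqs q<1$ in the supremum norm.

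For uniqueness, if $r_1,r_2$ are two solutions then $h:=r_1-r_2$ vanishes on $A$ and satisfies $h(x)=(Kh)(x)=(K_{A^c}h)(x)$ for $x\in A^c$, since the contribution of $A$ to $Kh$ is zero. As $\norm{K_{A^c}}\leqs q<1$, the operator $\id-K_{A^c}$ is invertible (Neumann series, or~\cite[Theorem~8.1]{gohberg2013classes}), forcing $h\equiv0$ on $A^c$ and hence on $\Sigma$. The only genuinely delicate point in the whole argument is justifying the interchange of expectation and infinite summation in the first-step decomposition and in the resolvent identity; this is handled uniformly by the geometric tail bound $\Prcx{x}{\tau_A>n}\leqs q^n$, which is exactly why the hypothesis $\sup_{x\in A^c}\Prcx{x}{X_1\in A^c}<1$ is imposed.
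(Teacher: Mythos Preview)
Your proof is correct and follows essentially the same approach as the paper: verify the boundary condition via the empty-sum convention, check the equation on $A^c$ by a first-step decomposition with the Markov property, and deduce uniqueness from the invertibility of $\id-K_{A^c}$ via~\cite[Theorem~8.1]{gohberg2013classes}. Your explicit boundedness check using the geometric tail $\Prcx{x}{\tau_A>n}\leqs q^n$ is a welcome addition that the paper leaves implicit.
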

\begin{proof}
First, let us check that the proposed function solves the boundary value
problem. This is obvious for $x \in {\setA}$, since in that case, with the
convention taken for the empty sum, $  \added{\bigexpecin{x}
{\sum_{n=0}^{\tau_{\setA}-1} g\pth{X_n} }}=0$. 
For $x \in A^c$,
\begin{equation}
\pth{\pth{\id - K } r } \pth{x} =    \Espc{x}{ \sum_{n=0}^{\tau_{\setA}-1} 
g\pth{X_n} } - \Espc{x}{  \Espc{X_1}{\sum_{n=0}^{\tau_{\setA}-1}  g\pth{X_n}
}}\; .
\end{equation}
We can split the expectations according to the location of $X_1$, and use the
strong Markov property, to obtain
\begin{align}
\nonumber
&\pth{\pth{\id - K } r } \pth{x} \\
\nonumber
&\;{}= \Espc{x} { \ind{X_1 \in \setA} g\pth{x} } +
\Espc{x}{ \ind{X_1 \in A^c} \sum_{n=0}^{\tau_{\setA}-1}  g\pth{X_n}} 
- \Espc{x}{ \ind{X_1
\in A^c} \Espc{X_1}{\sum_{n=0}^{\tau_{\setA}-1}  g\pth{X_n} }}\\
\nonumber
&\;{}= \Espc{x} { \ind{X_1 \in \setA} g\pth{x} } + \Espc{x}{
\ind{X_1 \in A^c}\sum_{n=0}^{\tau_{\setA}-1}  g\pth{X_n} } -
\Espc{x}{\ind{X_1 \in A^c}\sum_{n=1}^{\tau_{\setA}-1}  g\pth{X_n} }\\
&\;{}= g(x)\; .
\end{align}
This shows that we have an admissible solution for all $x \in \Sigma$. 

Uniqueness is a consequence of the Fredholm alternative. Indeed, since 
\begin{equation}
\norm{ K_{A^c}} \leqslant\supSur{x \in A^c}{\Prcx{x}{X_1 \in A^c}} <1 \; ,
\end{equation}
we can apply \cite[Theorem 8.1]{gohberg2013classes}. In particular, $\pth{\id -
K_{A^c}}$  is invertible.
\end{proof}

\begin{remark}
For $A = {\cM_{k}}$,  since
\begin{equation}
\norm{ K_{\cM_{k}^c}} \leqslant\supSur{x \in \cM_{k}^c}{\Prcx{x}{X_1 \in
\cM_{k}^c}} <1 \; ,
\end{equation}
the assumption of Lemma~\ref{Prop:EquationBoundaryValueProblemZ} is
satisfied.
\end{remark}

\begin{proof}[{\sc Proof of Proposition \ref{Prop:NormKuK0}}]
Note that 
\begin{equation}
\norm{\Ku - K^0} \leqslant \supSur{x \in \cM_{k}}{ \bigexpecin{x}{ \e^{u
(\tau^+_{\cM_{k}}-1)}  - 1 }  }\; .
\end{equation}
Let us assume that this maximum is obtained for $\bar{x} \in \cM_{k}$.
Recognizing the sum of terms of a geometric sequence, we obtain
\begin{align}
\nonumber
 \bigexpecin{\bar{x} }{ \e^{u (\tau^+_{\cM_{k}}-1)}  - 1 } &= \pth{1 -
\e^{-u}} \Biggexpecin{\bar{x}}{ \sum_{n=1}^{\tau^+_{\cM_{k}} \added{-1}  } \e^{u n}} \\
\nonumber
 &= \pth{1 - \e^{-u}} \Biggexpecin{\bar{x}}{ \sum_{n=1}^{\tau^+_{\cM_{k}}-1}
\e^{u \npar{\tau^+_{\cM_{k}} \added{- n}}}}\\ 
 &= \pth{1 - \e^{-u}} \Biggexpecin{\bar{x}}{ \sum_{n=1}^{\tau^+_{\cM_{k}}-1}
\Espc{X_n}{ \e^{u {\tau_{\cM_{k}}}}}}\; .
\end{align}
 We thus  get
\begin{equation}
 \norm{\Ku - K^0} \leqslant \pth{1-\e^{-u}}
\bigexpecin{\cM_{k}}{\tau^+_{\cM_{k}}-1} \bigexpecin{\cM_{k}^c}{ \e^{u
{\tau_{\cM_{k}}}}}\; .
\end{equation}
 Let us now bound the expected value starting from $\cM_{k}^c$.
 Note that $r\pth{x} = \Espc{x}{\e^{u \tau_{\cM_{k}}}} -1$ solves the boundary
value problem
\begin{align}
 \nonumber
\pth{ \pth{\id - K} r} \pth{x} &= \pth{1- \e^{-u}}\Espc{x}{\e^{u
\tau_{\cM_{k}}}} &x \in \cM_{k}^c& \;,\\
r\pth{x} &=0  &x \in \cM_{k} &\; .
\end{align}
Thanks to Lemma~\ref{Prop:EquationBoundaryValueProblemZ}, we have  
\begin{equation}
 r\pth{x} =\Espc{x}{\e^{u \tau_{\cM_{k}}}} -1  
 = \pth{1-\e^{-u}}\Biggexpecin{x}{  \sum_{n=0}^{\tau_{\cM_{k}} -1} 
\bigexpecin{X_n}{\e^{u \tau_{\cM_{k}}}}   }\; .\label{Eq:remainder}
\end{equation}
Introducing $M = \Espc{\cM_{k}^c}{\e^{u \tau_{\cM_{k}}}} $, and taking the
supremum for  $x\in \cM_{k}^c$ in \eqref{Eq:remainder}, we obtain 
\begin{equation}
M -1 \leqslant \pth{1-\e^{-u}}  \bigexpecin{\cM_{k}^c}{  \tau^+_{\cM_{k}}} M\; .
 \end{equation}
Thus if $\pth{1-\e^{-u}}  \bigexpecin{\cM_{k}^c}{  \tau^+_{\cM_{k}}} <1 $, we
have
 \begin{equation}
 M =\bigexpecin{\cM_{k}^c}{\e^{u \tau_{\cM_{k}}}} \leqslant \frac{1}{1-
\pth{1-\e^{-u}}  \bigexpecin{\cM_{k}^c}{  \tau^+_{\cM_{k}}} }\; ,
 \end{equation}
 which gives the result.
\end{proof}

\begin{remark}
Note that in the previous proof, we have obtained the bound
\begin{equation}
\Biggexpecin{\cM_{k}}{ \sum_{n=1}^{\tau^+_{\cM_{k}}-1  } \e^{u n}} \leqslant
\frac{ \bigexpecin{\cM_{k}}{\tau^+_{\cM_{k}}-1}}{1- \pth{1-\e^{-u}}
\bigexpecin{\cM_{k}^c}{ \tau^+_{\cM_{k}} } }\label{Eq:BoundSumUpToTau}\; .
\end{equation}
\end{remark}

We are now going to bound the supremum norm of the difference between the
 iterates of these two kernels. We recall that we want to prove
\begin{proposition}
\label{Prop:NormKuKZIteratedk}
For all real  $u$ verifying the Laplace condition given by
\eqref{EqLaplaceTransform} \added{with $A=\cM_k$}, and such that $\pth{1-\e^{-u}}
\bigexpecin{\cM_{k}^c}{
\tau^+_{\cM_{k}} }<1$, we have
\begin{equation}
\norm{\pth{\Ku}^m  - \pth{K^{0}}^m} \leqslant \Biggpar{ 1 +
\frac{\pth{1-\e^{-u}} \bigexpecin{\cM_{k}}{\tau^+_{\cM_{k}}-1} }{ 1-
\pth{1-\e^{-u}} \bigexpecin{\cM_{k}^c}{ \tau^+_{\cM_{k}} }}}^m  -1 \; .
\label{Eq:NormKUKZIteratedk}
\end{equation}
\end{proposition}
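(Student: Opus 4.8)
The plan is to deduce the iterated bound from the one-step bound of Proposition~\ref{Prop:NormKuK0} by a telescoping argument. First I would record the elementary identity, valid for any bounded linear operators $A,B$ on a Banach space and any $m\in\N$,
\[
 A^m - B^m = \sum_{j=0}^{m-1} A^{\,m-1-j}\,(A-B)\,B^{\,j}\;,
\]
obtained by expanding the sum and telescoping. Taking $A=\Ku$, $B=K^0$ and passing to operator norms gives
\[
 \norm{\pth{\Ku}^m - \pth{K^0}^m}
 \;\leqslant\; \norm{\Ku - K^0}\,\sum_{j=0}^{m-1}\norm{\Ku}^{\,m-1-j}\,\norm{K^0}^{\,j}\;.
\]

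The two norms inside the sum are then controlled as follows. Since $K^0$ is the trace kernel $K^0(x,\6y)=\Prcx{x}{X_{\tau^+_{\cM_{k}}}\in\6y}$ and $\tau^+_{\cM_{k}}$ is almost surely finite, $K^0$ is a genuine stochastic kernel, so $K^0\mathds{1}=\mathds{1}$ and $\norm{K^0}\leqslant1$ in the supremum norm. For $\Ku$, the triangle inequality gives $\norm{\Ku}\leqslant\norm{K^0}+\norm{\Ku-K^0}\leqslant1+\norm{\Ku-K^0}$. Writing $\beta:=\norm{\Ku-K^0}\geqslant0$ and substituting, the geometric sum collapses via $a^m-1=(a-1)\sum_{j=0}^{m-1}a^{\,m-1-j}$ with $a=1+\beta$:
\[
 \norm{\pth{\Ku}^m - \pth{K^0}^m}
 \;\leqslant\; \beta\sum_{j=0}^{m-1}(1+\beta)^{\,m-1-j}
 \;=\; (1+\beta)^m-1\;.
\]

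It then remains only to bound $\beta$. Proposition~\ref{Prop:NormKuK0} applies verbatim under the hypotheses assumed here, namely that the real $u$ satisfies~\eqref{EqLaplaceTransform} with $A=\cM_{k}$ and $(1-\e^{-u})\bigexpecin{\cM_{k}^c}{\tau^+_{\cM_{k}}}<1$, so $\beta$ is at most the right-hand side $B$ of~\eqref{Eq:NormKuK0}. Since $t\mapsto(1+t)^m-1$ is nondecreasing on $t\geqslant0$ and $0\leqslant\beta\leqslant B$, this yields $\norm{\pth{\Ku}^m-\pth{K^0}^m}\leqslant(1+B)^m-1$, which is exactly~\eqref{Eq:NormKUKZIteratedk}; for $m=1$ it recovers Proposition~\ref{Prop:NormKuK0}. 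I do not expect a genuine obstacle in this argument: the only points worth a sentence of justification are that $K^0$ is stochastic (hence $\norm{K^0}\leqslant1$) and that the conditions imposed on $u$ are precisely those under which the one-step estimate has been established. If one wanted a marginally sharper constant, one could replace $\norm{\Ku}\leqslant1+\norm{\Ku-K^0}$ by the identity $\norm{\Ku}=\sup_{x\in\cM_{k}}\bigexpecin{x}{\e^{u(\tau^+_{\cM_{k}}-1)}}$ (valid for real $u$, the kernel being nonnegative) combined with~\eqref{Eq:BoundSumUpToTau}, but this is not needed for the stated bound.
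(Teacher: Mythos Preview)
Your argument is correct and gives exactly the stated bound. It is, however, a genuinely different route from the paper's. The paper works directly with the probabilistic representation of the iterated kernel, writing
\[
 \pth{\Ku}^m(x,\6y) = \Espc{x}{\e^{u(\tau^+_m-m)}\bigind{X_{\tau^+_m}\in\6y}}
\]
(with $\tau^+_m$ the $m$-th return time to $\cM_k$), bounds the norm by $\sup_x\Espc{x}{\e^{u(\tau^+_m-m)}-1}$, and then uses the strong Markov property at $\tau^+_1$ to obtain the recursion $t_m\leqslant t_1+(1+(1-\e^{-u})t_1)\,t_{m-1}$, which is solved explicitly. Your approach bypasses all of this: the telescoping identity $A^m-B^m=\sum_j A^{m-1-j}(A-B)B^j$, together with $\norm{K^0}\leqslant1$ and $\norm{\Ku}\leqslant1+\beta$, reduces the iterated bound immediately to the one-step estimate of Proposition~\ref{Prop:NormKuK0}. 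This is shorter and purely operator-theoretic; the paper's method has the minor advantage of making the probabilistic structure (how the error accumulates over successive returns) explicit, and of not needing the one-step result as a black box. Both yield the identical constant.
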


\begin{proof}
To ease notation, we introduce 
$
\smash{\tau^+_m = \tau^{+,m}_{\cM_k}}
$
for the $m^\text{th}$ return time to $\cM_k$. 
Note that the $m^\text{th}$ iterated kernel of $\Ku$ is given by 
\begin{equation}
\pth{\Ku}^m \pth{x , \6y} = \Espc{x}{ \e^{u (  \tau^+_m - m ) } 
\mathds{1}_{\big\{ X_{\tau^+_m} \in   \6y \big\} }}\; .
\end{equation}
Therefore, the norm of the difference between the 
 iterates of $\Ku$ and $K^0$ satisfies 
\begin{equation}
\norm{\pth{\Ku}^m  - \pth{K^{0}}^m} \leqslant  \supSur{x \in \cM_{k}}{\Espc{x}{
\e^{u (\tau^+_m - m)} -1 }}\; .
\end{equation}
As previously,  recognizing the sum of terms of a geometric sequence, we can
bound the norm by 
\begin{equation}
\norm{\pth{\Ku}^m  - \pth{K^{0}}^m} \leqslant \pth{1- \e^{-u}}  \supSur{x \in
\cM_{k}}{\Biggexpecin{x}{ \sum_{n=1}^{\tau^+_m-m} \e^{u n} } }\; .
\end{equation}
We can now split the expected value of the sum as follows:
\begin{equation}
\Biggexpecin{x}{ \sum_{n=1}^{\tau^+_m-m}  \e^{u n}  }  = \Biggexpecin{x}{
\sum_{n=1}^{\tau^+_{1}-1}   \e^{u n}  } + \Biggexpecin{x}{
\sum_{n=\tau^+_{1}}^{\tau^+_m-m} \e^{u n}  }\; .
\end{equation}
Using the strong Markov property for the second term on the
right-hand side we get
\begin{equation}
\Biggexpecin{x}{ \sum_{n=1}^{\tau^+_m-m}   \e^{u n}  }  = \Biggexpecin{x}{
\sum_{n=1}^{\tau^+_{1}-1}  \e^{u n}   }+ \Biggexpecin{x}{
\e^{u \pth{ \tau^+_{1}-1}} \Biggexpecin{X_{\tau^+_{1}}}
{\sum_{n=1}^{\tau^+_{m-1}-(m-1)} \e^{u n}  }} \; .
\end{equation}
%
Denoting for all $m \in \N$
\begin{equation}
t_m =  \Espc{\cM_{k}}{ \sum_{n=1}^{\tau^+_m-m}   \e^{u n}  }\; ,
\end{equation}
we obtain the induction relation
\begin{equation}
t_m \leqslant t_1 + t_{m-1} \pth{1+ \pth{1-\e^{-u}} t_1}\; .
\end{equation}
Thus, the general term can be bounded by
\begin{equation}
t_m \leqslant \frac{\pth{1 + \pth{1- \e^{-u}} t_1}^m-1}{1-\e^{-u}} \; .
\end{equation}
Using the bound found in \eqref{Eq:BoundSumUpToTau} for $t_1$, it follows that
\begin{equation}
\Biggexpecin{  \cM_{k}}{ \sum_{n=1}^{\tau^+_m-m}   \e^{u n}  }  \leqslant
\frac{{\Biggpar{1 + \pth{1- \e^{-u}}  \Biggexpecin{ \cM_{k} }{
\displaystyle\sum_{n=1}^{\tau^+_{1}-1  } \e^{u n}  }}}^m  -1  }{1 - \e^{-u}}\; ,
\end{equation}
which gives the result.
\end{proof}

\subsection{Comparison between $K^0$, $K^{\star}$ and their $m$-fold iterates}
\label{ssec:K0Kstar} 

The aim of this section is to prove the following proposition:
\begin{proposition}
\label{Prop:NormK0KStark}
For all $m\in\N$, the norm of the difference between the 
 iterates of $K^0$ and $K^{\star}$ satisfies the bound 
\begin{equation}
\norm{\pth{K^{0}}^m  - \pth{K^{\star}}^m} \leqslant  \supSur{1 \leqslant i
\leqslant {k}} {R_i} \; ,
\end{equation}
where 
\begin{align}
\nonumber
R_i ={} &\norm{\mathring{\phi}^{B_i}_0-1} +2 \bigabs{\mathring{\ev}^{B_i}_1}^m +
2 \frac{1- \bigabs{\mathring{\ev}^{B_i}_1}^m}
{1-\bigabs{\mathring{\ev}^{B_i}_1}}
\Prcx{B_{i}}{\tau^+_{\cM_{k} \bs  B_i}  < \tau^+_{B_i}} \\
&{}+ m \pth{m-1} \Prcx{B_{i}}{\tau^+_{\cM_{k} \bs B_i}  < \tau^+_{B_i}}
\Prcx{\cM_{k} \bs B_i}{\tau^+_{B_i} <\tau^+_{\cM_{k} \bs B_i  }  }\; .
\end{align}
\end{proposition}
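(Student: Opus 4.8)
The plan is to exploit the block structure of $K^0$ over $\cM_k = \bigcup_{i=1}^{k}B_i$ and reduce everything to a within‑ball estimate plus a bookkeeping of excursions between balls. Write $K^0 = D + E$, where $D(x,\6y) = \sum_{i=1}^{k}\ind{x\in B_i}\,k^0(x,y)\ind{y\in B_i}\6y$ is the within‑ball part — its restriction to $B_i$ is exactly the kernel $K^0_{B_i}$ of Section~\ref{sec:KZBi} — and $E = K^0 - D$ collects the one‑step changes of ball of the trace chain. Then $\norm{D}\leqs1$, and, since a one‑step change of ball starting from $x\in B_i$ means that the first return of the original process to $\cM_k$ lands outside $B_i$, one has $\norm{E}\leqs\supSur{1\leqs i\leqs k}{\Prcx{B_i}{\tau^+_{\cM_k\bs B_i}<\tau^+_{B_i}}}$. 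Introduce the block‑wise projection $\Pi(x,\6y) = \sum_{i=1}^{k}\ind{x\in B_i}\,\mathring{\pi}^{B_i}_0(y)\ind{y\in B_i}\6y$; by construction $K^\star = \Pi K^0$, and $\Pi^2 = \Pi$ since each $\mathring{\pi}^{B_i}_0$ is a probability measure on $B_i$. Setting $D^\star := \Pi D$ and using that $\mathring{\pi}^{B_i}_0$ is the left eigenfunction of $K^0_{B_i}$ with eigenvalue $\mathring{\ev}^{B_i}_0$ (cf.~\eqref{eq:lambda0_qed}), one checks that $D^\star$ is again block‑diagonal with $D^\star|_{B_i}(x,\6y) = \mathring{\ev}^{B_i}_0\,\mathring{\pi}^{B_i}_0(y)\ind{y\in B_i}\6y$, whence $(D^\star)^l|_{B_i}(x,\6y) = (\mathring{\ev}^{B_i}_0)^l\,\mathring{\pi}^{B_i}_0(y)\ind{y\in B_i}\6y$ and $(D^\star)^l\Pi = (D^\star)^l$ for $l\geqs1$. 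Thus $K^\star = D^\star + \Pi E$ is the natural companion of $K^0 = D + E$.

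The first main step is to isolate, for a fixed $x\in B_i$, the contribution of trajectories that never leave $B_i$. Decomposing $(K^0)^m(x,\cdot)$ and $(K^\star)^m(x,\cdot)$ according to the first time the trace chain leaves $B_i$, the ``never leaves within $m$ steps'' parts are $(K^0_{B_i})^m(x,\cdot)$ and $(D^\star)^m(x,\cdot) = (\mathring{\ev}^{B_i}_0)^m\mathring{\pi}^{B_i}_0(\cdot)$ respectively. Their difference is controlled by the iterated spectral decomposition \eqref{Eq:SpectralDecompositionKZBIIterated}: from $(\mathring{\ev}^{B_i}_1)^m g^m(x,y) = (k^0_{B_i})^m(x,y) - (\mathring{\ev}^{B_i}_0)^m\mathring{\pi}^{B_i}_0(y)\mathring{\phi}^{B_i}_0(x)$, integrating in $y$ and using $\norm{g^m}\leqs2$ and $\mathring{\ev}^{B_i}_0\leqs1$, one obtains $\norm{(K^0_{B_i})^m - (D^\star)^m}\leqs\norm{\mathring{\phi}^{B_i}_0 - 1} + 2\bigabs{\mathring{\ev}^{B_i}_1}^m$, which is exactly the sum of the first two terms of $R_i$.

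The second main step is to bound the contribution of trajectories that do leave $B_i$, organised by the step of the first departure. Using $(D^\star)^l\Pi E = (D^\star)^l E$ for $l\geqs1$ (and $\norm{(I-\Pi)E}\leqs2\norm{E}$ for the case of immediate departure), the difference of the two contributions with first departure at step $l+1$ splits, after adding and subtracting, into a ``spectral'' piece of norm at most $(\norm{\mathring{\phi}^{B_i}_0-1} + 2\bigabs{\mathring{\ev}^{B_i}_1}^l)\,\norm{E}$ — whose geometric part, summed over $l=0,\dots,m-1$, produces $2\tfrac{1-|\mathring{\ev}^{B_i}_1|^m}{1-|\mathring{\ev}^{B_i}_1|}\,\Prcx{B_i}{\tau^+_{\cM_k\bs B_i}<\tau^+_{B_i}}$, the third term of $R_i$, and whose remaining part sums to at most $m\,\norm{\mathring{\phi}^{B_i}_0-1}\,\Prcx{B_i}{\tau^+_{\cM_k\bs B_i}<\tau^+_{B_i}}$, dominated by the first term of $R_i$ when the right‑hand side of the Proposition is $\leqs1$ (the statement being trivial otherwise) — and a ``recursive'' piece of norm at most $\Prcx{B_i}{\tau^+_{\cM_k\bs B_i}<\tau^+_{B_i}}\,\norm{(K^0)^{m-l-1} - (K^\star)^{m-l-1}}$, the relevant escape probability being the one out of $B_i$ since the first departure is from $B_i$. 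Iterating this on $m$, the only genuinely new contribution is carried by trajectories that, after leaving $B_i$, later re‑enter it: a direct re‑entry from $\cM_k\bs B_i$ costs at most $\Prcx{\cM_k\bs B_i}{\tau^+_{B_i}<\tau^+_{\cM_k\bs B_i}}$, and there are at most $m(m-1)$ ways to place the departure step and the subsequent re‑entry step, which yields the fourth term of $R_i$; every remaining contribution (several re‑entries, within‑ball spectral errors produced after a re‑entry, etc.) is dominated by one of the four listed quantities. Taking the supremum over $1\leqs i\leqs k$ gives the claim.

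The step I expect to be the main obstacle is the iteration in the last paragraph: organising the multiple excursions in and out of $B_i$ so that the recursion closes with exactly the quadratic factor $m(m-1)$ and the single return probability $\Prcx{\cM_k\bs B_i}{\tau^+_{B_i}<\tau^+_{\cM_k\bs B_i}}$ — rather than higher powers — and checking carefully that every sub‑leading cross term (products of $\norm{\mathring{\phi}^{B_i}_0-1}$, $|\mathring{\ev}^{B_i}_1|$ and escape/return probabilities) is absorbed into the four terms of $R_i$. A subsidiary point is the bound $\norm{g^m}\leqs2$, which follows from \eqref{Eq:SpectralDecompositionKZBIIterated} together with $(K^0_{B_i})^m(x,B_i)\leqs1$ and the normalisation of $\mathring{\phi}^{B_i}_0$; and one must keep $\norm{D},\norm{E},\norm{\Pi}\leqs1$ throughout so that the repeated estimates lose no constants.
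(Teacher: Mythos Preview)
Your overall strategy --- decompose $K^0$ into a block-diagonal part $D$ and an inter-ball part $E$, and compare with $K^\star=\Pi K^0$ via the spectral decomposition~\eqref{Eq:SpectralDecompositionKZBIIterated} of each $K^0_{B_i}$ --- is the right one, and is also what the paper does. But the paper organises the combinatorics differently, and that difference is what makes the argument close.

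The paper introduces an intermediate kernel $\check K^m$ describing trajectories of the trace chain that make \emph{at most one} change of ball: stay in $B_i$ for $l\leqs m$ steps, then (possibly) jump to $\cM_k\setminus B_i$ and stay there. This gives the splitting
\[
\bigabs{(k^0)^m-(k^\star)^m}\leqs\bigabs{(k^0)^m-\check k^m}+\bigabs{\check k^m-\mathring\pi_0^{B_i}\check k^m}+\bigabs{\mathring\pi_0^{B_i}\bigl[\check k^m-(k^0)^m\bigr]}\;.
\]
The outer two terms are bounded by the probability of at least two ball-changes in $m$ steps, which a one-line induction gives as $\tfrac12 m(m-1)\,\Prcx{B_i}{\tau^+_{\cM_k\bs B_i}<\tau^+_{B_i}}\,\Prcx{\cM_k\bs B_i}{\tau^+_{B_i}<\tau^+_{\cM_k\bs B_i}}$ --- this is where the quadratic factor and the single return probability come from, with no recursion needed. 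For the middle term one applies the spectral decomposition to $\check k^m$; the key point is that the $\|\mathring\phi_0^{B_i}-1\|$ contributions appear with weights $(\mathring\lambda_0^{B_i})^m$ (from staying in $B_i$ for all $m$ steps) and $(\mathring\lambda_0^{B_i})^l(1-\mathring\lambda_0^{B_i})$ (from leaving at step $l+1$), and these \emph{telescope} to give the coefficient exactly $1$.

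Your direct approach misses this telescoping. When you bound the ``spectral piece'' by $(\|\mathring\phi_0^{B_i}-1\|+2|\mathring\lambda_1^{B_i}|^l)\,\norm{E}$ and sum over $l$, the $\|\mathring\phi_0^{B_i}-1\|$ part contributes roughly $m\,\|\mathring\phi_0^{B_i}-1\|\,\Prcx{B_i}{\tau^+_{\cM_k\bs B_i}<\tau^+_{B_i}}$. Your claim that this is ``dominated by the first term of $R_i$ when the right-hand side is $\leqs1$'' is false: $R_i\leqs1$ does not force $m\,\Prcx{B_i}{\tau^+_{\cM_k\bs B_i}<\tau^+_{B_i}}\leqs1$ (take $|\mathring\lambda_1^{B_i}|$ small, so the third term of $R_i$ is $\approx 2\,\Prcx{B_i}{\tau^+_{\cM_k\bs B_i}<\tau^+_{B_i}}$ independently of $m$); and the statement is only trivial when $\sup_iR_i\geqs2$, not $\geqs1$, since both kernels are stochastic. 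The fix is not to use the crude product bound $\|D^l-(D^\star)^l\|\,\norm{E}$ but to compute $\|(D^l-(D^\star)^l)E\|$ directly: the $\|\mathring\phi_0^{B_i}-1\|$ piece then carries the factor $(\mathring\lambda_0^{B_i})^l\int_{B_i}\mathring\pi_0^{B_i}(z)\,E(z,\cM_k\bs B_i)\,\6z=(\mathring\lambda_0^{B_i})^l(1-\mathring\lambda_0^{B_i})$, and the sum over $l$ telescopes as in the paper. Likewise, your recursion for the fourth term is acknowledged but not carried out; the paper's $\check K^m$ device replaces it by the direct counting argument above, which is both simpler and yields the exact constant $m(m-1)$.
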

\begin{proof}
Let us first introduce the kernel $\npar{\check{K}}^m$ with density
\begin{equation}
\npar{\check{k}}^m \pth{x,y}= \sum_{i=1}^{k} \ind{x \in B_i}
\npar{\check{k}_i}^m \pth{x,y}
\end{equation}
where for any $x \in B_i$
\begin{multline}
\npar{\check{k}_i}^m \pth{x,y} = {\npar{k^0_{B_i}}}^m\pth{x,y} \\
+ \sum_{j=0}^{m-1} \! \int_{\! \cM_{k} \bs B_i} \! \int_{\! B_i}
{\npar{k^0_{B_i}}}^l\pth{x,z_1} k^0\pth{z_1,z_2} {\npar{k^0_{\cM_{k} \bs
B_i}}}^{m-l-1}\pth{z_2,y} \6z_1 \6z_2\; .
\end{multline}
Note that this kernel describes the process living on $\cM_{k}$ which can only
perform one transition, i.e.,\ starting in $B_i$ the Markov chain either stays
in $B_i$ or makes an excursion to $\cM_{k} \bs B_i$ and stays in this set.
We introduce the notation 
\begin{equation}
\Delta_m = \int_{\cM_{k}} \crochets{{\npar{k^0}}^m\pth{x,y} -
\npar{\check{k}}^m \pth{x,y}} \6y\; .
\end{equation}
We claim that for any $x \in B_i$, for all $m \geqslant 1$
\begin{equation}
\Delta_m \leqslant \frac{1}{2} m \pth{m-1}
\Bigprobin{B_i}{X_{\tau^+_{\cM_{k}}}  \notin B_i}
  \Bigprobin{\cM_{k} \bs B_i}{X_{\tau^+_{\cM_{k}}}  \in  B_i}\; .
\label{Eq:KKcheck}
\end{equation}
Let us prove this claim by induction. Since ${k^0}\pth{x,y} = \check{k}
\pth{x,y}  $ the base case is verified. 
The induction step is based on counting the possible ways to make more than one
transition when considering the $m+1^{\text{st}}$ iterate. At time $m$, either
the process has already made more than two transitions, or the process has made
one transition from $B_i$ to $\cM_{k} \bs B_i$ before time $m$ and made an
excursion from $\cM_{k} \bs B_i$ to $B_i$ at time $m$. Note that in the second
case, there are exactly $m$ different ways to perform such transitions
(depending on the time of the first excursion). It follows that
\begin{equation}
\Delta_{m+1} \leqslant \Delta_m + m \Bigprobin{B_i}{X_{\tau^+_{\cM_{k}}}  \notin
B_i} \Bigprobin{\cM_{k} \bs B_i}{X_{\tau^+_{\cM_{k}}}  \in  B_i}\; ,
\end{equation}
so that the general term indeed satisfies the bound~\eqref{Eq:KKcheck}. 

We can now bound, for all $m$, the norm of the difference between the 
 iterates of $K^0$ and $K^{\star}$, that is
\begin{equation}
\norm{\pth{K^{0}}^m  - \pth{K^{\star}}^m} \leqslant \maxSur{1 \leqslant i
\leqslant {k}}{ \supSur{x \in B_i}{ \int_{\cM_{k}}
\abs{{{\npar{k^0}}}^m\pth{x,y}- {{\npar{k^\star}}}^m\pth{x,y}  }}}  \6y\; .
\end{equation}
The triangle inequality yields 
\begin{align}
\nonumber
\abs{{{\npar{k^0}}}^m\pth{x,y}- {{\npar{k^\star}}}^m\pth{x,y}  }  \leqslant{}&
\abs{ {{\npar{k^0}}}^m\pth{x,y} -  \check{k}^m\pth{x,y}} \\
\nonumber
&{}+ \abs{\check{k}^m\pth{x,y} - \int_{B_i} \mathring{\pi}^{B_i}_0\pth{z}
\check{k}^m\pth{z,y} \6z }\\
&{}+\abs{\int_{B_i} \mathring{\pi}^{B_i}_0\pth{z} \pth{ \check{k}^m\pth{z,y}
-{{\npar{k^0}}}^m\pth{z,y} } \6z}\; .
\end{align}
Integrating over $\cM_{k}$, the first and the last term in the right-hand side
can be bounded using \eqref{Eq:KKcheck}. 
Using the spectral decomposition 
\eqref{Eq:SpectralDecompositionKZBI} of $k^0_{B_i}$, we obtain
\begin{multline}
\int_{B_i} \abs{\check{k}^m\pth{x,y} - \int_{B_i} \mathring{\pi}^{B_i}_0\pth{z}
\check{k}^m\pth{z,y}\6z  } \6y \\
\leqslant 
{\bigpar{\mathring{\ev}^{B_i}_0}}^m \abs{\mathring{\phi}_0^{B_i}\pth{x} -
1}
+2 \abs{ \mathring{\ev}^{B_i}_1}^m \supSur{z \in
B_i}{\abs{\int_{B_i}g^m\pth{z,y}} \6y}\; ,
\end{multline}
(since $\check{k}^m(x,y) = (k^0_{B_i})^m(x,y)$ if $x,y\in B_i$)
and  
\begin{multline}
\int_{\cM_{k} \bs B_i} \mkern-3mu \abs{\check{k}^m\pth{x,y} - \int_{B_i}
\mathring{\pi}^{B_i}_0\pth{z} \check{k}^m\pth{z,y}\6z  } \6y \\
\leqslant
\sum_{l=0}^{m-1} {\bigpar{\mathring{\ev}^{B_i}_0}}^{l}
\abs{\mathring{\phi}_0^{B_i}\pth{x} - 1}\pth{1- \mathring{\ev}^{B_i}_0} 
+ 2 \Bigprobin{B_i}{X_{\tau^+_{\cM_{k}}} \notin B_i}
 {\abs{ \mathring{\ev}^{B_i}_1}}^l \supSur{z \in
B_i}{\abs{\int_{B_i}g^l\pth{z,y} \6y}}\; .
\end{multline}
Regrouping the different terms, we obtain the result.
\end{proof}

\section{Perturbation theory for bounded linear operators}
\label{sec:operators} 
In the previous section, we have shown that the kernel $K^{u}$ 
(or its $m$-fold iterates) defined on $\meta{\kk}$ by
\begin{equation}
K^{u}\pth{x,\6y}  = \Biggexpecin{x}{\e^{u(\tau^+_{\meta{\kk}}-1)}
\bigind{X_{\tau^+_{ \meta{\kk}}}  \in\ \6y}}\; ,
\end{equation}
can be approximated by a finite-rank operator $K^{\star}$ (or its 
iterates). 
We now study the spectral properties of $K^\star$ (and its 
iterates) to deduce the spectral properties of $K^u$.

In the following, to ease notation, we will consider the case $m=1$, but the
results remain true for all $m$ considering the $m^{\text{th}}$ return time to
$\cM_k$.

\subsection{General idea}
\label{ssec:contour_general} 

Let $\uGs\pth{K^\star}$ denote the spectrum of the operator $K^\star$.
For $\uGs$ an isolated part of $\uGs\pth{K^\star}$, we define the Riesz
projection $\Pi_{\uGs}\pth{K^\star}$  by
\begin{equation}
\label{eq:riesz} 
\Pi_{\uGs}\pth{K^\star} = \frac{1}{2 \pi \icx } \int_{\cC} \pth{z \id -
K^{\star}}^{-1} \6z \; ,
\end{equation}
 where we assume that $\cC\subset\C$ is a Cauchy contour (in the resolvent set
of $K^\star$) around $\uGs$.  Recall that $\Pi_{\uGs}$ is a projection
\cite[Lemma 2.1]{gohberg2013classes} and that 
 \begin{equation}
 \Pi_{\uGs\pth{K^\star}}\pth{K^\star}= \id\; .
 \label{Eq:ProjectionOnSigma}
 \end{equation}
We want to know what happens to the spectrum $K^u$ when $K^u$ can be seen as a
perturbation of $K^\star$.
We choose a Cauchy contour $\cC$ in $\complex$ surrounding $\uGs\pth{K^\star}$
and first give a condition that ensures that $\cC$ does not contain any
eigenvalue of  $K^u$. This amounts to checking that $\pth{z \id - K^u}$ is
invertible for all $z \in \cC$.
\begin{proposition}[{\cite[Corollary 8.2]{gohberg2013basic}}]
\label{Prop:InverseOperator}
If $\pth{z \id - K^\star}$ is invertible and $\norm{K^u - K^\star} =
\norm{\pth{z \id - K^\star} - \pth{z \id - K^u}} < {\norm{{\pth{z \id -
K^\star}}^{-1}}^{-1}}$, then  $\pth{z \id - K^u}$ is invertible and
\begin{equation}
\norm{\pth{z \id - K^\star}^{-1} - \pth{z \id - K^u}^{-1} }  \leqslant
\frac{\norm{\pth{z \id - K^\star}^{-1}}^2 \norm{K^u - K^\star}}{1-\norm{\pth{z
\id - K^\star}^{-1}} \norm{K^u - K^\star} }\; .
\label{Eq:InverseOperator}
\end{equation}
\end{proposition}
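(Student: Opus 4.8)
The statement is the standard perturbative stability of invertibility under a small perturbation, quoted from \cite{gohberg2013basic}; the plan is simply to run a Neumann (geometric) series argument and then extract the quantitative bound from a resolvent-type identity. Write $A = z\id - K^{\star}$ and $B = z\id - K^{u}$, regarded as bounded operators on the relevant Banach space (bounded measurable functions on $\cM_{k}$ with the supremum norm, on which both kernels act with submultiplicative operator norm, as used throughout the preceding sections). By hypothesis $A$ is invertible, and $q := \norm{A^{-1}}\,\norm{K^{u}-K^{\star}} < 1$, since $\norm{K^{u}-K^{\star}} < \norm{A^{-1}}^{-1}$.

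First I would factor $B = A - (K^{u}-K^{\star}) = A\pth{\id - A^{-1}(K^{u}-K^{\star})}$. Because $\norm{A^{-1}(K^{u}-K^{\star})} \leqslant q < 1$, the operator $\id - A^{-1}(K^{u}-K^{\star})$ is invertible, its inverse being the norm-convergent Neumann series $\sum_{n\geqslant 0}\bigpar{A^{-1}(K^{u}-K^{\star})}^{n}$, whose norm is at most $(1-q)^{-1}$. Hence $B$ is invertible with $B^{-1} = \pth{\id - A^{-1}(K^{u}-K^{\star})}^{-1}A^{-1}$ and $\norm{B^{-1}} \leqslant \norm{A^{-1}}/(1-q)$; this already gives the first assertion, that $\pth{z\id - K^{u}}$ is invertible.

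For the estimate I would use the algebraic identity $A^{-1} - B^{-1} = A^{-1}(B-A)B^{-1}$ together with $B - A = K^{\star}-K^{u}$, so that $A^{-1}-B^{-1} = -A^{-1}(K^{u}-K^{\star})B^{-1}$. Taking supremum norms, using submultiplicativity and the bound on $\norm{B^{-1}}$ just obtained, yields
\[
\norm{A^{-1}-B^{-1}} \leqslant \norm{A^{-1}}\,\norm{K^{u}-K^{\star}}\,\norm{B^{-1}} \leqslant \frac{\norm{A^{-1}}^{2}\,\norm{K^{u}-K^{\star}}}{1-\norm{A^{-1}}\,\norm{K^{u}-K^{\star}}}\;,
\]
which is precisely \eqref{Eq:InverseOperator}.

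There is essentially no obstacle: the only ingredients are completeness of the underlying Banach space (so that the Neumann series converges) and submultiplicativity of the operator norm, both standard and already used implicitly, together with the hypothesis that $z$ lies in the resolvent set of $K^{\star}$. One could alternatively avoid the identity and write $A^{-1}-B^{-1} = \bigpar{\id - \pth{\id - A^{-1}(K^{u}-K^{\star})}^{-1}}A^{-1}$, expand the Neumann series and sum the geometric tail $\sum_{n\geqslant 1}q^{n} = q/(1-q)$, arriving at the same bound; the resolvent-identity route is merely shorter.
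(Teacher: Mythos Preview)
Your proof is correct and is the standard Neumann-series argument for this classical perturbation lemma. The paper does not actually supply its own proof here: the proposition is simply quoted from \cite[Corollary~8.2]{gohberg2013basic} and used as a black box, so there is nothing to compare against beyond noting that your argument is precisely the one underlying that reference.
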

To ensure that $\pth{z \id - K^u}$ is invertible for all $z \in \cC$, it is
rather natural to require  
\begin{equation}
\norm{K^u - K^\star} 
\leqslant \frac{1}{2} \gamma 
:= \frac12 \min \braces{ \norm{\pth{z \id - K^{\star}}^{-1}}^{-1} \vert z \in
\cC}\;.
\label{Eq:DefGamma}
\end{equation}
Under this assumption, \eqref{Eq:InverseOperator} shows that $\pth{z \id - K^u}$
is invertible for all $z \in \cC$ and that
\begin{align}
\norm{\pth{z \id - K^\star}^{-1} - \pth{z \id - K^u}^{-1} }  
&\leqslant 2 \norm{\pth{z \id - K^\star}^{-1}}^2 \norm{K^u - K^\star}\; .
\label{Eq:NormDiffResolv}
\end{align}
Thus the Riesz projection on the part of $\uGs\pth{K^u}$
inside $\cC$, given by 
\begin{equation}
\Pi = \frac{1}{2 \pi \icx} \int_{\cC} \pth{z \id - K^{u}}^{-1}  \6z\;,
\end{equation} 
is well-defined.
Using \eqref{Eq:ProjectionOnSigma} and \eqref{Eq:NormDiffResolv}, we obtain 
\begin{align}
\nonumber
\norm{\id - \Pi} &= \norm[\bigg]{\frac{1}{2 \pi \icx} \int_{\cC} \pth{z \id -
K^{\star}}^{-1} - \pth{z \id - K^{u}}^{-1}  \6z }\\
\nonumber
& \leqslant \frac{1}{2 \pi} \int_{\cC} \norm{ \pth{z \id - K^{\star}}^{-1} -
\pth{z \id - K^{u}}^{-1} } \6z\\
& \leqslant  \frac{1}{\pi}\int_{\cC} \norm{\pth{z \id -
K^\star}^{-1}}^2 \6z \, \norm{K^u - K^\star}\; .
\end{align}
If $\norm{\id - \Pi} <1$, since $\Pi$ is a projection, it follows that
$\id-\Pi=0$, 
and therefore $\uGs\pth{K^u}$ is inside $\cC$.
Thus a second natural assumption to make in order to control the spectrum of
$K^u$ is that 
\begin{equation}
\label{Eq:DefC}
C := \frac{1}{\pi} \int_{\cC}{\norm{\pth{z \id - K^{\star}}^{-1}}^{2} \6z} \,
 < \frac{1}{\norm{K^\star - K^u}}\; .
\end{equation}
%
This yields the following proposition.

\begin{proposition}[{\cite[Proposition 4.2]{gohberg2013classes}}]
\label{Prop:ContinuityEigenvalueAllSpectrum}
Let $\Omega$ be an open neighbourhood of $\uGs\pth{K^\star}$. Then there exists
$\epsilon>0$
such that $\uGs\pth{K^u} \subset \Omega$  for any operator $K\added{^u}$ with
$\norm{K^\star-K\added{^u}}< \epsilon$.
\end{proposition}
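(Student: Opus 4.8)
The plan is to turn this qualitative statement into the quantitative perturbation bound that is essentially already assembled in the discussion preceding it, following \cite[Proposition~4.2]{gohberg2013classes}; it is mostly a matter of fixing a contour and choosing $\epsilon$ explicitly. First I would fix a good Cauchy contour. Since $K^\star$ is a bounded operator, $\uGs\pth{K^\star}$ is compact, and since it lies in the open set $\Omega$ one can cover it by finitely many open discs whose closures are contained in $\Omega$ and let $\cC$ be the positively oriented boundary of their union. Then $\cC$ is a Cauchy contour lying in the resolvent set of $K^\star$, disjoint from $\uGs\pth{K^\star}$, and the bounded region $U$ that it encircles satisfies $\uGs\pth{K^\star}\subset U$ and $\bar U\subset\Omega$. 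On the compact set $\cC$ the resolvent $z\mapsto\pth{z\id-K^\star}^{-1}$ is continuous, so the two constants
\begin{equation}
 \gamma := \min\braces{\norm{\pth{z\id-K^\star}^{-1}}^{-1} : z\in\cC}>0
 \qquad\text{and}\qquad
 C := \frac1\pi\int_{\cC}\norm{\pth{z\id-K^\star}^{-1}}^2\6z<\infty
\end{equation}
from~\eqref{Eq:DefGamma} and~\eqref{Eq:DefC} are well defined.

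Then I would set $\epsilon=\min\braces{\gamma/2,\,1/(2C)}$ and take any bounded operator $K^u$ with $\norm{K^\star-K^u}<\epsilon$. For every $z\in\cC$ one has $\norm{K^u-K^\star}<\gamma/2<\norm{\pth{z\id-K^\star}^{-1}}^{-1}$, so Proposition~\ref{Prop:InverseOperator} applies along $\cC$: it shows that $z\id-K^u$ is invertible and that the resolvent difference obeys~\eqref{Eq:NormDiffResolv}. In particular $\cC$ is in the resolvent set of $K^u$, hence the Riesz projection $\Pi=\frac{1}{2\pi\icx}\int_{\cC}\pth{z\id-K^u}^{-1}\6z$ onto the part of $\uGs\pth{K^u}$ inside $\cC$ is well defined. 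Using $\Pi_{\uGs\pth{K^\star}}\pth{K^\star}=\id$ (equation~\eqref{Eq:ProjectionOnSigma}, which holds because $\cC$ encircles the whole spectrum of $K^\star$), together with~\eqref{Eq:NormDiffResolv} and integration along $\cC$ exactly as in the lines just above the statement, I get $\norm{\id-\Pi}\leqs C\norm{K^u-K^\star}<C\epsilon\leqs\tfrac12<1$. Since $\id-\Pi$ is a projection of operator norm strictly less than $1$, it must vanish, so $\Pi=\id$; thus $K^u$ has no spectrum outside $\cC$ (and none on $\cC$), which gives $\uGs\pth{K^u}\subset U\subset\Omega$.

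The one step that deserves genuine care is the construction of $\cC$: one must check that a Cauchy contour separating the compact set $\uGs\pth{K^\star}$ from infinity can be drawn inside $\Omega$ together with the region it bounds. This is where compactness of $\uGs\pth{K^\star}$ and openness of $\Omega$ enter, and it is the only non-mechanical ingredient; once $\cC$ is fixed the conclusion is just a combination of Proposition~\ref{Prop:InverseOperator}, the bound~\eqref{Eq:NormDiffResolv}, and the elementary fact that a projection of operator norm less than $1$ is zero. So I expect the main obstacle to be the bookkeeping around the contour rather than any real analytic difficulty.
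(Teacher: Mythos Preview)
Your proposal is correct and follows exactly the approach the paper itself sketches in the discussion immediately preceding the proposition (equations~\eqref{Eq:DefGamma}--\eqref{Eq:DefC} and the Riesz-projection argument leading to $\norm{\id-\Pi}<1\Rightarrow\Pi=\id$); the paper does not give a separate proof but cites \cite[Proposition~4.2]{gohberg2013classes}, and your write-up is precisely the standard filling-in of that sketch. The only cosmetic difference is your choice $\epsilon=\min\{\gamma/2,1/(2C)\}$ versus the paper's $\min\{\gamma/2,(C+1)^{-1}\}$, both of which ensure $C\norm{K^u-K^\star}<1$.
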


More precisely, the above discussion shows that if 
\begin{equation}
\norm{ K^{u} - K^{\star}}< \min\braces{\frac{1}{2} \gamma , \pth{C+1}^{-1}}\;,
\end{equation}
where $\gamma$ and  $C$ are the quantities introduced in \eqref{Eq:DefGamma} and
\eqref{Eq:DefC}
and $\cC$ is a Cauchy contour that separates a simple eigenvalue of
$K^{\star}$ from the remaining part of its spectrum, 
then $\cC$ also contains a simple eigenvalue for $K^u$. Before estimating the
quantities $\gamma$ and $\cC$, we need to study the spectrum of $K^\star$.

\subsection{\added{Estimation} of the eigenvalues of $K^{\star}$}
\label{SubSect:LocaEigenvalues}

We are interested in the eigenvalues of the finite-rank kernel $K^{\star}$ given
by
\begin{equation}
K^{\star}\pth{x,\6y}=\sum_{i=1}^{\kk} \ind{x \in B_i} \int_{B_i}
\mathring{\pi}^{B_i}_0\pth{x_0} \ K^0\pth{x_0, \6y}   \6x_0\; .
\end{equation}
A kernel has finite rank whenever it can be written as a sum of a finite
number of products of functions of its first argument alone by functions
\replaced{of}{ofn} its second argument alone. 
Because $K^{\star}$ has finite rank, we can associate to it a
${\kk} \times {\kk}$ matrix whose non-zero eigenvalues correspond to the
non-zero eigenvalues of $K^{\star}$. Indeed, non-zero eigenvalues of $K^{\star}$
are solutions of the homogeneous Fredholm equation of the second kind
\begin{equation}\label{Eq:EqValeurPropreKStar}
\ev \phi\pth{x} = \int_{\meta{\kk}} K^{\star}\pth{x, \6y} \phi\pth{y}\; .
\end{equation}
Let us introduce the unknown constants 
\begin{equation}
c_i = \int_{\meta{\kk}} \Bigprobin{ \mathring{\pi}^{B_i}_0}{
X_{\tau^+_{\meta{\kk}}} \in \6x}  \phi\pth{x}
\end{equation}
which depend on the eigenfunction $\phi\pth{x}$.
It follows that
\begin{equation}
\ev \phi\pth{x} = \displaystyle \sum_{i=1}^{\kk} c_i \ind{x \in B_i}\; .
\end{equation}
For $\ev \neq 0$, inserting this expression in \eqref{Eq:EqValeurPropreKStar} we
obtain
\begin{equation}
\displaystyle \sum_{i=1}^{\kk} \ind{x \in B_i} \biggbrak{c_i -  \frac{1}{\ev} 
\int_{\meta{\kk}} \Bigprobin{ \mathring{\pi}^{B_i}_0}{ X_{\tau^+_{\meta{\kk}}}
\in \6y}  \sum_{j=1}^{\kk} c_j \ind{y \in B_j} } = 0\; .
\end{equation}
Writing 
\begin{equation}
P_{ij} = \Bigprobin{ \mathring{\pi}^{B_i}_0}{ X_{\tau^+_{\meta{\kk}}} \in B_j} 
\end{equation}
and since $\pth{ \ind{x \in B_i}}_{1 \leqslant i \leqslant \kk}$ is a set of
linearly independent functions, we obtain the system of linear algebraic
equations
\begin{equation}
\ev c_i = \sum_{j=1}^{\kk}  {P_{ij}} c_j,\qquad 1\leqslant i \leqslant \kk\; .
\end{equation}
It follows that non-zero eigenvalues of $K^{\star}$ correspond to the non-zero
eigenvalues of the matrix $P$. For $0 \leqslant i \leqslant \kkMUn$, we denote
these eigenvalues $\ev_i$. 

Note that the matrix $P$ is a stochastic matrix and due to the Laplace transform
condition \eqref{EqLaplaceTransform}, these eigenvalues should satisfy
\begin{equation}
\supSur{x \in 
\meta{\kk}^c}{\Prcx{x}{X_1 \in 
\meta{\kk}^c}} < \abs{\ev_i}  \leqslant 1
\end{equation}
for all $0 \leqslant i \leqslant \kkMUn$.

Let us examine the structure of the matrix  $P$. Thanks to the large-deviations
estimates of Proposition \ref{Prop:LargeDeviation}, elements on the main
diagonal of $P$ are close to one, whereas off-diagonal elements  are close to
zero. In order to study a matrix where all elements are small we introduce 
$\hat{P} = \id - P$. Its diagonal elements are given by
\begin{equation}
\hat{P}_{ii}= \Bigprobin{ \mathring{\pi}^{B_i}_0}{ X_{\tau^+_{\meta{\kk}}}
\notin {B_i}} = \Bigprobin{ \mathring{\pi}^{B_i}_0}{ X_{\tau^+_{\meta{\kk}}} \in
\cM_k \bs {B_i}}\; .
\end{equation}
Our aim is now to derive spectral properties of the matrix $\hat{P}$. 
\added{Such a problem has been studied by Wentzell~\cite{Wentzell_72_matrices}
using $W$-graphs. Here we use a different approach based on}
block-triangularisation \cite[Section 6.1]{berglund2013eyring}, \added{which
also gives direct access to eigenfunctions}.
We write $\hat{P}$ in the form
\begin{equation}
\hat{P} =
\begin{pmatrix}
\hat{P}_{11}&\hat{P}_{12}\\
\hat{P}_{21}&\hat{a}\\
\end{pmatrix}
\end{equation}
where $\hat{P}_{11} \in \R^{(\kkMUn) \times (\kkMUn) }$, $\hat{P}_{12} \in
\R^{\kkMUn}$, ${\hat{P}_{21}}^{\top} \in \R^{\kkMUn}$ and $\hat{a}  
\in\R$.
We want to prove that there exist matrices $S,T$  in
$\R^{k\times k}$
of the form
\begin{equation}
S=\begin{pmatrix}
\id  & S_{12} \\ 
0 & 1
\end{pmatrix},\qquad T= \begin{pmatrix}
T_{11} & 0\\ 
T_{21} & \alpha
\end{pmatrix}
\end{equation}
with the submatrices having the same dimensions as those of $\hat{P}$ and
verifying
\begin{equation}
\hat{P} S= S T \; . \label{Eq:BlockTriangDecom}
\end{equation}
Following the argument of \cite[Section 6.1]{berglund2013eyring}, if we manage
to prove that 
\begin{equation} \label{Eq:FixedPointEquationMatrixIdMinusP}
\hat{P}_{11} S_{12} - S_{12} \hat{a} - S_{12} \hat{P}_{21} S_{12} + \hat{P}_{12}
= 0
\end{equation}
admits a unique solution, it will follow that $\hat{P}$ is similar to the
block-diagonal matrix $T$, and the eigenvalues of $\hat{P}$ are $\alpha$ and
those of $T_{11}$. \added{Note that $T_{11}, T_{21}$ and $\alpha$ are then given by}
\begin{equation}
\added{T_{11} =  \hat{P}_{12} - S_{12} \hat{P}_{12}\;, 
\quad T_{21} = \hat{P}_{21}\;, 
\quad \alpha = \hat{a} + \hat{P}_{21}S_{12}}\;.
\end{equation}
\added{The fact that~\eqref{Eq:FixedPointEquationMatrixIdMinusP} admits a
unique solution} will be proven using the Banach fixed point theorem. In
the sequel, the matrix norm used is the sup-norm.

\begin{proposition}\label{Prop:TriangUniqueness}
Introduce the notations 
\begin{align}
b &= \maxSur{1 \leqslant l \leqslant \kkMUn }{\Bigprobin{
\mathring{\pi}^{B_{l}}_0 }{ X_{\tau^+_{\meta{\kk}}} \notin {B_l}}} \; ,\\
\hat{a}&= \Bigprobin{ \mathring{\pi}^{B_k}_0 }{X_{\tau^+_{\meta{\kk}}}
\notin {B_k}} \neq 0 \; .
\end{align}
For fixed blocks $\hat{P}_{11}$, $\hat{P}_{12}$, $\hat{P}_{21}$ and  $\hat{a}$,
if
\begin{equation}
\frac{b}{\hat{a}}  
< \frac{1}{8} 
\end{equation} then  \eqref{Eq:FixedPointEquationMatrixIdMinusP} admits a unique
solution. Moreover, this solution satisfies 
\begin{equation}
\norm{  S^{\star}_{12} }\leqslant 2 \frac{\norm{\hat{P}_{12}}}{\hat{a}} \; .
  \label{Eq:BoundOnStar12}
\end{equation}
\end{proposition}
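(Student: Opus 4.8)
The plan is to recast \eqref{Eq:FixedPointEquationMatrixIdMinusP} as a fixed-point equation $S_{12} = \Phi(S_{12})$ and apply the Banach fixed point theorem on a suitable closed ball of $\R^{k-1}$ equipped with the sup-norm. Since $\hat a \neq 0$, I would rewrite the equation by dividing through: define
\begin{equation}
\Phi(S_{12}) = \frac{1}{\hat a}\Bigpar{\hat P_{11} S_{12} - S_{12}\hat P_{21} S_{12} + \hat P_{12}}\;,
\end{equation}
so that solutions of \eqref{Eq:FixedPointEquationMatrixIdMinusP} are exactly the fixed points of $\Phi$. The key quantitative inputs are the bounds $\norm{\hat P_{11}} \leqs (k-1) b$ (since $\hat P_{11}$ is a submatrix of $\id - P$ whose entries are off-diagonal entries of $-P$ on the diagonal blocks, all controlled by $b$ in sup-norm — more carefully, each row sum of $\hat P_{11}$ is at most the probability of leaving $B_l$, hence $\leqs b$, which is the sharper estimate I'd actually use), $\norm{\hat P_{21}} \leqs b$, and $\norm{\hat P_{12}} \leqs b$, together with the lower bound $\hat a$ on the "escape from $B_k$" probability.

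First I would choose the ball $\cB = \{S_{12} : \norm{S_{12}} \leqs 2\norm{\hat P_{12}}/\hat a\}$ and show $\Phi$ maps $\cB$ into itself. On $\cB$, using $\norm{\hat P_{11}} \leqs b$ (row-sum bound) and $\norm{\hat P_{21}}\leqs b$, one estimates
\begin{equation}
\norm{\Phi(S_{12})} \leqs \frac{1}{\hat a}\Bigpar{b\,\norm{S_{12}} + b\,\norm{S_{12}}^2 + \norm{\hat P_{12}}}
\leqs \frac{\norm{\hat P_{12}}}{\hat a}\Bigpar{1 + \frac{2b}{\hat a} + \frac{4b\norm{\hat P_{12}}}{\hat a^2}}\;,
\end{equation}
and since $\norm{\hat P_{12}}\leqs b$ and $b/\hat a < 1/8$, the bracket is at most $1 + 1/4 + 4\cdot(1/8)^2 < 2$, giving $\norm{\Phi(S_{12})} \leqs 2\norm{\hat P_{12}}/\hat a$, so $\Phi(\cB)\subset\cB$. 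Next, for the contraction property, for $S_{12}, S_{12}' \in \cB$ I would expand
\begin{equation}
\Phi(S_{12}) - \Phi(S_{12}') = \frac{1}{\hat a}\Bigpar{\hat P_{11}(S_{12}-S_{12}') - S_{12}\hat P_{21}(S_{12}-S_{12}') - (S_{12}-S_{12}')\hat P_{21}S_{12}'}
\end{equation}
and bound the sup-norm by $\tfrac{1}{\hat a}(b + 2b\cdot 2\norm{\hat P_{12}}/\hat a)\norm{S_{12}-S_{12}'} \leqs \tfrac{b}{\hat a}(1 + 4b/\hat a)\norm{S_{12}-S_{12}'}$, which under $b/\hat a < 1/8$ is at most $\tfrac18(1 + \tfrac12) = \tfrac{3}{16} < 1$. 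Hence $\Phi$ is a contraction on the complete metric space $\cB$, so it has a unique fixed point $S^\star_{12}$ there, which automatically satisfies \eqref{Eq:BoundOnStar12}.

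The one gap to close carefully is \emph{global} uniqueness: Banach only gives uniqueness within $\cB$, whereas the proposition claims \eqref{Eq:FixedPointEquationMatrixIdMinusP} has a unique solution outright. I would handle this by showing that any solution must lie in $\cB$: if $S_{12}$ solves the equation, then $\hat a\norm{S_{12}} \leqs \norm{\hat P_{11}}\norm{S_{12}} + \norm{\hat P_{21}}\norm{S_{12}}^2 + \norm{\hat P_{12}}$, i.e. $\norm{S_{12}}$ satisfies a quadratic inequality $b\,t^2 - (\hat a - b)t + \norm{\hat P_{12}} \geqs 0$; the admissible region is $t \leqs t_-$ or $t \geqs t_+$ for the two roots, and a short computation using $b/\hat a < 1/8$ shows $t_- \leqs 2\norm{\hat P_{12}}/\hat a$ while $t_+$ is of order $\hat a / b$, so \emph{a priori} solutions either lie in $\cB$ or have norm $\gtrsim \hat a/b$. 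To rule out the large branch I would invoke that $S$ must in fact arise from the block-triangularisation of a genuine stochastic structure — alternatively, and more cleanly, restrict attention from the outset to the geometrically relevant regime, or note (as in \cite[Section 6.1]{berglund2013eyring}) that the construction is only claimed on the natural domain. The main obstacle is thus not the contraction estimate — which is routine once the constant $1/8$ is in hand — but making the uniqueness statement precise; I expect the intended reading is uniqueness in the ball $\cB$, and I would phrase the proof accordingly while noting the a priori bound that pins down the relevant branch.
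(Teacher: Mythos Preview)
Your approach is exactly the paper's: Banach fixed point for $\Phi(S_{12}) = \hat a^{-1}(\hat P_{12} + \hat P_{11}S_{12} - S_{12}\hat P_{21}S_{12})$ on the closed ball $\cB = \{\norm{S_{12}}\leqs 2\norm{\hat P_{12}}/\hat a\}$ in $(\R^{k-1},\norm{\cdot}_\infty)$.

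However, two of your norm bounds are wrong. First, $\norm{\hat P_{11}}\leqs 2b$, not $b$: the diagonal entries of $\hat P_{11}$ are the escape probabilities $1-P_{ll}\leqs b$, and the off-diagonal entries are $-P_{lj}$, so the absolute row sum is $(1-P_{ll}) + \sum_{j\neq l, j\leqs k-1}P_{lj}\leqs 2(1-P_{ll})\leqs 2b$. Your ``row-sum $\leqs b$'' claim forgets the absolute values. Second, and more importantly, $\norm{\hat P_{21}} = \hat a$, not $\leqs b$: the last row of $\hat P$ is governed by the $k$th escape probability $\hat a$, and since $P$ is stochastic one has $\sum_{j=1}^{k-1}|P_{kj}| = 1-P_{kk}=\hat a$. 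Because $\hat a > 8b$ under the hypothesis, this is off by a large factor and your quadratic-term estimate $b\norm{S_{12}}^2$ should read $\hat a\norm{S_{12}}^2$.

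With the correct bounds the computation still closes, and in fact explains the constant $\tfrac18$: on $\cB$ one gets $\norm{\Phi(S_{12})}\leqs \hat a^{-1}\norm{\hat P_{12}}(1+8b/\hat a)<2\norm{\hat P_{12}}/\hat a$, and the Lipschitz constant is $(2b+4\norm{\hat P_{12}})/\hat a\leqs 6b/\hat a<3/4$. So the strategy is right, only the arithmetic needs fixing.

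As for global uniqueness: the paper's proof, like yours, only establishes uniqueness inside $\cB$ via the contraction. Your closing remarks are well taken --- uniqueness in the ball is all that is actually used downstream (the bound \eqref{Eq:BoundOnStar12} and Remark~\ref{rem:Sstar12}), so there is no gap relative to what the paper claims to need.
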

\begin{proof}
Let $\cB$ be the ball $\cB = \set{\Xi \in \R^{\kkMUn}, \norm{\Xi} \leqslant 2
\frac{\norm{\hat{P}_{12}}}{\hat{a}}} \subset\R^{\kkMUn}$. We equip the Banach
space  $\real^{\kkMUn}$ with the supremum norm, and define a map $\Phi : \cB
\rightarrow \cB$ by
\begin{equation}
\Phi\pth{ \Xi } =  \frac{1}{\hat{a}} \pth{  \hat{P}_{12} + \hat{P}_{11} \Xi -
\Xi \hat{P}_{21} \Xi } \; .
\end{equation}
Note that 
\begin{equation}
\norm{\hat{P}_{11}} \leqslant 2 b,\ 
\norm{\hat{P}_{12}} \replaced{{}\leqslant{}}{{}={}} b,\ 
\norm{\hat{P}_{21}} =
\hat{a} \; .
\end{equation}
It is then straightforward to check that $\Phi$ is a contraction on $\cB$.
\end{proof}

\begin{remark}
\label{rem:Sstar12} 
It follows from \eqref{Eq:FixedPointEquationMatrixIdMinusP} that
$S^{\star}_{12}$ satisfies
\begin{align}
\nonumber
  S^{\star}_{12}  &= {\pth{\id - \frac{\hat{P}_{11}}{\hat{a}}+
\frac{\hat{P}_{21} S^{\star}_{12}}{\hat{a}}\id  }}^{-1}
   \frac{\hat{P}_{12}}{\hat{a}}    \\
  & = \sum_{k \geqslant 0}  {\pth{\frac{\hat{P}_{11}}{\hat{a}}-
\frac{\hat{P}_{21} S^{\star}_{12} }{\hat{a}}\id }}^k
\frac{\hat{P}_{12}}{\hat{a}}   \; .
\end{align} 
This yields a more precise estimate than the a priori estimate
\eqref{Eq:BoundOnStar12}. In particular, at the first order, it follows
\begin{equation}
\biggnorm{ S^{\star}_{12} - \frac{\hat{P}_{12}}{\hat{a}} } \leqslant \frac{4b/
\hat{a}}{1-4b/ \hat{a}}  \frac{\norm{\hat{P}_{12}} }{\hat{a}}
\end{equation}
\end{remark}

\begin{corollary}\label{CorEigenvalueProp}
For $ 0 \leqslant i \leqslant \kkMUn$, we denote ${\ev}^{\star}_i$ the
eigenvalues of $K^{\star}$ labelled by \replaced{decreasing}{increasing} order.  
Then the smallest in modulus non-zero eigenvalue $\ev^{\star}_{\kkMUn}$ of
$K^{\star}$  is real, simple and satisfies
\begin{equation}
\abs{\ev^{\star}_{\kkMUn} -
\pth{1-\Bigprobin{\mathring{\pi}^{B_{\kk}}_0}{X_{\tau^+_{\cM_{\kk}}} \notin
B_{\kk}}}}  \leqslant 2 \norm{\hat{P}_{12}} \leqslant 2 b\; ,
\end{equation}
Furthermore, for $0 \leqslant i \leqslant \kkMDeux$,
the other non-zero eigenvalue\added{s} of $K^{\star}$ satisfy
\begin{equation}
\abs{1- \ev^{\star}_{i}} \leqslant \norm{T_{11}} \leqslant  4
\norm{\hat{P}_{12}} \leqslant 4 b\; .
\end{equation}
\end{corollary}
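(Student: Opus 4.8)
The preceding reduction identifies the non-zero eigenvalues of $K^{\star}$ with those of the $\kk\times\kk$ stochastic matrix $P$, hence with $1-\mu$ for $\mu\in\uGs\pth{\hat{P}}$, where $\hat{P}=\id-P$ has entries $\hat{P}_{ii}=\Bigprobin{\mathring{\pi}^{B_i}_0}{X_{\tau^+_{\cM_{\kk}}}\in\cM_{\kk}\bs B_i}$ and $\hat{P}_{ij}=-\Bigprobin{\mathring{\pi}^{B_i}_0}{X_{\tau^+_{\cM_{\kk}}}\in B_j}$ for $i\neq j$. The plan is to feed $\hat{P}$ into the block-triangularisation of Proposition~\ref{Prop:TriangUniqueness}, with the distinguished index taken to be the last one, $\kk$, so that $\hat{a}=\hat{P}_{\kk\kk}=\Bigprobin{\mathring{\pi}^{B_\kk}_0}{X_{\tau^+_{\cM_{\kk}}}\notin B_\kk}$. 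Before applying it, two standing requirements must be checked. First, $\hat{a}\neq0$: this holds because $K^0$ has a strictly positive density and $\mathring{\pi}^{B_\kk}_0$ is a probability measure on $B_\kk$, so the trace process returns to $\cM_\kk$ outside $B_\kk$ with positive probability. Second, the hypothesis $b/\hat{a}<1/8$, which is exactly where the metastable-hierarchy Assumption~\ref{ass:hierarchy} enters: by Proposition~\ref{Prop:LargeDeviation}, $\hat{a}$ carries the exponential rate $H(\kk,M_{\kkMUn})$ while $b=\maxSur{l\leqs\kkMUn}{\Bigprobin{\mathring{\pi}^{B_l}_0}{X_{\tau^+_{\cM_{\kk}}}\notin B_l}}$ carries the rates $H(l,M_\kk\bs\set{l})$ with $l<\kk$, and~\eqref{eq:metastable_hierarchy} makes the former strictly smaller, so $b/\hat{a}=\Order{\e^{-\theta^-/\sigma^2}}$, which is below $1/8$ once $\sigma$ is small; the same estimate shows $\hat{a}$ is itself exponentially small, in particular $\hat{a}<1$.

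Granting these, Proposition~\ref{Prop:TriangUniqueness} provides the unique solution $S^{\star}_{12}$ of~\eqref{Eq:FixedPointEquationMatrixIdMinusP} with $\norm{S^{\star}_{12}}\leqs 2\norm{\hat{P}_{12}}/\hat{a}$, and matching the four blocks of~\eqref{Eq:BlockTriangDecom}, $\hat{P}S=ST$, yields
\[
 T=\begin{pmatrix} T_{11} & 0 \\ T_{21} & \alpha \end{pmatrix},\qquad
 T_{11}=\hat{P}_{11}-S^{\star}_{12}\hat{P}_{21},\qquad
 T_{21}=\hat{P}_{21},\qquad
 \alpha=\hat{a}+\hat{P}_{21}S^{\star}_{12}\,.
\]
Since $S$ is upper block-triangular with unit diagonal blocks it is invertible, so $\hat{P}$ is similar to the block lower-triangular $T$; hence $\uGs\pth{\hat{P}}=\set{\alpha}\cup\uGs\pth{T_{11}}$, and the non-zero eigenvalues of $K^{\star}$ are $1-\alpha$ together with the $1-\mu$ for $\mu\in\uGs\pth{T_{11}}$.

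It then remains to run the elementary estimates, using the block bounds recorded in the proof of Proposition~\ref{Prop:TriangUniqueness} ($\norm{\hat{P}_{11}}\leqs 2b$, $\norm{\hat{P}_{12}}\leqs b$, $\norm{\hat{P}_{21}}=\hat{a}$). From $\abs{\alpha-\hat{a}}=\abs{\hat{P}_{21}S^{\star}_{12}}\leqs\norm{\hat{P}_{21}}\norm{S^{\star}_{12}}\leqs 2\norm{\hat{P}_{12}}\leqs 2b$ one reads off that $\alpha$ is real with $\alpha\geqs\hat{a}-2b\geqs\tfrac34\hat{a}>0$, and from $\norm{T_{11}}\leqs\norm{\hat{P}_{11}}+\norm{S^{\star}_{12}}\norm{\hat{P}_{21}}\leqs 2b+2\norm{\hat{P}_{12}}\leqs 4b$ together with $b/\hat{a}<1/8$ one gets $\norm{T_{11}}<\hat{a}/2<\alpha$; thus $\alpha$ is strictly isolated in $\uGs\pth{\hat{P}}$ and therefore a simple eigenvalue. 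Combining $\hat{a}<1$ with these bounds forces $1-\alpha\in(0,1)$ whereas $\abs{1-\mu}\geqs 1-\norm{T_{11}}>1-\alpha$ for every $\mu\in\uGs\pth{T_{11}}$, so $1-\alpha$ is the smallest-in-modulus non-zero eigenvalue of $K^{\star}$; calling it $\ev^{\star}_{\kkMUn}$, this gives the first claim, since $\abs{\ev^{\star}_{\kkMUn}-(1-\hat{a})}=\abs{\alpha-\hat{a}}\leqs 2\norm{\hat{P}_{12}}\leqs 2b$ and $1-\hat{a}=\Bigprobin{\mathring{\pi}^{B_\kk}_0}{X_{\tau^+_{\cM_{\kk}}}\in B_\kk}$. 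The remaining eigenvalues $\ev^{\star}_i=1-\mu_i$ with $0\leqs i\leqs\kkMDeux$ then satisfy $\abs{1-\ev^{\star}_i}=\abs{\mu_i}\leqs\norm{T_{11}}\leqs 4b$, which is the second claim. The one genuinely delicate point is the verification of $b/\hat{a}<1/8$: this single inequality --- the quantitative form of Assumption~\ref{ass:hierarchy} combined with the large-deviation bounds of Proposition~\ref{Prop:LargeDeviation} --- is what simultaneously makes the fixed-point scheme of Proposition~\ref{Prop:TriangUniqueness} applicable, makes $\ev^{\star}_{\kkMUn}$ simple and well separated from the rest of the spectrum, and makes $\norm{T_{11}}$ exponentially small; everything after it is routine linear algebra.
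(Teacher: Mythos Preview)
Your proof is correct and follows the same block-triangularisation approach as the paper, just with considerably more detail filled in (the paper's own proof is a three-line sketch invoking Proposition~\ref{Prop:TriangUniqueness}). One minor remark: your bound $\norm{T_{11}}\leqs\norm{\hat{P}_{11}}+\norm{S^{\star}_{12}}\norm{\hat{P}_{21}}\leqs 2b+2\norm{\hat{P}_{12}}\leqs 4b$ does not quite yield the intermediate inequality $\norm{T_{11}}\leqs 4\norm{\hat{P}_{12}}$ stated in the corollary (since $\norm{\hat{P}_{11}}\leqs 2b$ need not imply $\norm{\hat{P}_{11}}\leqs 2\norm{\hat{P}_{12}}$), but you do establish the final bound $4b$, which is all that is used downstream.
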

\begin{proof}
To each non-zero eigenvalue of $P$  corresponds a non-zero eigenvalue of
$K^{\star}$. From Proposition \ref{Prop:TriangUniqueness},  the biggest non-zero
eigenvalue of $\id-P$, thus the smallest non-zero of $K^{\star}$, is real and
positive and satisfies
\begin{equation}
\Bigl\vert \added{ ( 1- {\ev}^{\star}_{\kkMUn} ) - \Bigprobin{
\mathring{\pi}^{B_{\kk}}_0}{X_{\tau^+_{B_{\kk}}}  \notin B_{\kk} } } \Bigr\vert 
\leqslant 2 \norm{\hat{P}_{12}} \leqslant 2 b  \; .
\end{equation}
\end{proof}

\begin{remark}
Note that  $1- \bigprobin{ \mathring{\pi}^{B_{\kk}}_0}{X_{\tau^+_{B_{\kk}}} 
\notin B_{\kk} } $  is the principal eigenvalue of kernel $K^{\star}_{B_{\kk}}$,
(the process with kernel $K^{\star}$ killed upon leaving $B_{\kk}$). 
\end{remark}

\begin{figure}
\centering
\begin{tikzpicture}[x=2.5cm,y=2.5cm,>=stealth']
\draw [line width=0.1pt,color=black,fill=white, fill opacity=.25] (1.,0.) circle
(2.5cm);
\draw [line width=0.6pt,color=black,fill=white,fill opacity=1]
(0.5076690421225205,0.) circle (0.43721770551038847cm);
(0.2691726053063012cm);
\draw [line width=1pt,color=blue,fill=white,fill opacity=1] (1.,0.) circle
(0.25cm);
\draw[->,color=black] (-0.1,0.) -- (2.2,0.) node[below
left]{$\text{Re}\pth{1-\ev}$};
\draw[->,color=black] (0.,-1.1) -- (0.,1.2) node[below
right]{$\text{Im}\pth{1-\ev}$};
\draw[shift={(1,0)},color=black] (0pt,2pt) -- (0pt,-2pt) 
node[below,yshift=-3pt] {$1$};
\draw[shift={(2,0)},color=black] (0pt,2pt) -- (0pt,-2pt);
\draw[shift={(0,1)},color=black] (2pt,0pt) -- (-2pt,0pt)
node[left,yshift=-1pt] {$1$};
\draw[shift={(0,-1)},color=black] (2pt,0pt) -- (-2pt,0pt);
\draw [shift={(0.5076690421225205,0)},color=brown!70!black] (0pt,2pt) --
(0pt,-20pt) node[below,yshift=5pt] {\footnotesize
$\bigprobin{\mathring{\pi}^{B_{\kk}}_0}{X_{\tau^+_{\mathcal{M}_{\kk}}} \in
\mathcal{M}_{\kkMUn}}$};
\draw [line width=2pt,color=red] (0.4,0.)-- (0.615338084245041,0.);
\draw [fill=black,color=black] (0.5076690421225205,0.) circle (1.5pt);
\draw [shift={(0.,0.)},line width=0.4pt,color=purple,fill=purple,fill
opacity=1.0]  (0,0) -- 
plot[domain=-1.5238347597983815:1.5238347597983815,variable=\t]({
1.*0.09388861496207804*cos(\t r)+0.*0.09388861496207804*sin(\t
r)},{0.*0.09388861496207804*cos(\t r)+1.*0.09388861496207804*sin(\t r)}) --
cycle ;
\end{tikzpicture}
\caption{Sketch of \added{the} location of eigenvalues of $\hat{P}=\id-P$.}
\end{figure}
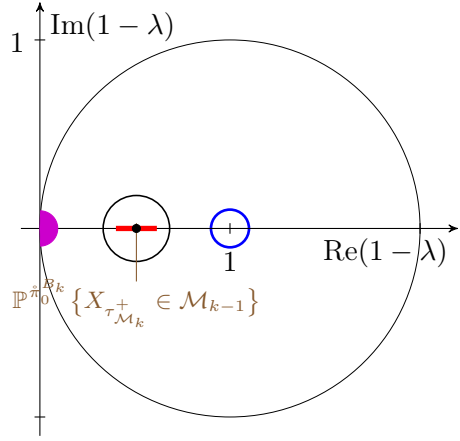

Thanks to the block-triangularisation, we also get an explicit expression for
the eigenfunction associated to the smallest eigenvalue of $K^{\star}$.
\begin{lemma}
\label{lem:phikstar} 
Up to a multiplicative constant, the eigenfunction of $K^{\star}$ corresponding
to the eigenvalue ${\ev}^{\star}_{\kkMUn}$ is given by
\begin{equation}
\phi^{\star}_{\kkMUn}\pth{x} = \sum_{i=1}^{\kkMUn} \ind{x \in B_i}
\pth{S^{\star}_{12}}_i + \ind{x \in B_{\kk}}
\end{equation}
where $ S^{\star}_{12}$ is a unique solution of
\eqref{Eq:FixedPointEquationMatrixIdMinusP}.
\end{lemma}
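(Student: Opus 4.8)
The plan is to reduce everything to the finite-dimensional picture already set up in Section~\ref{SubSect:LocaEigenvalues}. There it is shown that the non-zero eigenvalues of $K^{\star}$ coincide with those of the $\kk\times\kk$ stochastic matrix $P$ with entries $P_{ij}=\Bigprobin{\mathring{\pi}^{B_i}_0}{X_{\tau^+_{\meta{\kk}}}\in B_j}$; more precisely, a one-line computation (substitute $\phi(x)=\sum_{i=1}^{\kk}c_i\ind{x\in B_i}$ into $\pth{K^{\star}\phi}\pth{x}$ and use the definitions of $K^{\star}$ and of $P$) shows that whenever $c=(c_i)\in\R^{\kk}$ satisfies $Pc=\ev c$ with $\ev\neq 0$, the function $x\mapsto\sum_{i=1}^{\kk}c_i\ind{x\in B_i}$ is an eigenfunction of $K^{\star}$ for the eigenvalue $\ev$. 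Since Corollary~\ref{CorEigenvalueProp} guarantees that $\ev^{\star}_{\kkMUn}$ is simple, it suffices to exhibit one non-zero eigenvector $c$ of $P$ for $\ev^{\star}_{\kkMUn}$ and to read off its components.

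First I would rewrite the eigenvector equation in terms of $\hat P=\id-P$: one needs $\hat P c=\pth{1-\ev^{\star}_{\kkMUn}}c$, and the proof of Corollary~\ref{CorEigenvalueProp} identifies $1-\ev^{\star}_{\kkMUn}$ with the scalar $\alpha$ from the block-triangularisation $\hat P S=ST$ of Proposition~\ref{Prop:TriangUniqueness}, namely the (simple, largest-modulus) eigenvalue of $\hat P$ sitting in the lower-right corner of $T=\begin{pmatrix}T_{11}&0\\T_{21}&\alpha\end{pmatrix}$. Writing $e=(0,\dots,0,1)^{\top}\in\R^{\kk}$, the $\kk$-th column of $T$ is $(0,\dots,0,\alpha)^{\top}=\alpha e$, so $Te=\alpha e$; since $S$ is invertible (block upper-triangular with $\id$ and $1$ on the diagonal), the vector $c:=Se$ satisfies
\begin{equation}
\hat P c=\hat P S e=S T e=S(\alpha e)=\alpha\,c\;,
\end{equation}
i.e.\ $c$ is an eigenvector of $\hat P$ for $\alpha$, hence of $P$ for $\ev^{\star}_{\kkMUn}$, and it is non-zero because its last component equals $1$.

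It then remains to compute $c=Se$ from the explicit form $S=\begin{pmatrix}\id&S^{\star}_{12}\\0&1\end{pmatrix}$, which gives $c=\pth{S^{\star}_{12},1}^{\top}$, that is $c_i=\pth{S^{\star}_{12}}_i$ for $1\leqslant i\leqslant\kkMUn$ and $c_{\kk}=1$. Feeding this back into the recipe of the first paragraph yields $\phi^{\star}_{\kkMUn}(x)=\sum_{i=1}^{\kkMUn}\ind{x\in B_i}\pth{S^{\star}_{12}}_i+\ind{x\in B_{\kk}}$ as a non-zero eigenfunction of $K^{\star}$ for the simple eigenvalue $\ev^{\star}_{\kkMUn}$, hence a generator of the corresponding one-dimensional eigenspace, which is the claim. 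The only point requiring care is the bookkeeping in the second paragraph --- confirming that the corner eigenvalue $\alpha$ of the triangularised matrix really is the one attached to $\ev^{\star}_{\kkMUn}$ rather than to an eigenvalue of $T_{11}$ --- but this is exactly what the proof of Corollary~\ref{CorEigenvalueProp} already establishes (using $b/\hat a<1/8$, so that $\abs{\alpha}\geqslant\hat a-2b$ dominates $\norm{T_{11}}\leqslant 4b$); everything else is routine linear algebra.
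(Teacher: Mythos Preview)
Your proof is correct and follows essentially the same approach as the paper. The paper's proof is the one-line observation that $\hat P\begin{pmatrix}S_{12}^{\star}\\1\end{pmatrix}=(1-\ev^{\star}_{\kkMUn})\begin{pmatrix}S_{12}^{\star}\\1\end{pmatrix}$; your argument simply unpacks this by writing the eigenvector as $Se$ and using $\hat P Se=STe=\alpha Se$, together with the observation (already implicit in Corollary~\ref{CorEigenvalueProp}) that $\alpha$ is the eigenvalue of $\hat P$ corresponding to $\ev^{\star}_{\kkMUn}$.
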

The proof is immediate since
\begin{equation}
\hat{P} {\begin{pmatrix} S_{12}^{\added{\star} }\\ 1
\end{pmatrix}} =  (1-{\ev}^{\star}_{\kkMUn} )  {\begin{pmatrix} S_{12}^{\added{\star}} \\ 1
\end{pmatrix}}\; .
\end{equation}

\subsection{Resolvent estimate of $K^{\star}$}

We now want to estimate the quantities $C$ and $\gamma$ associated to the real
and simple eigenvalue ${\ev}^{\star}_{\kkMUn}$ of $K^{\star}$, cf.\
\eqref{Eq:DefC} and  \eqref{Eq:DefGamma}. Thus we need an upper bound on the
norm of the resolvent  $\norm{\pth{z \id - K^{\star} }^{-1}} $ when $z$ is close
to ${\ev}^{\star}_{\kkMUn}$. Note that for $z \neq 0$, the resolvent operator
satisfies the  resolvent equation
\begin{equation}\label{Eq:ResolventeOperatorEg}
\pth{z \id - K^{\star}}^{-1} = \frac{1}{z} \pth{\id + K^{\star} \pth{z \id
-K^{\star}}^{-1}}\; .
\end{equation}
Solving the Fredholm linear integral equation of the second kind,
\begin{align}
\nonumber
z  \phi\pth{x} &= \varphi\pth{x} + \pth{K^{\star} \added{\phi}}\pth{x} \\
\nonumber
&= \varphi\pth{x} +
\int_{\meta{\kk}} \sum_{i=1}^{\kk} \ind{x \in B_i} \Bigprobin{
\mathring{\pi}^{B_i}_0}{ X_{\tau^+_{\meta{\kk}}} \in \6y }  \phi\pth{y}\\
&=  \varphi\pth{x} + 
\sum_{i=1}^{\kk} \ind{x \in B_i}  c_i
\label{Eq:NonHomogeneous}
\end{align}
and following the same procedure as for the homogeneous equation
\eqref{Eq:EqValeurPropreKStar}, we obtain the system of linear algebraic
equations given for all $1 \leqslant i \leqslant \kk$ by
\begin{equation}
z c_i - \sum_{i=1}^{\kk} P_{ij} c_j = \varphi_i
:= 
\displaystyle \int_{\meta{\kk}} \Bigprobin{
\mathring{\pi}^{B_i}_0}{X_{\tau^+_{\meta{\kk}}} \in \6y} \varphi\pth{y}\;.
\end{equation}
If $z$ is such that $\det\pth{z \id - K^{\star}} \neq 0$, i.e.\ not an
eigenvalue of $K^{\star}$, the system has the unique solution given by
\begin{equation}
c_i =  \sum_{j=1}^{\kk} \pth{z \id - P}^{-1}_{ij} \varphi_j, \qquad 1 \leqslant
i \leqslant \kk\; .
\end{equation}
Inserting the previous result in \eqref{Eq:NonHomogeneous}, we see that
\begin{equation}
z \phi\pth{x} = \varphi\pth{x} + \int_{\meta{\kk}}   \sum_{i=1}^{\kk}
\sum_{j=1}^{\kk} \ind{x \in B_i} \pth{z \id - P}^{-1}_{ij}
\Bigprobin{\mathring{\pi}^{B_{j}}_0}{X_{\tau^+_{\meta{\kk}}} \in \6y}
\varphi\pth{y}\; ,
\end{equation}
from which it follows that the resolvent operator $\pth{z \id -K^{\star}}^{-1}$
admits a resolvent kernel $R \pth{z; x, \6y}$ given by
\begin{equation}
R \pth{z; x, \6y}= \frac{1}{z} \biggbrak{ \id +\sum_{i =1}^{\kk}
\sum_{j=1}^{\kk} \ind{x \in B_i} \pth{z \id - {P}}^{-1}_{ij}
\Bigprobin{\mathring{\pi}^{B_{j}}_0}{ X_{\tau^+_{\meta{\kk}}} \in \6y }}\; .
\end{equation}
Since $ R  = \frac{1}{z} \braces{\id + \pth{R K^{\star}} }$ and  thanks to
\eqref{Eq:ResolventeOperatorEg}, it follows that the resolvent kernel of the
resolvent operator $\pth{z \id -K^{\star}}^{-1}$ is also given by
\begin{equation}
R \pth{z; x, \6y}=  \sum_{i =1}^{\kk} \sum_{j=1}^{\kk} \ind{x \in B_i} \pth{z
\id - {P}}^{-1}_{ij} \Bigprobin{\mathring{\pi}^{B_{j}}_0}{
X_{\tau^+_{\meta{\kk}}} \in \6y }\; .
\end{equation}
We are now able to bound the resolvent of $K^{\star}$. Let $\cC$ be the contour
defined by  
\begin{equation}
\bigsetsuch{z \in \replaced{\complex}{\cC}}{\abs{z- \ev_k} = r }\; ,
\end{equation}
and let us assume that
\begin{equation}
r < \frac{\hat{a}}{4} < \frac{\hat{a}-6b}{2} \; .
\label{Eq:RCondition}
\end{equation}
Then have the following resolvent estimate.

\begin{proposition}
Recall that we denote ${\ev}^{\star}_{\kkMUn}$ the smallest non-zero eigenvalue
of the kernel $K^{\star}$.  There exists a numerical constant $\cUn>0$,
independent of $\sigma$, such that for all $z \in \cC$
\begin{equation}
\norm{\pth{z \id  - K^{\star}}^{-1}} < \cUn \pth{z -
{\ev}^{\star}_{\kkMUn}}^{-1}\; .
\end{equation}
\end{proposition}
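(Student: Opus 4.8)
The plan is to pass from the operator resolvent of $K^{\star}$ to the resolvent of the $\kk\times\kk$ matrix $P$, and then to exploit the block triangular form of $\hat P=\id-P$ produced in Proposition~\ref{Prop:TriangUniqueness}. First I would use the resolvent kernel computed just above,
\[
R\pth{z;x,\6y}=\sum_{i=1}^{\kk}\sum_{j=1}^{\kk}\ind{x\in B_i}\,\pth{z\id-P}^{-1}_{ij}\,\Bigprobin{\mathring{\pi}^{B_j}_0}{X_{\tau^+_{\meta{\kk}}}\in\6y}\;,
\]
together with the fact that the trace kernel $K^0$ is stochastic on $\meta{\kk}$, so that $\int_{\meta{\kk}}\bigprobin{\mathring{\pi}^{B_j}_0}{X_{\tau^+_{\meta{\kk}}}\in\6y}=1$ for every $j$. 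Estimating the supremum norm of the associated integral operator by the integral of $\abs{R}$ gives
\[
\norm{\pth{z\id-K^{\star}}^{-1}}\;\leqslant\;\maxSur{1\leqslant i\leqslant\kk}{\sum_{j=1}^{\kk}\bigabs{\pth{z\id-P}^{-1}_{ij}}}\;=\;\bignorm{\pth{z\id-P}^{-1}}\;,
\]
where on the right $\norm{\cdot}$ is the matrix norm induced by the $\ell^\infty$-norm (maximal absolute row sum). It therefore suffices to bound $\bignorm{\pth{z\id-P}^{-1}}$ for $z\in\cC$.

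Next I would write $z\id-P=\pth{z-1}\id+\hat P$ and use the similarity $\hat P=STS^{-1}$ from Proposition~\ref{Prop:TriangUniqueness}, so that $\pth{z\id-P}^{-1}=S\,\pth{\pth{z-1}\id+T}^{-1}S^{-1}$. Since $T$ is block lower triangular with diagonal blocks $T_{11}$ and $\hat a+\hat P_{21}S^{\star}_{12}$, and since the smallest eigenvalue of $K^{\star}$ equals $1-\pth{\hat a+\hat P_{21}S^{\star}_{12}}$, one has $\pth{z-1}+\hat a+\hat P_{21}S^{\star}_{12}=z-{\ev}^{\star}_{\kkMUn}$; hence $\pth{\pth{z-1}\id+T}^{-1}$ is again block lower triangular, with diagonal blocks $\pth{\pth{z-1}\id+T_{11}}^{-1}$ and $\pth{z-{\ev}^{\star}_{\kkMUn}}^{-1}$ and lower-left block $-\pth{z-{\ev}^{\star}_{\kkMUn}}^{-1}\hat P_{21}\pth{\pth{z-1}\id+T_{11}}^{-1}$. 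Under condition~\eqref{Eq:RCondition}, which in particular forces $b<\hat a/12$, the estimates from Proposition~\ref{Prop:TriangUniqueness} and Corollary~\ref{CorEigenvalueProp} give $\norm{S},\norm{S^{-1}}\leqslant 1+2b/\hat a$, $\norm{\hat P_{21}}=\hat a$ and $\norm{T_{11}}\leqslant 4b$, all bounded by numerical constants times the relevant quantities. So everything reduces to estimating the central block.

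For $\pth{\pth{z-1}\id+T_{11}}^{-1}$ I would use a Neumann series. On $\cC$ one has $\abs{z-1}\geqslant\abs{\hat a+\hat P_{21}S^{\star}_{12}}-r\geqslant\hat a-2b-r$, and $r<\hat a/4$ together with $b<\hat a/12$ give $\abs{z-1}>\tfrac{7}{12}\hat a>\norm{T_{11}}$, whence
\[
\bignorm{\pth{\pth{z-1}\id+T_{11}}^{-1}}\;\leqslant\;\frac{1}{\abs{z-1}-\norm{T_{11}}}\;\leqslant\;\frac{4}{\hat a}\;\leqslant\;\frac1r\;,
\]
the last step using $\hat a>4r$. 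The scalar $\pth{z-{\ev}^{\star}_{\kkMUn}}^{-1}$ has modulus $1/r$ on $\cC$, and the lower-left block is then at most $\hat a\cdot\tfrac4{\hat a}\cdot\tfrac1r=\tfrac4r$; collecting these three blocks and the two factors $S,S^{-1}$ gives $\norm{\pth{z\id-K^{\star}}^{-1}}\leqslant\norm{\pth{z\id-P}^{-1}}\leqslant\cUn\,\abs{z-{\ev}^{\star}_{\kkMUn}}^{-1}$ for a numerical constant $\cUn$ (one may take $\cUn=7$).

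The only genuine difficulty is the Neumann-series step: it requires $\abs{z-1}$ to dominate $\norm{T_{11}}$ strictly and uniformly over the whole contour $\cC$, and this is exactly what condition~\eqref{Eq:RCondition} (equivalently $b<\hat a/12$) buys, once one notices that the relevant scale in the denominator is $\hat a$ rather than $\abs{1-{\ev}^{\star}_{\kkMUn}}$. Everything else is routine bookkeeping with sub-multiplicativity of the $\ell^\infty$ matrix norm and the a priori bound $\norm{S^{\star}_{12}}\leqslant 2\norm{\hat P_{12}}/\hat a\leqslant 2b/\hat a$.
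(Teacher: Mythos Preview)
Your proof is correct and follows essentially the same route as the paper: reduce the operator resolvent to the matrix resolvent $\norm{(z\id-P)^{-1}}$, conjugate by $S$ to pass to the block-triangular $T$, invert blockwise, and use a Neumann series for the $T_{11}$ block after checking $\abs{z-1}>\norm{T_{11}}$ via~\eqref{Eq:RCondition}. The only cosmetic differences are that you establish $\norm{(z\id-K^{\star})^{-1}}\leqslant\norm{(z\id-P)^{-1}}$ (which is all that is needed) while the paper asserts equality, and you write the block inverse directly whereas the paper factors it as a product of two block matrices; the numerical constants differ slightly but are equally valid.
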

\begin{proof}
Note that we have equality between the supremum norms of the resolvent of the
operator $K^\star$ and of the matrix $P$, i.e.\
\begin{equation}
\norm{\pth{z \id  - K^{\star}}^{-1}} = \norm{\pth{z \id  - P}^{-1}} \; .
\end{equation}
Let us now derive an upper bound on $\norm{\pth{z \id - P}^{-1}}$ for $z \in \cC$.
Thanks to the block-triangularisation \eqref{Eq:BlockTriangDecom}, we get
\begin{align}
\nonumber
\norm{\pth{z \id - P}^{-1}} &= \Bignorm{\pth{\pth{1-z} \id - \hat{P}}^{-1}}\\
& \leqslant \norm{S} \norm{S^{-1}} \norm{ \pth{\pth{1-z} \id - {T}}^{-1} }\; .
\end{align}
Since $\norm{S}=\norm{S^{-1}} = 1+ \norm{S_{12}}$, we can bound  $\norm{S} \norm{S^{-1}}$  thanks to \eqref{Eq:BoundOnStar12}.
We also have an explicit expression for $ \pth{\pth{1-z} \id - {T}}^{-1}$, given by
\begin{multline}
 \bigbrak{(1-z) \id - {T}}^{-1} 
 \\ = 
 \begin{pmatrix}
 \added{\id} & 0 \\
 T_{21}
 (1-{\ev}^{\star}_{\kkMUn})^{-1} & 1
 \end{pmatrix} 
 \begin{pmatrix}
 \bigbrak{ (1-z)\added{\id} - T_{11}}^{-1} &0\\
 0& \bigbrak{(1-z)-(1-{\ev}^{\star}_{\kkMUn})}^{-1}
 \end{pmatrix}\; .
\end{multline}
Since $\abs{1-z}> \norm{T_{11}}$ for all $z \in \cC$, we have the classical bound
\begin{equation}
 \norm{ \pth{\pth{1-z}- T_{11}}^{-1} }  \leqslant \frac{1}{ \abs{1-z} -  \norm{T_{11} }}\; .
\end{equation}
For all $r< \frac{\hat{a}-6b}{2}$, we finally get
\begin{equation}
\norm{\pth{z \id - P}^{-1}} \leqslant \frac{1}{\abs{z - {\ev}^{\star}_{\kkMUn}}}
{\pth{1+2 \frac{b}{\hat{a}}}}^2 \pth{ 1+ \frac{\hat{a}}{\hat{a}-6b-r} }\; .
\end{equation}
The result is then immediate since 
\begin{equation}
\norm{\pth{z \id - P}^{-1}} \leqslant 9{\pth{1+\frac{1}{4}}}^2 {\abs{z -
{\ev}^{\star}_{\kkMUn}}}^{-1}\; .
\end{equation}
\end{proof}

We can now estimate $C$.  Since the resolvent kernel is bounded, the M-L
inequality yields the upper bound
\begin{equation}
C  = \frac{1}{\pi} \int_{\cC}{\norm{\pth{z \id - K^{\star}}^{-1}}^{2} \6z}
\leqslant 2 r \norm{\pth{z \id - K^{\star}}^{-1}}^{2}\; .
\end{equation}
It follows that
\begin{equation}
\epsilon = \min\braces{\frac{1}{2} \gamma , \pth{C+1}^{-1}} \geqslant  \frac{r}{396}\; .
\end{equation}
Thus, if 
\begin{equation}
\norm{K^u - K^{\star} } < \frac{r}{396}\; , 
\end{equation}
we have the desired result for the approximation of the eigenvalues of $K^{u}$, where $K^u$ is seen as an approximation of $K^\star$.
In Section \ref{sec:lastSteps}, we will check that this inequality and
\eqref{Eq:RCondition} are indeed satisfied.

\section{Sample path estimates}
\label{sec:sample_paths} 
\subsection{Stochastic differential equations with coexistence of periodic orbits}

In this section, if $x\in\R^d$ then $\norm{x}$ denotes the Euclidean norm of
$x$. We assume that the deterministic system \eqref{EqODE}
admits $N$ stable periodic orbits and $N^{+}$ unstable periodic orbits, i.e.,
there are periodic functions $\gamma^{-}_{i}: \real \rightarrow \mathcal{D}$ of
respective periods $T_{i}$ such that 
\begin{equation}
\dot\gamma_i\pth{t} = f\pth{ \gamma_i\pth{t}} \qquad \forall t \in \real
\end{equation}
for $1 \leqs i \leqs N$, and there are periodic functions $\gamma^{+}_{i}:
\real \rightarrow \mathcal{D}$ of respective periods $T^+_{i}$ such that 
\begin{equation}
\dot\gamma_i^+\pth{t} = f\pth{ \gamma^+_i\pth{t}}\qquad  \forall t \in \real
\; .
\end{equation}
In what follows, we study the behaviour of the system in the neighbourhood of a
stable or unstable periodic orbit.
Adapting \cite[Proposition 2.1]{berglund2014noise} and \cite[Proposition
3.3]{berglund2014noise} to the multidimensional case, it follows that the SDE
\eqref{eq:SDE}
can be written in polar-type coordinates as shown in the following proposition.
 
\begin{proposition}
\label{Prop:EqStablePeriodicOrbit}
There exists a change of coordinates such that in a neighbourhood of a stable
periodic orbit (i.e., for $\pth{x,\varphi}$ such that 
$\norm{x}$ is small enough), the SDE takes the form
\begin{align}
\nonumber
\dd x_t &= \pth{ - \Lambda x_t + b_x\pth{x_t, \varphi_t} } \dd t + \sigma
g_x\pth{x_t,\varphi_t} \dd W_t\;,\\
\dd \varphi_t& = \pth{ \frac{1}{T_i} + b_\varphi\pth{x_t, \varphi_t} } \dd t +
\sigma g_\varphi\pth{x_t,\varphi_t} \dd W_t\;,
\label{EqSystemeCompletRPhiStable}
\end{align}
where $\Lambda$ is a triangular matrix with positive diagonal elements
corresponding to the Lyapunov exponents of the stable orbit, $b_x$, $b_\varphi$,
$g_x$, $g_\varphi$ are periodic in $\varphi$ with period $1$, and the nonlinear
drift terms satisfy $\norm{b_x\pth{x, \varphi}},\abs{b_{\varphi}\pth{x, \varphi}} = \Order{\norm{x}^2}$.
\end{proposition}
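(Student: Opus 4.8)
The plan is to build the coordinate change in two stages — first a purely geometric ``tubular'' stage that separates motion along $\Gamma_i$ from transversal motion, and then a linear-algebraic stage that straightens the transversal linear part by Floquet theory — and finally to read off the announced form by applying It\^o's formula. The planar ($d=1$) versions of this construction are \cite[Propositions~2.1 and~3.3]{berglund2014noise}; the only genuinely new ingredient in the present $(d+1)$-dimensional setting is the reduction of the $d\times d$ transversal monodromy to a constant triangular matrix, for which I would invoke the Floquet theory recalled in Appendix~\ref{app:Floquet}.

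\emph{Geometric stage.} Since $\Gamma_i$ is the image of the $\mathcal{C}^2$ periodic function $\gamma_i$ of minimal period $T_i$, it is a $\mathcal{C}^2$ embedded circle in $\cD$, and the tubular neighbourhood theorem provides a $\mathcal{C}^1$ diffeomorphism from a neighbourhood of $\Gamma_i$ onto $(\R/\Z)\times V$, with $V\subset\real^d$ a neighbourhood of the origin, mapping $\Gamma_i$ onto $(\R/\Z)\times\set{0}$ and normalised so that along the orbit the angular coordinate $\varphi$ evolves at constant speed $1/T_i$. Writing $y\in V$ for the transversal coordinate, the orbit is $\set{y=0}$, and since $f$ is tangent to $\Gamma_i$ there, the transported vector field has at $y=0$ a vanishing $y$-component and $\varphi$-component equal to $1/T_i$. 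A first-order Taylor expansion in $y$ gives
\begin{equation}
 \dot y = A(\varphi)\,y + \Order{\norm{y}^2}\;,\qquad
 \dot\varphi = \frac1{T_i} + \pscal{c(\varphi)}{y} + \Order{\norm{y}^2}\;,
\end{equation}
where $A(\varphi)$ and $c(\varphi)$ are $\mathcal{C}^1$ and $1$-periodic in $\varphi$, and $A$ is conjugate, along the orbit, to the transversal block of the linearisation $\partial_z f(\gamma_i(\cdot))$.

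\emph{Linear stage.} Consider the transversal linear system $\dot u = A(t/T_i)\,u$, which has $T_i$-periodic-in-$t$ (equivalently $1$-periodic-in-$\varphi$) coefficients. By Floquet's theorem its principal fundamental matrix is $Q(t)\e^{Rt}$ with $Q$ invertible, $\mathcal{C}^1$ and $T_i$-periodic; asymptotic stability of $\Gamma_i$ forces every eigenvalue of $R$ to have strictly negative real part. A constant linear change of basis brings $-R$ to triangular form $\Lambda$ with strictly positive diagonal entries — these being the Lyapunov exponents of $\Gamma_i$ — upper triangular if one allows complex coordinates, real block-triangular otherwise. Setting $x = C\,Q(T_i\varphi)^{-1}y$ with $C$ the corresponding constant matrix defines a $\mathcal{C}^1$, $1$-periodic-in-$\varphi$ change of the transversal variable after which the transversal linear part becomes exactly $-\Lambda x$; the resulting coefficients, being built from $\gamma_i$, $A$, $Q$ and $C$, all remain $1$-periodic in $\varphi$.

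\emph{Conclusion and main obstacle.} Denoting by $\Psi$ the composition $z\mapsto(x,\varphi)$ of the two changes of variables, It\^o's formula turns~\eqref{eq:SDE} into a system of the form~\eqref{EqSystemeCompletRPhiStable}: the diffusion coefficients $g_x,g_\varphi$ are the corresponding rows of $(D\Psi)\,g\circ\Psi^{-1}$, manifestly $1$-periodic in $\varphi$; the drift is $(D\Psi)\,f\circ\Psi^{-1}$ plus the It\^o correction, the former being $\bigpar{-\Lambda x+\Order{\norm{x}^2},\,1/T_i+\Order{\norm{x}^2}}$ by the two stages, and the latter a smooth term of order $\sigma^2$, which is incorporated into $b_x$, $b_\varphi$. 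The main obstacle is the linear stage: one must carry out the triangularisation so that it is simultaneously periodic in $\varphi$ — which is why one cannot diagonalise $A(\varphi)$ pointwise but must pass through the Floquet factorisation $Q(t)\e^{Rt}$ — reduces the transversal part to a \emph{constant} matrix, and is compatible with the flow/transversal splitting (the neutral Floquet direction $f(\gamma_i(\cdot))$ must be excluded cleanly). A secondary point requiring care is the bookkeeping of the $\Order{\sigma^2}$ It\^o correction and of the cross terms produced when It\^o's formula is applied to the $\varphi$-dependent linear map $Q(T_i\varphi)^{-1}$; these are of lower order and do not affect the stated structure.
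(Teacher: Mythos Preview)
Your two-stage outline is natural, but as written it leaves two genuine gaps with respect to the statement you are asked to prove.

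\medskip
\textbf{The linear term in $\dot\varphi$.} After your geometric stage you correctly record
\[
 \dot\varphi = \frac{1}{T_i} + \pscal{c(\varphi)}{y} + \Order{\norm{y}^2}\;,
\]
with a generically nonzero $c(\varphi)$. Your linear stage is a change of the \emph{transversal} variable only, $x = C\,Q(T_i\varphi)^{-1}y$, leaving $\varphi$ untouched; substituting $y = Q(T_i\varphi)C^{-1}x$ into the equation above still gives $\dot\varphi = 1/T_i + \Order{\norm{x}}$, not $\Order{\norm{x}^2}$. So the claim in your conclusion that the drift is $\bigpar{-\Lambda x+\Order{\norm{x}^2},\,1/T_i+\Order{\norm{x}^2}}$ is not justified, and the statement $\abs{b_\varphi}=\Order{\norm{x}^2}$ fails. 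The paper avoids this by doing Floquet theory on the \emph{full} $(d{+}1)$-dimensional linearisation (Appendix~\ref{app:Floquet}) and using the single transformation $z=\Gamma(\varphi)+R(\varphi)x$, where $\bigbrak{\Gamma'(\varphi),R(\varphi)}=P(\varphi,\varphi_0)S$ with $S$ putting the Floquet matrix $B$ in Jordan form $\diag(0,\Lambda)$. Because the frame $\bigbrak{\Gamma',R}$ is adapted to the Floquet splitting that already separates the neutral direction $\Gamma'$ from the contracting ones, projecting the equation onto the dual of $\Gamma'$ yields $\dot\varphi-1/T=\Order{\norm{x}^2}$ directly. Your approach can be repaired either by choosing the tubular fibres to be the Floquet directions from the start (which is essentially the paper's construction), or by an additional shear $\varphi\mapsto\varphi+\pscal{d(\varphi)}{x}$ that kills the $\Order{\norm{x}}$ term; but neither is done.

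\medskip
\textbf{The It\^o correction.} You write that the $\Order{\sigma^2}$ It\^o correction ``is incorporated into $b_x$, $b_\varphi$'' and ``does not affect the stated structure''. But the correction is a smooth function of $(x,\varphi)$ that in general does \emph{not} vanish at $x=0$, so it cannot be absorbed into a term satisfying $\norm{b_x}=\Order{\norm{x}^2}$. The paper flags this explicitly (``a drawback of this system is that the drift term $f_x$ does not vanish in $x=0$'') and appeals to the argument of~\cite[Proposition~3.3]{berglund2014noise}, which performs a further $\sigma$-dependent shift of the transversal origin to restore the $\Order{\norm{x}^2}$ property. Your ``secondary point requiring care'' is precisely this step, and it is not optional for the statement as formulated.
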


\added{Note that we can choose $\Lambda$ to be in Jordan canonical form, and
that in these variables the $B_i$ can be taken to be balls
$\setsuch{x}{\norm{x}\leqs\delta}$.}
In the neighbourhood of an unstable periodic orbit, we have the following
similar result.

\begin{proposition}
\label{Prop:EqStablePeriodicOrbitUnstable}
There exists a change of coordinates such that in a neighbourhood of an
unstable periodic orbit (i.e., for $\pth{x,\varphi}$ such
that
$\norm{x}$ is small enough), the SDE takes the form
\begin{align}
\nonumber
\dd x_t &= \pth{ \begin{pmatrix}
- \Lambda^- & 0 \\ 
0 & \Lambda^+
\end{pmatrix} 
 x_t + b_x\pth{x_t, \varphi_t} } \dd t + \sigma g_x\pth{x_t,\varphi_t} \dd
W_t\;,\\
\dd \varphi_t& = \pth{ \frac{1}{T_i} + b_\varphi\pth{x_t, \varphi_t} } \dd t +
\sigma g_\varphi\pth{x_t,\varphi_t} \dd W_t\;,
\label{EqSystemeCompletRPhiUnstable}
\end{align}
where $\Lambda^-$ is a triangular 
matrix with positive diagonal elements, $\Lambda^{+}$ is a triangular matrix
with non-negative diagonal elements and at least one strictly positive diagonal
element, 
corresponding to the Lyapunov exponents, $b_x$, $b_\varphi$,
$g_x$, $g_\varphi$ are periodic in $\varphi$ with period $1$, and the nonlinear
drift terms satisfy $\norm{b_x\pth{x, \varphi}},\abs{b_{\varphi}\pth{x, \varphi}} = \Order{\norm{x}^2}$.
\end{proposition}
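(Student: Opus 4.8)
The plan is to argue exactly as in the proof of Proposition~\ref{Prop:EqStablePeriodicOrbit}, the only structural difference being that the transversal linear part, which is purely contracting near a stable orbit, now splits into a contracting and an expanding block. Concretely: (i)~I would introduce Floquet-type tubular coordinates around the unstable orbit $\gamma^+_i$, separating a phase $\varphi$ (rescaled to period $1$) from a transversal deviation $x\in\R^d$; (ii)~reduce the transversal linearisation to a constant matrix and put it into the block form $\diag(-\Lambda^-,\Lambda^+)$; (iii)~collect the zeroth-order, higher-order and It\^o-correction terms into $1/T_i+b_\varphi$, $b_x$, $g_x$ and $g_\varphi$, and check the periodicity and the size estimates.

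For step~(i) I would fix a smooth family of codimension-one hyperplanes $\Sigma_\varphi$ (a moving frame) transversal to $\gamma^+_i$ at the points $\gamma^+_i(\varphi)$, $\varphi\in\R/T_i\Z$, so that every point $z$ in a sufficiently thin tube around $\Gamma^+_i$ has a unique representation as $\gamma^+_i(\varphi)$ plus a transversal deviation $x$ read off in $\Sigma_\varphi$, and then rescale $\varphi$ to have period~$1$. Substituting this into~\eqref{EqODE} and using $\dot\gamma^+_i=f(\gamma^+_i)$, the deterministic part of the equations becomes $\dot\varphi=1/T_i+\Order{\norm{x}}$ and $\dot x=A(\varphi)x+\Order{\norm{x}^2}$, where $A(\cdot)$ is the $1$-periodic linearised transversal vector field along the orbit. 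By Floquet theory (Appendix~\ref{app:Floquet}) there is a smooth, $1$-periodic-in-$\varphi$ linear change of the variable $x$ after which $A(\varphi)$ is replaced by a constant matrix $R$, whose eigenvalues are the nontrivial Floquet exponents of $\Gamma^+_i$, i.e.\ its Lyapunov exponents.

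For step~(ii), the eigenvalues of $R$ with $\re<0$ and those with $\re\geqs0$ form two disjoint groups, the second being non-empty and containing at least one eigenvalue with $\re>0$ since $\Gamma^+_i$ is linearly unstable; the associated Riesz spectral projections yield an $R$-invariant splitting $\R^d=\R^{d^-}\oplus\R^{d^+}$, and in a basis adapted to it $R$ takes the block form $\diag(-\Lambda^-,\Lambda^+)$, with $\Lambda^-$ triangular with positive diagonal and $\Lambda^+$ triangular with non-negative diagonal and at least one positive entry. For step~(iii) I would apply this whole chain of smooth, $1$-periodic-in-$\varphi$ transformations to the SDE~\eqref{eq:SDE}, not merely to the ODE, adding to the drift the It\^o-correction terms produced by the nonlinear part of the change of coordinates; the $x$-independent contributions have been absorbed into $\dot\gamma^+_i$ and the rescaled phase speed $1/T_i$, the linear ones into $\diag(-\Lambda^-,\Lambda^+)x$, and the remainder defines $b_x,b_\varphi$, which are smooth, $1$-periodic in $\varphi$ and of order $\Order{\norm{x}^2}$ exactly as in the stable case, while $g_x,g_\varphi$ are the images of $\sigma g$ under the coordinate change and are therefore smooth and $1$-periodic in $\varphi$.

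The step requiring the most care --- indeed the only place where the argument is not a verbatim copy of the proof of Proposition~\ref{Prop:EqStablePeriodicOrbit} --- is the spectral splitting of $R$ in step~(ii): one has to check that the dichotomy $\{\re z<0\}$ versus $\{\re z\geqs0\}$ is a genuine spectral gap for $R$, so that the Cauchy contour used to define the Riesz projection (for instance the vertical line $\{\re z=-\kappa\}$ with $\kappa>0$ smaller than $\min_j\abs{\re\mu_j}$ taken over the strictly stable Floquet exponents $\mu_j$) avoids the spectrum; this holds because the two groups of exponents are separated by such a line. Everything else --- the existence and smoothness of the tubular coordinates, the Floquet reduction to constant coefficients, the bookkeeping of the It\^o corrections, and the verification of the periodicity and of the $\Order{\norm{x}^2}$ bounds --- is identical to the stable case and to~\cite[Propositions~2.1 and~3.3]{berglund2014noise}, to which I would refer for the details.
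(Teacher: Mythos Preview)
Your proposal is correct and follows essentially the same route as the paper: the paper's proof (given jointly for Propositions~\ref{Prop:EqStablePeriodicOrbit} and~\ref{Prop:EqStablePeriodicOrbitUnstable}) also invokes the Floquet change of variables from Appendix~\ref{app:Floquet} together with It\^o's formula, and then refers to~\cite[Propositions~2.1 and~3.3]{berglund2014noise} for the remaining adjustments, with the unstable case differing only in that the transversal Jordan form splits into the block $\diag(-\Lambda^-,\Lambda^+)$. One point the paper makes explicit that you only cover by reference: after applying It\^o's formula the drift $f_x$ does \emph{not} vanish at $x=0$ (because of the $\sigma^2$-dependent It\^o correction), and a further $\sigma$-dependent shift of the origin, as in~\cite[Proposition~3.3]{berglund2014noise}, is needed to achieve $b_x=\Order{\norm{x}^2}$; your intermediate claim $\dot\varphi=1/T_i+\Order{\norm{x}}$ likewise only sharpens to $\Order{\norm{x}^2}$ once the specific Floquet frame of Proposition~\ref{Prop:FloquetChangeOfVariable} is used, not for a generic transversal section.
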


\begin{proof}[{\sc Proof of Propositions \ref{Prop:EqStablePeriodicOrbit} and
\ref{Prop:EqStablePeriodicOrbitUnstable}}]
Using the time parametrisation proposed in \cite{berglund2014noise}, i.e.,
setting $\Gamma_i\pth{ \varphi} = \gamma_i\pth{T_i \varphi}$ so that $\varphi
\in \real / \mathbb{Z}$ ($\varphi$ parametrises time),
and It\^{o}'s formula, the stochastic differential equation \eqref{eq:SDE}
is equivalent, thanks to  the transformation of Proposition
\ref{Prop:FloquetChangeOfVariable}, to a system of the form 
\begin{align}
\nonumber
\dd x_t &= f_x({x_t, \varphi_t, \sigma}) \dd t + \sigma g_x\pth{x_t, \varphi_t}
\dd W_t \\
\dd \varphi_t &= f_\varphi({x_t, \varphi_t, \sigma}) \dd t + \sigma
g_\varphi\pth{x_t, \varphi_t} \dd W_t \; .
\end{align}
As noted in \cite{berglund2014noise}, a drawback of this system is that the
drift term $f_x$ does not vanish in $x =0$. We use a similar argument as in 
\cite[Proposition 3.3]{berglund2014noise} to obtain the desired form.
\end{proof}

\subsection{General estimates}
\label{Ssection:GeneralEst}

We consider the system in continuous time describing the dynamics near a
periodic orbit in the polar-type coordinates
\eqref{EqSystemeCompletRPhiStable} or \eqref{EqSystemeCompletRPhiUnstable}.
We first recall a result proved in \cite{berglund2014noise} which shows that
$\varphi_t$ does not differ much from ${t}/{T_i}$ on rather long timescales.
Given $T, H>0$, we introduce two stopping times by
\begin{align}
\nonumber
\tilde{\tau}_{H} &= \inf \braces{ t>0:  \norm{x_t } \geqslant H}\;,\\
\tilde{\tau}_\varphi &= \inf \braces{ t>0:  \abs{\varphi_t - \frac{t}{T_i
}} \geqslant M\pth{H^2 t + \sqrt{H^3 T}}}\; .
\end{align}
Then \cite[Proposition 6.3]{berglund2014noise} gives us the following result.

\begin{proposition}[Control of the diffusion along
$\varphi$]\label{PropAlongTheDiffusion}
There is a constant $C_1$, depending
only on the ellipticity constants of the diffusion terms, such that
\begin{equation}
\Prcx{\pth{x,0}}{ \tilde{\tau}_\varphi <  \tilde{\tau}_H \wedge T} \leqslant
\e^{-H/\pth{C_1 \sigma^2}}
\end{equation}
holds for all $T, \sigma, H>0$ \added{and all $x$ with  $\norm{x}<H$}.
\end{proposition}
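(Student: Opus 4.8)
The plan is to reduce the statement to a martingale/supermartingale estimate for the $\varphi$-component alone, conditioned on the $x$-component staying below $H$. First I would write the $\varphi$-equation from \eqref{EqSystemeCompletRPhiStable} (the unstable case \eqref{EqSystemeCompletRPhiUnstable} is identical since it only differs in the $x$-block) as
\begin{equation}
\varphi_t - \frac{t}{T_i} = \int_0^t b_\varphi(x_s,\varphi_s)\,\6s + \sigma\int_0^t g_\varphi(x_s,\varphi_s)\,\6W_s\;,
\end{equation}
and observe that on the event $\{t < \tilde\tau_H\}$ the drift term is bounded by $\int_0^t C\norm{x_s}^2\,\6s \leqs C H^2 t$, using $\norm{b_\varphi(x,\varphi)} = \Order{\norm{x}^2}$ and $\norm{x_s}<H$ for $s<\tilde\tau_H$. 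So the deterministic part alone never makes $\absN{\varphi_t - t/T_i}$ exceed $M H^2 t$ (choosing $M$ large enough to absorb the constant $C$), and the whole contribution to $\tilde\tau_\varphi$ before $\tilde\tau_H$ must come from the stochastic integral exceeding $M\sqrt{H^3 T}$.

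The key step is then a supremum bound on the stopped stochastic integral. I would introduce $N_t = \sigma\int_0^{t\wedge\tilde\tau_H} g_\varphi(x_s,\varphi_s)\,\6W_s$, a continuous martingale whose quadratic variation satisfies $\6\langle N\rangle_s \leqs \sigma^2 c_+\,\6s$ by the ellipticity assumption~\ref{ass:ellipticity} (this is where the constant $C_1$ depending only on the ellipticity constants enters). By the time-change / Dambis--Dubins--Schwarz representation, $N_{t\wedge\tilde\tau_H}$ is a time-changed Brownian motion with clock bounded by $\sigma^2 c_+ T$ on $\{t\leqs T\}$, so
\begin{equation}
\Prcx{(x,0)}{\sup_{0\leqs t\leqs T\wedge\tilde\tau_H}\absN{N_t} \geqs M\sqrt{H^3 T}}
\leqs 2\exp\!\biggpar{-\frac{M^2 H^3 T}{2\sigma^2 c_+ T}}
= 2\e^{-M^2 H^3/(2c_+\sigma^2)}\;,
\end{equation}
by the reflection-principle/Bernstein bound for Brownian motion (or directly by the exponential martingale inequality applied to $N$). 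Since $H^3 \geqs H$ in the regime of interest is \emph{not} automatic, I would instead keep the cleaner bound $M\sqrt{H^3 T} \geqs$ (something linear in $\sqrt{H}$) or simply note that the claimed right-hand side $\e^{-H/(C_1\sigma^2)}$ only requires the exponent to dominate $H/\sigma^2$; absorbing $H^3$ versus $H$ is harmless because $H$ is small, so one uses $H^3 \leqs H$ for $H\leqs 1$ and the bound $M^2 H^3/(2c_+) \geqs H/C_1$ fails — so actually the correct reading is that $\tilde\tau_\varphi$ already requires the deviation to be at least $M\sqrt{H^3 T}$ at the \emph{first} time it could happen, and one splits time into $\lceil T/1\rceil$ unit intervals, on each of which the Gaussian tail gives $\e^{-cH^3/\sigma^2}$, then takes a union bound; since there are at most order-$T$ intervals and $T$ can be polynomial in $\sigma^{-1}$, the factor is absorbed into the constant $C_1$. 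This is the bookkeeping that \cite[Proposition 6.3]{berglund2014noise} carries out, so I would simply cite it after setting up the reduction.

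The main obstacle is making the constants uniform: one must check that $M$ can be chosen once and for all (depending only on the $\Order{\cdot}$ constants in $b_\varphi$, which come from the change of coordinates in Propositions~\ref{Prop:EqStablePeriodicOrbit} and~\ref{Prop:EqStablePeriodicOrbitUnstable}), and that $C_1$ depends only on $c_\pm$. The subtlety is that $g_\varphi$ and $b_\varphi$ depend on $(x,\varphi)$, so before $\tilde\tau_H$ we have $\norm{x}<H$ but we do \emph{not} control $\varphi$ a priori — yet we don't need to, since the bounds on $b_\varphi$ and the quadratic variation of $N$ are uniform in $\varphi$ by periodicity. A second minor point is the initial condition: the statement allows any $x$ with $\norm{x}<H$ rather than $x=0$, but nothing in the argument used $x=0$, only $\norm{x_0}<H$ so that $\tilde\tau_H>0$ almost surely; I would just replace $(x,0)$ by $(x,\varphi_0)$ throughout and note the bound is independent of $\varphi_0$. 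Given all the heavy lifting is in \cite{berglund2014noise}, the proof here is essentially: restate the continuous-time polar-coordinate system, observe the drift/diffusion bounds are uniform in $\varphi$ and governed by the ellipticity constants, and invoke \cite[Proposition 6.3]{berglund2014noise}.
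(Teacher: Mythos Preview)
The paper gives no proof of this proposition at all---it is stated as a direct quotation of \cite[Proposition~6.3]{berglund2014noise}, and nothing further is argued. Your proposal, after some sketching, lands in the same place: set up the polar-coordinate system, note the uniform bounds on $b_\varphi$ and $g_\varphi$, and invoke that citation. So you match the paper's ``proof'' exactly.

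That said, the intermediate sketch you give has an unresolved tension you should be aware of. Your Bernstein/exponential-martingale estimate yields an exponent of order $M^2 H^3/(c_+\sigma^2)$, not $H/\sigma^2$, and for small $H$ this is strictly weaker than the stated bound $\e^{-H/(C_1\sigma^2)}$. Your proposed repair---splitting $[0,T]$ into unit subintervals and taking a union bound---does not close this gap: on each subinterval the threshold is still at least $M\sqrt{H^3 T}$ (or at best $M\sqrt{H^3}$ if you localise $T$), the quadratic variation is still of order $\sigma^2$ per unit time, and so each piece still contributes $\e^{-cH^3/\sigma^2}$. A union bound over polynomially many intervals cannot upgrade $H^3$ to $H$ in the exponent. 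This is not a flaw in your overall proposal, since you correctly fall back on the citation rather than claiming your sketch is complete; but if you want the sketch to stand on its own you should either accept the $H^3$ exponent (which is what the argument genuinely gives and is sufficient for every downstream application in the paper) or identify an additional structural input---such as a bound on $g_\varphi$ near $x=0$---that would sharpen it.
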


The following result bounds the probability to escape from one of the metastable
neighbourhood~$B_i$.

\begin{proposition} 
\label{prop:exit_Mk} 
There exist $C>0$ and $\kappa >0$ such that for all $x \in B_i \subset
\meta{k}$, 
\begin{equation}
\Prcx{x}{X_1 \notin \meta{k} } \leqslant C \e^{-\kappa \abs{B_i}/ \sigma^2}\; ,
\end{equation}
where $\abs{B_i}$ denotes the radius of the ball $B_i$.
\end{proposition}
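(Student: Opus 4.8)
The plan is to reduce the statement to a one-step escape estimate from the single ball $B_i$ and then invoke the sample-path tools near a stable periodic orbit. Since $B_i\subset\cM_k$ one has $\Prcx{x}{X_1\notin\cM_k}\le\Prcx{x}{X_1\notin B_i}$, so it suffices to show that, starting from $x\in B_i$, the first return point to $\Sigma$ (reached after the first visit to $\Sigma'$) lies outside $B_i$ only with probability $\le C\e^{-\kappa\abs{B_i}/\sigma^2}$. Working near the stable orbit $\Gamma_i$, I would apply the change of variables of \Cref{Prop:EqStablePeriodicOrbit}, in which $B_i=\setsuch{x}{\norm{x}\le\delta}$ with $\delta=\abs{B_i}$, the transversal drift is $-\Lambda x+\Order{\norm{x}^2}$ with $\Lambda$ having positive diagonal, and $\varphi$ advances at speed $1/T_i+\Order{\norm{x}^2}$; near $\Gamma_i$ both sections reduce to fixed phases, so the deterministic orbit issued from any $x\in B_i$ crosses $\Sigma'$ and then $\Sigma$ within a time close to $T_i$, returning at $\Pi(x)$ with $\norm{\Pi(x)}\le\mu\delta$ for some $\mu<1$ (this is the hypothesis that $\Pi$ maps $B_i$ strictly into itself, valid once $\delta$ is small).

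I would then fix $H=2\delta$ (small enough for the polar coordinates to remain valid) and a horizon $T=2T_i$, and let $\tilde\tau_H,\tilde\tau_\varphi$ be the stopping times introduced before \Cref{PropAlongTheDiffusion}. On the good event $G=\set{\tilde\tau_\varphi\wedge\tilde\tau_H>T}$ one has $\norm{x_t}<2\delta$ and $\varphi_t$ within the prescribed tube around $t/T_i$ on all of $[0,T]$. The complement splits as $\set{\tilde\tau_\varphi<\tilde\tau_H\wedge T}$, whose probability is $\le\e^{-2\delta/(C_1\sigma^2)}$ by \Cref{PropAlongTheDiffusion}, and $\set{\tilde\tau_H\le T}$, for which I would compare $x_t$ with the deterministic solution through the variation-of-constants formula: the difference is a stochastic convolution $\int_0^t\e^{-\Lambda(t-s)}\sigma g_x(x_s,\varphi_s)\,\dd W_s$ plus a term of order $\delta^2$ while $\norm{x}<2\delta$, and a Bernstein-type inequality for stochastic integrals on the bounded interval $[0,T]$ (as in \cite{Berglund_Gentz_book,berglund2014noise}) gives $\Prcx{x}{\tilde\tau_H\le T}\le C\e^{-\kappa\delta^2/\sigma^2}$. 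Hence $\Prcx{x}{G^c}\le C\e^{-\kappa\delta/\sigma^2}$.

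On $G$ the phase increases essentially monotonically, so the first visit to $\Sigma'$ and the subsequent first return $\tau_1$ to $\Sigma$ both occur before $T$, with $\tau_1=T_i(1+\Order{\delta})$; over that time the deterministic transversal coordinate has contracted to at most $\mu\delta$, and the displacement caused by $\tau_1$ differing from $T_i$ is $\Order{\delta^2}$ because $\dot x=\Order{\delta}$ there. Using the variation-of-constants comparison once more, with a threshold equal to a fixed fraction of $(1-\mu)\delta$, one obtains $\norm{x_{\tau_1}}\le\mu\delta+\Order{\delta^2}+\tfrac12(1-\mu)\delta<\delta$ on $G$ outside a set of probability $C\e^{-\kappa\delta^2/\sigma^2}$. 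Combining with the estimate on $\Prcx{x}{G^c}$ yields $\Prcx{x}{X_1\notin B_i}\le C\e^{-\kappa\abs{B_i}/\sigma^2}$ uniformly in $x\in B_i$; in fact the amplitude contributions decay like $\e^{-\kappa\abs{B_i}^2/\sigma^2}$, so the stated bound holds with room to spare.

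The main obstacle is the amplitude control in the last two steps: one must simultaneously keep $x_t$ inside the enlarged tube, keep the phase on schedule, and produce an estimate on $\norm{x_{\tau_1}}$ that is uniform over all $x\in B_i$ — including points on $\partial B_i$, where it is precisely the contraction of the deterministic return map that forces the return point back into $B_i$. Running the comparison argument on the process stopped at $\tilde\tau_H\wedge\tilde\tau_\varphi\wedge T$, so that the drift and diffusion coefficients are genuinely bounded and nearly linear, is the natural way to make this rigorous; the only remaining bookkeeping is the interplay of this stopping time with the two-section definition of $X_1$.
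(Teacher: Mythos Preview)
Your argument is correct in outline and reaches the goal, but the decomposition differs from the paper's. Instead of comparing the stochastic path from $x$ to the deterministic flow from the same point and invoking the contraction $\norm{\Pi(x)}\le\mu\delta$, the paper couples $x^{(x_0,0)}_t$ with a second \emph{stochastic} trajectory $x^{(0,0)}_t$ started exactly on the periodic orbit and driven by the same Brownian motion, splitting
\[
\bigprob{\norm{x^{(x_0,0)}_{\tilde\tau_\Sigma}}>\delta}
\le \bigprob{\norm{x^{(0,0)}_{\tilde\tau_\Sigma}}>(1-\varrho)\delta}
+ \bigprob{\bignorm{x^{(x_0,0)}_{\tilde\tau_\Sigma}-x^{(0,0)}_{\tilde\tau_\Sigma}}>\varrho\delta}\;.
\]
The difference term is dispatched by the ready-made synchronisation estimate \cite[Proposition~6.12]{berglund2014noise} (the same result quoted in Section~\ref{ssec:distance_trajectories}), and the first term involves a process starting at $0$, so the Bernstein and \Cref{PropAlongTheDiffusion} estimates apply to a path whose deterministic part is identically zero rather than a contracting curve. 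This buys cleanliness: there is no need to track the return map $\Pi$, the margin $(1-\mu)\delta$, or the mismatch between the random return time $\tau_1$ and $T_i$ that you correctly flag as the main bookkeeping obstacle. Your route is more self-contained---no black-box coupling lemma---and makes the role of the deterministic contraction explicit, at the cost of precisely that simultaneous control.

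One small slip: you say the amplitude contributions $\e^{-\kappa\delta^2/\sigma^2}$ give the stated bound $\e^{-\kappa\delta/\sigma^2}$ ``with room to spare'', but for small $\delta$ the inequality goes the other way ($\delta^2<\delta$, so the quadratic exponent is the \emph{weaker} bound). This is harmless here since $\delta=\abs{B_i}$ is held fixed and the extra factor can be absorbed into $\kappa$, but it is worth getting the direction straight.
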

\begin{proof}
We introduce the continuous stopping time $\tilde{\tau}_\Sigma = \inf\setsuch{t
>0}{\varphi_t >1}$ corresponding to the first return time to the Poincar\'{e}
map. Then for any initial condition $(x_0,0)$ with $x_0 \in B_i$,  
\begin{equation}
\added{\Prcx{x_0}{X_1 \notin \meta{k} } 
 = \bigprob{ \norm{x^{(x_0,0)}_{\tilde{\tau}_\Sigma}} 
 > \abs{B_i}}\; .}
 \label{Eq:SamplePathBSigma}
\end{equation}
\added{Introducing a second sample path starting on the $i^{\text{th}}$ stable 
periodic at time 0, i.e. started in $(0,0)$, and driven by the same Brownian
 motion, we can use an upper bound on the probability that the two sample 
paths do not approach each other exponentially fast to
bound~\eqref{Eq:SamplePathBSigma}. Indeed, for
$\varrho\in(0,1)$,}
\begin{align}
\nonumber
\added{ \bigprob{ \norm{x^{(x_0,0)}_{\tilde{\tau}_\Sigma}} > \abs{B_i}}
\leqslant} 
&\ \added{
 \bigprob{ \norm{x^{(0,0)}_{\tilde{\tau}_\Sigma}} > (1-\varrho) \abs{B_i}}}\\
 &{}\added{{}+   \bigprob{ \norm{x^{(x_0,0)}_{\tilde{\tau}_\Sigma} -
x^{(0,0)}_{\tilde{\tau}_\Sigma}  }
 > \varrho \abs{B_i}}\; .}
 \label{eq:DecomposedPbaLeaveBi}
 \end{align}
\added{Adapting~\cite[Proposition 6.12]{berglund2014noise}, we obtain the
existence of $c_0>0$ and $\varrho<1$ such that the second term on the right-hand
side, corresponding to the difference of the two sample paths, is bounded by} 
 \begin{equation}
\added{ \bigprob{ \norm{x^{(x_0,0)}_{\tilde{\tau}_\Sigma} - 
x^{(0,0)}_{\tilde{\tau}_\Sigma}  } > \varrho \abs{B_i}} \leqslant \e^{-c_0
\abs{B_i}/\sigma^2}\; .}
 \end{equation}
\added{In order to apply Proposition \ref{PropAlongTheDiffusion} to bound the 
first term on the right-hand side of~\eqref{eq:DecomposedPbaLeaveBi}, we
decompose}
\begin{align}
\nonumber
\added{\bigprob{
\norm{x^{(0,0)}_{\tilde{\tau}_\Sigma}} > (1- \varrho) \abs{B_i} } \leqslant}&\
\added{ \bigprob{
\norm{x^{(0,0)}_{\tilde{\tau}_\Sigma}} > (1- \varrho) \abs{B_i} , \tilde{\tau}_\varphi > 
\tilde{\tau}_{H} \wedge T}}\\
 &{}\added{{}+ \Prcx{\pth{0,0}}{ \tilde{\tau}_\varphi < 
\tilde{\tau}_{H} \wedge  T}\;,}
\label{Eq:BoundExitBi}
\end{align}
\added{where we will choose $T = 2 T_i$ and $H = (1- \varrho) \abs{B_i}$.  }
\added{Note that the solution of \eqref{EqSystemeCompletRPhiStable} with initial
condition $(0,0)$ can be written as}
\begin{equation}
\added{
x^{(0,0)}_t = \int_0^{t} \e^{-\Lambda \pth{t -s}} b_x\bigpar{x^{(0,0)}_s,
\varphi^{(0,0)}_s} \6s + \sigma \int_0^t \e^{-\Lambda \pth{t-s}}
g_x\bigpar{x^{(0,0)}_s, \varphi^{(0,0)}_s} \6W_s\; .}
\end{equation}
\added{To bound the first term on the right-hand side of~\eqref{Eq:BoundExitBi},
observe that on $\set{\tilde{\tau}_\varphi > \tilde{\tau}_{H} \wedge T}$ we
have $\added{\tilde{\tau}_{\Sigma}}< 2 T_i$ and thus }
\begin{equation}
\added{\bigprob{ \norm{x^{(0,0)}_{\tilde{\tau}_\Sigma}} >
 (1- \varrho) \abs{B_i} , \tilde{\tau}_\varphi
>  \tilde{\tau}_{H} \wedge T}  \leqslant
\biggprob{\supSur{0 \leqslant s \leqslant 2 T_i}{ \norm{x^{(0,0)}_s }
> (1- \varrho) \abs{B_i} }}\;.}
\end{equation}
\added{Using a Bernstein inequality and a partition of the interval $\brak{0,2
T_i}$, as in~\cite[Theorem~5.1.18]{Berglund_Gentz_book}
or~\cite[Proposition~3.3]{Berglund_Gentz_Kuehn_2015}, 
we can show that there exist
  $C_0, \kappa_0 >0$ such that}
\begin{equation}
\added{\biggprob{\supSur{0 \leqslant s \leqslant 2 T_i}{ \norm{x^{(0,0)}_s }
\geqslant (1- \varrho) \abs{B_i}   }} \leqslant  C_0 \e^{-\kappa_0  \abs{B_i} / \sigma^2}\; .}
\end{equation}
Using Proposition \ref{PropAlongTheDiffusion} to bound the second term
on the right-hand side of \eqref{Eq:BoundExitBi}\added{,} we obtain the result.
\end{proof}

We also need to bound the probability of staying close to an unstable periodic
orbit. Let \added{ $\cU \subset \Sigma$ be a union of neighbourhoods of size $\delta$
of the unstable periodic orbits, with $\delta$ of order $1$,  and 
let $\cS \subset \cU$ be a union of  neighbourhoods of size $h=\sigma^{3/4}$ of the
unstable periodic orbits on the Poincar\'{e} section.}

\begin{proposition}
Let $h = \sigma^{3/4}$ and  
$ \tilde{\tau}_{\cS^c} = \inf \braces{t>0 : \norm{x_t} = h }$. 
There exists a constant $C_2$ such that for any $x$ such that $\norm{x} < h$ 
and $0<T \leqslant1/h$, 
\begin{equation}
\Prcx{\pth{x,0}}{ \tilde{\tau}_{\cS^c} >T  ,  \tilde{\tau}_\varphi > 
\tilde{\tau}_{\added{\cS^c}} \wedge T}  \leqslant C_2 \sigma^{1/2}\; .
\label{Eq:ProbaContLow}
\end{equation}
\end{proposition}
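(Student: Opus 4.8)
The plan is to work, in a neighbourhood of a fixed unstable periodic orbit, in the polar-type coordinates of Proposition~\ref{Prop:EqStablePeriodicOrbitUnstable}, and to track a single expanding direction; the estimate will be uniform over the finitely many unstable orbits. Let $\lambda>0$ be a Lyapunov exponent of the orbit (a strictly positive diagonal entry of $\Lambda^+$), and $y_t$ a corresponding coordinate of $x_t$, so that $\abs{y_t}\leqs\norm{x_t}$; to keep the exposition light I first describe the argument as if the linear part of the $y$-equation were $\dot y=\lambda y$, the general (possibly non-semisimple) case being discussed at the end. Let $g^+$ be the row of $g_x$ driving $y$, put $\theta=\tilde\tau_{\cS^c}\wedge\tilde\tau_\varphi$, and let $\cG=\bigset{\tilde\tau_{\cS^c}>T,\ \tilde\tau_\varphi>\tilde\tau_{\cS^c}\wedge T}$ be the event whose probability we want to bound; on $\cG$ one has $\theta>T$ and $\abs{y_t}<h$ for all $t\leqs T$. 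By the variation-of-constants formula for $y_t$ and the bound $\norm{b_x}=\Order{\norm{x}^2}=\Order{h^2}$ on the nonlinear drift, one obtains, on $\cG$ and for all $t\leqs T$,
\begin{equation}
 \abs{\,y_0+\sigma M_t\,}\leqs h\,\e^{-\lambda t}+Ch^2\;,
 \qquad\text{where}\quad
 M_t:=\int_0^t\e^{-\lambda s}\,g^+(x_s,\varphi_s)\,\dd W_s\;.
\end{equation}
Evaluating this at $t_\sigma:=\tfrac{3}{4\lambda}\log(\sigma^{-1})$ — note $t_\sigma\leqs1/h$ for $\sigma$ small, and assume, as we may, $T\geqs t_\sigma$ — and recalling $h=\sigma^{3/4}$, the right-hand side becomes $\Order{\sigma^{3/2}}$. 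Hence, on $\cG$, the random variable $M_{t_\sigma}$ lies in an interval $J$ of length $\Order{\sigma^{1/2}}$ centred at the deterministic point $-y_0/\sigma$, and everything reduces to bounding the probability of this concentration.

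The key point is that $M_{t_\sigma}$ cannot concentrate on an interval this short. I would compare $M$ with the Gaussian martingale obtained by freezing its integrand on the orbit: with $\bar g(s):=g^+(0,s/T_i)$, write $M_{t_\sigma}=\bar M_{t_\sigma}+\cE_{t_\sigma}$, where $\bar M_{t_\sigma}=\int_0^{t_\sigma}\e^{-\lambda s}\bar g(s)\,\dd W_s$ has a deterministic integrand and so is a centred Gaussian with variance $\int_0^{t_\sigma}\e^{-2\lambda s}\abs{\bar g(s)}^2\,\dd s$. By the ellipticity Assumption~\ref{ass:ellipticity} (which survives the change of coordinates up to a change of constants) this variance is bounded below by some $v_->0$ once $\sigma$ is small, so $\bar M_{t_\sigma}$ has density at most $(2\pi v_-)^{-1/2}$ and $\bigprob{\bar M_{t_\sigma}\in J'}=\Order{\sigma^{1/2}}$ for any interval $J'$ of length $\Order{\sigma^{1/2}}$. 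For the remainder $\cE_t=\int_0^t\e^{-\lambda s}\bigpar{g^+(x_s,\varphi_s)-\bar g(s)}\,\dd W_s$, I would work with the process stopped at $t_\sigma\wedge\theta$: by the very definition of $\theta$, for $s<t_\sigma\wedge\theta$ one has $\norm{x_s}<h$ and, using $s<t_\sigma$ together with $T\leqs1/h$, $\abs{\varphi_s-s/T_i}<M(h^2s+\sqrt{h^3T})=\Order{h}$; hence, by the local Lipschitz continuity of $g$, the quadratic variation of the stopped remainder at time $t_\sigma$ is at most $Ch^2=C\sigma^{3/2}$ — \emph{deterministically}, not merely on $\cG$. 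A Bernstein inequality for continuous martingales then yields $\bigprob{\,\abs{\cE_{t_\sigma\wedge\theta}}>\sigma^{5/8}\,}\leqs2\,\e^{-\sigma^{-1/4}/(2C)}$.

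Combining the two bounds, and noting that on $\cG$ one has $\theta>T\geqs t_\sigma$, so $\cE_{t_\sigma\wedge\theta}=\cE_{t_\sigma}$ and $M_{t_\sigma}=\bar M_{t_\sigma}+\cE_{t_\sigma}\in J$, we get
\begin{equation}
 \bigprob{\cG}\leqs\bigprob{\,\bar M_{t_\sigma}\in J+[-\sigma^{5/8},\sigma^{5/8}]\,}+\bigprob{\,\abs{\cE_{t_\sigma\wedge\theta}}>\sigma^{5/8}\,}=\Order{\sigma^{1/2}}\;,
\end{equation}
which is $\leqs C_2\sigma^{1/2}$ for a suitable constant $C_2$ and all sufficiently small $\sigma$.

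The step I expect to be the real obstacle is the first one: in the general case the linear part of the $y$-equation couples $y$ to the other expanding coordinates, and its resolvent carries polynomial-in-$t$ prefactors when the corresponding block of Lyapunov exponents is not semisimple. One must then choose the local coordinates so that this coupling, together with the nonlinear drift, still displaces $y$ over the window $[0,t_\sigma]$ only by $\Order{h^2}=\Order{\sigma^{3/2}}$ — a displacement of the natural order $h=\sigma^{3/4}$, divided by $\sigma$, would already be too large to beat. Concretely, one replaces $\e^{-\lambda s}$ above by the renormalised linear resolvent $a(t_\sigma,s)$, which obeys $a(t_\sigma,0)=1$ and $\abs{a(t_\sigma,s)}\leqs C\e^{-(\lambda-\varepsilon)s}$ for an arbitrarily small fixed $\varepsilon>0$, takes $t_\sigma=K\log(\sigma^{-1})$ with $K(\lambda-\varepsilon)>\tfrac34$, and checks that $\int_0^{t_\sigma}\abs{a(t_\sigma,s)}^2\,\dd s$ stays bounded below — all of which is done exactly as in the analysis of escape through unstable periodic orbits in~\cite{berglund2014noise}. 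The remaining ingredients — the sample-path control of $\varphi$ (Proposition~\ref{PropAlongTheDiffusion}) and the Bernstein-type martingale inequality — are classical; the role of the constraint $T\leqs1/h$ is precisely to keep the term $\sqrt{h^3T}$ in the definition of $\tilde\tau_\varphi$ of order $h$.
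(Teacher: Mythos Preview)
Your argument is correct and follows the same route as the paper's proof, which is only sketched there (variation-of-constants in the unstable direction, split the stochastic integral into a frozen Gaussian part and a small remainder, then use anticoncentration for the Gaussian); you have simply made the details explicit where the paper refers to \cite[Theorem~3.2.2]{Berglund_Gentz_book}. Two minor remarks: your caveat ``assume, as we may, $T\geqs t_\sigma$'' is not a convenience but a necessity --- the bound $C_2\sigma^{1/2}$ fails for $T$ of order $1$ (take $x=0$), so the statement should really carry a lower bound of order $\log(\sigma^{-1})$ on $T$, which is how it is used afterwards; and your handling of the non-semisimple case can be shortcut by taking $y=\ell\cdot x^+$ with $\ell$ a \emph{left} eigenvector of $\Lambda^+$, which decouples the linear part exactly and removes the polynomial prefactors altogether.
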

\begin{proof}
We introduce  the stopping time
\begin{equation}
\tilde{\tau}_{h^+} = \inf \braces{t>0 : \norm{x^+_t} = h }
 \end{equation}			
 where $x^+$ corresponds to the coordinates with positive Lyapunov exponents. Note that
 \begin{equation}
\Prcx{\pth{x,0}}{ \tilde{\tau}_{\cS^c} >T  ,  \tilde{\tau}_\varphi > 
\tilde{\tau}_{\added{\cS^c}}\wedge T}  \leqslant \Prcx{\pth{x,0}}{ \tilde{\tau}_{h^+} >T  , 
\tilde{\tau}_\varphi >  \tilde{\tau}_{\added{\cS^c}} \wedge T}  \; .
\end{equation}
On $\braces{\tilde{\tau}_\varphi >  \tilde{\tau}_{\cS^c} \wedge T}$, $\varphi_t$
is close to $t/T_i$, hence the equation for $x^+_t$ can be written 
\begin{equation}
\6 x^+_t = \pth{ \Lambda^+ x^+_t + b_{x^+}\pth{x_t, \varphi_t} } \6t + \sigma
\pth{ g_0(t) +  g_1\pth{x_t,\varphi_t,t} }\6W_t
\end{equation}
where $g_0(t) =  g_{x^+}\pth{0,t/T_i}$ and $g_1 = \Order{ \norm{x} + h}$. The solution can be expressed as
\begin{multline}
x^+_t = \e^{\Lambda^+ t} \biggl\{ \sigma \int_0^t \e^{-\Lambda^+ s} g_0(s) \6
W_s + \sigma \int_0^t \e^{-\Lambda^+ s} g_1\pth{x_s,\varphi_s,s} \6W_s \\+
\int_{0}^t  \e^{-\Lambda^+ s}  b_{x^+}\pth{x_s, \varphi_s}  \6s \biggr\} \; .
\end{multline}
The proof is then similar to \cite[Theorem 3.2.2]{Berglund_Gentz_book}.
\end{proof}

The following proposition will allow us to extend the previous estimate to an
exit from the larger set $\cU$. We denote $\cU \bs \cS$ by $\cK$. 

\begin{proposition}
\label{prop:tauKc} 
Let $\tilde{\tau}_{\cK^c} = \inf\setsuch{t>0}{x_t \notin \cK}$. There exists
a constant $\kappa_2>0$ such that for any initial condition $\pth{x,0} \in
\cK$, 
\begin{equation}
\label{eq:tauKc} 
\Prcx{(x,0)}{ \tilde{\tau}_{\cK^c} > t } \leqslant \e^{-\kappa_2 t / 
\log(\sigma^{-1})} \; .
\end{equation}
Furthermore, if $\tilde{\tau}_{\cU^c} = \inf\setsuch{t>0}{x_t \notin \cU}$
and $\tilde{\tau}_{\cS} = \inf\setsuch{t>0}{x_t \in \cS}$, for any $T_0>0$ 
there exists a constant $\kappa_3>0$ such that 
\begin{equation}
\label{eq:prob_tauSU} 
 \Prcx{(x,0)}{ T_0 \leqs \tilde{\tau}_{\cS} < \tilde\tau_{\cU^c} }
 \leqs \e^{-\kappa_3/\sigma^{1/2}}\;.
\end{equation} 
\end{proposition}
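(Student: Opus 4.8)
Throughout I will work in the local normal-form coordinates of Proposition~\ref{Prop:EqStablePeriodicOrbitUnstable} near each unstable periodic orbit, and reduce everything to sample-path control that is already available: Proposition~\ref{PropAlongTheDiffusion} for the phase variable $\varphi$, together with Bernstein-type deviation estimates of the kind in~\cite[Chapter~3]{Berglund_Gentz_book} and~\cite{berglund2014noise} for the transversal variable $x$. The plan is to prove the two bounds separately.

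For~\eqref{eq:tauKc} I would use a restart argument. First, show that the \emph{deterministic} flow leaves $\cK=\cU\setminus\cS$ within a time $T_1=\Order{\log(\sigma^{-1})}$ that is uniform over $\cK$: in each chart the contracting coordinate $x^-$ decays and the expanding coordinate $x^+$ grows at rates bounded below by the Lyapunov exponents, so starting from $\norm{x}\geqs h=\sigma^{3/4}$ either $\norm{x^+}$ reaches $\delta$ or $\norm{x}$ drops below $h$ within time $\Order{\log(\delta/h)}=\Order{\log(\sigma^{-1})}$; since there are finitely many unstable orbits and no heteroclinic connections between them (Assumption~\ref{ass:limit_sets}), only a bounded number of chart transitions can occur before the flow leaves $\cU$. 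Next, transfer this to the stochastic process on $[0,T_1+1]$: outside the event $\{\tilde\tau_\varphi<\tilde\tau_H\wedge T\}$, which is super-exponentially unlikely by Proposition~\ref{PropAlongTheDiffusion}, the phase tracks $t/T_i$, and a Bernstein estimate shows that either $\norm{x^+}$ grows geometrically (because $\norm{x^+_0}\gg\sigma$ on the relevant part of $\cK$, the $\sigma$-sized fluctuation cannot reverse the deterministic expansion except with probability $\leqs\e^{-c/\sigma^{1/2}}$) or $\norm{x^-}$ contracts below $h$; in both cases the path leaves $\cK$ by time $T_1+1$ with probability at least $1-q_0$ for any fixed $q_0>0$, so $\sup_{(x,0)\in\cK}\Prcx{(x,0)}{\tilde\tau_{\cK^c}>T_1+1}\leqs q<1$. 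Finally, applying the strong Markov property at the times $n(T_1+1)$ — on $\{\tilde\tau_{\cK^c}>t\}$ the path is still in $\cK$ at each such time — yields $\Prcx{(x,0)}{\tilde\tau_{\cK^c}>t}\leqs q^{\lfloor t/(T_1+1)\rfloor}\leqs\e^{-\kappa_2 t/\log(\sigma^{-1})}$ for a suitable $\kappa_2>0$.

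For~\eqref{eq:prob_tauSU} the mechanism is that entering $\cS$ forces the expanding coordinate $x^+$ to become, and stay, smaller than $h$, which is very costly once $x^+$ has had a macroscopic time to move away from the orbit. In a chart, $x^+_t=\e^{\Lambda^+ t}\bigl(x^+_0+\sigma M_t+R_t\bigr)$, where $M_t$ is a stochastic integral with variance bounded \emph{uniformly in $t$} (the propagator $\e^{-\Lambda^+ s}$ decays exponentially, $\Lambda^+$ having positive spectrum) and $R_t$ a bounded nonlinear remainder; hence a deviation of the bracket of size $\asymp h$ has probability at most $\e^{-c(h/\sigma)^2}=\e^{-c/\sigma^{1/2}}$, which is exactly the source of the exponent in~\eqref{eq:prob_tauSU}. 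I would first handle paths for which $\norm{x^+}$ reaches a threshold of order $h$ before $\tilde\tau_\cS\wedge\tilde\tau_{\cU^c}$: from such a threshold, returning below $\tfrac12 h$ (necessary to reach $\cS$) before leaving $\cU$ has probability $\leqs\e^{-c/\sigma^{1/2}}$ by the bound above. The role of the hypothesis $\tilde\tau_\cS\geqs T_0$ is to guarantee that the path has already spent a macroscopic time in $\cK$, so that the transient during which $\norm{x^+}$ may still be growing up to this threshold is over and the threshold estimate applies; one then sums these local estimates over the finitely many charts, again using the absence of heteroclinic connections to bound the number of chart transitions.

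The main obstacle is the complementary case: paths that reach $\cS$ by shadowing the stable manifold of an unstable orbit, i.e.\ keeping $\norm{x^+}$ anomalously small while $\norm{x^-}$ contracts from order $\delta$ down to $h$. Here one must quantify the competition between the contraction rate of $x^-$ and the noise-driven growth rate of $x^+$, and this is where the precise scaling $h=\sigma^{3/4}$ enters: by the time $\norm{x^-}$ has shrunk to $h$, the stochastic integral $\sigma M_t$ amplified by $\e^{\Lambda^+ t}$ has typically inflated $\norm{x^+}$ past $h$, so that the path is deflected out of $\cU$ rather than into $\cS$, and the event that $\norm{x^+}$ stays below $h$ throughout this window is once more controlled by a Bernstein estimate yielding $\e^{-\kappa_3/\sigma^{1/2}}$. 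Making this comparison of exponential time scales rigorous and uniform over all starting points of $\cK$ is the delicate point; everything else reduces to the tracking and phase-control estimates cited above.
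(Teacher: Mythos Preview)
Your restart argument for~\eqref{eq:tauKc} is correct and is the same skeleton the paper uses: show that the process leaves $\cK$ within a time $T_1$ of order $\log(\sigma^{-1})$ with probability at least $1-q$ for some $q<1$, then apply the strong Markov property at multiples of $T_1$. The paper, however, implements the single-block estimate differently: it introduces the Lyapunov function $U_t=\sum_{i=1}^{m^+}(x^+_{t,i})^2$ on the unstable coordinates only, computes via It\^o's formula that $\dd U_t$ has drift bounded below by a constant times $U_t$ (plus an $\Order{\sigma^2}$ It\^o correction), and then invokes an endpoint estimate in the style of~\cite[Proposition~D.4]{BerglundGentzKuehn2010} to conclude that $\prob{U_{T_1}<\delta^2}\leqs q$. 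This bypasses your case split between ``$x^+$ grows'' and ``$x^-$ contracts below $h$'': the paper simply does not use the lower constraint $\norm{x}\geqs h$ at all, only $\norm{x^+}<\delta$, which is cleaner. Your tracking-of-the-deterministic-flow argument should also work, but you have to be a little careful with the case where $\norm{x^+_0}$ is comparable to or smaller than $\sigma$ (your parenthetical ``$\norm{x^+_0}\gg\sigma$ on the relevant part of $\cK$'' is not true uniformly on $\cK$, only on the part where the expansion dominates).

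For~\eqref{eq:prob_tauSU} the paper again stays with the Lyapunov function and reduces to a single estimate: bound the probability that $U_t$ leaves a tube of width $\sigma^{3/4}$ around its exponentially growing ``centre'', which is exactly the content of~\cite[Proposition~D.7]{BerglundGentzKuehn2010}. Your decomposition $x^+_t=\e^{\Lambda^+ t}(x^+_0+\sigma M_t+R_t)$ with $M_t$ having uniformly bounded variance is essentially the same representation that underlies that proposition, and your identification of the exponent $(h/\sigma)^2=\sigma^{-1/2}$ is correct. What the Lyapunov-function framing buys is that the two cases you distinguish (threshold reached versus shadowing the stable manifold) are handled by a single tube estimate on the scalar process $U_t$, so the ``delicate point'' you flag is absorbed into the cited result rather than requiring a separate comparison of time scales. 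Your argument would ultimately arrive at the same place, but with more bookkeeping.
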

\begin{proof}
First, note that $\braces{\tilde{\tau}_{\cK^c} > t}\subset \braces{\norm{x^+_T}
< \delta}$.
Assume that the unstable periodic orbit admits $m^+$ positive Lyapunov exponents. We introduce the Lyapunov function
\begin{equation}
U_t = \sum_{i=1}^{m^+} (x^+_{t,i})^2 \; .
\end{equation}
Applying It\^o's formula  we obtain
\begin{equation}
\6U_t = \Bigset{ \sum_{i=1}^{m^+} \lambda^+_{i} (x^+_{t,i})^2 + \beta\pth{x_t,
\varphi_t}} \6t + \sigma  \sum_{i=1}^{m^+} g_{x,i}(x_t, \varphi_t) \6W_t^{i}\;
\end{equation}
where $\beta\pth{x_t, \varphi_t} \leqslant M \pth{ (U_t)^{3/2} + \sigma^2}$.
The proof is then similar to the proof of \cite[Proposition
D.4]{BerglundGentzKuehn2010}. Indeed, the drift term is bounded below by a
constant times $U_t$, and  $\braces{\norm{x^+_T} < \delta} \subset \braces{ U_T 
< m^+ \delta^2 /2 } $.
Using an endpoint estimate and the Markov property to restart the process at times
which are multiples of $\log(\sigma^{-1})$, we obtain~\eqref{eq:tauKc}. The
estimate~\eqref{eq:prob_tauSU} is obtained by bounding the probability that
$U_t$ leaves a neighbourhood of size $\sigma^{3/4}$ around an exponentially
growing term, similarly to \cite[Proposition D.7]{BerglundGentzKuehn2010}.
\end{proof}

\begin{proposition}
\label{prop:committor_U_MN} 
For all  $x \in {\pth{\cU \cup \cM_N}}^c$, there exist constants $C_1,
\kappa_1>0$ such that
\begin{equation}
\Prcx{x}{\tau^+_{\cU} < \tau^+_{\cM_N}} \leqslant C_1 \e^{-\kappa_1/\sigma^2}\;.
\end{equation}
\end{proposition}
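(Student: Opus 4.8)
The plan is to bound the discrete committor by a continuous‑time hitting probability and then to estimate the latter by a sample‑path argument. Fix $x\in(\cU\cup\cM_N)^c$ and view it as the corresponding point of $\Sigma\subset\R^{d+1}$; since $x\notin\cU\cup\cM_N$ we have $\tau^+_\cU=\tau_\cU$ and $\tau^+_{\cM_N}=\tau_{\cM_N}$. The return points of the Poincar\'e chain are precisely the intersections of $(Z_t)_{t\geqs0}$ with $\Sigma$, and $\cU,\cM_N$ are subsets of $\Sigma$, so if $X_n\in\cU$ for some $n<\tau^+_{\cM_N}$ then the diffusion started at $x$ visits $\cU$ before visiting $\cM_N$. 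Hence $\Prcx{x}{\tau^+_\cU<\tau^+_{\cM_N}}$ is bounded by the probability of the latter event, and it suffices to show that the chain follows the deterministic Poincar\'e orbit of $x$ long enough to reach $\cM_N$ without meeting $\cU$.

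The dynamical input is Assumption~\ref{ass:limit_sets}: the only $\omega$‑limit sets in $\cD$ are the stable orbits $\Gamma_1,\dots,\Gamma_N$ and the linearly unstable periodic orbits and stationary points, and there are no heteroclinic connections between unstable objects. All unstable objects lie in $\cU$ while $x\notin\cU$; thus, if $x$ belongs to the basin $\cA_j$ of a stable orbit, its forward orbit converges to $x^\star_j$, and since $B_j$ is mapped strictly into itself by $\Pi$ the iterates $\Pi^n(x)$ enter the interior of $B_j\subset\cM_N$ after finitely many steps $n_0=n_0(x)$. We treat the situation relevant to the applications, in which $\Pi^n(x)\notin\cU$ for all $0\leqs n\leqs n_0$ (equivalently, the orbit of $x$ is not captured near an unstable orbit before reaching $\cM_N$); then we may fix $r_0=r_0(x)>0$ so small that $\Pi^n(x)$ lies at distance $\geqs2r_0$ from $\cU$ for $0\leqs n<n_0$ and that the $r_0$‑ball around $\Pi^{n_0}(x)$ is contained in the interior of $B_j$ (which is disjoint from $\cU$).

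To conclude, we apply the standard sample‑path concentration estimate for the diffusion near a deterministic trajectory — a Bernstein inequality on a partition of the fixed time interval needed for these $n_0$ returns, as used in~\cite[Theorem~5.1.18]{Berglund_Gentz_book} and in the proof of Proposition~\ref{prop:exit_Mk}, supplemented by Proposition~\ref{PropAlongTheDiffusion} to keep the phase variable close to $t/T_j$ during the final step near $\Gamma_j$. This yields constants $C,\kappa>0$, depending on $x$, such that with probability at least $1-C\e^{-\kappa/\sigma^2}$ one has $\abs{X_n-\Pi^n(x)}\leqs r_0$ for all $0\leqs n\leqs n_0$. On this event $X_n\notin\cU$ for every $n\leqs n_0$ and $X_{n_0}\in B_j\subset\cM_N$, so $\tau^+_{\cM_N}\leqs n_0<\tau^+_\cU$. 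Therefore $\set{\tau^+_\cU<\tau^+_{\cM_N}}$ is contained in the complementary event, whence $\Prcx{x}{\tau^+_\cU<\tau^+_{\cM_N}}\leqs C\e^{-\kappa/\sigma^2}$, which is the claim with $C_1=C$ and $\kappa_1=\kappa$.

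The main obstacle is the dynamical reduction of the second paragraph: controlling the sample path while the deterministic orbit of $x$ passes close to an unstable periodic orbit, where the flow is exponentially sensitive and the naive tube estimate breaks down. This is exactly where the finer results of this section must be brought in — the escape bound~\eqref{eq:tauKc} of Proposition~\ref{prop:tauKc} for $\cK=\cU\setminus\cS$ and the control~\eqref{Eq:ProbaContLow} near $\cS$ — using that the diffusion spends only a time of order $\log(\sigma^{-1})$ near an unstable orbit before being expelled along an unstable direction, and that, by the absence of heteroclinic connections between unstable objects, this expulsion drives it into the basin of a stable orbit and hence towards $\cM_N$ rather than back to $\cU$.
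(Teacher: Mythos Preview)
Your core argument is the paper's: show that the stochastic process tracks the deterministic orbit from $x$, which reaches a neighbourhood of a stable periodic orbit (hence $\cM_N$) without entering $\cU$, via the tube estimate of~\cite[Theorem~5.1.18]{Berglund_Gentz_book}. The paper works directly in continuous time and is more compact: since $x\notin\cU$ and $\partial\cU$ is at order-$1$ distance from every unstable orbit, the deterministic solution $z_t^{\text{det}}$ reaches a neighbourhood of a stable orbit in a time $T$ of order~$1$, with the local Lyapunov exponent along the orbit bounded as $\chi(T)\leqs 1+M_0T$ (the orbit is being attracted, not repelled). A single application of the tube estimate with $h_0=\abs{B_i}/2$ then gives the exponentially small failure probability. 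Your discrete-time tracking of the Poincar\'e iterates amounts to the same argument with additional bookkeeping.

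Your final-paragraph caveat is overcautious, and the remedy you sketch would not actually work: the escape estimates of Proposition~\ref{prop:tauKc} and~\eqref{Eq:ProbaContLow} concern exit \emph{from} $\cU$, but once the discrete chain has visited $\cU$ the committor event $\set{\tau^+_{\cU}<\tau^+_{\cM_N}}$ has already occurred, so no subsequent exit can rescue the bound. The paper simply sidesteps the issue by recording that the deterministic orbit from $x\notin\cU$ stays in the attracting regime so that $\chi(T)$ remains under control --- which is precisely the restriction you make explicit in your second paragraph.
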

\begin{proof}
Consider a deterministic solution $z_t^\text{det}= \pth{x_t^\text{det},
\varphi_t^{\text{det}}}$ with initial condition $z_0 =(x, 0) $. Since
$\partial\cU$ is at distance of order $1$ of any unstable periodic orbit, and
because the stable periodic orbits are the only attractive limit sets
(Assumption \ref{ass:limit_sets}), $z_t^\text{det}$ will reach a neighbourhood 
of a stable periodic orbit  in a time $T$ of order 1. Using \cite[Theorem
5.1.18]{Berglund_Gentz_book},
it follows that for $t\geqslant 0$,
\begin{equation}
\Prcx{\pth{x,0}}{ \supSur{0\leqslant s \leqslant t}{\norm{z_{\added{s}} -
z_{\added{s}}^{\text{det}}}} > h_0} \leqslant C_0 (1+t) \e^{-\kappa_0 h_0^2 / \sigma^2}
\label{Eq:DiffDeterministic}
\end{equation}
for some constants $C_0, \kappa_0>0$. Note that the estimate holds for $h_0
\leqslant h_1 / \chi(t)$, where $h_1$ is another constant and $\chi(t)$ is
related to the local Lyapunov exponent of $z_t^\text{det}$. Since
$z_t^{\text{det}}$ is attracted by the stable orbit, there exists $M_0>0$ such
that $\chi(T) \leqslant 1+ M_0 T$. Applying \eqref{Eq:DiffDeterministic} with
$h_0 = \abs{B_i}/2$, we find that any sample path which does not reach $\cU$
before time $T$  will hit $B_i$ with high probability.
\end{proof}

\subsection{Mean return time estimates}
\label{ssec:mean_return_time} 

The following two lemmas are useful to bound expectations of first return
times.

\begin{lemma}
\label{Lemma:GenExpEst}
For any $A \subset \Sigma$, $n_0 \in \N$ and $x \in \Sigma$, the
expectation of the first return time to $A$ satisfies
\begin{equation}
\Espc{x}{\tau^{+}_{A}} \leqslant \frac{n_0 \Prcx{x}{\tau^{+}_{A} \geqslant n_0}}{1- \Prcx{A^c}{\tau^{+}_{A}\geqslant n_0}}\; .
\end{equation}
\end{lemma}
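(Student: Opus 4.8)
The plan is to decompose the first-return time $\tau^+_A$ into successive blocks of length $n_0$ and to estimate the number of such blocks using a geometric-tail argument. First I would condition on the position of the chain at the times $0, n_0, 2n_0, \dots$ before it hits $A$. On the event $\{\tau^+_A \geqs n_0\}$, the chain has not yet returned to $A$ after $n_0$ steps, so at time $n_0$ we have $X_{n_0}\in A^c$; starting the clock afresh there and using the strong Markov property, the probability that it still has not returned $n_0$ further steps later is at most $\Prcx{A^c}{\tau^+_A\geqs n_0}$. Iterating, one gets the bound
\begin{equation}
\Prcx{x}{\tau^+_A \geqs m n_0}
\leqs \Prcx{x}{\tau^+_A \geqs n_0}\,
\bigpar{\Prcx{A^c}{\tau^+_A \geqs n_0}}^{m-1}
\qquad\text{for all }m\geqs1\;.
\label{eq:plan_geometric_decay}
\end{equation}
The subtle point here is the very first block: on $\{\tau^+_A\geqs n_0\}$ the chain is somewhere in $A^c$ (if $x\in A$ it already has $\tau^+_A\geqs1$ and still lands in $A^c$ after one step, unless it returns immediately, in which case $\{\tau^+_A\geqs n_0\}$ is empty for $n_0\geqs2$), whereas for subsequent blocks one always restarts from a point of $A^c$, which is why only the first factor is $\Prcx{x}{\cdot}$ and the remaining ones are the supremum over $A^c$.

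Second, I would convert \eqref{eq:plan_geometric_decay} into a bound on the expectation via the standard layer-cake formula $\Espc{x}{\tau^+_A} = \sum_{j\geqs1}\Prcx{x}{\tau^+_A\geqs j}$. Grouping the terms $j$ into blocks $[mn_0+1,(m+1)n_0]$ and using that $\Prcx{x}{\tau^+_A\geqs j}$ is non-increasing in $j$, each block of $n_0$ consecutive terms is bounded by $n_0\,\Prcx{x}{\tau^+_A\geqs mn_0}$, so
\begin{equation}
\Espc{x}{\tau^+_A}
\leqs n_0 \sum_{m\geqs0} \Prcx{x}{\tau^+_A \geqs m n_0}
\leqs n_0\,\Prcx{x}{\tau^+_A\geqs n_0}\sum_{m\geqs0}\bigpar{\Prcx{A^c}{\tau^+_A\geqs n_0}}^{m}\;,
\end{equation}
using \eqref{eq:plan_geometric_decay} (and noting the $m=0$ term $\Prcx{x}{\tau^+_A\geqs0}=1$ is dominated since we may assume $\Prcx{x}{\tau^+_A\geqs n_0}\leqs1$; the case where this quantity equals $1$ needs the crude observation that then the claimed bound has an infinite right-hand side unless $\Prcx{A^c}{\tau^+_A\geqs n_0}<1$, in which case one re-runs the argument more carefully). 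Summing the geometric series gives exactly
\begin{equation}
\Espc{x}{\tau^+_A} \leqs \frac{n_0\,\Prcx{x}{\tau^+_A\geqs n_0}}{1-\Prcx{A^c}{\tau^+_A\geqs n_0}}\;,
\end{equation}
which is the assertion. If $\Prcx{A^c}{\tau^+_A\geqs n_0}=1$ the right-hand side is $+\infty$ and there is nothing to prove.

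The main obstacle is purely bookkeeping: making the iteration \eqref{eq:plan_geometric_decay} rigorous requires a careful application of the strong Markov property at the deterministic times $mn_0$ together with the observation that on $\{\tau^+_A > mn_0\}$ the chain sits in $A^c$, so that the conditional probability of surviving the next $n_0$ steps is uniformly bounded by $\Prcx{A^c}{\tau^+_A\geqs n_0}=\supSur{y\in A^c}{\Prcx{y}{\tau^+_A\geqs n_0}}$. Everything else — the layer-cake identity and the monotone grouping of terms — is routine. I expect no analytic difficulty; positive Harris recurrence and $k>0$ guarantee $\tau^+_A<\infty$ a.s.\ when $A$ has positive Lebesgue measure, but the inequality itself is valid verbatim even without finiteness, both sides being then allowed to be infinite.
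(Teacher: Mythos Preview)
Your approach is exactly the paper's: apply the layer-cake identity $\Espc{x}{\tau^+_A}=\sum_{j\geqs1}\Prcx{x}{\tau^+_A\geqs j}$, group into blocks of length $n_0$, bound each block by $n_0$ times one representative term, and use the Markov property to obtain the geometric tail $\Prcx{x}{\tau^+_A\geqs mn_0}\leqs\Prcx{x}{\tau^+_A\geqs n_0}\,q^{m-1}$ for $m\geqs1$, with $q=\Prcx{A^c}{\tau^+_A\geqs n_0}$. That part is fine.

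The gap is your treatment of the $m=0$ block. You write that the term $\Prcx{x}{\tau^+_A\geqs0}=1$ ``is dominated since we may assume $\Prcx{x}{\tau^+_A\geqs n_0}\leqs1$'', but this is backwards: to absorb the $m=0$ contribution into the geometric series you would need $1\leqs\Prcx{x}{\tau^+_A\geqs n_0}$, which is generally false. What your argument actually yields is
\[
\Espc{x}{\tau^+_A}\;\leqs\; n_0 + \frac{n_0\,\Prcx{x}{\tau^+_A\geqs n_0}}{1-q}\;.
\]
In fact the inequality as stated is false: if $\tau^+_A=1$ almost surely and $n_0\geqs2$, the left-hand side equals $1$ while the right-hand side equals $0$. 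The paper's own proof contains the same slip --- its step $\sum_{n=1}^{n_0}\Prcx{x}{\tau^+_A\geqs in_0+n}\leqs n_0\,\Prcx{x}{\tau^+_A\geqs(i+1)n_0}$ bounds a sum of nonincreasing terms by $n_0$ times the \emph{smallest} of them. Both arguments correctly establish the weaker bound with the additional additive $n_0$ (equivalently, numerator $n_0$ in place of $n_0\,\Prcx{x}{\tau^+_A\geqs n_0}$), and that weaker form is all that is used in the subsequent applications.
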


\begin{proof}
Using the Markov property, we decompose the expectation as
\begin{align}
\nonumber
\Espc{x}{\tau^{+}_{A} } &= \sum_{i \geqslant 0} \sum_{n=1}^{n_0} \Prcx{x}{\tau^{+}_{A} \geqslant i n_0 + n}\\
\nonumber
&\leqslant n_0  \sum_{i \geqslant 0} \Prcx{x}{\tau^{+}_{A} \geqslant (i+1) n_0} 
\\
&\leqslant n_0 \sum_{i \geqslant 0} \Prcx{x}{\tau^{+}_{A} \geqslant n_0}
{\pth{\Prcx{A^c}{\tau^{+}_{A} \geqslant n_0}}}^{i} \;
\end{align}
which gives the result by summing a geometric series.
\end{proof}

The next lemma is inspired by results in \cite{betz2016multi}.

\begin{lemma}
\label{Lemma:ExpSplittingSet}
For any $A,B,C \subset \Sigma$, 
\begin{equation}
\Espc{A}{\tau^{+}_B} \leqslant \Espc{A}{\tau^{+}_{B \cup C}} + \Prcx{A}{\tau^{+}_C < \tau^{+}_B} \Espc{C}{\tau^{+}_B}\; .
\end{equation}
\end{lemma}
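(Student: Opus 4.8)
The plan is to split the first return time to $B$ according to whether the chain reaches $C$ strictly before $B$. Fix an initial point $x\in A$. Since $\tau^+_{B\cup C}=\min\{\tau^+_B,\tau^+_C\}$ and $B\subseteq B\cup C$, one always has $\tau^+_{B\cup C}\leqs\tau^+_B$, with equality on the event $\{\tau^+_B\leqs\tau^+_C\}$, while $\tau^+_{B\cup C}=\tau^+_C$ on the complementary event $\{\tau^+_C<\tau^+_B\}$. First I would record the pathwise identity
\[
 \tau^+_B = \tau^+_{B\cup C} + \bigpar{\tau^+_B - \tau^+_C}\ind{\tau^+_C<\tau^+_B}\;,
\]
which follows because the second summand vanishes on $\{\tau^+_B\leqs\tau^+_C\}$ and equals $\tau^+_B-\tau^+_{B\cup C}$ on $\{\tau^+_C<\tau^+_B\}$; then take $\Espc{x}{\cdot}$ of both sides.

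The first term is bounded by $\Espc{A}{\tau^+_{B\cup C}}$ since $x\in A$. For the second term, the key observation is that on $\{\tau^+_C<\tau^+_B\}$ the chain sits at the point $X_{\tau^+_C}\in C$ at time $\tau^+_C$ (and this point is \emph{not} in $B$, because $\tau^+_B>\tau^+_C$), so that the remaining time $\tau^+_B-\tau^+_C$ equals the first return time to $B$ of the chain shifted by $\tau^+_C$, started from $X_{\tau^+_C}$. As $\{\tau^+_C<\tau^+_B\}=\{X_m\notin B\text{ for all }1\leqs m\leqs\tau^+_C\}$ is $\cF_{\tau^+_C}$-measurable, the strong Markov property at the stopping time $\tau^+_C$ gives
\[
 \Espc{x}{\bigpar{\tau^+_B-\tau^+_C}\ind{\tau^+_C<\tau^+_B}}
 = \Espc{x}{\ind{\tau^+_C<\tau^+_B}\,\Espc{X_{\tau^+_C}}{\tau^+_B}}
 \leqs \Prcx{x}{\tau^+_C<\tau^+_B}\,\Espc{C}{\tau^+_B}\;,
\]
where the last step uses $X_{\tau^+_C}\in C$ together with the definition $\Espc{C}{\cdot}=\supSur{z\in C}{\Espc{z}{\cdot}}$. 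Adding the two bounds gives $\Espc{x}{\tau^+_B}\leqs\Espc{A}{\tau^+_{B\cup C}}+\Prcx{A}{\tau^+_C<\tau^+_B}\Espc{C}{\tau^+_B}$ for every $x\in A$, and taking the supremum over $x\in A$ on the left-hand side yields the claim.

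This is essentially bookkeeping, so I do not expect a substantial obstacle; the only points deserving care are the pathwise identity above (which rests precisely on $\tau^+_{B\cup C}=\min(\tau^+_B,\tau^+_C)$ and $B\subseteq B\cup C$), the $\cF_{\tau^+_C}$-measurability of $\{\tau^+_C<\tau^+_B\}$ needed to invoke the strong Markov property in the displayed form, and minor integrability matters: if $\Espc{A}{\tau^+_{B\cup C}}$ or $\Espc{C}{\tau^+_B}$ is infinite the inequality is trivial, and otherwise the almost sure finiteness of $\tau^+_C$ on the event $\{\tau^+_C<\tau^+_B\}$ (a consequence of positive Harris recurrence together with positivity of the transition density whenever $C$ has positive Lebesgue measure, as recalled at the start of Section~\ref{sec:outline}) makes every manipulation legitimate.
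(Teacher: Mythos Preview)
Your proof is correct and follows essentially the same approach as the paper: both split $\tau^+_B$ according to whether $\tau^+_C<\tau^+_B$, identify the first piece with $\tau^+_{B\cup C}$, and bound the remainder via the strong Markov property at $\tau^+_C$. Your presentation is in fact slightly cleaner, since you state the pathwise identity upfront and partition into $\{\tau^+_B\leqs\tau^+_C\}$ versus $\{\tau^+_C<\tau^+_B\}$ (covering the equality case), whereas the paper writes the disjoint split $\{\tau^+_B<\tau^+_C\}$ and $\{\tau^+_C<\tau^+_B\}$ without explicitly treating $\{\tau^+_B=\tau^+_C\}$.
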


\begin{proof}
Splitting the expectation according to the event $ \braces{\tau^{+}_B <
\tau^{+}_C}$ or $\braces{\tau^{+}_C < \tau^{+}_B}$ and then using the strong
Markov property, we obtain 
\begin{align}
\nonumber
\Espc{x}{\tau^{+}_B} &=  \Espc{x}{\tau^{+}_B \ind{\tau^{+}_B < \tau^{+}_C}}  +
\Espc{x}{\tau^{+}_B \ind{\tau^{+}_C < \tau^{+}_B}} \\
\nonumber
&=  \Espc{x}{\tau^{+}_B \ind{\tau^{+}_B < \tau^{+}_C}}  +
\Espc{x}{\added{\bigl[} (\tau^{+}_B-\tau^{+}_C) + \tau^{+}_C  \added{\bigr]}
\ind{\tau^{+}_C < \tau^{+}_B}} \\
\nonumber
&= \Espc{x}{\tau^{+}_{B \cup C}} + \Espc{x}{(\tau^{+}_B-\tau^{+}_C) 
\ind{\tau^{+}_C < \tau^{+}_B}}\\
& \leqslant \Espc{x}{\tau^{+}_{B \cup C}} + \Prcx{A}{\tau^{+}_C < \tau^{+}_B} \Espc{C}{\tau^{+}_B}\; ,
\end{align}
which gives the result by taking the supremum for $ x \in A$.
\end{proof}

\begin{corollary}
\label{Corrolary:ExpectedValueMN}
For $\cU$ as defined in Section~\ref{Ssection:GeneralEst},
\begin{equation}
\label{eq:cor_expectation} 
\bigexpecin{\cM_N^c}{\tau^{+}_{\cM_N} } \leqslant \frac{
\Espc{\cU}{\tau^{+}_{\cU^c}} + \bigexpecin{{\pth{\cU \cup  \cM_N}
}^c}{\tau^{+}_{\cU \cup \cM_N}} }{1-\bigprobin{{\pth{\cU \cup
\cM_N}}^c}{\tau^{+}_{\cU} < \tau^{+}_{\cM_{N}}}}\; .
\end{equation}
\end{corollary}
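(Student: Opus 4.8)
The plan is to prove Corollary~\ref{Corrolary:ExpectedValueMN} by directly applying Lemma~\ref{Lemma:ExpSplittingSet} with a carefully chosen triple of sets, and then bounding the resulting terms using the sample-path estimates of the preceding subsection. First I would invoke Lemma~\ref{Lemma:ExpSplittingSet} with $A = \cM_N^c$, $B = \cM_N$, and $C = \cU$. This immediately gives
\begin{equation}
\bigexpecin{\cM_N^c}{\tau^+_{\cM_N}}
\leqslant \bigexpecin{\cM_N^c}{\tau^+_{\cM_N \cup \cU}}
+ \bigprobin{\cM_N^c}{\tau^+_{\cU} < \tau^+_{\cM_N}}\, \bigexpecin{\cU}{\tau^+_{\cM_N}}\;.
\end{equation}
Note that since $\cM_N \cup \cU = \pth{(\cU\cup\cM_N)^c}^c$, the first term on the right is exactly $\bigexpecin{(\cU\cup\cM_N)^c}{\tau^+_{\cU\cup\cM_N}}$ — one only needs to observe that for $x\in\cM_N\cap(\cU)^c$ one has $\tau^+_{\cM_N}=\tau^+_{\cM_N\cup\cU}=0$ (or is trivially dominated), so restricting the supremum from $\cM_N^c$ to $(\cU\cup\cM_N)^c$ does not lose anything. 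Thus the numerator is already in the desired form, provided one also controls $\bigexpecin{\cU}{\tau^+_{\cM_N}}$.

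The main work is therefore to bound $\bigexpecin{\cU}{\tau^+_{\cM_N}}$, and here I would use a renewal-type argument: starting in $\cU$, the process must first leave $\cU$ (taking expected time at most $\bigexpecin{\cU}{\tau^+_{\cU^c}}$ by the strong Markov property, monitored at the Poincaré section), after which it either reaches $\cM_N$ directly or returns to $\cU$ before hitting $\cM_N$. Writing $E := \bigexpecin{\cU}{\tau^+_{\cM_N}}$ and using Lemma~\ref{Lemma:ExpSplittingSet} once more (or a direct first-step decomposition) one obtains an inequality of the form
\begin{equation}
E \leqslant \bigexpecin{\cU}{\tau^+_{\cU^c}} + \bigexpecin{(\cU\cup\cM_N)^c}{\tau^+_{\cU\cup\cM_N}} + \bigprobin{(\cU\cup\cM_N)^c}{\tau^+_{\cU}<\tau^+_{\cM_N}}\, E\;,
\end{equation}
which, upon solving for $E$ under the condition that the probability coefficient is strictly less than $1$, yields
\begin{equation}
E \leqslant \frac{\bigexpecin{\cU}{\tau^+_{\cU^c}} + \bigexpecin{(\cU\cup\cM_N)^c}{\tau^+_{\cU\cup\cM_N}}}{1-\bigprobin{(\cU\cup\cM_N)^c}{\tau^+_{\cU}<\tau^+_{\cM_N}}}\;.
\end{equation}
Substituting this back and using that $\bigprobin{\cM_N^c}{\tau^+_\cU<\tau^+_{\cM_N}}\leqslant 1$, together with the fact that the first term $\bigexpecin{(\cU\cup\cM_N)^c}{\tau^+_{\cU\cup\cM_N}}$ is dominated by the same fraction (since its denominator factor is $\leqslant 1$), one recovers exactly the claimed bound~\eqref{eq:cor_expectation}. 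A small amount of bookkeeping is needed to check that the numerator $\bigexpecin{\cU}{\tau^+_{\cU^c}} + \bigexpecin{(\cU\cup\cM_N)^c}{\tau^+_{\cU\cup\cM_N}}$ appearing in both places collapses into the single fraction on the right-hand side of~\eqref{eq:cor_expectation}; this is just arithmetic once the two inequalities above are in hand.

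The step I expect to be the genuine obstacle is verifying that the probability $\bigprobin{(\cU\cup\cM_N)^c}{\tau^+_{\cU}<\tau^+_{\cM_N}}$ appearing in the denominator is bounded away from $1$ — indeed, one wants it exponentially small, which is precisely the content of Proposition~\ref{prop:committor_U_MN}. So the corollary is really a bookkeeping consequence of Lemma~\ref{Lemma:ExpSplittingSet} combined with Proposition~\ref{prop:committor_U_MN} (guaranteeing the denominator is $\geqslant 1 - C_1\e^{-\kappa_1/\sigma^2}$, hence close to $1$) and the finiteness of $\bigexpecin{\cU}{\tau^+_{\cU^c}}$, which in turn follows from Proposition~\ref{prop:tauKc} applied to the sets $\cK = \cU\setminus\cS$ and $\cS$ (a path starting in $\cU$ either exits $\cK$ quickly, by~\eqref{eq:tauKc}, or enters $\cS$ and then is pushed out of $\cU$ along the unstable directions in a short time, by~\eqref{eq:prob_tauSU}). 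I would make sure to state explicitly that all these estimates are uniform in the initial condition over the relevant set, so that taking suprema is legitimate, and that the Poincaré-map return time is finite in expectation by Proposition~\ref{prop:tau_sigma}, which justifies manipulating these expectations as finite quantities in the first place.
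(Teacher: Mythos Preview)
Your core argument---two applications of Lemma~\ref{Lemma:ExpSplittingSet} to derive the self-referential inequality for $E=\bigexpecin{\cU}{\tau^+_{\cM_N}}$ and then solve---is exactly the paper's approach. The paper is slightly cleaner in that it dispenses with your initial application of the lemma: since $\cM_N^c=\cU\cup(\cU\cup\cM_N)^c$, it simply observes that $\bigexpecin{\cM_N^c}{\tau^+_{\cM_N}}$ is the maximum of $\bigexpecin{\cU}{\tau^+_{\cM_N}}$ and $\bigexpecin{(\cU\cup\cM_N)^c}{\tau^+_{\cM_N}}$, and shows that both of these are bounded by the fraction in~\eqref{eq:cor_expectation}.

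Two small glitches in your write-up are worth noting. First, your identification $\bigexpecin{\cM_N^c}{\tau^+_{\cM_N\cup\cU}}=\bigexpecin{(\cU\cup\cM_N)^c}{\tau^+_{\cU\cup\cM_N}}$ is not justified: the supremum on the left includes points $x\in\cU$, where $\tau^+_{\cM_N\cup\cU}\geqs1$ (not $0$), and your parenthetical ``$x\in\cM_N\cap(\cU)^c$'' appears to be a typo. Second, even granting that identification, the final ``bookkeeping'' does not actually collapse to the exact bound~\eqref{eq:cor_expectation}: substituting $E\leqs\frac{G+F}{1-q}$ into $F+pE$ with $p\leqs1$ gives $\frac{(2-q)F+G}{1-q}$, not $\frac{F+G}{1-q}$. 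Neither issue matters for the application (the bound is still of the right order), but if you want the inequality exactly as stated, replace your first step by the paper's max-splitting.
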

\begin{proof}
For all $x \in \cM_N^c$, 
\begin{equation}
\bigexpecin{x}{\tau^{+}_{\cM_N}} \leqslant \max\Bigset{
\bigexpecin{\cU}{\tau^{+}_{{\cM}_N}} , \bigexpecin{\added{(\cU \cup
\cM_N)}^c}{\tau^{+}_{{\cM}_N}} }\; .
\end{equation}
Applying Lemma \ref{Lemma:ExpSplittingSet} with $A = \cU$, $B =\cM_{N}$ and $C
= (\cU \cup \cM_{N})^c$,  we obtain
\begin{equation}
\bigexpecin{\cU}{\tau^{+}_{{\cM}_N}}  \leqslant \bigexpecin{\cU}{
\tau^{+}_{\cU^c}  } + 
 \bigexpecin{{\pth{\cU \cup \cM_N}}^c}{\tau^{+}_{{\cM}_N}}\; ,
\end{equation}
whereas taking $A = (\cU\cup\cM_N)^c$, $B =\cM_{N}$ and $C
= \cU$,  we get
\begin{equation}
\bigexpecin{{\pth{\cU \cup \cM_N}}^c }{\tau^{+}_{{\cM}_N } }  \leqslant
\bigexpecin{{\pth{\cU \cup \cM_N}}^c }{ \tau^{+}_{\cM_N \cup \ \cU}  } +
\bigprobin{ {\pth{\cU \cup \cM_N}}^c }{{ \tau^{+}_{\cU}} < \tau^{+}_{\cM_N}  }
\bigexpecin{\cU}{\tau^{+}_{{\cM}_N}}\; .
\end{equation}
Combining these two bounds, we obtain
\begin{equation}
\bigexpecin{\cU} {\tau^{+}_{{\cM}_N}}  \leqslant \bigexpecin{\cU}{
\tau^{+}_{\cU^c}  } + 
\bigexpecin{{\pth{\cU \cup \cM_N}}^c }{ \tau^{+}_{\cM_N \cup \ \cU}  } +
\bigprobin{ {\pth{\cU \cup \cM_N}}^c }{{ \tau^{+}_{\cU}} < \tau^{+}_{\cM_N}  }
\bigexpecin{\cU}{\tau^{+}_{{\cM}_N}}\; ,
\end{equation}
which yields~\eqref{eq:cor_expectation}.
\end{proof}

In order to bound the expected value $\Espc{\cU}{\tau^{+}_{\cU^c}} $, we will
again use Lemma \ref{Lemma:ExpSplittingSet} with two neighbourhoods of an
unstable periodic orbit. First, we show that the sample paths are likely to
leave the small neighbourhood $\cS$ of the unstable periodic orbit (of size
$h=\sigma^{3/4}$), then as soon as paths have left $\cS$, the drift term will
make it easier to escape from the larger neighbourhood $\cU$. Using similar
arguments as in the proof of Corollary \ref{Corrolary:ExpectedValueMN}, we
obtain the following result.

\begin{lemma}
For $\cS\subset\cU$ and $\cK=\cU\setminus\cS$, as defined in
Section~\ref{Ssection:GeneralEst},
%
and all $x \in \cU$,
\begin{equation}
\Espc{x}{ \tau^{+}_{\cU^c}} \leqslant \frac{\Espc{\cK  }{\tau^{+}_{{\cK}^c}} +
\Espc{\cS }{\tau^{+}_{\cS^c}} }{1- \Prcx{\cK}{ \tau^{+}_{\cS}  < 
\tau^{+}_{\cU^c}}}\;.
\label{Eq:ExpectedValueLeave}
\end{equation}
\end{lemma}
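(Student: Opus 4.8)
\textbf{Approach.} The plan is to follow the same pattern as the proof of Corollary~\ref{Corrolary:ExpectedValueMN}: apply Lemma~\ref{Lemma:ExpSplittingSet} twice, with the roles of $\cS$ and $\cK$ interchanged, and then close a small linear inequality. Since $\cU=\cS\cup\cK$ is a disjoint union, for any $x\in\cU$ one has $\Espc{x}{\tau^{+}_{\cU^c}}\leqslant\max\bigl\{\Espc{\cK}{\tau^{+}_{\cU^c}},\,\Espc{\cS}{\tau^{+}_{\cU^c}}\bigr\}$, so it suffices to bound these two suprema. Throughout I write $p=\Prcx{\cK}{\tau^{+}_{\cS}<\tau^{+}_{\cU^c}}$; if $p=1$ the asserted right-hand side is infinite and there is nothing to prove, so I may assume $p<1$.

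\textbf{Two applications of Lemma~\ref{Lemma:ExpSplittingSet}.} First I would apply it with $A=\cK$, $B=\cU^c$, $C=\cS$. Since $\cK=\cU\cap\cS^c$ we have $B\cup C=\cU^c\cup\cS=\cK^c$, so the lemma gives $\Espc{\cK}{\tau^{+}_{\cU^c}}\leqslant\Espc{\cK}{\tau^{+}_{\cK^c}}+p\,\Espc{\cS}{\tau^{+}_{\cU^c}}$, where $p$ is exactly the committor appearing in the denominator of the claimed bound. Then I would apply the lemma again with $A=\cS$, $B=\cU^c$, $C=\cK$; because $\cS\subset\cU$ we have $B\cup C=\cU^c\cup\cK=\cS^c$, and bounding the committor factor $\Prcx{\cS}{\tau^{+}_{\cK}<\tau^{+}_{\cU^c}}$ by $1$ yields $\Espc{\cS}{\tau^{+}_{\cU^c}}\leqslant\Espc{\cS}{\tau^{+}_{\cS^c}}+\Espc{\cK}{\tau^{+}_{\cU^c}}$.

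\textbf{Closing the loop.} Substituting the second inequality into the first and isolating $\Espc{\cK}{\tau^{+}_{\cU^c}}$ gives $(1-p)\,\Espc{\cK}{\tau^{+}_{\cU^c}}\leqslant\Espc{\cK}{\tau^{+}_{\cK^c}}+p\,\Espc{\cS}{\tau^{+}_{\cS^c}}$, hence $\Espc{\cK}{\tau^{+}_{\cU^c}}\leqslant(\Espc{\cK}{\tau^{+}_{\cK^c}}+p\,\Espc{\cS}{\tau^{+}_{\cS^c}})/(1-p)$, which is at most the claimed bound since $p\leqslant1$. Feeding this back into the second inequality, the terms recombine exactly: $\Espc{\cS}{\tau^{+}_{\cS^c}}+(\Espc{\cK}{\tau^{+}_{\cK^c}}+p\,\Espc{\cS}{\tau^{+}_{\cS^c}})/(1-p)=(\Espc{\cK}{\tau^{+}_{\cK^c}}+\Espc{\cS}{\tau^{+}_{\cS^c}})/(1-p)$, so $\Espc{\cS}{\tau^{+}_{\cU^c}}$ satisfies the same bound, and taking the maximum over $\cK$ and $\cS$ concludes. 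The only points needing care are the two elementary set identities $\cU^c\cup\cS=\cK^c$ and $\cU^c\cup\cK=\cS^c$ (both immediate from $\cS\subset\cU$, $\cK=\cU\setminus\cS$) and the verification that the final recombination leaves no spurious $\Espc{\cS}{\tau^{+}_{\cS^c}}$ term; there is no real obstacle, since everything is purely combinatorial and algebraic once Lemma~\ref{Lemma:ExpSplittingSet} is available.
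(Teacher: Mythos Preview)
Your proof is correct and follows essentially the same approach as the paper, which does not give a detailed proof but simply refers to ``similar arguments as in the proof of Corollary~\ref{Corrolary:ExpectedValueMN}''. Your two applications of Lemma~\ref{Lemma:ExpSplittingSet} with the roles of $\cS$ and $\cK$ interchanged, followed by closing the linear inequality, are exactly the argument the reader is expected to reconstruct.
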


The different expected values involved in \eqref{Eq:ExpectedValueLeave} will be
bounded using Lemma \ref{Lemma:GenExpEst} and results from
Section~\ref{Ssection:GeneralEst}.

\begin{proposition}
There exist constants $M_1, \kappa>0$ such that 
\begin{align}
\nonumber
\Espc{\cS}{\tau^{+}_{\cS^c}} &\leqslant M_1 \sigma^{1/2}\;,\\
\nonumber
\Espc{\cK}{\tau^{+}_{\cK^c}} &\leqslant M_1 \log(\sigma^{-1})\;,\\
\Prcx{\cK}{\tau^{+}_{\cS} < \tau^{+}_{\cU^c}} &\leqslant
\e^{-\kappa/\sigma^{1/2}}\;.
\end{align}
\end{proposition}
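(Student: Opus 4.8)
The plan is to deduce all three bounds from the sample‑path estimates proved earlier in this section, combined with Lemma~\ref{Lemma:GenExpEst}, which converts a tail bound on a discrete return time into a bound on its expectation. The recurring technical ingredient is a dictionary between the continuous‑time stopping times $\tilde\tau_{\cK^c}$, $\tilde\tau_{\cS^c}$, $\tilde\tau_{\cS}$, $\tilde\tau_{\cU^c}$ and the discrete return times $\tau^+_{\cK^c}$, $\tau^+_{\cS^c}$, $\tau^+_{\cS}$ of the Poincar\'e chain: if the chain performs $n-1$ consecutive returns to $\Sigma$ while staying in a given neighbourhood of an unstable orbit, then on the event of Proposition~\ref{PropAlongTheDiffusion} (on which $\varphi$ stays close to $t/T_i$) $\varphi$ increases by roughly one per return, so a continuous time of order $n$ has elapsed, and — up to a probability $\Order{n\,\e^{-\kappa\delta^2/\sigma^2}}$ coming from the tracking estimate~\eqref{Eq:DiffDeterministic} applied loop by loop — the sample path has remained in a slightly larger tube over that neighbourhood; the event on which $\varphi$ misbehaves has probability at most $\e^{-H/(C_1\sigma^2)}$ and is negligible.

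First I would treat $\Prcx{\cK}{\tau^+_{\cS} < \tau^+_{\cU^c}}$, which is essentially immediate: $\{\tau^+_{\cS} < \tau^+_{\cU^c}\} \subset \{\tilde\tau_{\cS} < \tilde\tau_{\cU^c}\}$, and I split the latter according to whether $\tilde\tau_{\cS} < T_0$ or $\tilde\tau_{\cS} \geqs T_0$ for a fixed $T_0>0$. The second piece is bounded by $\e^{-\kappa_3/\sigma^{1/2}}$ directly by~\eqref{eq:prob_tauSU}. The first piece is the probability that, starting in $\cK$, the unstable coordinate falls back into the $\sigma^{3/4}$‑ball $\cS$ within a bounded time while it is being pushed away by the linear drift, which is controlled by the same Bernstein‑type argument as in the proof of Proposition~\ref{prop:tauKc} (bounding the probability that $U_t$ enters an exponentially small neighbourhood of $0$); restarting the process at the multiples of $T_0$ then absorbs this piece into a term of the same exponential order.

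Next, for $\Espc{\cK}{\tau^+_{\cK^c}}$ I would apply Lemma~\ref{Lemma:GenExpEst} with $A=\cK^c$ and $n_0 = \lceil C\log(\sigma^{-1})\rceil$. For any initial point, $\tau^+_{\cK^c} \geqs n_0$ forces $X_1,\dots,X_{n_0-1}\in\cK$; by the dictionary above this event is, up to a probability exponentially small in $\sigma^{-2}$, contained in $\{\tilde\tau_{\cK^c}\geqs c(n_0-1)\}$ for a constant $c>0$ (each near‑orbit loop takes at least a fixed continuous time, by transversality of $\Sigma$), whose probability is at most $\e^{-\kappa_2 c (n_0-1)/\log(\sigma^{-1})} \leqs \e^{-\kappa_2 cC/2}$ by~\eqref{eq:tauKc}. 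Taking $C$ large makes both $\Prcx{\cK}{\tau^+_{\cK^c}\geqs n_0}$ and $\Prcx{\cK^c}{\tau^+_{\cK^c}\geqs n_0}$ less than $\tfrac12$, so Lemma~\ref{Lemma:GenExpEst} gives $\Espc{\cK}{\tau^+_{\cK^c}} \leqs 2n_0 \leqs M_1\log(\sigma^{-1})$. The bound on $\Espc{\cS}{\tau^+_{\cS^c}}$ follows from exactly the same scheme, invoking instead the proposition immediately preceding the statement — which bounds the probability of remaining in $\cS$ over a continuous time $T\leqs 1/h=\sigma^{-3/4}$ by $C_2\sigma^{1/2}$, up to the negligible $\varphi$‑event — so that Lemma~\ref{Lemma:GenExpEst}, applied with $n_0$ of the appropriate order, produces the claimed estimate. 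Finally, plugging these three bounds into the preceding lemma yields $\Espc{\cU}{\tau^+_{\cU^c}}=\Order{\log(\sigma^{-1})}$, which is what the rest of the argument needs.

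The step I expect to be the real obstacle is the discrete/continuous dictionary itself: the sample‑path propositions are all phrased in terms of the continuous‑time exit times $\tilde\tau$, whereas $\tau^+$ counts returns to $\Sigma$, and between two consecutive returns the continuous path makes a full loop that can momentarily leave the tube over $\cK$ (or $\cS$). Making the inclusion ``the discrete chain stays in $\cK$ for $n_0-1$ returns'' $\subset$ ``the continuous path stays in a slightly enlarged tube for a continuous time $\asymp n_0$'' quantitative requires combining Proposition~\ref{PropAlongTheDiffusion} with the one‑loop tracking bound~\eqref{Eq:DiffDeterministic} and keeping the error accumulated over $n_0\asymp\log(\sigma^{-1})$ loops negligible; once that is done, everything else is routine bookkeeping with Lemma~\ref{Lemma:GenExpEst} and the estimates~\eqref{eq:tauKc} and~\eqref{eq:prob_tauSU}.
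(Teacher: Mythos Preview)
Your proposal is correct and follows essentially the same route as the paper: convert the continuous-time exit estimates of Propositions~\ref{PropAlongTheDiffusion} and~\ref{prop:tauKc} and the bound~\eqref{Eq:ProbaContLow} into tail bounds on the discrete return times via the $\tilde\tau_\varphi$-splitting (your ``dictionary''), and feed these into Lemma~\ref{Lemma:GenExpEst}. The paper writes out this decomposition only for the $\cS$-bound and declares the other two analogous, whereas you treat the committor first and the two expectations afterwards; the substance is identical, and you correctly single out the continuous/discrete translation as the only nontrivial step.
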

\begin{proof}
Recall that 
$\tilde{\tau}_{\cS^c} = \braces{\inf t > 0 : \norm{x_t} \geqslant h}$, 
and let $n_0> T/T_i+\sigma^{3/4}$, with $0<T \leqslant {1}/{\sigma^{3/4}}$.
For all $x \in \cS$,
\begin{align}
\nonumber
\Prcx{x}{\tau^{+}_{\cS^c} > n_0} 
\leqslant{}&  \Prcx{(x,0)}{{ \tilde{\tau}_\varphi
<\tilde{\tau}_{\cS^c} \wedge T}   } +  \Prcx{(x,0)}{ \tilde{\tau}_\varphi
>\tilde{\tau}_{\cS^c} \wedge T, \tau^{+}_{\cS^c} > n_0 }\\
\nonumber
\leqslant{}& \Prcx{(x,0)}{{ \tilde{\tau}_\varphi
<\tilde{\tau}_{\cS^c} \wedge T}   } +  \Prcx{(x,0)}{\tilde{\tau}_\varphi
>\tilde{\tau}_{\cS^c} \wedge T ,
\tau^{+}_{\cS^c} > n_0 , \tilde{\tau}_{\cS^c} < T } \\
&{}+ \Prcx{(x,0)}{\tilde{\tau}_\varphi >\tilde{\tau}_{\cS^c} \wedge T,
\tilde{\tau}_{\cS^c}>T} \;.
\end{align}
However since $n_0> T/T_i+\sigma^{3/4}$,
\begin{align}
\added{\Prcx{(x,0)}{\tilde{\tau}_\varphi
>\tilde{\tau}_{\cS^c} \wedge T ,
\tau^{+}_{\cS^c} > n_0 , \tilde{\tau}_{\cS^c} < T }}=0 \; .
\end{align}
We obtain the bound on $\bigexpecin{\cS}{\tau^{+}_{\cS^c}}$
using~\eqref{Eq:ProbaContLow} and applying Lemma
\ref{Lemma:GenExpEst}. The two other bounds follow in a similar way, using
Proposition~\ref{prop:tauKc}. 
\end{proof}

Combining the last three results with Proposition~\ref{prop:committor_U_MN}, we
immediately get:

\begin{corollary}
\label{cor:expecMN} 
There exists a constant $M_2>0$ such that 
\begin{equation}
\label{eq:expec_tauMN} 
\bigexpecin{\cM_N^c}{\tau^{+}_{\cM_N}} \leqslant M_2 \log(\sigma^{-1})\;.
\end{equation} 
\end{corollary}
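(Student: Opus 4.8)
The plan is to obtain the bound by feeding three ingredients into Corollary~\ref{Corrolary:ExpectedValueMN}, whose right-hand side is a quotient of $\Espc{\cU}{\tau^{+}_{\cU^c}} + \bigexpecin{\pth{\cU\cup\cM_N}^c}{\tau^{+}_{\cU\cup\cM_N}}$ by $1 - \bigprobin{\pth{\cU\cup\cM_N}^c}{\tau^{+}_{\cU}<\tau^{+}_{\cM_N}}$. I would first dispose of the denominator: by Proposition~\ref{prop:committor_U_MN}, $\bigprobin{\pth{\cU\cup\cM_N}^c}{\tau^{+}_{\cU}<\tau^{+}_{\cM_N}} \leqslant C_1\e^{-\kappa_1/\sigma^2}$, so for $\sigma$ small the denominator is at least $\tfrac12$, and it suffices to show that the numerator is $\Order{\log(\sigma^{-1})}$.

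For the term $\Espc{\cU}{\tau^{+}_{\cU^c}}$ I would simply insert the three estimates of the preceding proposition, namely $\Espc{\cS}{\tau^{+}_{\cS^c}}\leqslant M_1\sigma^{1/2}$, $\Espc{\cK}{\tau^{+}_{\cK^c}}\leqslant M_1\log(\sigma^{-1})$ and $\Prcx{\cK}{\tau^{+}_{\cS}<\tau^{+}_{\cU^c}}\leqslant\e^{-\kappa/\sigma^{1/2}}$, into the bound~\eqref{Eq:ExpectedValueLeave}. Since $\sigma^{1/2}\ll\log(\sigma^{-1})$ and $\e^{-\kappa/\sigma^{1/2}}\to0$ as $\sigma\to0$, this yields $\Espc{\cU}{\tau^{+}_{\cU^c}}\leqslant 4M_1\log(\sigma^{-1})$ for $\sigma$ small enough, which is of the required order.

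The remaining piece, and the only one needing a genuinely new argument, is the bound $\bigexpecin{\pth{\cU\cup\cM_N}^c}{\tau^{+}_{\cU\cup\cM_N}}=\Order{1}$ (even $\Order{\log(\sigma^{-1})}$ would suffice). Here I would use that the deterministic Poincaré map drives every point of $\Sigma$ into $\cU\cup\cM_N$ in a uniformly bounded number $m_0$ of steps: orbits attracted to a stable periodic orbit have their Poincaré iterates converging to the corresponding $x^\star_j$, hence enter $B_j\subset\cM_N$ (which is forward-invariant under $\Pi$ by the choice of $B_j$), while orbits on the separatrices between basins approach the unstable $\omega$-limit sets and thus enter the order-$\delta$ neighbourhood $\cU$; since there are no other $\omega$-limit sets (Assumption~\ref{ass:limit_sets}), a compactness argument gives the uniform step count $m_0$. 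Sample-path tracking over the corresponding (order one, or at worst $\Order{\log(\sigma^{-1})}$) time window then shows that $\Prcx{x}{\tau^{+}_{\cU\cup\cM_N}>m_0+1}$ is exponentially small in $\sigma^{-2}$, uniformly over $x\in\Sigma$, and Lemma~\ref{Lemma:GenExpEst} applied with $A=\cU\cup\cM_N$ and $n_0=m_0+1$ turns this into $\bigexpecin{\pth{\cU\cup\cM_N}^c}{\tau^{+}_{\cU\cup\cM_N}}=\Order{1}$. I expect the main obstacle to be making the uniform deterministic-return statement rigorous near the basin separatrices, where the Poincaré map need not be continuous (or even everywhere defined); the way around this is to take $\delta$ small enough that a full neighbourhood of each unstable $\omega$-limit set is contained in $\cU$ and to reuse the escape estimates of Section~\ref{sec:sample_paths} — in particular the proof of Proposition~\ref{prop:committor_U_MN}, which already controls exactly this kind of transition into a neighbourhood of a stable orbit.

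Combining the three bounds, Corollary~\ref{Corrolary:ExpectedValueMN} gives $\bigexpecin{\cM_N^c}{\tau^{+}_{\cM_N}} \leqslant \frac{4M_1\log(\sigma^{-1})+\Order{\log(\sigma^{-1})}}{1-C_1\e^{-\kappa_1/\sigma^2}} \leqslant M_2\log(\sigma^{-1})$ for a suitable constant $M_2$ and all $\sigma$ small enough, which is the claim.
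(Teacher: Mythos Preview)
Your proposal is correct and follows essentially the same route as the paper: feed the three estimates on $\Espc{\cS}{\tau^{+}_{\cS^c}}$, $\Espc{\cK}{\tau^{+}_{\cK^c}}$, $\Prcx{\cK}{\tau^{+}_{\cS}<\tau^{+}_{\cU^c}}$ into~\eqref{Eq:ExpectedValueLeave}, combine with Proposition~\ref{prop:committor_U_MN} for the denominator, and plug everything into Corollary~\ref{Corrolary:ExpectedValueMN}. The paper is terser about the remaining term $\bigexpecin{\pth{\cU\cup\cM_N}^c}{\tau^{+}_{\cU\cup\cM_N}}$, simply folding it into ``combining the last three results with Proposition~\ref{prop:committor_U_MN}'', but your explicit treatment of it via deterministic tracking plus Lemma~\ref{Lemma:GenExpEst} is exactly the argument implicit in the proof of Proposition~\ref{prop:committor_U_MN}.
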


We are now going to estimate the mean return time
$\bigexpecin{x}{\tau^+_{\meta{k}}}$ for $x \in \meta{k}$. This estimate is
needed to bound the norm of the difference between $K^u$  and  $K^\star$  in
Proposition~\ref{Prop:NormKuKZIterated}.
By decreasing induction on $k$ ($1\leqslant k \leqslant N$), we can prove that
for all $x \in \cM_{k}$, the expectation  $\bigexpecin{x}{\tau^+_{\meta{k}}}$ is
exponentially close to one. We start by estimating the expectation of the first
return time to $\cM_N$.
\begin{lemma}
For all $x \in \cM_{N}$, 
\begin{equation}
\bigexpecin{x}{\tau^{+}_{\cM_N}-1} \leqslant\Prcx{x}{X_1 \notin \cM_{N}} 
\bigexpecin{\cM_N^c}{\tau^{+}_{\cM_N}} 
\; .
\end{equation} 
\end{lemma}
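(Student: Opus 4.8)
The plan is to condition on the first step $X_1$ and use the Markov property at time $1$. First I would note that on the event $\set{X_1\in\cM_N}$ the return time satisfies $\tau^+_{\cM_N}=1$, so that $\tau^+_{\cM_N}-1$ vanishes there, giving
\begin{equation}
 \bigexpecin{x}{\tau^{+}_{\cM_N}-1}
 = \bigexpecin{x}{(\tau^{+}_{\cM_N}-1)\ind{X_1\notin\cM_N}}\;.
\end{equation}

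Next I would apply the Markov property at time $1$: conditionally on $X_1=y$ with $y\notin\cM_N$, the remaining time $\tau^+_{\cM_N}-1$ is distributed as the first hitting time $\tau_{\cM_N}$ of the process restarted at $y$, and since $y\in\cM_N^c$ one has $\tau_{\cM_N}(y)=\tau^+_{\cM_N}(y)$. Hence
\begin{equation}
 \bigexpecin{x}{(\tau^{+}_{\cM_N}-1)\ind{X_1\notin\cM_N}}
 = \bigexpecin{x}{\ind{X_1\notin\cM_N}\,\bigexpecin{X_1}{\tau^{+}_{\cM_N}}}
 \leqslant \bigexpecin{x}{\ind{X_1\notin\cM_N}}\,\bigexpecin{\cM_N^c}{\tau^{+}_{\cM_N}}\;,
\end{equation}
where in the last step I bound $\bigexpecin{X_1}{\tau^{+}_{\cM_N}}$ by its supremum $\bigexpecin{\cM_N^c}{\tau^{+}_{\cM_N}}$ over $\cM_N^c$, valid on the event $\set{X_1\notin\cM_N}$, and pull the resulting constant out of the expectation. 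Since $\bigexpecin{x}{\ind{X_1\notin\cM_N}}=\Prcx{x}{X_1\notin\cM_N}$, this is exactly the claimed inequality.

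No genuine obstacle is expected; the one point to verify is that the expectations involved are finite, which follows from Corollary~\ref{cor:expecMN}, guaranteeing $\bigexpecin{\cM_N^c}{\tau^{+}_{\cM_N}}=\Order{\log(\sigma^{-1})}<\infty$ (and hence $\bigexpecin{x}{\tau^{+}_{\cM_N}}<\infty$ for $x\in\cM_N$ by the same splitting). This lemma then serves as the base case of the decreasing induction on $k$ used to show that $\bigexpecin{x}{\tau^{+}_{\cM_k}}$ is exponentially close to $1$ for $x\in\cM_k$.
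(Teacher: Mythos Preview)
Your proof is correct and follows essentially the same approach as the paper: the paper's proof is a one-liner stating that ``splitting the expectation according to the location of $X_1$'' gives $\bigexpecin{x}{\tau^{+}_{\cM_N}} \leqslant 1 + \Prcx{x}{X_1 \notin \cM_{N}}\bigexpecin{\cM_N^c}{\tau^{+}_{\cM_N}}$, which is precisely your argument spelled out in full. Your additional remarks on finiteness and the role of this lemma as the base of the decreasing induction are accurate and match the surrounding text.
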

\begin{proof}
Splitting the expectation according to the location of $X_1$, we have 
\begin{equation}
\bigexpecin{x}{\tau^{+}_{\cM_N} } \leqslant 1+  \Prcx{x}{X_1 \notin \cM_{N}} 
\bigexpecin{\cM_N^c}{\tau^{+}_{\cM_N}}  \; .
\end{equation}
\end{proof}

\begin{lemma}
For all $k <N$, for all $x \in B_i \subset \cM_{k}$,
\begin{equation}
\bigexpecin{x}{\tau^{+}_{\cM_k}} \leqslant \bigexpecin{x}{\tau^{+}_{\cM_{k+1}}}
+ \bigprobin{x}{\tau^{+}_{B_{k+1}} < \tau^{+}_{\cM_{k}}}  \bigexpecin{B_{k+1}}{
\tau^{+}_{\cM_{k}}} \; .
\end{equation}
\end{lemma}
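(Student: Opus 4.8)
The plan is to read off this inequality as a special case of Lemma~\ref{Lemma:ExpSplittingSet}. First I would record the elementary set identity $\cM_{k+1} = \cM_k \cup B_{k+1}$, which is immediate from the definition $\cM_j = \bigcup_{i=1}^j B_i$, and note that $\cM_k$ and $B_{k+1}$ are disjoint, since the $B_i$ sit in the pairwise disjoint basins of attraction $\cA_i$.

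Then I would apply Lemma~\ref{Lemma:ExpSplittingSet} with $B = \cM_k$ and $C = B_{k+1}$, so that $B \cup C = \cM_{k+1}$. What the proof of that lemma actually produces, for a fixed initial point and before the supremum over the initial set is taken, is the pointwise estimate
\[
 \bigexpecin{x}{\tau^{+}_{\cM_k}}
 \leqslant \bigexpecin{x}{\tau^{+}_{\cM_{k+1}}}
 + \bigprobin{x}{\tau^{+}_{B_{k+1}} < \tau^{+}_{\cM_k}}\,
 \bigexpecin{B_{k+1}}{\tau^{+}_{\cM_k}}\;,
\]
which is exactly the claim for any $x \in B_i \subset \cM_k$. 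For completeness I would recall the underlying one-step argument: split $\tau^{+}_{\cM_k}$ according to whether $\tau^{+}_{\cM_k} < \tau^{+}_{B_{k+1}}$ or $\tau^{+}_{B_{k+1}} < \tau^{+}_{\cM_k}$; on the second event write $\tau^{+}_{\cM_k} = (\tau^{+}_{\cM_k} - \tau^{+}_{B_{k+1}}) + \tau^{+}_{B_{k+1}}$; observe that $\tau^{+}_{\cM_k}\ind{\tau^{+}_{\cM_k} < \tau^{+}_{B_{k+1}}} + \tau^{+}_{B_{k+1}}\ind{\tau^{+}_{B_{k+1}} < \tau^{+}_{\cM_k}} = \tau^{+}_{\cM_{k+1}}$; and bound the residual overshoot via the strong Markov property at time $\tau^{+}_{B_{k+1}}$, using $X_{\tau^{+}_{B_{k+1}}} \in B_{k+1}$, to get $\bigexpecin{x}{(\tau^{+}_{\cM_k} - \tau^{+}_{B_{k+1}})\ind{\tau^{+}_{B_{k+1}} < \tau^{+}_{\cM_k}}} \leqslant \bigprobin{x}{\tau^{+}_{B_{k+1}} < \tau^{+}_{\cM_k}}\,\bigexpecin{B_{k+1}}{\tau^{+}_{\cM_k}}$.

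I do not expect any genuine obstacle. The only point needing a moment's attention is that the statement is for a fixed starting point $x$, whereas Lemma~\ref{Lemma:ExpSplittingSet} is phrased with a supremum over an initial set $A$; this is harmless, since one simply applies it with $A = \{x\}$, or equivalently uses the intermediate pointwise inequality already present in its proof. Finiteness of all the expectations involved is guaranteed by the positive Harris recurrence assumption together with the positivity of the kernel, so there is nothing further to check.
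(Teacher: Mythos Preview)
Your proposal is correct and matches the paper's own proof, which simply states that the result is a direct application of Lemma~\ref{Lemma:ExpSplittingSet} with $A = B_i$, $B = \cM_k$ and $C = B_{k+1}$. Your observation about the pointwise-versus-supremum formulation is a fair refinement: the paper's choice $A = B_i$ literally yields only the supremum bound, but as you note the pointwise inequality is exactly what the proof of Lemma~\ref{Lemma:ExpSplittingSet} establishes before taking the supremum, so there is no real difference.
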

\begin{proof}
The proof is a direct application of Lemma \ref{Lemma:ExpSplittingSet} with $A =
B_i$, $B = \cM_k $ and $C  =  B_{k+1}$. 
\end{proof}

Combining the last two lemmas with Corollary~\ref{cor:expecMN} and
Proposition~\ref{prop:exit_Mk} shows that, as announced,
$\bigexpecin{\cM_k}{\tau^{+}_{\cM_k}} = 1+\Order{\e^{-\kappa/\sigma^2}}$ for
all $k$, where $\kappa\added{{}>0}$ is proportional to the size of the
neighbourhood $B_i$.

\subsection{Coupling argument}
\label{ssec:distance_trajectories} 

In order to apply the coupling argument in Proposition~\ref{prop:coupling}, we
need to estimate the probability that two trajectories $\pth{X_n^{x_1}}_n$ and
$\pth{X_n^{x_2}}_n$ driven by the same realization of the Brownian motion drift
apart, i.e., their difference leaves a contracting \lq\lq layer\rq\rq. 

\begin{proposition}[{\cite[Proposition~6.12]{berglund2014noise}}]
There exist $C, \kappa>0$ and $\varrho<1$, independent of
$\sigma$ such that for $x_1, x_2 \in B_i$, 
\begin{equation}
\Pba{ \norm{X_n^{x_1} - X_n^{x_2}}  > \varrho^n \norm{x_1 - x_2} }\leqslant C
\e^{- \kappa/\sigma^2}\; .
\end{equation}
\end{proposition}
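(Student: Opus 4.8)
The statement is~\cite[Proposition~6.12]{berglund2014noise} carried over to the present multidimensional setting, and the plan is to reproduce that coupling argument. First I would pass to the polar-type coordinates $(x,\varphi)$ of Proposition~\ref{Prop:EqStablePeriodicOrbit}: there $B_i=\setsuch{x}{\norm{x}\leqs\delta}$, the transversal deterministic drift near $\Gamma_i$ is $-\Lambda x+\Order{\norm{x}^2}$ with $\Lambda$ in Jordan form and of eigenvalues with positive real part (the Lyapunov exponents of the orbit), so that $\norm{\e^{-\Lambda t}}\leqs C\e^{-\lambda t}$ for some $\lambda>0$, and --- by the choice of $\delta$ --- the deterministic Poincar\'e map sends $B_i$ strictly into itself. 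Given $x_1,x_2\in B_i$ I would run the two solutions $(x_t^{(1)},\varphi_t^{(1)})$ and $(x_t^{(2)},\varphi_t^{(2)})$ from $(x_1,0)$ and $(x_2,0)$ driven by the \emph{same} Brownian motion, and set $y_t=x_t^{(1)}-x_t^{(2)}$, $\psi_t=\varphi_t^{(1)}-\varphi_t^{(2)}$.

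The key point is that subtracting the two systems gives a \emph{homogeneous} pair of equations
\begin{equation}
\dd y_t=\bigpar{-\Lambda y_t+R_1(t)}\dd t+\sigma R_2(t)\,\dd W_t\;,\qquad
\dd\psi_t=R_3(t)\,\dd t+\sigma R_4(t)\,\dd W_t\;,
\end{equation}
in which the common noise does not simply cancel, but, since $b_x,b_\varphi,g_x,g_\varphi$ are $\cC^1$, it enters only multiplicatively through the difference of the coefficients: $\norm{R_1(t)},\abs{R_3(t)}=\Order{(\norm{x_t^{(1)}}+\norm{x_t^{(2)}})(\norm{y_t}+\abs{\psi_t})}$ and $\norm{R_2(t)},\abs{R_4(t)}=\Order{\norm{y_t}+\abs{\psi_t}}$. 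Hence the leading drift of $y_t$ is the contraction $-\Lambda$, and every correction carries a factor $\sigma$ or a factor $\norm{x_t^{(j)}}=\Order{\delta}$ on the event that the trajectories stay near $\Gamma_i$.

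Then I would localise and estimate. Fixing $T$ slightly larger than $T_i$, I would work on $\cG=\bigcap_{j=1,2}\bigset{\sup_{0\leqs t\leqs T}\norm{x_t^{(j)}}\leqs 2\delta}$, whose complement has probability $\Order{\e^{-\kappa/\sigma^2}}$ by the sample-path bounds of Section~\ref{Ssection:GeneralEst} (the argument used for Proposition~\ref{prop:exit_Mk}, tracking each $x_t^{(j)}$ along the deterministic solution that remains in $B_i$). Proposition~\ref{PropAlongTheDiffusion} keeps $\varphi_t^{(j)}$ within $\Order{\delta}$ of $t/T_i$ on $\cG$, up to a further error of probability $\Order{\e^{-\kappa/\sigma^2}}$, so on a good event $\cG'$ of probability $1-\Order{\e^{-\kappa/\sigma^2}}$ the first return times of both trajectories to $\Sigma$ fall in $[0,T]$. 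On $\cG'$, after shrinking $\delta$ so that $-\Lambda+\Order{\delta}$ is still a contraction, I would feed the variation-of-constants representation of $(y_t,\psi_t)$ into a Bernstein-type exponential martingale inequality for the stochastic convolution $\sigma\int_0^t\e^{\Lambda s}R_2(s)\,\dd W_s$ (as in~\cite[Theorem~5.1.18]{Berglund_Gentz_book}) to get, on a still smaller event of the same probability, an exponential decay of $\norm{y_t}+\abs{\psi_t}$ on $[0,T]$. Evaluating at the first return time and using that the transversal linearisation of the Poincar\'e map at $x^\star_i$ has spectral radius equal to the largest Floquet multiplier of $\Gamma_i$, hence $<1$, produces a $\sigma$-independent $\varrho<1$ with $\norm{X_1^{x_1}-X_1^{x_2}}\leqs\varrho\norm{x_1-x_2}$ on that event; iterating by the strong Markov property at the successive return times and summing the exceptional probabilities --- which remain uniformly $\Order{\e^{-\kappa/\sigma^2}}$ because the contracting dynamics keeps both trajectories in $2B_i$ --- yields the claim after enlarging $C$.

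I expect the phase desynchronisation to be the main obstacle: since the two paths reach $\Sigma$ at distinct random times, the difference process $(y_t,\psi_t)$ must be controlled up to a \emph{common deterministic horizon} $T$, after which one still has to verify that the relevant return times lie inside $[0,T]$ on the good event, rather than simply stopping at a return time. Coupling the phase control of Proposition~\ref{PropAlongTheDiffusion} with the hyperbolic contraction of the transversal variables is exactly the mechanism of~\cite[Proposition~6.12]{berglund2014noise}; the only genuinely new ingredient is that $\Lambda$ is now a matrix (in Jordan form), so the scalar contraction estimate used there is replaced by $\norm{\e^{-\Lambda t}}\leqs C\e^{-\lambda t}$, at the cost of an arbitrarily small loss in the rate.
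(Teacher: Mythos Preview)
Your proposal is correct and follows exactly the approach the paper intends: the paper's own proof consists of a single sentence stating that the argument is a straightforward generalisation of \cite[Proposition~6.12]{berglund2014noise} to the multidimensional case, and you have correctly identified the ingredients of that generalisation --- passing to the polar-type coordinates of Proposition~\ref{Prop:EqStablePeriodicOrbit}, writing the homogeneous difference system, localising on the good event via Proposition~\ref{PropAlongTheDiffusion} and the sample-path bounds of Section~\ref{Ssection:GeneralEst}, and replacing the scalar contraction by the matrix estimate $\norm{\e^{-\Lambda t}}\leqs C\e^{-\lambda t}$. One small clarification: when you iterate via the strong Markov property and ``sum the exceptional probabilities'', the resulting bound naturally carries a factor $n$; this is consistent with how the paper uses the result immediately afterwards (the bound $\prob{N>n_0}\leqs n_0\e^{-\kappa/\sigma^2}$), so either read the constant $C$ as absorbing a fixed $n$, or simply note that the single-step estimate with iteration is all that is actually needed downstream.
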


The proof is a straightforward generalisation of the proof of \cite[Proposition
6.12]{berglund2014noise} to the multidimensional case. As explained
in~\cite[Section~6.3]{berglund2014noise}, it follows that the stopping time $N$
introduced in~\eqref{eq:def_N} satisfies $\prob{N>n_0} \leqs
n_0\e^{-\kappa/\sigma^2}$ for an $n_0$ of order $\log(\sigma^{-1})$. Using the
Markov property at multiple times of $n_0$, if follows that 
\begin{equation}
 \rho_{kn_0} = 
 \prob{N>kn_0} \leqs \bigpar{M\log(\sigma^{-1})\e^{-\kappa/\sigma^2}}^k\;.
\end{equation} 
Choosing $k$ such that $k\kappa > C+1$ in~\eqref{eq:rho_n}, we obtain a
constant $L(n)$ close to $1$. 


\subsection{Miscellaneous a priori bounds}
\label{ssec:apriori}

\begin{proof}[{\sc Proof of Proposition~\ref{prop:tau_sigma}}]
 If the initial condition $z$ lies within the basin of attraction of one of the
stable periodic orbits, the same argument as in
Proposition~\ref{prop:committor_U_MN} shows that $Z_t$ will reach $\Sigma$ in a
time of order $1$ with high probability, so that $\bigprobin{z}{\tau_\Sigma >
2T}$ is exponentially small. If $z$ belongs to the neighbourhood of an unstable
periodic orbit, the results from Section~\ref{ssec:mean_return_time} show that
$Z_t$ will leave this neighbourhood in a mean time of order
$\log(\sigma^{-1})$. A similar result holds \added{if} $z$ belongs to the neighbourhood
of an unstable equilibrium point, as shown
in~\added{\cite{Kifer81,Bakhtin_2008,Almada_Bakhtin_2011}}. Combining this with
the strong Markov property \added{and~\eqref{eq:tau_D}} yields the result. 
\end{proof}

\begin{proof}[{\sc Proof of Proposition~\ref{Prop:LargeDeviation}}]
For two points $x, y\in\Sigma$, the continuous-time large-deviation principle
naturally induces a discrete-time large-deviation principle with rate function
\begin{equation}
 J(x,y) = \inf_{T>0} \inf_{\gamma:(x,0)\to(y,1)} I_{[0,T]}(\gamma)\;,
\end{equation} 
where the notation $\gamma:(x,0)\to(y,1)$ implies that we consider trajectories
visiting $\Sigma'$ between the points $x$ and $y$ (this can be viewed as an
instance of the contraction principle). More generally, for any sequence
$(x_0,\dots,x_n)$ of points in $\Sigma$, the rate function is given by 
$ J(x_0,\dots,x_n) = \sum_{j=0}^{n-1} J(x_j,x_{j+1})$.
The fact that $V(x^\star_i,x^\star_j) = H(i,j)$ implies that for any $\eta>0$,
there exists a $T>0$ and a continuous-time trajectory $\gamma$ connecting the
two periodic orbits in time $T$ such that 
\begin{equation}
 I_{[0,T]}(\gamma) \leqs H(i,j) + \frac{\eta}2\;.
\end{equation} 
Enlarging $T$ if needed, one can assume that $\gamma$ starts and ends on
$\Sigma$, since one can follow the deterministic flow at zero cost. Furthermore,
there exists $\delta>0$ such that if the neighbourhood $B_i$, $B_j$ have radius
$\delta$, they can be connected by a trajectory $\gamma$ such that
$I_{[0,T]}(\gamma) \leqs H(i,j) + \eta$. We may assume that $\gamma$ intersects
$B_i\added{{}\cup{}} B_j$ only at its endpoints, for otherwise there would exist a cheaper
way to connect the neighbourhoods. Therefore, there exists $n\geqs1$ and points $x_0\in
B_i, x_1, \dots, x_{n-1} \notin B_i\cup B_j, x_n\in B_j$, defined by the
successive intersections of $\gamma$ with $\Sigma$, such that 
\begin{equation}
 J(x_0,\dots,x_n) \leqs H(i,j) + \eta\;.
\end{equation} 
On the other hand, for any $\eta>0$, there exists a neighbourhood of radius $\delta>0$ such
that for any $x\in B_i$ and $y\in B_j$, $V(x,y) \geqs H(i,j) - \eta$. A
similar argument as above shows that any discrete-time trajectory connecting
the neighbourhoods must also have a cost larger than $H(i,j) - \eta$. 
\end{proof}

\section{Last steps of the proofs}
\label{sec:lastSteps} 
\subsection{Proof of Theorem~\ref{thm:eigenvalues}}

Fix a small constant $\eta>0$. We start by estimating the $k^{\text{th}}$
eigenvalue $\lambda_{k-1}$ of $K$, by showing that it is close to the
$k^{\text{th}}$ eigenvalue $\lambda^\star_{k-1}$
of the finite rank kernel $K^\star$, estimated in
Corollary~\ref{CorEigenvalueProp}.

As discussed in Section~\ref{ssec:distance_trajectories}, we can find an $n$ of
order $\log(\sigma^{-1})$ such that each kernel $K^0_{B_i}$ satisfies the
uniform positivity condition~\eqref{eq:uniform_positivity}, with $L(n)-1$ an
arbitrary positive constant of order $1$. Then
Proposition~\ref{prop:spectral_gap} shows the existence of a constant $c_0>0$
such that 
\begin{equation}
 \bigabs{\mathring{\ev}^{B_i}_1} \leqs \e^{-c_0/\log(\sigma^{-1})}\;.
\end{equation} 
Proposition~\ref{prop:oscillation_phi0}, \eqref{eq:lambda0_qed} and the
large-deviation estimate in Proposition~\ref{Prop:LargeDeviation} yield the
bound 
\begin{equation}
 \norm{\mathring{\phi}^{B_i}_0 - 1} \leqs M_0\log(\sigma^{-1}) 
 \e^{-[H(i,M_k\setminus \set{i}) - \eta]/\sigma^2}
\end{equation} 
on the oscillation of the principal eigenfunction. Plugging this into
Proposition~\ref{Prop:NormK0KStark} and using 
Assumption~\ref{ass:hierarchy} to compare the various $H(i,j)$ yields 
\begin{equation}
 \norm{(K^0)^m - (K^\star)^m} 
 \leqs 2\e^{-mc_0/\log(\sigma^{-1})} 
 + \bigbrak{M_0\log(\sigma^{-1}) + m^2 \e^{-H'_k/\sigma^2}}\e^{-H'_k/\sigma^2}
\end{equation} 
where $H'_k = H(k,M_{k-1}) - \eta$. Combining this with
Proposition~\ref{Prop:NormKuKZIteratedk} and the mean return time estimates in
Section~\ref{ssec:mean_return_time} shows that $\norm{(K^u)^m -
(K^\star)^m}$ is bounded by 
\begin{align}
\nonumber
 \Delta_m ={}& 2\e^{-mc_0/\log(\sigma^{-1})} 
+ \bigbrak{M_0\log(\sigma^{-1}) + m^2 \e^{-H'_k/\sigma^2}}\e^{-H'_k/\sigma^2}\\
 &{}+ \bigpar{1 + 2(1-\e^{-u})\e^{-\theta'/\sigma^2}}^m - 1\;,
\end{align} 
provided $(1-\e^{-u})\e^{[H(k+1,M_k)+\eta]/\sigma^2} \leqs 1/2$. 
The argument given in Section~\ref{ssec:contour_general} shows that $(\Ku)^m$
admits a unique eigenvalue $\lambda_{k-1}^m$ inside the contour $\cC$ of radius
$c_2\Delta_m$ centred in $(\lambda^\star_{k-1})^m$ (for a $c_2$ of order $1$),
and that 
\begin{equation}
 \frac{1-\lambda_{k-1}^m}{1-(\lambda^\star_{k-1})^m}
 = 1 + \biggOrder{\frac{\Delta_m}{1-(\lambda^\star_{k-1})^m}}\;.
\end{equation} 
Note that this eigenvalue is necessarily real, since $(\Ku)^m$ is real and has
exactly one eigenvalue inside $\cC$. Using the fact that for any
$x\in(0,1)$ such that $m(1-x) < 2$, one has 
\begin{equation}
 (1-x) \Bigbrak{1 - \frac12 m(1-x)} \leqs \frac{1-x^m}{m} \leqs 1-x\;,
\end{equation} 
we obtain 
\begin{equation}
 \frac{1-\lambda_{k-1}}{1-\lambda^\star_{k-1}}
 = 1 + \bigOrder{m(1-\lambda^\star_{k-1})} + 
 \biggOrder{\frac{\Delta_m}{1-(\lambda^\star_{k-1})^m}}\;.
\end{equation} 
The optimal error term is obtained for
$m=\log(\sigma^{-1})\e^{(2\eta+\delta)/\sigma^2}$, with $\delta =
H(k,M_{k-1})/2$. Together with Corollary~\ref{CorEigenvalueProp}, this shows
that $\lambda_{k-1}$ satisfies~\eqref{eq:lambdak}. 

As discussed in Section~\ref{subSection:ChoiceSetA}, applying this argument 
to the kernels $K^{u,(k)}$ for $k=1,\dots N$ shows that $K^{u,(N)}$ has exactly
$N$ eigenvalues outside some disc centred in the origin. The
system~\eqref{eq:coupled} can then be used to show that the original kernel $K$
also has exactly $N$ eigenvalues outside this disc, satisfying the same
asymptotics. 

\begin{remark}
Strictly speaking, to justify this argument, we have to make sure that the
eigenvalues of the $K^{u,(k)}$ vary sufficiently slowly as functions of $u$.
This, however, is easy to obtain. Indeed, a standard perturbation argument shows
that if $K(u)$ is a family of linear operators depending differentiably on $u$,
and $\lambda$ is an isolated simple eigenvalue of $K(u_0)$ with left and right
eigenfunctions $\pi$ and $\phi$, then 
\begin{equation}
 \frac{\6\lambda}{\6u}(u_0) = \pi \frac{\6K}{\6u}(u_0)\phi\;.
\end{equation} 
In our case, the relevant derivative is given by 
\begin{equation}
 \frac{\6}{\6u} K^u(x,\6y) = \biggexpecin{x}{ \bigpar{\tau^+_{\cM_k}-1}
\e^{u (\tau^+_{\cM_{k}}-1)} \bigind{X_{\tau^+_{\cM_{k}}} \in \6y}}\;.
\end{equation} 
Proceeding as in Proposition~\ref{Prop:NormKuK0}, it is not hard to check that
the norm of this operator is of order $\bigexpecin{\cM_k}{\tau^+_{\cM_k}-1}$ for
$u$ as in the above computation. 
\end{remark}

To prove the spectral gap estimate~\eqref{eq:gap_lambdaN}, one can use the fact
that 
\begin{equation}
 \bigprobin{\cM_N^c}{X_m \in \cM_N^c} \leqs \frac12
\end{equation} 
for $m$ of order $\log(\sigma^{-1})$, as a consequence
of~\eqref{eq:expec_tauMN}, Proposition~\ref{prop:exit_Mk} and Markov's
inequality. If $(\tilde X_n)_{n\geqslant0} = (X_{mn})_{n\geqslant0}$ denotes the process diluted by a factor
$m$, then the Laplace transform of the first time $\tilde X_n$ hits $\cM_N$
exists for all $u$ such that $\abs{\e^{-u}} \geqs 1/2$. Therefore, by the above
argument, $K^m$ has exactly $N$ eigenvalues outside a disc of radius $1/2$,
which implies that $K$ has exactly $N$ eigenvalues outside a disc of radius
$\e^{-c_0/\log(\sigma)^{-1}}$. 

Finally, the result~\eqref{eq:principal_eigenvalue} on the principal eigenvalue
follows from the fact that the principal eigenfunction of the process killed
when hitting $\cM_{k-1}$ satisfies 
\begin{equation}
 \phi_0^{\cM_{k-1}^c}(x) =
\Bigexpecin{x}{\e^{u\tau_{B_k}}\phi_0^{\cM_{k-1}^c} \bigpar{X_{\tau_{B_k}}}}\;.
\end{equation} 
Therefore, it is also an eigenfunction of the kernel  
\begin{equation}
 K^u_{B_k}(x,\6y) = \biggexpecin{x}{\e^{u (\tau^+_{B_k}-1)}
\bigind{X_{\tau^+_{B_k}}  \in \6y, \tau^+_{B_k} < \tau^+_{\cM_{k-1}}}}\;.
\end{equation} 
This kernel can be approximated by 
\begin{equation}
K^{\star}_{B_k}\pth{x, \6y} =  
\int_{B_k} \mathring{\pi}^{B_i}_0\pth{z} \ K^0_{B_k}\pth{z, \6y} \6z
= \bigprobin{ \mathring{\pi}^{B_k}_0}{X_{\tau^+_{B_k}}  \in \6y, \tau^+_{B_k} <
\tau^+_{\cM_{k-1}}}\;,
\end{equation}
which is a rank $1$ operator, whose single nonzero eigenvalue is 
$\bigprobin{ \mathring{\pi}^{B_k}_0}{\tau^+_{B_k} <
\tau^+_{\cM_{k-1}}}$. The approximation arguments applied to $K^u$ and
$K^\star$ apply in this case as well, because the norm of the difference
$K^u_{B_k}-K^\star_{B_k}$ is trivially bounded above by the norm of the
difference $K^u - K^\star$.
\qed

\subsection{Proof of Theorem~\ref{thm:right_eigenfunctions}}

Recall that the $k^{\text{th}}$ eigenfunction $\phi^\star_{k-1}$ of $K^\star$
has been obtained in Lemma~\ref{lem:phikstar}, and that
$\norm{\phi^\star_{k-1}}=1$. In order to bound the difference between
$\phi_{k-1}$ and $\phi^\star_{k-1}$, we choose a countour $\cC$ around
$\lambda_{k-1}$ and consider the associated Riesz projector $\Pi_{\uGs}(\Ku)$
(cf.~\eqref{eq:riesz}). Since $\Pi_{\uGs}(\Ku)$ projects on the subspace
associated with $\lambda_{k-1}$, $\phi_{k-1}$ is given, up to multiplication by
a constant, by
\begin{equation}
 \phi_{k-1} = \Pi_{\uGs}(K^u)\phi^\star_{k-1}\;.
\end{equation} 
We also have the relation 
\begin{equation}
 \phi^\star_{k-1} = \Pi_{\uGs}(K^\star)\phi^\star_{k-1}\;,
\end{equation} 
where the Riesz projector $\Pi_{\uGs}(K^\star)$ is defined with the same
contour $\cC$. Taking the difference, it follows from
Proposition~\ref{Prop:InverseOperator} that 
\begin{equation}
 \norm{\phi_{k-1} - \phi^\star_{k-1}}
 \leqs C \norm{\Ku - K^\star}\;,
\end{equation} 
where $C$ is defined in~\eqref{Eq:DefC}, provided $\norm{\Ku - K^\star} <
\gamma/2$, cf.~\eqref{Eq:DefGamma}. An analogous bound holds for the
iterates $(\Ku)^m$ and $(K^\star)^m$, with a coutour around $\lambda_{k-1}^m$. 
Choosing $m$ as in the previous section, and a circular contour of radius
$(1-\lambda_{k-1}^m)/2$, one obtains 
\begin{equation}
 \norm{\phi_{k-1} - \phi^\star_{k-1}}
 = \Order{\e^{-\theta_{k-1}/\sigma^2}}\;,
\end{equation} 
where $\theta_{k-1}$ is $\eta$-close to $H(k,M_{k-1})/2$. 

Applying the Feynman--Kac relation of Proposition~\ref{prop:Feynman-Kac} with 
$\e^{-u} = \lambda_{k-1}$, we obtain 
\begin{equation}
 \e^{-u} \phi_{k-1}(x) 
 = \bigexpecin{x}{\phi_{k-1}(X_{\tau_{\cM_k}})} 
 + \bigexpecin{x}{(\e^{u(\tau_{\cM_k}-1)}-1) \phi_{k-1}(X_{\tau_{\cM_k}})}\;.
\end{equation}
By Proposition~\ref{Prop:NormKuK0}, the second term on the right-hand side has
order $\e^{-(H(k,M_{k-1})+\theta'-\eta)/\sigma^2}$. As for the first term, it
can be rewritten (recall that $\phi^\star_{k-1}$ is constant on each $B_j$)
\begin{equation}
 \sum_{j=1}^k \Bigexpecin{x}{\bigind{X_{\tau_{\cM_k}}\in B_j}
\phi_{k-1}\bigpar{X_{\tau_{\cM_k}}}}
= \sum_{j=1}^k \bigprobin{x}{\tau_{B_j} < \tau_{\cM_k\setminus B_j}}
\phi^\star_{k-1}(x^\star_j) + \Order{\e^{-\theta_{k-1}/\sigma^2}}\;.
\end{equation} 
To lowest order, \added{using Lemma~\ref{lem:phikstar} and Remark~\ref{rem:Sstar12}}, we have $\phi^\star_{k-1}(x^\star_j) = \delta_{jk} +
\smash{\Order{\e^{-\theta^-/\sigma^2}}}$, which
yields~\eqref{eq:right_eigenfunction}. The more precise
expression~\eqref{eq:right_eigenfunction_iterated} is based on the fact that
\begin{equation}
 \phi^\star(x^\star_j) = 
 - \frac{\bigprobin{\mathring{\pi}^{B_j}_0}{\tau^+_{B_k} <
 \tau^+_{\cM_{k-1}}}}{\bigprobin{\mathring{\pi}^{B_k}_0}{\tau^+_{\cM_{k-1}} <
 \tau^+_{B_k}}}
 + \Order{\e^{-2\theta^-/\sigma^2}}\;,
\end{equation}
as a consequence of Remark~\ref{rem:Sstar12}. 
As for the principal eigenfunction $\phi_0^{\cM_{k-1}^c}$, it satisfies 
\begin{align}
\nonumber
 \e^{-u} \phi_0^{\cM_{k-1}^c}(x) 
 ={}& \Bigexpecin{x}{\phi_0^{\cM_{k-1}^c} \bigpar{X_{\tau_{B_k}}}
 \normalind{\tau_{B_k} < \tau_{\cM_{k-1}}}} \\
 &{}+ \Bigexpecin{x}{\bigpar{\e^{u(\tau_{\cM_k}-1)}-1} \phi_0^{\cM_{k-1}^c} 
 \bigpar{X_{\tau_{B_k}}} \normalind{\tau_{B_k} < \tau_{\cM_{k-1}}}}\;,
\end{align}
where $\e^{-u} = \lambda_0^{\cM_{k-1}^c}$. 
The first term on the right-hand side is equal to 
\begin{equation}
\bigprobin{x}{\tau_{B_k} < \tau_{\cM_{k-1}}} \bigpar{ 1 +
\Order{\e^{-\theta_{k-1}/\sigma^2}}}\;,
\end{equation} 
while the second one can be bounded as above by
$\Order{\e^{-(H(k,M_{k-1})+\theta'-\eta)/\sigma^2}}$. 

\subsection{Proof of Theorem~\ref{thm:left_eigenfunctions}}

Using Proposition~\ref{prop:pi0} with $A_1=B_1$ and $A_2=\cM_N\setminus B_1$
and the large-deviation a priori bounds of
Proposition~\ref{Prop:LargeDeviation} shows that $\pi_0(\cM_N\setminus B_1)
\leqs \e^{-\theta^-/\sigma^2} \pi_0(B_1)$. Together with~\eqref{eq:pi0_MN}, this
proves~\eqref{eq:left_ef_1}. 

The bound~\eqref{eq:left_ef_2} can be proved by reasoning on the stationary
distribution of the Doob-transformed process $\bar X_{\cM_k^c}$ and using the
relation~\eqref{eq:Doob_eigenfunctions} between the left eigenfunctions of both
processes. 

In order to prove the first relation in~\eqref{eq:left_ef_3}, we use
Lemma~\ref{lem:left_eigenfunction}, showing that $\pi_{k-1}$ is a left
eigenfunction of the kernel $\Ku$, cf.~\eqref{eq:defKu}. Therefore we expect
$\pi_{k-1}$ to be close to the left eigenfunction $\pi^\star_{k-1}$ of
$K^\star$. Using the block-triangularisation of
Section~\ref{SubSect:LocaEigenvalues}, one easily obtains that 
\begin{equation}
 \pi^\star_{k-1} = (\hat\pi, 1-\hat\pi S^\star_{12})
 \qquad \text{where} \qquad 
 \hat\pi = (\alpha\id - T_{11})^{-1}\hat P_{21}\;,
\end{equation} 
which implies 
\begin{align}
\nonumber 
\pi^\star_{k-1}(B_k) &= 1 + \Order{\e^{-\theta^-/\sigma^2}}\;, \\
\pi^\star_{k-1}(B_j) &=
 -\frac{\bigprobin{\mathring{\pi}^{B_k}_0}{\tau^+_{B_j} <
 \tau^+_{\cM_k\setminus B_j}}}
 {\bigprobin{\mathring{\pi}^{B_k}_0}{\tau^+_ {\cM_{k-1}} <
 \tau^+_{B_k}}} \bigbrak{1 + \Order{\e^{-\theta^-/\sigma^2}}}
 && \text{for $1\leqs j\leqs k-1$\;.}
\end{align}
To compare $\pi_{k-1}$ and $\pi^\star_{k-1}$, it suffices to realise that the
$L^1$-operator norm of a kernel $K$, acting on signed measures, can be bounded
by $\sup_{x\in\cM_k} K(x,\cM_k)$. Therefore, the same bounds on
$\norm{\Ku-K^\star}$ and their iterates apply for the action of these operators
on signed measures, so that one can repeat the argument of the previous section
showing that 
\begin{equation}
 \bigabs{\pi_{k-1}(B_j) - \pi^\star_{k-1}(B_j)} =
\Order{\e^{-\theta_{k-1}/\sigma^2}}\;.
\end{equation} 
Finally, the second relation in~\eqref{eq:left_ef_3} is obained by comparing
the original and killed process monitored while \added{visiting} $\cM_j$. The
kernel of the original process can be approximated by a kernel $K^\star$ of
rank $j$, while the killed process is described by the restriction of this
kernel to $B_k\cup\dots\cup B_j$. Using a similar block-triangularisation as
in Section~\ref{SubSect:LocaEigenvalues}, with blocks of size $k-1$ and
$j-k+1$, the result follows easily. 
\qed

\subsection{Proof of Theorem~\ref{thm:expectations}}

The result will be proved if we manage to control the oscillation of
$\smash{\bigexpecin{x}{\tau^+_{\cM_{k-1}}}}$ when $x$ varies in $B_k$. To this
end, consider the process $(\hat X_n)_n$, killed when hitting $\cM_{k-1}$ and
monitored only while visiting $\cM_k$, whose kernel is $K^0_{B_k}$. If
$\hat\tau_{\cM_{k-1}}$ denotes the killing time of $\smash{\hat X_n}$, then we
have 
\begin{equation}
 \bigexpecin{x}{\hat\tau_{\cM_{k-1}}} 
 \leqs \bigexpecin{x}{\tau^+_{\cM_{k-1}}} 
 = \biggexpecin{x}{\sum_{n=0}^{\hat\tau-1} \bigexpecin{\hat X_n}{\tau_{\cM_k}}}
 \leqs \bigexpecin{x}{\hat\tau_{\cM_{k-1}}} \bigexpecin{B_k}{\tau_{\cM_k}}\;,
\end{equation} 
so that 
\begin{equation}
 1 \leqs
\frac{\bigexpecin{x}{\tau^+_{\cM_{k-1}}}}{\bigexpecin{x}{\hat\tau_{\cM_{k-1}}}}
 \leqs \bigexpecin{B_k}{\tau_{\cM_k}}\;.
\end{equation} 
It follows that 
\begin{equation}
\label{eq:oscillation_expectation} 
 \frac{\bigexpecin{B_k}{\tau^+_{\cM_{k-1}}}}
 {\added{\infSur{x \in B_k}{ \bigexpecin{x}{\tau^+_{\cM_{k-1}}}}}}
 \leqs  \frac{\bigexpecin{B_k}{\hat\tau_{\cM_{k-1}}}}
 {\added{\infSur{x \in B_k}{ \bigexpecin{x}{\hat\tau_{\cM_{k-1}}}}}}
\bigexpecin{B_k}{\tau_{\cM_k}}\;.
\end{equation} 
To control the oscillation of $\hat\tau_{\cM_{k-1}}$, we note that the spectral
decomposition~\eqref{Eq:SpectralDecompositionKZBI} yields 
\begin{align}
\nonumber
 \bigexpecin{x}{\hat\tau_{\cM_{k-1}}} 
 &= \sum_{n\geqs0} \bigpar{K^0_{B_k}}^n(x,B_k) \\
 &= \sum_{n\geqs0} \bigpar{\mathring{\ev}^{B_k}_0}^n \braces{ 
 \mathring{\phi}^{B_k}_0\pth{x}  +
\biggpar{\frac{\mathring{\ev}^{B_k}_1}{\mathring{\ev}^{B_k}_0}}^n
g^n\pth{x,B_k}}\;.
\end{align} 
We know that the kernel $K^0_{B_k}$ satisfies the uniform positivity
condition~\eqref{eq:uniform_positivity} with an $n_0$ of order
$\log(\sigma^{-1})$. It follows that 
\begin{equation}
 \bigexpecin{x}{\hat\tau_{\cM_{k-1}}}
 = \frac{1}{1-\mathring{\ev}^{B_k}_0}\mathring{\phi}^{B_k}_0\pth{x} 
 + \biggOrder{\frac{1}{1-\varrho^{1/n_0}\mathring{\ev}^{B_k}_0}}\;.
\end{equation} 
Together with Proposition~\ref{prop:oscillation_phi0}, this shows that
the oscillation of $\bigexpecin{x}{\hat\tau_{\cM_{k-1}}}$ is bounded by a term
of order $\smash{\log(\sigma^{-1})\e^{-(H(k,M_{k-1})-\eta)/\sigma^2}}$. Combined
with~\eqref{eq:oscillation_expectation}, this completes the proof. 
\qed

\appendix
\section{Doob's $h$-transform}
\label{app:Doob} 
Consider a Markov process $(X_n)_{n\geqs0}$ with state space $\Sigma$ and
transition kernel having density $k(x,y)$. Given a subset $A\subset\Sigma$, the
process conditioned on remaining in $A$ can be constructed using the functions 
\begin{equation}
 h_n(x) = \probin{x}{\tau_{A^c} > n}\;,
\end{equation} 
where $\tau_{A^c} = \inf\setsuch{n>0}{X_n \in A^c}$ denotes the first-exit time
from $A$. Indeed, \added{assuming  $h_n(x) > 0$ for all $x \in A$,} then
\added{for $y \in A$} we have 
\begin{equation}
 \bigpcondin{x}{X_1\in\6y}{\tau_{A^c} > n} 
 = \frac{1}{h_n(x)} \Bigexpecin{x}{\ind{X_1\in\6y}
\probin{y}{\tau_{A^c} > n-1}}
= \frac{h_{n-1}(y)}{h_n(x)} \probin{x}{X_1\in\6y}\;.
\end{equation}
This shows that the kernel
\begin{equation}
 \bar k_A(x,y;n) = \frac{h_{n-1}(y)}{h_n(x)} k(x,y) \ind{x\in A, y\in A}
\end{equation} 
describes the process conditioned to stay in $A$ up to time $n$. 
Thus if 
\begin{equation}
 \bar k_A(x,y) = \lim_{n\to\infty} \bar k_A(x,y;n)
\end{equation} 
exists, it will describe the process conditioned on staying in $A$ forever. 

Let $k_A(x,y) = k(x,y)\ind{x\in A,y\in A}$ denote the kernel of the process
killed upon leaving $A$, and write \added{$\lambda^A_i$} for its eigenvalues ordered by
decreasing module, $\pi^A_i$ for its left eigenfunctions and $\phi^A_i$ for its
right eigenfunctions. Recall that the principal eigenvalue $\lambda^A_0$ is real and
positive, and that $\pi^A_0(x)$ and $\phi^A_0(x)$ can be chosen real and
positive as well. \added{We also choose to normalise the eigenfunctions in such a way that
}
\begin{equation}
\added{ 
 \int_A \pi^A_i(x)\phi^A_j(x)\6x = \delta_{ij}\;.
}\end{equation}

\begin{lemma}
Under the spectral gap condition $\abs{\lambda^A_1} < \lambda^A_0$, we have 
\begin{equation}
 \lim_{n\to\infty} \frac{h_{n-1}(y)}{h_n(x)} 
 = \frac{1}{\lambda^A_0} \frac{\phi^A_0(y)}{\phi^A_0(x)}\;.
\end{equation} 
\end{lemma}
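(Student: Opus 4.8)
The plan is to recognise $h_n$ as the $n$-th iterate of the killed kernel applied to the constant function $\mathbf 1$, and then read off the large-$n$ asymptotics from the spectral theory of $K_A$. By the Markov property,
\begin{equation}
 h_n(x) = \probin{x}{X_1\in A,\dots,X_n\in A}
 = \int_A k_A^n(x,y)\,\6y = \bigpar{K_A^n\mathbf 1}(x)\;,
\end{equation}
so everything reduces to controlling $\bigpar{K_A^n\mathbf 1}(x)$ as $n\to\infty$, uniformly in $x$.

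First I would use that $K_A$ is a compact operator with continuous positive kernel, so that by Jentzsch's theorem (as recalled in Section~\ref{ssec:setup-spectral}) the principal eigenvalue $\lambda^A_0$ is real, positive, simple and isolated, with $\pi^A_0$ and $\phi^A_0$ strictly positive on $A$. Let $P_0$ be the Riesz projection of $K_A$ onto the one-dimensional eigenspace spanned by $\phi^A_0$; with the normalisation $\int_A\pi^A_0\phi^A_0=1$ one has $P_0 f = \bigpar{\int_A\pi^A_0 f}\phi^A_0$. Writing $K_A=\lambda^A_0 P_0 + N$ with $N=K_A(\id-P_0)$, one has $P_0N=NP_0=0$ and the spectral radius of $N$ is $\leqs\abs{\lambda^A_1}<\lambda^A_0$, hence $K_A^n=(\lambda^A_0)^nP_0+N^n$ and, by Gelfand's formula, $\norm{N^n}\leqs Cr^n$ for any fixed $r\in(\abs{\lambda^A_1},\lambda^A_0)$. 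Applying this to $\mathbf 1$ and setting $c_0:=\int_A\pi^A_0>0$, this gives
\begin{equation}
 h_n(x) = (\lambda^A_0)^n c_0\,\phi^A_0(x) + \bigpar{N^n\mathbf 1}(x)
 = (\lambda^A_0)^n c_0\,\phi^A_0(x)\Bigbrak{1 + \Order{(r/\lambda^A_0)^n}}\;,
\end{equation}
uniformly in $x\in A$, where the last step uses $\phi^A_0(x)>0$.

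It then remains to take the ratio: the factors $(\lambda^A_0)^n$ and $c_0$ almost cancel, leaving
\begin{equation}
 \frac{h_{n-1}(y)}{h_n(x)}
 = \frac{1}{\lambda^A_0}\,\frac{\phi^A_0(y)}{\phi^A_0(x)}\,
 \frac{1+\Order{(r/\lambda^A_0)^{n-1}}}{1+\Order{(r/\lambda^A_0)^{n}}}
 \;\xrightarrow[n\to\infty]{}\;
 \frac{1}{\lambda^A_0}\,\frac{\phi^A_0(y)}{\phi^A_0(x)}\;,
\end{equation}
which is the claim. The only genuinely delicate inputs are the two uniform-in-$x$ statements used above, namely the strict positivity of $\phi^A_0$ on all of $A$ (so that the division is legitimate and the remainder is $o(1)$) and the exponential bound $\norm{N^n}\leqs Cr^n$; both follow from compactness together with the positivity of $k_A$, exactly in the Perron--Frobenius/Jentzsch framework already invoked in Section~\ref{ssec:setup-spectral}, so I expect no real obstacle beyond spelling these out carefully.
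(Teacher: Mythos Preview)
Your proof is correct and follows essentially the same route as the paper: both write $K_A=\lambda^A_0 P_0+N$ with $P_0$ the rank-one spectral projector and $N$ of spectral radius $\abs{\lambda^A_1}$, deduce $h_n(x)=(\lambda^A_0)^n\phi^A_0(x)\bigbrak{c_0+o(1)}$, and take the ratio. Your version is somewhat more explicit (invoking Gelfand's formula for the remainder bound and flagging the positivity of $\phi^A_0$ needed to divide), but the underlying argument is the same.
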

\begin{proof}
We can write 
\begin{equation}
 k_A(x,y) = \lambda^A_0 \Pi_0(x,y) + k_\perp(x,y)\;,
\end{equation} 
where $\Pi_0(x,y) = \phi^A_0(x)\pi^A_0(y)$ is the projector on the subspace of
$\lambda^A_0$, and the remainder $k_\perp$ satisfies $\Pi_0 k_\perp = 0$, $k_\perp
\Pi_0 = 0$. Furthermore, $k_\perp$ has spectral radius $\abs{\lambda^A_1}$.
Therefore 
\begin{equation}
 k^n_A(x,y) = (\lambda^A_0)^n \Pi_0(x,y) + k_\perp(x,y)^n\;,
\end{equation} 
and thus 
\begin{equation}
 h_n(x) = \int_A k^n_A(x,y) \6y 
 = (\lambda^A_0)^n \replaced{\phi_0^A(x)}{\phi_0^A(y)} +
\bigOrder{\abs{\lambda^A_1}^n}\;. 
\end{equation} 
The result follows at once from the spectral-gap assumption. 
\end{proof}

We have thus obtained 
\begin{equation}
 \bar k_A(x,y) = \frac{1}{\lambda^A_0} \frac{\phi^A_0(y)}{\phi^A_0(x)} k_A(x,y)\;.
\end{equation} 

\begin{corollary}
The eigenvalues and eigenfunctions of $\bar K_A$ are given by 
\begin{equation}
 \bar\lambda^A_n = \frac{\lambda^A_n}{\lambda^A_0}\;, 
 \qquad
 \bar\pi^A_n(x) = \pi^A_n(x)\phi^A_0(x) 
 \qquad\text{and}\qquad 
 \bar\phi^A_n(x) = \frac{\phi^A_n(x)}{\phi^A_0(x)}\;.
\end{equation} 
\end{corollary}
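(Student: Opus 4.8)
The plan is to verify the stated formulas by direct substitution into the left‑ and right‑eigenvalue equations for $\bar K_A$, and then to argue that the resulting pairs exhaust the spectrum. Throughout I use the expression $\bar k_A(x,y) = (\lambda^A_0)^{-1}\,\phi^A_0(y)\,\phi^A_0(x)^{-1}\,k_A(x,y)$ established just above the corollary, together with the fact that $\phi^A_0$ can be taken strictly positive on $A$, so that division by $\phi^A_0$ is harmless.

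First I would check the right eigenfunctions. Setting $\bar\phi^A_n = \phi^A_n/\phi^A_0$ and using $K_A\phi^A_n = \lambda^A_n\phi^A_n$, one computes
\[
 (\bar K_A\bar\phi^A_n)(x)
 = \int_A \frac{1}{\lambda^A_0}\frac{\phi^A_0(y)}{\phi^A_0(x)}\, k_A(x,y)\,\frac{\phi^A_n(y)}{\phi^A_0(y)}\,\6y
 = \frac{1}{\lambda^A_0\,\phi^A_0(x)}\int_A k_A(x,y)\,\phi^A_n(y)\,\6y
 = \frac{\lambda^A_n}{\lambda^A_0}\,\frac{\phi^A_n(x)}{\phi^A_0(x)},
\]
which is exactly $\bar\lambda^A_n\,\bar\phi^A_n(x)$; the case $n=0$ recovers $\bar\lambda^A_0=1$ and $\bar\phi^A_0\equiv1$, consistent with $\bar K_A$ being stochastic. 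Next, for $\bar\pi^A_n = \pi^A_n\phi^A_0$ and $\pi^A_n K_A = \lambda^A_n\pi^A_n$,
\[
 (\bar\pi^A_n\bar K_A)(y)
 = \int_A \pi^A_n(x)\,\phi^A_0(x)\,\frac{1}{\lambda^A_0}\frac{\phi^A_0(y)}{\phi^A_0(x)}\, k_A(x,y)\,\6x
 = \frac{\phi^A_0(y)}{\lambda^A_0}\int_A \pi^A_n(x)\, k_A(x,y)\,\6x
 = \frac{\lambda^A_n}{\lambda^A_0}\,\pi^A_n(y)\,\phi^A_0(y),
\]
which is $\bar\lambda^A_n\,\bar\pi^A_n(y)$. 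Finally I would note that the biorthonormalisation is inherited, since the two factors $\phi^A_0$ cancel: $\int_A \bar\pi^A_i(x)\,\bar\phi^A_j(x)\,\6x = \int_A \pi^A_i(x)\,\phi^A_j(x)\,\6x = \delta_{ij}$.

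The only point that is not a one‑line calculation is confirming that these are \emph{all} eigenvalues/eigenfunctions of $\bar K_A$, with the same algebraic multiplicities and Jordan structure. The clean way is to introduce the multiplication operator $M\colon f\mapsto \phi^A_0 f$, which is bounded and boundedly invertible on the function space at hand because $\phi^A_0$ is bounded and uniformly positive on $A$; then the density formula above is precisely the operator identity $\bar K_A = (\lambda^A_0)^{-1}\,M^{-1}K_A M$, so $\bar K_A$ is similar to $(\lambda^A_0)^{-1}K_A$. Hence $\sigma(\bar K_A) = (\lambda^A_0)^{-1}\sigma(K_A)$ with identical multiplicities, the Riesz projectors are conjugated by $M$, and the (generalised) right, resp.\ left, eigenfunctions are obtained by applying $M^{-1}$, resp.\ its transpose. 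The main obstacle is therefore a bookkeeping one: fixing the Banach space on which $M$, $K_A$ and $\bar K_A$ all act (e.g.\ $C(\bar A)$ or an $L^p$‑type space), and justifying that $\phi^A_0$ is bounded and bounded below on $A$ — which follows from continuity and positivity of the kernel density together with the compactness arguments already used in the main text. With that settled, the corollary follows immediately from the two displayed identities.
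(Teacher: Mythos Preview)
Your proposal is correct and follows the same approach as the paper: the paper's proof is the one-liner ``A direct computation shows that $K_A \phi^A_n = \lambda^A_n \phi^A_n \Leftrightarrow \bar K_A \bar \phi^A_n = \bar \lambda^A_n \bar\phi^A_n$, and similarly for the left eigenfunctions,'' and you have simply written out that direct computation in full. Your additional remark that $\bar K_A = (\lambda^A_0)^{-1} M^{-1} K_A M$ is a nice way of making the ``if and only if'' explicit, but it is the same mechanism as the paper's equivalence.
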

\begin{proof}
A direct computation shows that 
$K_A \phi^A_n = \lambda^A_n \phi^A_n \Leftrightarrow \bar K_A \bar \phi^A_n = \bar
\lambda^A_n \bar\phi^A_n$, and similarly for the left eigenfunctions. 
\end{proof}

\section{Floquet theory}
\label{app:Floquet} 
\added{Floquet theory and its application to the stability of periodic orbits
is explained in many standard text books, such as~\cite[Chapters~III
and~VI]{Hale}. Here we briefly recall some important facts and notations used
in the present work.}

Consider a $d+1$-dimensional deterministic ODE
\begin{equation}
\dot{z} = f\pth{z}\; ,
\label{Eq:ODEFloquet}
\end{equation}
where $f \in \cC^2(\cD_0,\real^{d+1})$. We assume that this system admits a
periodic solution $\gamma$ \added{of period $T$} with associated orbit $\Gamma$.
We introduce the variable $\varphi \in \real / \mathbb{Z}$ and set $\Gamma\pth{
\varphi} = \gamma\pth{T \varphi}$. Note that
\begin{equation}
\dfrac{\dd }{\dd \varphi} \Gamma\pth{\varphi} = T f\pth{\Gamma\pth{\varphi}}\;
,\label{EqSystCoordEquaDiffGamma}
\end{equation} 
so that $\dot{\varphi}=1/T$ is constant on the periodic orbit.
In order to analyse the dynamics near $\Gamma$,  we start by linearising the
equation. Let $A\pth{\varphi}= \partial_z f\pth{ \Gamma\pth{\varphi}} $
 be the Jacobian matrix of $f$ at $\Gamma(\varphi)$. The linearisation around
the periodic orbit is given by
\begin{equation}
\added{
\dfrac{\dd }{\dd \varphi} \zeta = T A\pth{\varphi} \zeta\;.}
\end{equation}
Therefore \added{$\zeta\pth{\varphi} =
U\pth{\varphi,\varphi_0}\zeta\pth{\varphi_0}$}, where the principal solution
$U\pth{\varphi,\varphi_0}$ satisfies
\begin{equation}
\added{\partial_\varphi} U\pth{\varphi,\varphi_0}= T \replaced{A\pth{\varphi}}{A\pth{\varphi, \varphi_0}} U\pth{\varphi,
\varphi_0}, \qquad U\pth{\varphi\added{_0},\varphi_0} = \id \; .
\end{equation}
Since \replaced{$A(\varphi)=A(\varphi+1)$}{$A\pth{\varphi, \varphi_0}=A\pth{\varphi+1, \varphi_0}$} for all $\varphi$,
Floquet's theorem allows us to decompose the principal solution as 
\begin{equation}
\label{Eq:FloquetTheorem}
U\pth{\varphi, \varphi_0} = P\pth{\varphi, \varphi_0}
\e^{T\pth{\varphi-\varphi_0} B\pth{\varphi_0}}\; ,
\end{equation}
where
 $P\pth{\cdot , \varphi_0}$ is periodic with same period as 
$A\pth{\cdot}$, i.e.\ $1$, and $P$ satisfies $P\pth{\varphi_0,\varphi_0}= \id$,
 and $B\pth{\varphi_0}$ is a constant matrix \added{which can always be chosen to be real
 even if it means taking $P(\cdot , \varphi_0)$ to be $2-$periodic}.
\added{Note that $P$ satisfies}
\begin{equation}
\label{eq:ODE_for_P} 
\added{\dfrac{\dd }{\dd \varphi} 
P(\varphi, \varphi_0) = T\bigbrak{A(\varphi) P(\varphi , \varphi_0)  
- P(\varphi,\varphi_0)  B(\varphi_0)}\;.}
\end{equation}
The asymptotic behaviour  of $\Gamma\pth{\varphi}$ only depends on the
eigenvalues of $T B\pth{\varphi_0}$, which are called characteristic exponents
(or Floquet exponents) of
$\Gamma$. The matrix $U\pth{1+\varphi_0, \varphi_0}=\exp\pth{T B\pth{\varphi_0}
}$ is called the monodromy matrix in $\varphi_0$, and its eigenvalues are called
the characteristic multipliers.
\added{Note that  Floquet multipliers do not depend on $\varphi_0$. Indeed, one 
can show that all monodromy matrices are similar and thus have the same eigenvalues.} 
Differentiating \eqref{EqSystCoordEquaDiffGamma} with respect to $\varphi$,  we
observe that
\begin{equation}
\dfrac{\dd}{\dd \varphi} \Gamma '\pth{\varphi} = T \dfrac{\dd}{\dd \varphi}
f\pth{\Gamma\pth{\varphi}} = \added{T} A\pth{\varphi} \Gamma '\pth{\varphi}\; .
\end{equation}
\added{Thus, owing to periodicity, we have}
\begin{equation}
\Gamma '\pth{\varphi}= \Gamma '\pth{\varphi +1 }= U\pth{\varphi +1 ,\varphi }
\Gamma '\pth{\varphi}\; ,
\label{EqValeurPropre1PrA}
\end{equation}
\added{showing that} $1$ is an eigenvalue of  $U\pth{1+\varphi, \varphi}$ with
eigenvector $\Gamma'\pth{\varphi}$.
\begin{proposition}
\label{Prop:FloquetChangeOfVariable}
There exist $L>0$ and a $d \times d$ triangular matrix $\Lambda$  such that
system \eqref{Eq:ODEFloquet} is equivalent for $\norm{x}< L$ to
\begin{align}
\nonumber
\dot{x} &= \Lambda x + \Order{{\norm{x}}^2}\\
\dot{\varphi} &= \frac{1}{T} + \Order{{\norm{x}}^2}\; .
\end{align}
\end{proposition}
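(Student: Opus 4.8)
The plan is to manufacture the coordinates $\pth{x,\varphi}$ directly out of the Floquet decomposition~\eqref{Eq:FloquetTheorem} and then read the normal form off the variational equation. First I would fix $\varphi_0=0$, abbreviate $P\pth{\varphi}=P\pth{\varphi,0}$ and $B=B\pth{0}$, and use the freedom recalled above to take $B$ real (possibly letting $P$ have period $2$) with the branch of logarithm chosen so that the Floquet multiplier $1$ corresponds to the eigenvalue $0$ of $B$. Since $\Gamma'\pth{0}$ is a Floquet eigenvector for the multiplier $1$ by~\eqref{EqValeurPropre1PrA}, this forces $B\Gamma'\pth{0}=0$, hence $\e^{T\varphi B}\Gamma'\pth{0}=\Gamma'\pth{0}$, and since $\Gamma'\pth{\varphi}=U\pth{\varphi,0}\Gamma'\pth{0}$ solves the linearised equation we get $\Gamma'\pth{\varphi}=P\pth{\varphi}\Gamma'\pth{0}$ for all $\varphi$. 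I would then pick a $B$-invariant complement $W$ of $\real\Gamma'\pth{0}$ in $\real^{d+1}$ (possible whenever the multiplier $1$ is a simple eigenvalue of the monodromy matrix — in particular for every stable orbit — and more generally whenever the zero eigenvalue of $B$ is semisimple; the degenerate case is handled below), fix a basis $w_1,\dots,w_d$ of $W$, set $E=\pth{w_1\mid\dots\mid w_d}\in\real^{\pth{d+1}\times d}$ and $R\pth{\varphi}=P\pth{\varphi}E$. Then $R$ is smooth, periodic, of rank $d$, its columns span a complement of $\real\Gamma'\pth{\varphi}$, and $BE=E\Lambda$ for the $d\times d$ matrix $\Lambda$ representing $B|_W$. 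The Jacobian of $\pth{x,\varphi}\mapsto\Gamma\pth{\varphi}+R\pth{\varphi}x$ at $x=0$ is the invertible matrix $\bigbrak{\Gamma'\pth{\varphi}\mid R\pth{\varphi}}$, so by the inverse function theorem and compactness of $\Gamma$ there is an $L>0$ for which this map is a diffeomorphism of $\set{\norm{x}<L}\times\pth{\real/2\Z}$ onto a tubular neighbourhood of $\Gamma$; this is the change of variables witnessing the claimed equivalence.

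Next I would transform the ODE. Differentiating $z=\Gamma\pth{\varphi}+R\pth{\varphi}x$ along a solution and inserting $\dot z=f\pth{z}$ gives
\[
\bigbrak{\Gamma'\pth{\varphi}+R'\pth{\varphi}x\mid R\pth{\varphi}}\binom{\dot\varphi}{\dot x}=f\bigpar{\Gamma\pth{\varphi}+R\pth{\varphi}x}\;,
\]
whose matrix is invertible for $\norm{x}$ small, so $\dot\varphi$ and $\dot x$ are smooth functions of $\pth{x,\varphi}$ with $\dot\varphi|_{x=0}=\tfrac1T$ and $\dot x|_{x=0}=0$. Taylor-expanding the right-hand side about $\Gamma\pth{\varphi}$, using $f\bigpar{\Gamma\pth{\varphi}}=\tfrac1T\Gamma'\pth{\varphi}$ and $A\pth{\varphi}=\partial_z f\bigpar{\Gamma\pth{\varphi}}$, substituting $R'\pth{\varphi}=T\bigbrak{A\pth{\varphi}R\pth{\varphi}-P\pth{\varphi}BE}$ from~\eqref{eq:ODE_for_P}, and then using $\Gamma'\pth{\varphi}=P\pth{\varphi}\Gamma'\pth{0}$, $R\pth{\varphi}=P\pth{\varphi}E$, $BE=E\Lambda$ and multiplying through by the bounded periodic matrix $P\pth{\varphi}^{-1}$, the first-order part of the identity collapses to
\[
\Gamma'\pth{0}\bigpar{\dot\varphi-\tfrac1T}+E\bigpar{\dot x-\Lambda x}=\Order{\norm{x}^2}\;,
\]
the remainder being uniform in $\varphi$ since $f\in\mathcal{C}^2$ and $\Gamma$ is compact. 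As $\bigbrak{\Gamma'\pth{0}\mid E}$ is invertible, projecting onto its two blocks yields exactly $\dot\varphi=\tfrac1T+\Order{\norm{x}^2}$ and $\dot x=\Lambda x+\Order{\norm{x}^2}$. A final constant linear change $x\mapsto Sx$ putting $\Lambda$ in triangular form preserves both equations, and shrinking $L$ if necessary (and noting the $B_i$ become balls in the new $x$) finishes the proof.

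In the degenerate case where the zero eigenvalue of $B$ fails to be semisimple — a genuine Jordan block tying an extra transversal exponent $0$ to the tangent direction — one cannot take $W$ to be $B$-invariant, and the computation above instead gives $\dot\varphi=\tfrac1T+\beta^{\mathrm T}x+\Order{\norm{x}^2}$, where $\beta\in\real^d$ is the $\Gamma'\pth{0}$-component of $BE$, while $\dot x=\Lambda x+\Order{\norm{x}^2}$ still holds. The linear term would then be removed by a near-identity reparametrisation $\varphi=\tilde\varphi+c(\tilde\varphi)^{\mathrm T}x$, with $c$ the unique $2$-periodic solution of the cohomological equation $\tfrac1T c'(\tilde\varphi)+\Lambda^{\mathrm T}c(\tilde\varphi)=\beta$, which exists provided $\Lambda$ has no eigenvalue in $\tfrac{\pi\icx}{T}\Z$ — a non-resonance condition automatically met for the orbits considered here. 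I expect this solvability/non-resonance point, together with the choice of a real logarithm $B$ with $B\Gamma'\pth{0}=0$ adapted to the possibly nontrivial Jordan structure at the multiplier $1$, to be the only real difficulty; everything else is routine Taylor expansion and linear algebra, with all error terms made uniform by compactness of $\Gamma$.
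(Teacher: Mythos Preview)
Your proof is correct and follows essentially the same route as the paper: both build the change of variables $z=\Gamma(\varphi)+R(\varphi)x$ from the Floquet factor $P(\varphi)$ applied to a fixed complement of the tangent direction, verify it is a local diffeomorphism at $x=0$, and read off the normal form by Taylor expansion combined with the relation $R'=T(AR-R\Lambda)$. The only organisational differences are that the paper packages the choice of complement as a single similarity $S$ bringing $B$ to Jordan form $\operatorname{diag}(0,\Lambda)$ (thereby getting $\Lambda$ triangular from the start, rather than via your final constant conjugation), and that the paper does not explicitly discuss the degenerate case where the zero eigenvalue of $B$ has a nontrivial Jordan block --- your added paragraph on this is a genuine clarification, since the paper's ansatz $\hat\Lambda=\operatorname{diag}(0,\Lambda)$ tacitly assumes semisimplicity of the trivial multiplier.
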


\begin{proof}
We are going to define the change of coordinates explicitly but we first
introduce some notations. \added{Let $\hat{\Lambda}=S^{-1} B
S=\text{diag}(0,\Lambda)$ be the Jordan canonical form of the constant matrix
$B$ defined in~\eqref{Eq:FloquetTheorem}, where $\Lambda\in\R^{d\times d}$. We
also write $P(\varphi,\varphi_0)S = 
 \crochets{u\pth{\varphi},R\pth{\varphi}}$, 
where $u$ is a column vector of dimension $d+1$ 
and $R$ is a matrix of dimension $(d+1 )\times d$.
It follows from~\eqref{eq:ODE_for_P} that the vector $u$ and the matrix $R$
satisfy the equations}
\begin{align}
\nonumber
u'(\varphi) &= T A(\varphi) u(\varphi)\; ,\\
R'\pth{\varphi} &= T \pth{A(\varphi) R\pth{\varphi}- R\pth{\varphi} \Lambda }\; .
\label{EqMatriceRDerivee}
\end{align}
Note that we can choose the matrix $S$ such that $u(\varphi) =
\Gamma'(\varphi)$.

We now introduce the transformation
\begin{equation} \label{EqTransfCgmtVar}
 z =  \Gamma\pth{\varphi} + R\pth{\varphi} x\; .
\end{equation}
We can first check that this transformation is well defined in a neighbourhood
of $\Gamma$. Indeed,
if $F\pth{z,x,\varphi} = \Gamma\pth{\varphi} + R\pth{\varphi} \added{x}-z$, the
\replaced{partial}{paritial}
derivatives of $F$ with respect of $x$ and $\varphi$ are
\begin{align}
\nonumber
\dfrac{\partial F}{\partial \varphi} &= \Gamma'\pth{\varphi} + R'\pth{\varphi}
x\;,\\
\dfrac{\partial F}{\partial x} &= R\pth{\varphi}\;.
\end{align}
For $x=0$, \added{we have $\det\crochets{\partial_\varphi F,
\partial_x F} \neq 0$ for all $\varphi$, since
$\crochets{\Gamma'\pth{\varphi}, R\pth{\varphi}}$ is the matrix
$P(\varphi,\varphi_0)S$ which is invertible.}

If $z(t) = \Gamma\pth{\varphi\pth{t}} + R\pth{\varphi\pth{t}} x\pth{t}$
satisfies $\dot{z} = f\pth{z}$ then
\begin{equation}
f\pth{ \Gamma\pth{\varphi} + R\pth{\varphi} x }= \dot{\varphi}
\Gamma'\pth{\varphi} + \dot{\varphi} R'\pth{\varphi} x + R\pth{\varphi} 
\dot{x}\; .
\end{equation}  
Performing a Taylor expansion of the left-hand side and
using~\eqref{EqMatriceRDerivee}, we obtain
\begin{equation}
  \Order{\norm{x}^2}= \bigpar{\dot{\varphi}-\frac{1}{T}} 
  \bigbrak{ \Gamma'(\varphi)  + T A(\varphi) R(\varphi)x}+ R(\varphi)(
\dot{x} - \dot{\varphi} T \Lambda x)\;.
\end{equation}
The result follows by projecting on a normal vector to the space generated by
the column vectors of $R$.
\end{proof}

\bibliographystyle{plain}
\bibliography{ref}

\tableofcontents

\vfill

\bigskip\bigskip\noindent
{\small
Universit\'e d'Orl\'eans, Laboratoire {\sc Mapmo} \\
{\sc CNRS, UMR 7349} \\
F\'ed\'eration Denis Poisson, FR 2964 \\
B\^atiment de Math\'ematiques, B.P. 6759\\
45067~Orl\'eans Cedex 2, France \\
{\it E-mail addresses: }
{\tt manon.baudel@etu.univ-orleans.fr},
{\tt nils.berglund@univ-orleans.fr}

\end{document}